\definecolor{e-mail}{rgb}{0,.40,.80}
\definecolor{reference}{rgb}{.20,.60,.22}
\definecolor{citation}{rgb}{0,.40,.80}
\tikzset
{
  partial ellipse/.style args={#1:#2:#3}{
    insert path={+ (#1:#3) arc (#1:#2:#3)}
  }
}
\DeclareMathSymbol\bbDelta  \mathord{bbold}{"01}
\newcommand{\cA}{\mathcal{A}}
\newcommand{\cC}{\mathcal{C}}
\newcommand{\cD}{\mathcal{D}}
\newcommand{\cE}{\mathcal{E}}
\newcommand{\cF}{\mathcal{F}}
\newcommand{\cL}{\mathcal{L}}
\newcommand{\cM}{\mathcal{M}}
\newcommand{\cO}{\mathcal{O}}
\newcommand{\cS}{\mathcal{S}}
\newcommand{\cT}{\mathcal{T}}
\newcommand{\cU}{\mathcal{U}}
\newcommand{\cW}{\mathcal{W}}
\newcommand{\fg}{\mathfrak{g}}
\newcommand{\fZ}{\mathfrak{Z}}
\newcommand{\C}{\mathrm{C}}
\renewcommand{\d}{\mathrm{d}}
\renewcommand{\H}{\mathrm{H}}
\renewcommand{\L}{\mathrm{L}}
\newcommand{\R}{\mathrm{R}}
\newcommand{\T}{\mathrm{T}}
\newcommand{\UT}{\mathrm{UT}}
\newcommand{\bA}{\mathbf{A}}
\newcommand{\bD}{\mathbf{D}}
\newcommand{\bE}{\mathbb{E}}
\newcommand{\bF}{\mathbf{F}}
\newcommand{\bK}{\mathbf{K}}
\newcommand{\Q}{\mathbf{Q}}
\newcommand{\bR}{\mathbf{R}}
\newcommand{\bS}{\mathbb{S}}
\newcommand{\Z}{\mathbf{Z}}
\newcommand{\Alg}{\mathrm{Alg}}
\newcommand{\Bord}{\mathrm{Bord}}
\newcommand{\bu}{\mathbf{1}}
\newcommand{\Catperf}{\mathrm{Cat}^{\mathrm{perf}}_\infty}
\newcommand{\ch}{\mathrm{ch}}
\newcommand{\coev}{\mathrm{coev}}
\DeclareMathOperator*{\colim}{colim}
\newcommand{\coCorr}{\mathrm{coCorr}}
\newcommand{\Corr}{\mathrm{Corr}}
\newcommand{\Disc}{\mathcal{D}\mathrm{isc}}
\newcommand{\End}{\mathrm{End}}
\newcommand{\ev}{\mathrm{ev}}
\newcommand{\fd}{\mathrm{fd}}
\newcommand{\flip}{\mathrm{flip}}
\newcommand{\FM}{\mathrm{FM}}
\newcommand{\FE}{\mathrm{Fig}(8)}
\newcommand{\fr}{\mathrm{fr}}
\newcommand{\Fun}{\mathrm{Fun}}
\newcommand{\id}{\mathrm{id}}
\newcommand{\Ind}{\mathrm{Ind}}
\newcommand{\Int}{\mathrm{int}}
\newcommand{\HH}{\mathrm{HH}}
\newcommand{\Ho}{\mathrm{Ho}}
\newcommand{\hofib}{\mathrm{hofib}}
\newcommand{\Hom}{\mathrm{Hom}}
\newcommand{\bHom}{\mathbf{H}\mathrm{om}}
\newcommand{\Lift}{\mathrm{Lift}}
\newcommand{\LocSys}{\mathrm{LocSys}}
\newcommand{\Map}{\mathrm{Map}}
\newcommand{\Man}{\mathcal{M}\mathrm{an}}
\newcommand{\LMod}{\mathrm{LMod}}
\newcommand{\Mod}{\mathrm{Mod}}
\newcommand{\nc}{\mathrm{nc}}
\newcommand{\OC}{\mathcal{OC}}
\newcommand{\op}{\mathrm{op}}
\newcommand{\ori}{\mathrm{or}}
\newcommand{\ost}{\mathring{\mathrm{st}}}
\newcommand{\Perf}{\mathrm{Perf}}
\newcommand{\PrSt}{\mathrm{Pr}^{\mathrm{St}}}
\newcommand{\PSh}{\mathrm{PSh}}
\newcommand{\PT}{\mathrm{PT}}
\newcommand{\pt}{\mathrm{pt}}
\newcommand{\QCoh}{\mathrm{QCoh}}
\newcommand{\SH}{\mathrm{SH}}
\newcommand{\Shv}{\mathrm{Shv}}
\newcommand{\Sing}{\mathrm{Sing}}
\newcommand{\SO}{\mathrm{SO}}
\DeclareMathOperator{\Spec}{Spec}
\newcommand{\Sp}{\mathrm{Sp}}
\newcommand{\Sym}{\mathrm{Sym}}
\newcommand{\Td}{\mathrm{Td}}
\newcommand{\Th}{\mathrm{Th}}
\newcommand{\THH}{\mathrm{THH}}
\newcommand{\Top}{\mathrm{Top}}
\newcommand{\tr}{\mathrm{tr}}
\newcommand{\bWh}{\mathbf{Wh}}
\newcommand{\Wh}{\mathrm{Wh}}
\newcommand{\const}{\mathrm{const}}
\newcommand{\Emb}{\mathrm{Emb}}
\newcommand{\dual}{\mathrm{dual}}
\newcommand{\PrDual}{\mathrm{Pr}^\dual}
\newtheorem{maintheorem}{Theorem}
\newtheorem{maincorollary}[maintheorem]{Corollary}
\newtheorem{thm}{Theorem}[section]
\newtheorem{prop}[thm]{Proposition}
\crefname{prop}{proposition}{propositions}
\newtheorem{cor}[thm]{Corollary}
\newtheorem{lm}[thm]{Lemma}
\theoremstyle{definition}
\newtheorem{defn}[thm]{Definition}
\theoremstyle{remark}
\newtheorem{remark}[thm]{Remark}
\newtheorem*{remarknonum}{Remark}
\newtheorem{example}[thm]{Example}
\newcommand{\defterm}[1]{\textbf{\emph{#1}}}
\newcommand{\adj}[2]{
\xymatrix{
#1 \ar@<.5ex>[r] & #2 \ar@<.5ex>[l]
}
}
\newcommand{\importpiclib}[1][]{
	
	\ifthenelse{\isempty{#1}}{
		
		\newcommand{\newpic}[2]{
			\newcommand{##1}{##2}
		}
		
	}{
		
		\newcommand{\newpic}[2]{
			\newcommand{##1}{##2}
			\string##1 = ##1}
		
	}

	\newpic{\pzetam}{
		\begin{tikzpicture}
			\draw[red, fill=red] (0, 0) circle (2pt);
			\draw[dashed] (-1,0) -- (0,0);
		\end{tikzpicture}
	}
	
	\newpic{\phhcm}{
		\begin{tikzpicture}
			\draw (0, 0) circle (0.2cm);
			\draw[red, fill=red] (0.2, 0) circle (2pt);
			\draw[dashed] (-1.2,0) -- (-0.2,0);
			\draw[fill] (-0.2,0) circle (2pt);
		\end{tikzpicture}
	}
	
	\newpic{\phhm}{
		\begin{tikzpicture}
			\draw (0.0,0) .. controls ++(0:.7) and ++(0: .5) .. (0,.7)
			.. controls ++(0:-.5) and ++(0: -.7) .. (-0.0,0);
			\draw[dashed] (-1,0) -- (0,0);
			\draw[fill] (0,0) circle (2pt);
		\end{tikzpicture}
	}
	
	\newpic{\pfigeightbm}{
		\begin{tikzpicture}
			\draw (0, 0.4cm) circle (0.4cm);
			\draw (0, -0.4cm) circle (0.4cm);
			\draw[dashed, shift={(0,0.4cm)}] (230:0.4cm) -- ++(-1,0);
			\draw[fill, shift={(0,0.4cm)}] (230:0.4cm) circle (2pt);
		\end{tikzpicture}
	}
	
	\newpic{\ptwocircm}{
		\begin{tikzpicture}
			\draw (0, 0.5cm) circle (0.4cm);
			\draw (0, -0.5cm) circle (0.4cm);
			\draw[dashed] (0,0.1) -- ++(-1,0);
			\draw[fill] (0,0.1) circle (2pt);
		\end{tikzpicture}
	}
	
	\newpic{\ppointm}{
		\begin{tikzpicture}
			\draw[dashed] (0,0) -- ++(-1,0);
			\draw[fill] (0,0) circle (2pt);
		\end{tikzpicture}
	}
	
	\newpic{\ppointmun}{
		\begin{tikzpicture}
			\draw[dashed] (0,0.5) -- ++(-1,0);
			\draw[fill] (0,0.5) circle (2pt);
			\draw[fill] (0,-0.5) circle (2pt);
		\end{tikzpicture}
	}
	
	\newpic{\phhc}{
		\begin{tikzpicture}[scale=0.4]
			\draw (0, 0) circle (0.5cm);
			\draw[red, fill=red] (0.5, 0) circle (2pt);
		\end{tikzpicture}
	}
	
	\newpic{\ppoint}{
		\begin{tikzpicture}[scale=0.4]
			\draw[fill] (0, 0) circle (2pt);
		\end{tikzpicture}
	}
	
	\newpic{\pdpoint}{
		\begin{tikzpicture}[scale=0.4]
			\draw[fill] (0, 0.5) circle (2pt);
			\draw[fill] (0, -0.5) circle (2pt);
		\end{tikzpicture}
	}
	
	\newpic{\pcirc}{
		\begin{tikzpicture}[scale=0.4]
			\draw (0, 0) circle (0.5cm);
		\end{tikzpicture}
	}
	
	\newpic{\pcircirc}{
		\begin{tikzpicture}[scale=0.4]
			\draw (0, 0) circle (0.5cm);
			\draw (1, 0) circle (0.5cm);
		\end{tikzpicture}
	}
	
	\newpic{\pcircircirc}{
		\begin{tikzpicture}[scale=0.4]
			\draw (0, 0) circle (0.5cm);
			\draw (0.8, 0.6) circle (0.5cm);
			\draw (1.4, -0.2) circle (0.5cm);
		\end{tikzpicture}
	}
	
	\newpic{\pcircircircsep}{
		\begin{tikzpicture}[scale=0.4]
			\draw (0, 0) circle (0.5cm);
			\draw (0.8, 0.6) circle (0.5cm);
			\draw (1.6, -0.4) circle (0.5cm);
		\end{tikzpicture}
	}
	
	\newpic{\pzeta}{
		\begin{tikzpicture}[scale=0.4]
			\draw[red, fill=red] (0.5, 0) circle (2pt);
		\end{tikzpicture}
	}
	
	\newpic{\pfigeight}{
		\begin{tikzpicture}[scale=0.4]
			\draw (0, 0.5cm) circle (0.5cm);
			\draw (0, -0.5cm) circle (0.5cm);
		\end{tikzpicture}
	}
	
	\newpic{\pohoh}{
		\begin{tikzpicture}[scale=0.4]
			\draw (0, 0.6cm) circle (0.5cm);
			\draw (0, -0.6cm) circle (0.5cm);
		\end{tikzpicture}
	}
	
	\newpic{\povalcirc}{
		\begin{tikzpicture}[scale=0.4]
			\draw (1.6, -0.4) circle (0.5cm);
			\begin{scope}[scale=1.25, rotate=36.8, shift={(-8,-.4)}]
				\draw (8, 0) arc (270:90:0.4cm) -- (8.8, 0.8) arc (90:-90:0.4cm) -- (8, 0);
			\end{scope}
		\end{tikzpicture}
	}
	
	\newpic{\povalcircnsep}{
		\begin{tikzpicture}[scale=0.4]
			\draw (1.4, -0.2) circle (0.5cm);
			\begin{scope}[scale=1.25, rotate=36.8, shift={(-8,-.4)}]
				\draw (8, 0) arc (270:90:0.4cm) -- (8.8, 0.8) arc (90:-90:0.4cm) -- (8, 0);
			\end{scope}
		\end{tikzpicture}
	}
	
	\newpic{\pxfigeightm}{
		\begin{tikzpicture}[scale=0.4]
			\draw (-{sqrt(2)/4},-{sqrt(2)/4}) arc (-225:45:0.5cm);
			\draw ({sqrt(2)/4},{sqrt(2)/4}) arc (-45:225:0.5cm);
			\draw ({sqrt(2)/4},-{sqrt(2)/4}) -- (-{sqrt(2)/4},{sqrt(2)/4});
			\draw (-{sqrt(2)/4},-{sqrt(2)/4}) -- ({sqrt(2)/4},{sqrt(2)/4});
			\draw[red, fill=red] (0, 0) circle (2pt);
		\end{tikzpicture}
	}
	
	\newpic{\ptopeightm}{
		\begin{tikzpicture}[scale=0.4]
			\draw ({sqrt(2)/4},{sqrt(2)/4}) arc (-45:225:0.5cm);
			\draw ({sqrt(2)/4},{sqrt(2)/4}) -- (0,0) -- (-{sqrt(2)/4},{sqrt(2)/4});
			\draw[red, fill=red] (0, 0) circle (2pt);
		\end{tikzpicture}
	}
	
	\newpic{\pxfigeight}{
		\begin{tikzpicture}[scale=0.4]
			\draw (-{sqrt(2)/4},-{sqrt(2)/4}) arc (-225:45:0.5cm);
			\draw ({sqrt(2)/4},{sqrt(2)/4}) arc (-45:225:0.5cm);
			\draw ({sqrt(2)/4},-{sqrt(2)/4}) -- (-{sqrt(2)/4},{sqrt(2)/4});
			\draw (-{sqrt(2)/4},-{sqrt(2)/4}) -- ({sqrt(2)/4},{sqrt(2)/4});
		\end{tikzpicture}
	}
	
	\newpic{\pxfigeightex}{
		\begin{tikzpicture}[scale=0.4]
			\draw (-{sqrt(2)/4},-{sqrt(2)/4}) arc (-225:45:0.5cm);
			\draw ({sqrt(2)/4},{sqrt(2)/4}) arc (-45:225:0.5cm);
			\draw ({sqrt(2)/4},-{sqrt(2)/4}) -- (-{sqrt(2)/4},{sqrt(2)/4});
			\draw (-{sqrt(2)/4},-{sqrt(2)/4}) -- ({sqrt(2)/4},{sqrt(2)/4});
			\draw (0,0) .. controls (1,.75) and (1,-.75) .. (0,0);
		\end{tikzpicture}
	}
	
	\newpic{\ptopeight}{
		\begin{tikzpicture}[scale=0.4]
			\draw ({sqrt(2)/4},{sqrt(2)/4}) arc (-45:225:0.5cm);
			\draw ({sqrt(2)/4},{sqrt(2)/4}) -- (0,0) -- (-{sqrt(2)/4},{sqrt(2)/4});
		\end{tikzpicture}
	}
	
	\newpic{\ptopeightpoint}{
		\begin{tikzpicture}[scale=0.4]
			\draw ({sqrt(2)/4},{sqrt(2)/4}) arc (-45:225:0.5cm);
			\draw ({sqrt(2)/4},{sqrt(2)/4}) -- (0,0) -- (-{sqrt(2)/4},{sqrt(2)/4});
			\draw[fill] (0, -0.5) circle (2pt);
		\end{tikzpicture}
	}
	
	\newpic{\ptopeightex}{
		\begin{tikzpicture}[scale=0.4]
			\draw ({sqrt(2)/4},{sqrt(2)/4}) arc (-45:225:0.5cm);
			\draw ({sqrt(2)/4},{sqrt(2)/4}) -- (0,0) -- (-{sqrt(2)/4},{sqrt(2)/4});
			\draw (0,0) .. controls (1,.75) and (1,-.75) .. (0,0);
		\end{tikzpicture}
	}
	
	\newpic{\ptheta}{
		\begin{tikzpicture}[scale=0.4]
			\draw (0,0) circle (0.5cm);
			\draw (-0.5,0) -- (0.5,0);
		\end{tikzpicture}
	}
}
\begin{document}
\title{Simple homotopy invariance of the loop coproduct}
\address{Trinity College Dublin, Dublin, Ireland}
\email{naeff@tcd.ie}
\author{Florian Naef}
\address{School of Mathematics, University of Edinburgh, Edinburgh, UK}
\email{p.safronov@ed.ac.uk}
\author{Pavel Safronov}
\begin{abstract}
We prove a transformation formula for the Goresky--Hingston loop coproduct in string topology under homotopy equivalences of manifolds. The formula involves the trace of the Whitehead torsion of the homotopy equivalence. In particular, it implies that the loop coproduct is invariant under simple homotopy equivalences. In a sense, our results determine the Dennis trace of the simple homotopy type of a closed manifold from its framed configuration spaces of $\leq 2$ points.
We also explain how the loop coproduct arises as a secondary operation in a 2-dimensional TQFT which elucidates a topological origin of the transformation formula.
\end{abstract}
\maketitle

\section*{Introduction}

\subsection*{String topology}

In his description of the Poisson bracket on the character variety of an oriented (connected) surface $\Sigma$, Goldman \cite{Goldman} introduced a Lie bracket on the abelian group $\fg(\Sigma)$ of homotopy classes of loops $S^1\rightarrow \Sigma$. The Goldman bracket is defined in terms of counting intersection points of two loops. Turaev \cite{TuraevSkein} defined a Lie cobracket on $\fg(\Sigma)$ in terms of counting self-intersection points, so that $\fg(\Sigma)$ becomes a Lie bialgebra. In fact, the Lie cobracket takes values in $\wedge^2\overline{\fg}(\Sigma)$, where $\overline{\fg}(\Sigma)=\fg(\Sigma)/\Z$ is the quotient by the class of contractible loops.

The generalizations of these operations to higher-dimensional manifolds is given by the string topology operations \cite{ChasSullivan1,ChasSullivan2,Sullivan,GoreskyHingston}. Let $M$ be a closed oriented $d$-manifold and $LM=\Map(S^1, M)$ its free loop space. We will primarily be interested in the following two operations:
\begin{itemize}
    \item Loop product $\wedge\colon \H_\bullet(LM)\otimes \H_\bullet(LM)\rightarrow \H_{\bullet-d}(LM)$.
    \item Loop coproduct $\vee\colon \H_{\bullet+d-1}(LM)\rightarrow \H_\bullet(LM\times LM, M\times LM\cup LM\times M)$. For instance, if $\H_\bullet(LM, M)$ has no torsion, the target of the loop coproduct is $\H_\bullet(LM, M)\otimes \H_\bullet(LM, M)$.
\end{itemize}

We refer to \cite{NaefRiveraWahl} for an exposition of different approaches to string topology operations. Along with the BV operator $\Delta\colon \H_\bullet(LM)\rightarrow \H_{\bullet+1}(LM)$ given by the loop rotation, the loop product and coproduct give rise to the string bracket and cobracket on $\H^{S^1}_\bullet(LM)$. For $M=\Sigma$ a surface the string bracket and string cobracket reduce to the Goldman bracket and Turaev cobracket on $\fg(\Sigma)\cong \H^{S^1}_0(L\Sigma)$ and $\overline{\fg}(\Sigma)\cong \H^{S^1}_0(L\Sigma, \Sigma)$.

\subsection*{Homotopy invariance of string topology}

The original construction of the string topology operations involved a subtle infinite-dimensional intersection theory on the loop space $LM$. In particular, these constructions require $M$ to be a smooth manifold. It was realized early on that both the BV operator and the loop product are homotopy invariant \cite{CohenKleinSullivan,Crabb,GruherSalvatore,FelixThomas}. Namely, if $f\colon M\rightarrow N$ is an orientation-preserving homotopy equivalence of closed oriented $d$-manifolds, then the diagram
\[
\xymatrix{
\H_\bullet(LM)\otimes \H_\bullet(LM)\ar^-{\wedge_M}[r] \ar^{Lf\otimes Lf}[d] & \H_{\bullet-d}(LM) \ar^{Lf}[d] \\
\H_\bullet(LN)\otimes \H_\bullet(LN)\ar^-{\wedge_N}[r] & \H_{\bullet-d}(LN)
}
\]
commutes.

However, it was also conjectured by Sullivan, see \cite[Postscript]{SullivanNotes} and \cite{CohenKleinSullivan}, that the full range of string topology operations is not homotopy invariant. The previous results in the direction of this question are as follows. If $M$ is simply-connected, then over the rationals/reals the coproduct was shown to be homotopy invariant in \cite{NaefWillwacher, RiveraWang}. It was also shown to be invariant under homotopy equivalences satisfying certain regularity conditions in \cite{HingstonWahl2}.

The case of non-simply-connected manifolds is more subtle: the first author showed \cite{Naef} that in the case of a homotopy equivalence $f\colon L(7, 1)\rightarrow L(7, 2)$ of 3-dimensional lens spaces, the loop coproduct is not preserved.

The above homotopy equivalence of 3-dimensional lens spaces is the simplest example of a homotopy equivalence $f\colon N\rightarrow M$ with a nonvanishing \emph{Whitehead torsion} $\tau(f)$, an invariant lying in the Whitehead group $\Wh(\pi_1(M)) = K_1(\Z[\pi_1(M)])/(\pm\pi_1(M))$. The Dennis trace defines a map from $K$-theory to Hochschild homology, which in this case produces a map $\tr\colon \Wh(\pi_1(M))\rightarrow \H_1(LM, M)$. Consider the trace of the Whitehead torsion $\tr(\tau(f))\in\H_1(LM, M)$. Denote its image under the antidiagonal map $LM\rightarrow LM\times LM$ given by $\gamma\mapsto (\gamma^{-1}, \gamma)$ by $\overline{\nu}' \otimes \nu'' \in \oplus_{i+j=1} \left(\H_i(LM, M) \otimes \H_j(LM)\right)$ and similarly for $\nu'\otimes \overline{\nu}''\in \oplus_{i+j=1} \left(\H_i(LM) \otimes \H_j(LM, M)\right)$. Our first main result is a complete explanation of the non-homotopy invariance of the loop coproduct. 

\begin{maintheorem}[{\Cref{cor:stringcoproducthomotopyinvariance}}] \label{maintheorem:stringcoproducterror}
Let $f\colon M\rightarrow N$ be an orientation-preserving homotopy equivalence of closed oriented $d$-manifolds. For every $\alpha\in \H_{n+d-1}(LM, M)$ we have
\[
\vee_M(f(\alpha)) - f(\vee_N(\alpha)) = (-1)^{n+1}\overline{\nu}'\otimes (\nu''\wedge_N f(\alpha)) - (f(\alpha)\wedge_N \nu')\otimes \overline{\nu}''.
\]
\end{maintheorem}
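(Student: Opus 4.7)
The plan is to deduce Main Theorem A (stated here as a corollary) from a more general chain-level transformation formula, organised around the 2-dimensional TQFT picture of the abstract in which the loop coproduct arises as a secondary operation on the codimension-one ``figure-eight'' stratum in a compactified moduli of genus-zero surfaces with boundary. From this viewpoint the obstruction to homotopy invariance is localised at a single stratum and is controlled entirely by the Poincaré duality data of the manifold. Since the loop product is already known to be homotopy invariant by \cite{CohenKleinSullivan, Crabb, GruherSalvatore, FelixThomas}, the entire defect on the right-hand side must originate from this ``diagonal'' data.

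First I would recast both $\vee_M$ and $\vee_N$ in a common Hochschild-theoretic form. At the chain level --- using either the Goresky--Hingston cochain construction or an equivalent coHochschild model built from $\C_\bullet(\Omega M)$ --- the loop coproduct factors through a cap product with a ``thin diagonal'' class of the manifold, followed by an application of the loop product $\wedge$. Applying this model on both sides of the formula reduces the statement to a comparison between the diagonal class of $M$ and the pullback via $f$ of the diagonal class of $N$, in the relevant relative chain complex on $LM \times LM$.

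Next I would identify this discrepancy of diagonal classes with the Dennis trace of the Whitehead torsion. The failure of the chain-level Poincaré duality quasi-isomorphism to be a \emph{simple} equivalence is measured exactly by $\tau(f) \in \Wh(\pi_1(M))$, and the Dennis trace $\tr\colon \Wh(\pi_1(M)) \to \HH_1(\Z[\pi_1(M)]) \to \H_1(LM, M)$ promotes this into a class on the free loop space. Running this class through the antidiagonal $LM \to LM \times LM$, $\gamma \mapsto (\gamma^{-1}, \gamma)$, and combining with the already homotopy-invariant loop product $\wedge_N$ yields precisely the pairs $\overline{\nu}' \otimes (\nu'' \wedge_N f(\alpha))$ and $(f(\alpha) \wedge_N \nu') \otimes \overline{\nu}''$. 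The sign $(-1)^{n+1}$ arises from the Koszul sign rule when moving the class $\alpha$ of degree $n+d-1$ past the degree-one trace class inside the coproduct.

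The main obstacle is the second step: establishing at the chain level, and with all signs correctly tracked, that $\tr(\tau(f))$ is the precise difference of the two diagonal classes. This requires a controlled comparison of Poincaré duality quasi-isomorphisms through the cyclic bar construction and is the algebraic heart of the argument; once it is in place, the transformation formula falls out by combining the homotopy invariance of $\wedge_N$ with the antidiagonal decomposition of the correction term.
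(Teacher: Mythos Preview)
Your high-level architecture matches the paper's: the loop product is homotopy invariant, so the defect in $\vee$ is isolated in the Poincar\'e-duality data along the diagonal, and the correction should be governed by the Whitehead torsion via the Dennis trace. Where you diverge from the paper --- and where the real gap lies --- is in the object you propose to compare and in the absence of any mechanism for the step you yourself flag as the main obstacle.

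First, a conceptual sharpening. The ``thin diagonal class'' is not the right invariant to track: the class of the diagonal in $\H_\bullet(LM\times LM)$ (equivalently the $\HH$ Euler characteristic $[\Z_M]\in\C_0(LM)$ pushed along $L\Delta$) \emph{is} homotopy invariant. What is not invariant is a \emph{lift} of this class along the inclusion of constant loops $\C_\bullet(M)\hookrightarrow\C_\bullet(LM)$, i.e.\ a nullhomotopy of $[\Z_M]$ in $\C_\bullet(LM,M)$. The paper isolates this as the extra datum needed to define the \emph{relative} intersection product (\cref{def:relint}) and hence the loop coproduct; two such lifts differ by an element of $\H_1(LM,M)$, and \cref{prop:relintersectionhomotopy} computes exactly how the relative intersection product changes under a change of lift. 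Your formulation in terms of a ``discrepancy of diagonal classes'' conflates the primary class with this secondary lift, and your description of $\vee$ as ``cap with the diagonal, then apply $\wedge$'' is only correct in the TQFT sense of the Frobenius homotopy $\bbDelta(a)\sim (a\otimes 1)\wedge\bbDelta(1)$, which already presupposes the lift.

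Second, the actual content. You write that the main obstacle is showing $\tr(\tau(f))$ equals the difference of the two pieces of diagonal data, and you propose to do this ``through the cyclic bar construction''. This is precisely Theorem~B of the paper, and no amount of bar-complex bookkeeping will produce it: one side is geometric (the Pontryagin--Thom collapse along the diagonal, packaged as the pushout square $\UT M\to\FM_2(M)\to M\times M\leftarrow M$) and the other is combinatorial (the Whitehead lift coming from a triangulation via additivity of $K$-theory). The paper bridges them by showing that the Pontryagin--Thom lift diagram is compatible with an open-star cover by a triangulation (\cref{thm:florian}), reducing to a poset-indexed category where the lift is unique (\cref{ex:posetalpha}). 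Without a concrete proposal for this comparison, your outline stops exactly where the work begins.
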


\begin{remarknonum}
    The same result also holds for any other homology theory over which $M$ and $N$ are oriented and $f$ is orientation-preserving. On the level of spectra the loop coproduct is a map
    \[
    \Sigma(LM^{-\T M}) \to \Sigma^\infty LM/M \wedge \Sigma^\infty  LM/M,
    \]
    where $LM^{-\T M}$ is the corresponding Thom spectrum. The same transformation formula holds for this operation as well, now for arbitrary (i.e. not necessarily orientation-preserving) homotopy equivalences.
\end{remarknonum}

Homotopy equivalences with vanishing Whitehead torsion are known as \emph{simple homotopy equivalences}, so we obtain that the loop coproduct is invariant under orientation-preserving simple homotopy equivalences. This is the first result to our knowledge which relates string topology operations to $K$-theoretic invariants such as the Whitehead torsion.

\begin{remarknonum}
While preparing this manuscript, we were informed that Lea Kenigsberg and Noah Porcelli have obtained an independent proof of a variant of Theorem A. The details will appear in their upcoming preprint \cite{KenigsbergPorcelli}.
\end{remarknonum}

To explain the appearance of the Whitehead torsion, let us recall from \cite{Crabb} that the loop product can be viewed as a fiberwise version of the intersection product $\H_\bullet(M)\otimes \H_\bullet(M)\rightarrow \H_{\bullet-d}(M)$ on the manifold. Namely, given a space $E\rightarrow M\times M$ the fiberwise version of the intersection product is $\H_\bullet(E)\rightarrow \H_{\bullet-d}(E\times_{M\times M} M)$. Setting $E = LM\times LM\xrightarrow{\ev\times \ev} M\times M$, where $\ev\colon LM\rightarrow M$ is the evaluation of a loop at the basepoint, by \cref{prop:stringproductintersection} we get the loop product as the composite
\[\H_\bullet(LM)\otimes \H_\bullet(LM)\longrightarrow \H_{\bullet-d}(LM\times_M LM)\longrightarrow \H_{\bullet-d}(LM),\]
where the last map is given by the composition of loops. The first map is a fiberwise intersection product and may be constructed from a pairing
\[\epsilon_p\colon \Z_M\boxtimes \Z_M\rightarrow \Delta_\sharp \Z_M[d]\]
in the $\infty$-category of local systems of chain complexes on $M\times M$ which is invariant under orientation-preserving homotopy equivalences.

To present a similar point of view on the loop coproduct, we define the \emph{relative intersection product} in \cref{sect:relativeintersection} following \cite[Section 4.1]{NaefRiveraWahl}. To define it we consider a commutative diagram
\[
\xymatrix{
F \ar[r] \ar[d] & E \ar[d] \\
M \ar[r] & M\times M
}
\]
The \emph{relative intersection product}, see \cref{def:relint}, lifts the intersection product to relative homology:
\[\H_\bullet(E, F)\longrightarrow \H_{\bullet-d}(E\times_{M\times M} M, F).\]
We have a commutative diagram
\[
\xymatrix{
LM\sqcup LM \ar^-{J_0\sqcup J_1}[r] \ar[d] & LM \ar^{(\ev_0, \ev_{1/2})}[d] \\
M \ar^-{\Delta}[r] & M\times M
}
\]
where $\ev_t\colon LM\rightarrow M$ are the evaluation maps at time $t\in [0, 1]$ and $J_i\colon LM\rightarrow LM$ are the reparametrization maps which make the loop constant either for times $[0, 1/2]$ or $[1/2, 1]$. Then the loop coproduct $\H_{\bullet+d-1}(LM)\rightarrow \H_\bullet(LM\times LM, M\times LM\cup LM\times M)$ is defined in terms of the relative intersection product with respect to the above commutative diagram, see \cref{def:stringcoproduct}.

The relative intersection product consists of two essential ingredients: the pairing $\epsilon_p$ as well as a trivialization of the image of a certain canonical element, the Hochschild homology Euler characteristic, $\chi_{\HH}(M)\in\C_0(LM)$ in $\C_\bullet(LM, M)$. In other words, the second piece of data consists of a lift of $\chi_{\HH}(M)$ along the inclusion of the constant loops $\C_\bullet(M)\rightarrow \C_\bullet(LM)$.

The Hochschild homology Euler characteristic $\chi_{\HH}(M)\in\C_0(LM)$ is well-defined for any finitely dominated space $M$ and can be understood as the class of the constant local system $\Z_M\in\LocSys(M)$ (which is a compact object since $M$ is finitely dominated) in Hochschild homology $\HH_\bullet(\LocSys(M)^\omega)\cong \dim(\LocSys(M))$. Moreover, the image of $\chi_{\HH}(M)$ in $\C_0(\pt)=\Z$ under the projection $LM\rightarrow \pt$ gives the Euler characteristic of $M$.

Let us assume for simplicity that $M$ is connected. Then we may define a version of $\chi_{\HH}(M)$ over the sphere spectrum (see \cref{sect:assembly} for details):
\begin{itemize}
    \item There is an element
    \[\chi_{\THH}(M)\in\Omega^\infty\Sigma^\infty_+ LM\cong \THH(\Sigma^\infty_+\Omega M),\]
    \emph{the $\THH$ Euler characteristic}. It coincides with the free loop transfer along $M\rightarrow \pt$ in the sense of \cite{LindMalkiewich}.
    \item There is an element
    \[\chi_{A}(M)\in A(M) = K(\Sigma^\infty_+ \Omega M)\]
    in Waldhausen's $A$-theory, \emph{the $A$-theoretic Euler characteristic}. It coincides with the homotopy-invariant Euler characteristic from \cite[Section 6]{DwyerWeissWilliams}. Under the Dennis trace map
    \[\tr\colon A(M)\rightarrow \Omega^\infty\Sigma^\infty_+ LM\]
    we have $\chi_A(M)\mapsto \chi_{\THH}(M)$.
\end{itemize}

The inclusion of constant loops $\C_\bullet(M)\rightarrow \C_\bullet(LM)$ is an instance of the assembly map \cite{WeissWilliams}: it is a universal colimit-preserving approximation to the functor $M\mapsto \C_\bullet(LM)$. There are similar assembly maps
\[\alpha\colon \Omega^\infty_+ M\otimes \bA(\pt)\longrightarrow \bA(M),\qquad \alpha\colon \Sigma^\infty_+ M\longrightarrow \Sigma^\infty_+ LM\]
in $A$-theory and $\THH$ related by the Dennis trace map.

The study of lifts of the $A$-theoretic Euler characteristic $\chi_A(M)$ along the assembly map is at the heart of simple homotopy theory. Namely, the fiber of the assembly map at $\chi_A(M)$ is the space of the structures of a simple homotopy type on a given finitely dominated homotopy type. An explicit description of the fiber of the assembly map in terms of the space of PL h-cobordisms is given by the parametrized h-cobordism theorem \cite{JahrenRognesWaldhausen}. We use the following two basic facts from simple homotopy theory:
\begin{enumerate}
    \item If $M$ is a finite polyhedron (equivalently, a finite CW complex), there is a lift $\lambda_{\Wh}(M)$ of $\chi_A(M)$ along the assembly map, see \cref{def:Whiteheadlift}.
    \item If $f\colon M_1\rightarrow M_2$ is a homotopy equivalence of finite polyhedra, the difference of the lifts defines an element $\tau(f)\in \Wh(\pi_1(M_1))$ in the Whitehead group given by the Whitehead torsion, see \cref{prop:whiteheadtorsion}.
\end{enumerate}

The geometric input to the construction of the relative intersection product is a model of the Thom collapse for the diagonal $\epsilon_p$ which fixes the diagonal up to homotopy; namely, we require the composite $\Delta_\sharp \Z_M\rightarrow \Z_M\boxtimes \Z_M\xrightarrow{\epsilon_p} \Delta_\sharp \Z_M[d]$ to be the pushforward of a map $\Z_M\rightarrow \Z_M[d]$ (the Euler class in $\C^d(M)$) along the diagonal. Geometrically, for $M$ a closed manifold we use a model of the Thom collapse given by the commutative diagram
\begin{equation}
\label{diag:poincdiagonal}    
\xymatrix{
\UT M \ar[r] \ar[d] & \FM_2(M) \ar[d] \\
M \ar[r] & M\times M,
}
\end{equation}
where $\FM_2(M)$ is the Fulton--MacPherson compactification of the configuration space of two points $M\times M\setminus \Delta$ and $\UT M\rightarrow \FM_2(M)$ is the inclusion of its boundary, which is the unit tangent bundle of $M$. This diagram is a pushout of spaces; so, passing to relative suspension spectra, we obtain a commutative triangle
\[
\xymatrix{
\Delta_\sharp \bS_M\ar[r] \ar_{\Delta_\sharp e(M)}[d] & \bS_{M\times M} \ar^{\epsilon_p}[dl] \\
\Delta_\sharp \bS^{\T M} &
}
\]
of parametrized spectra over $M\times M$. We explain in \cref{sect:PTlift} that this diagram gives rise (and is in fact equivalent) to a lift $\lambda_{\PT}(M)$ of the $\THH$ Euler characteristic $\chi_{\THH}(M)$ along the assembly map.

So, at this point we have two lifts of $\chi_{\THH}(M)$: one, $\lambda_{\PT}(M)$, constructed using the geometric model of the Pontryagin--Thom collapse along the diagonal and another one, $\tr(\lambda_{\Wh}(M))$, constructed using a triangulation of $M$. The proof of \cref{maintheorem:stringcoproducterror} then reduces to the following statement.

\begin{maintheorem}[{\Cref{thm:florian}}] \label{maintheorem:florian}
Let $M$ be a closed $d$-manifold. Then the lifts $\lambda_{\PT}(M)$ and $\tr(\lambda_{\Wh}(M))$ of the $\THH$ Euler characteristic $\chi_{\THH}(M)$ are equivalent.
\end{maintheorem}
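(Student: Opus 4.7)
The plan is to reduce the comparison to a local, cellular calculation. Both $\lambda_{\PT}(M)$ and $\tr(\lambda_{\Wh}(M))$ sit in the space of lifts of $\chi_{\THH}(M)$ along the THH assembly map $\Sigma^\infty_+ M \to \Sigma^\infty_+ LM$, so their difference lies in a homotopy group of the (pointed) fiber of this assembly. I would show this difference vanishes by expressing both lifts additively in terms of a common cellular decomposition of $M$ and checking agreement on the basic building blocks.

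Concretely, I would pick a smooth handle decomposition of $M$ (equivalently, a smooth triangulation). The Whitehead lift $\lambda_{\Wh}(M)$ is, essentially by construction of the $A$-theoretic Euler characteristic, additive over the handle filtration: each handle of index $k$ contributes $(-1)^k$ times a canonical class in $A(M)$ associated to the core cell, and its Dennis trace is an explicit class in $\THH(\Sigma^\infty_+\Omega M)\simeq \Sigma^\infty_+LM$. In parallel, I would express $\lambda_{\PT}(M)$ as a sum of contributions, one per handle, using that the tangent microbundle of $M$ is canonically trivial on each handle and that the Fulton--MacPherson compactification $\FM_2(M)$ restricts well to handle-wise pieces of $M\times M$. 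A handle-by-handle comparison then reduces to matching two canonical local classes on $\mathbb{R}^d$, which is an elementary calculation in a Euclidean model.

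The main obstacle is the additivity of $\lambda_{\PT}$ along the handle (or skeletal) filtration. Additivity of $\lambda_{\Wh}$ is built into the definition of $\chi_A$ via the standard behavior of Euler characteristics under cofiber sequences in $A$-theory, but $\lambda_{\PT}$ is constructed globally from the diagram~\eqref{diag:poincdiagonal}, and splitting it requires either a delicate Mayer--Vietoris argument on $\FM_2(M)$ with careful control of regular neighborhoods of handle boundaries, or, more conceptually, a reinterpretation of $\lambda_{\PT}$ as the lift induced by a parametrized duality datum on the tangent microbundle so that additivity follows from additivity of traces in the symmetric monoidal $\infty$-category of parametrized spectra over $M$. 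Once such an additivity is available, the cell-wise verification reduces to comparing the Euler class of a trivialized disk bundle over a cell with the corresponding cellular generator, which is a direct calculation; naturality under orientation-preserving diffeomorphisms of the local model then guarantees that the cellular gluing terms cancel correctly.
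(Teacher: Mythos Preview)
Your overall strategy---localize via a triangulation and compare the two lifts cell by cell---is in the right spirit, and you correctly identify the main difficulty as the ``additivity'' of $\lambda_{\PT}$. However, the paper's argument sidesteps the additivity and local-computation issues entirely by a categorical maneuver that you are missing.

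Rather than decomposing each lift as a sum over cells and matching summands, the paper introduces a notion of \emph{lift diagram} for an arbitrary small $\infty$-category $I$ admitting duality (not just for spaces), and shows that the set of such diagrams is naturally isomorphic to a set $\Lift(\beta_I(\chi_{\THH}(I)),\alpha_I)$ (\cref{thm:liftdiagrams}). For $I=X$ a space this recovers lifts along the $\THH$ assembly map; for $I=T^{\op}$ the opposite face poset of a triangulation, the map $\alpha_{T^{\op}}$ is an \emph{equivalence} (because for posets the unit $\id\to\Delta^*\Delta_\sharp$ is an isomorphism, \cref{ex:posetalpha}), so the lift set over $T^{\op}$ is a \emph{singleton}. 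The localization functor $t\colon T^{\op}\to X$ induces a map of lift sets, and both $\tr(\lambda_{\Wh}(M))$ and $\lambda_{\PT}(M)$ are shown to lie in its image---hence they coincide, with no local calculation needed.

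For $\tr(\lambda_{\Wh})$ this factorization is essentially the definition. For $\lambda_{\PT}$, the point is that the Fulton--MacPherson pushout square \eqref{eq:FM2diagram} \emph{restricts to a pushout over each product of open stars} $M_\tau\times M_\mu$, yielding a lift diagram in $\Sp^{T^{\op}\times T^{\op}}$ whose image under $t_\sharp\boxtimes t_\sharp$ is the original one (\cref{lm:constructiblesuspension}, \cref{lm:constructibleevaluation}). This replaces your Mayer--Vietoris argument on $\FM_2(M)$: rather than proving additivity, one simply observes that the defining diagram is \emph{constructible} with respect to the triangulation. Your proposed handle-by-handle matching, gluing cancellations, and Euclidean-model computation are all absorbed into the single observation that the poset lift set has one element.
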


To prove \cref{maintheorem:florian}, we show that the lift $\lambda_{\PT}(M)$ is compatible with triangulations. As the assembly map is an equivalence when $M$ is contractible and $\lambda_{\Wh}(M)$ is constructed by covering the manifold by open stars of simplices of the triangulation, this gives the result.

\subsection*{Connection to embedding calculus}

\Cref{maintheorem:florian} has the following implication for embedding calculus \cite{WeissEmbeddings}. Let $\Man_d$ be the $\infty$-category of smooth manifolds and embeddings. Let $\Disc_d\subset \Man_d$ be the full subcategory consisting of finite disjoint unions of $\bR^d$. Following the perspective of \cite{deBritoWeiss, KrannichKupers} we understand embedding calculus as the restricted Yoneda embedding $\Man_d\rightarrow \PSh(\Disc_d)=\Fun(\Disc_d^{\op}, \cS)$ given by sending a manifold $M$ to $E_M := \Emb(-, M)$, the collection of spaces of embeddings of Euclidean spaces into $M$. For instance, for a $\Disc_d$-algebra $A$ its factorization homology over $M$ as in \cite{AyalaFrancisFactorization} is completely determined by $E_M$:
\[\int_M A \cong E_M\otimes_{\Disc_d} A.\]

On the level of homotopy types, our diagram \eqref{diag:poincdiagonal} can be written in terms of the $\Disc_d$-presheaf $E_M$ as follows:
\[
\xymatrix{
E_M(\bR^d) \times_{O(d)} E_{\bR^d}(\bR^d \sqcup \bR^d)/O(d)^{\times 2} \ar[r] \ar[d] & E_M(\bR^d \sqcup \bR^d) /O(d)^{\times 2} \ar[d] \\
M \ar[r] & M\times M
}
\]
so that the lift $\lambda_{\PT}(M)$ is an invariant of $E_M$. We thus obtain the following consequence.

\begin{maincorollary}
Let $f \colon N \to M$ be a homotopy equivalence between closed $d$-manifolds with Whitehead torsion $\tau(f)\in\Wh(\pi_1(M))$. If $\tr(\tau(f)) \neq 0\in \H_1(LM, M)$, then $f$ cannot be extended to an equivalence $E_N\rightarrow E_M$ of $\Disc_d$-presheaves.
\end{maincorollary}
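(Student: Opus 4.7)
The plan is to argue by contradiction, exploiting that the lift $\lambda_{\PT}$ is already an invariant of the $\Disc_d$-presheaf $E_M$. Suppose $f \colon N \to M$ extends to an equivalence $\phi \colon E_N \xrightarrow{\sim} E_M$ in $\PSh(\Disc_d)$. As highlighted in the discussion preceding the statement, the Pontryagin--Thom square \eqref{diag:poincdiagonal} can be rewritten entirely in terms of the spaces $\Emb(\bR^d, M)$ and $\Emb(\bR^d \sqcup \bR^d, M)$ together with their $O(d)^{\times 2}$-actions. Consequently the construction of the lift $\lambda_{\PT}(M)$ of $\chi_{\THH}(M)$ along the assembly map $\Sigma^\infty_+ M \to \Sigma^\infty_+ LM$ is functorial in $E_M$, so that $\phi$ induces a canonical identification $f_* \lambda_{\PT}(N) \simeq \lambda_{\PT}(M)$ of lifts covering the homotopy equivalence $f$.

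Next I would invoke \cref{maintheorem:florian} applied to both $N$ and $M$, chaining it with the previous step to produce an equivalence
\[
\tr(\lambda_{\Wh}(M)) \;\simeq\; \lambda_{\PT}(M) \;\simeq\; f_* \lambda_{\PT}(N) \;\simeq\; \tr(f_* \lambda_{\Wh}(N))
\]
of lifts of $\chi_{\THH}(M)$ along the $\THH$-assembly map. By the very definition of Whitehead torsion recalled in the discussion of $\lambda_{\Wh}$ above, the difference between the $A$-theoretic lifts $\lambda_{\Wh}(M)$ and $f_* \lambda_{\Wh}(N)$ in the fiber of the $A$-theoretic assembly map is $\tau(f) \in \Wh(\pi_1(M))$. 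The Dennis trace maps this fiber into the fiber of the $\THH$-assembly map, and on $\pi_1$ it is the map $\tr \colon \Wh(\pi_1(M)) \to \H_1(LM, M)$ appearing in the statement. The displayed equivalence therefore forces $\tr(\tau(f)) = 0 \in \H_1(LM, M)$, contradicting the hypothesis.

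The main technical point is the functoriality claim in the first paragraph: one must verify carefully that the rewriting of \eqref{diag:poincdiagonal} in terms of embedding presheaves is genuinely natural in $\PSh(\Disc_d)$, and that it carries with it the tangent bundle data needed to form the Thom spectrum $\bS^{\T M}$ and hence the full parametrized-spectrum structure underlying $\lambda_{\PT}(M)$. In particular, the factors of $O(d)^{\times 2}$ must be tracked so that an equivalence of $\Disc_d$-presheaves is automatically compatible with the tangent microbundle. Once this is in place, the rest of the argument is a formal comparison of lifts supplied by \cref{maintheorem:florian} and the definition of $\tau(f)$ as the difference of Whitehead lifts.
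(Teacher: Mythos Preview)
Your proposal is correct and follows essentially the same reasoning as the paper. The paper does not give a formal proof of this corollary; it simply records that the diagram \eqref{diag:poincdiagonal}, and hence $\lambda_{\PT}(M)$, can be expressed purely in terms of the $\Disc_d$-presheaf $E_M$, so that an equivalence $E_N\simeq E_M$ forces $f_*\lambda_{\PT}(N)\simeq\lambda_{\PT}(M)$, and then \cref{maintheorem:florian} together with the definition of $\tau(f)$ as the difference of Whitehead lifts yields $\tr(\tau(f))=0$. Your caveat about checking the functoriality of the rewriting in terms of $E_M$ is well-taken, and the paper likewise leaves this step at the level of an assertion.
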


As explained above, $\tr(\tau(f))$ only depends on the corresponding diagrams \eqref{diag:poincdiagonal}. Diagrams of that shape (satisfying a certain non-degeneracy condition) are also studied in \cite{klein2008poincare} (see also \cite{klein2022poincare} for a comparison to our setting) under the name of \emph{Poincar\'e diagonals}. It is furthermore suggested there that a $C_2$-refinement of $\tr(\lambda_{\PT}(M))$ is a complete invariant for $d$ sufficiently large.

\subsection*{String topology TQFT}

Cohen and Godin \cite{CohenGodin} described a 2-dimensional TQFT $Z$ whose state space $Z(S^1)$ is isomorphic to $\H_\bullet(LM)$ and whose pair-of-pants product is given by the loop product. There are two subtleties related to the definition of this TQFT. First, the TQFT suffers an orientation anomaly: the operation associated to a surface $\Sigma$ shifts the homological degree by $\chi(\Sigma)\dim(M)$. We instead consider $Z$ as a framed 2d TQFT, so that, for instance, the boundary circles are equipped with 2-framings; we denote by $S^1_n$ the circle equipped with a 2-framing with $n$ twists. The second issue is that the operation associated to a disk with an incoming circle is not defined; in other words, the 2d TQFT is a \emph{positive boundary} one (in the terminology of \cite{CohenGodin}) or a \emph{non-compact} one (in the terminology of \cite{LurieCobordism}). The construction of the (closed) 2-dimensional TQFT of Cohen and Godin was extended to an open-closed TQFT (in the sense of Moore and Segal \cite[Chapter 2]{Dbranes}) in the works \cite{SullivanOpenclosed,Harrelson,Ramirez} with branes corresponding to submanifolds of $M$.

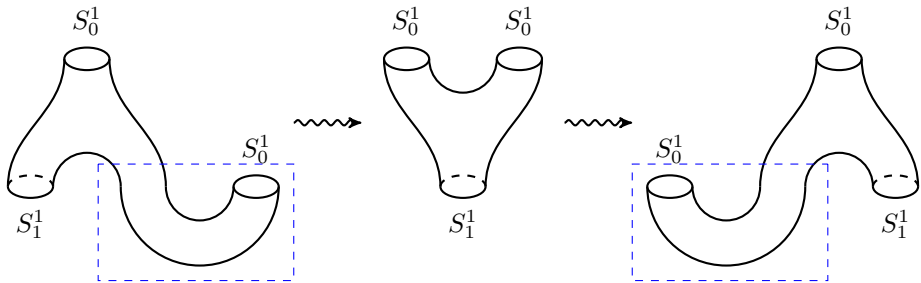
\begin{figure}[ht]
	\begin{tikzpicture}[thick]
		\draw (-5, 0) [partial ellipse = -180:0:0.3 and 0.15];
		\draw (-5, 0) [dashed, partial ellipse = 180:0:0.3 and 0.15];
		\draw (-4.25, 1.7) ellipse (0.3 and 0.15);
		\draw (-2, 0) ellipse (0.3 and 0.15);
		\draw (-5.3, 0) to[out=90, in=-90] (-4.55, 1.7);
		\draw (-4.7, 0) arc (180:0:0.45);
		\draw (-3.8, 0) arc (-180:0:1.05);
		\draw (-2.3, 0) arc (0:-180:0.45);
		\draw (-3.2, 0) to[out=90, in=-90] (-3.95, 1.7);
		\draw[thin, blue, dashed] (-4.1, 0.3) rectangle (-1.5, -1.25);
		\draw (-5, -0.5) node {$S^1_1$};
		\draw (-4.25, 2.2) node {$S^1_0$};
		\draw (-2, 0.5) node {$S^1_0$};
		
		\draw[->, decorate, decoration={snake,amplitude=1pt, segment length=5pt}, -stealth'] (-1.5, 0.85) -- (-0.6, 0.85);
		\draw[->, decorate, decoration={snake,amplitude=1pt, segment length=5pt}, -stealth'] (2.1, 0.85) -- (3, 0.85);
		
		\draw (0.75, 0) [partial ellipse = -180:0:0.3 and 0.15];
		\draw (0.75, 0) [dashed, partial ellipse = 180:0:0.3 and 0.15];
		\draw (0, 1.7) ellipse (0.3 and 0.15);
		\draw (1.5, 1.7) ellipse (0.3 and 0.15);
		\draw (0.45, 0) to[out=90, in=-90] (-0.3, 1.7);
		\draw (1.05, 0) to[out=90, in=-90] (1.8, 1.7);
		\draw (0.3, 1.7) arc (-180:0:0.45);
		\draw (0.75, -0.5) node {$S^1_1$};
		\draw (0, 2.2) node {$S^1_0$};
		\draw (1.5, 2.2) node {$S^1_0$};
		
		\draw (3.5, 0) ellipse (0.3 and 0.15);
		\draw (5.75, 1.7) ellipse (0.3 and 0.15);
		\draw (6.5, 0) [partial ellipse = -180:0:0.3 and 0.15];
		\draw (6.5, 0) [dashed, partial ellipse = 180:0:0.3 and 0.15];
		\draw (6.8, 0) to[out=90, in=-90] (6.05, 1.7);
		\draw (6.2, 0) arc (0:180:0.45);
		\draw (5.3, 0) arc (0:-180:1.05);
		\draw (3.8, 0) arc (-180:0:0.45);
		\draw (4.7, 0) to[out=90, in=-90] (5.45, 1.7);
		\draw[thin, blue, dashed] (5.6, 0.3) rectangle (3, -1.25);
		\draw (3.5, 0.5) node {$S^1_0$};
		\draw (5.75, 2.2) node {$S^1_0$};
		\draw (6.5, -0.5) node {$S^1_1$};
	\end{tikzpicture}
	\caption{A homotopy $\vee'$.}
	\label{fig:TFThomotopyIntro}
\end{figure}

It was observed by Tamanoi \cite{Tamanoi} that while the pair-of-pants product in the Cohen--Godin TQFT is interesting (it is the loop product), the pair-of-pants coproduct is not interesting: it can be expressed purely in terms of the Euler characteristic of $M$. The proof of this fact can be given by using a Frobenius-type relation illustrated in \cref{fig:TFThomotopyIntro} as well as the computation of the value $\bbDelta(1)\in Z(S^1)\otimes Z(S^1)\cong \H_\bullet(LM)\otimes \H_\bullet(LM)$ of $Z$ on the cylinder viewed as a cobordism with an empty incoming boundary. The key observation is that the element $\bbDelta(1)\in\H_0(LM)\otimes \H_0(LM)$ is the image of the homological Euler class $e(M)\in\H_0(M)$ under
\[\H_0(M)\xrightarrow{\Delta}\H_0(M)\otimes \H_0(M)\longrightarrow \H_0(LM)\otimes \H_0(LM).\]
In turn, if $M$ is connected with a basepoint $x$, the homological Euler class is equal to $\chi(M)[x]$.

In \cite{LurieCobordism} it was suggested how to extend the string topology TQFT to a fully extended (non-compact) framed 2d TQFT in the fully homotopical context. Namely, by the cobordism hypothesis such a TQFT $Z$ is determined by a smooth dg category $Z(\pt)$ which one can take to be $\LocSys(M)$. In a sense, it corresponds to an open-closed TQFT with a maximal set of branes; the branes corresponding to submanifolds $i\colon N\hookrightarrow M$ in this picture correspond to the pushforward local systems $i_\sharp \Z_N$. We show in \cref{thm:TFTstringproduct} that the pair-of-pants product in this TQFT coincides with the loop product; so, this 2d TQFT $Z$ is indeed a homotopical and fully extended lift of the Cohen--Godin TQFT.

The homotopy of cobordisms shown in \cref{fig:TFThomotopyIntro}, which realizes the proof of the triviality of the pair-of-pants coproduct $Z(S^1_1)\rightarrow Z(S^1_0)\otimes Z(S^1_0)$, gives rise to a secondary operation on $Z(S^1_1)$. Namely, given a trivialization of $\bbDelta(1)\in\C_0(LM\times LM)$, which is marked in \cref{fig:TFThomotopyIntro} in blue, we obtain a secondary coproduct
\[\vee\colon Z(S^1_1)\longrightarrow Z(S^1_0)\otimes Z(S^1_0)[-1].\]
For a general closed oriented manifold $M$, the lift $\lambda_{\PT}(M)$ provides a trivialization of the image of $\bbDelta(1)$ in $\C_0(LM\times LM, M\times M)$. Thus, using the lift $\lambda_{\PT}(M)$ we obtain the coproduct
\begin{equation}\label{eq:secondaryTQFTcoproduct}
\vee\colon \C_\bullet(LM)[-d]\longrightarrow \C_\bullet(LM, M)\otimes \C_\bullet(LM, M)[-1].
\end{equation}

\begin{maintheorem}[{\Cref{thm:TFTHWequivalence}}]
\label{maintheorem:tqftcoproduct}
The secondary TQFT coproduct \eqref{eq:secondaryTQFTcoproduct} coincides with the loop coproduct.
\end{maintheorem}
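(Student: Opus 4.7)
The plan is to unfold both sides of the asserted equality so that each manifestly depends on the same data---the pairing $\epsilon_p$ together with the lift $\lambda_{\PT}(M)$---and then identify the resulting composites. In essence, the two operations are built from the same ingredients but assembled through different formalisms, so the proof is a dictionary between the TQFT bordism calculus and the relative intersection product.

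First, I would unpack the secondary TQFT coproduct \eqref{eq:secondaryTQFTcoproduct}. By the cobordism hypothesis applied to $Z(\pt) = \LocSys(M)$, the value of $Z$ on the cylinder viewed as a bordism $\emptyset \to S^1_0 \sqcup S^1_0$ is the pushforward of the Hochschild homology Euler characteristic $\chi_{\HH}(M)$ along the diagonal $LM \to LM \times LM$; equivalently, under the comparison with $\chi_{\THH}(M)$, the lift $\lambda_{\PT}(M)$ supplies a trivialization of the image of $\bbDelta(1)$ in $\C_0(LM \times LM, M \times M)$. The Frobenius-type homotopy of \cref{fig:TFThomotopyIntro} then expresses the secondary coproduct as the composite of (i) cross product with $\bbDelta(1)$ along $\wedge$, (ii) the pair-of-pants product in $Z$, which by \cref{thm:TFTstringproduct} coincides with the loop product, and (iii) passage to relative chains using the chosen trivialization.

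Second, I would rewrite the loop coproduct in parallel form. By \cref{def:stringcoproduct}, it is the relative intersection product applied to the square involving the reparametrizations $J_0, J_1$, and by the constructions of \cref{sect:relativeintersection} and \cref{sect:PTlift}, this relative intersection product is itself built as the composite of $\epsilon_p$ with the $\lambda_{\PT}(M)$-trivialization of the image of $\chi_{\THH}(M)$ in relative chains, combined with the concatenation map $LM \times_M LM \to LM$. Both operations are therefore composites involving only the pair $(\epsilon_p, \lambda_{\PT}(M))$, assembled through either the bordism calculus or the geometric relative intersection product.

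The main obstacle will be matching these two assemblies at the level of the underlying bordism/geometric data. Concretely, one must show that the Frobenius-type $2$-morphism in \cref{fig:TFThomotopyIntro}, interpreted through the cobordism hypothesis, acts on $\C_\bullet(LM)$ by the reparametrization $\gamma \mapsto (J_0 \gamma, J_1 \gamma)$ paired against the diagonal, which is the geometric input to the relative intersection product. I would handle this by translating the TQFT bordism presentation into the Morita picture, where the pair-of-pants product is realized by the evaluation of $\LocSys(M)$ as a self-dual object in $\Catperf$, then identify this evaluation with the concatenation-of-loops map used to define $\wedge$ in \cref{prop:stringproductintersection}, and finally check that the specific cobordism homotopy in the figure realizes the standard homotopy between $\gamma$ and the concatenation of its two half-reparametrizations. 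Once this dictionary is installed, \cref{maintheorem:tqftcoproduct} follows because both sides then become literally the same composite of $\epsilon_p$, $\lambda_{\PT}(M)$, and the canonical algebraic structure on $\LocSys(M)$.
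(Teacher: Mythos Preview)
Your proposal correctly identifies that both operations are built from the same raw ingredients---the pairing $\epsilon_p$ and the lift $\lambda_{\PT}(M)$---but it stops well short of an actual proof. Phrases like ``I would translate \ldots\ then identify \ldots\ and finally check'' describe a plan, not an argument, and the final claim that ``both sides then become literally the same composite'' is precisely what is at issue and is not obvious.

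The genuine gap is this: the secondary coproduct is extracted from a \emph{homotopy} $\vee'$ between two composites, and that homotopy is itself a composite of several 2-cells (an interchange 2-cell and a cusp isomorphism for the adjunction $\coev\dashv\coev^\R$). The loop coproduct comes from the relative intersection square \eqref{eqn:relintersection}, which is assembled from $\epsilon_p$, $e(M)$, and the adjunction $\Delta_\sharp\dashv\Delta^*$ in a specific pattern. There is no direct reason these two assemblies should agree on the nose; one must exhibit an explicit 2-dimensional homotopy interpolating between them. The paper does this by introducing four auxiliary morphisms $\phi_1,\phi_2,\phi_3,\phi_4\colon \Z_M[-d]\to \ev_\sharp\Z_{LM}\otimes \C_\bullet(LM)$ together with a commuting square of homotopies $\phi_i(a)\sim (a\otimes 1)\wedge\phi_i(1)$, and then shows that one path around the square reproduces the TQFT homotopy $\vee'$ while the other reproduces the relative intersection construction. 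This is carried out via a pictorial ``marked spaces'' calculus that tracks the units/counits of $\Delta_\sharp\dashv\Delta^*$ and $p^*\dashv p_\sharp(\zeta_M\otimes-)$ explicitly; in particular, matching the $!$-triangle coming from the lift of the Euler characteristic on the two sides requires a nontrivial prism argument. None of this structure appears in your outline, and without it there is no mechanism for the comparison.
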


This theorem provides a conceptual explanation for the failure of homotopy invariance of the loop coproduct: while the TQFT $Z$ is homotopy invariant, the loop coproduct is a secondary operation which requires a trivialization of $\bbDelta(1)$ and this extra piece of information is not homotopy invariant. More precisely, any two trivializations differ by an element $\tau \in Z(S^1_0) \otimes Z(S^1_0)[-1]$. The corresponding secondary TQFT coproduct then differ by the sum of the two sides of \cref{fig:TFThomotopyIntro}, where $\bbDelta(1)$ is replaced by $\tau$. This is exactly the formula in \cref{maintheorem:stringcoproducterror}.

This theorem also suggests how to generalize the loop coproduct for smooth dg categories other than $\cC=\LocSys(M)$. Namely, any such smooth dg category defines a non-compact framed 2d TQFT $Z_\cC$. In this case $\bbDelta(1)$ is known as the Shklyarov copairing \cite{Shklyarov} (its analog for proper dg categories is the Mukai pairing \cite{Caldararu1}). Given a trivialization of $\bbDelta(1)$ (or its partial trivialization like in the case of manifolds with non-vanishing Euler characteristic) we obtain a secondary coproduct $\vee\colon Z(S^1_1)\rightarrow Z(S^1_0)\otimes Z(S^1_0)[-1]$. In \cref{sect:Shklyarovexamples} we describe several examples of smooth dg categories, where the Shklyarov copairing admits a trivialization (or a partial trivialization); in some of these cases we expect that the secondary coproduct coincides with the already known constructions of such coproducts:
\begin{itemize}
    \item For
    oriented manifolds $M$ together with a non-vanishing vector field $f$ (or equivalently a combinatorial Euler structure in the sense of \cite{TuraevEuler}) one can fully trivialize $\bbDelta(1)$ so that we obtain a loop operation
    \[
    \vee_f \colon \C_\bullet(LM)[-d] \to \C_\bullet(LM) \otimes \C_\bullet(LM)[-1].
    \]
    We expect this to coincide with the constructions in \cite{cieliebak2020poincar}, \cite[Theorem C]{latschev2024bv} and \cite[Section 3.4]{NaefWillwacher}. 
    We note that this coproduct depends on the non-vanishing vector field and is not preserved by general diffeomorphisms (not preserving the vector field). Instead, there is a transformation formula analogous to \cref{maintheorem:stringcoproducterror}. 
    \item If $M$ is a nondegenerate Liouville manifold of dimension $2n$, the wrapped Fukaya category $\cW(M)$ is smooth and there is an isomorphism $\HH_\bullet(\cW(M))\cong \SH^\bullet(M)$ to the symplectic cohomology \cite{Ganatra}. By \cite{Ritter} the Shklyarov copairing $\bbDelta(1)$ vanishes when projected to the positive-energy symplectic cohomology $\SH^\bullet_{>0}(M)$, so we obtain a secondary coproduct
    \[\SH^\bullet(M)\longrightarrow \SH^\bullet_{>0}(M)\otimes \SH^\bullet_{>0}(M)[n-1].\]
    We expect that it coincides with the loop coproduct defined geometrically \cite{AbbondandoloSchwarz,CieliebakHingstonOancea,CieliebakOancea,Kenigsberg}. We refer to \cref{sect:exsymplecticgeometry} for more on this.
    \item Let $A = k\langle x_1, \dots, x_n\rangle$ be the free algebra. It is smooth and by \cref{lm:finitecellresolution} the Shklyarov copairing $\bbDelta(1)$ vanishes when projected to the reduced Hochschild homology $\overline{\HH}_\bullet(A)$ defined as the quotient of $\HH_\bullet(A)$ by the span of the class $[A]$ of the free module, so we obtain a secondary coproduct
    \[\HH^1(A)\longrightarrow \overline{\HH}_0(A) \otimes \overline{\HH}_0(A^{\op}).\]
    We expect that it coincides with the divergence map from \cite[Proposition 3.1]{AKKN}. In the case of the path algebra of a doubled quiver, the composite of the divergence map and the Connes operator coincides with the Lie cobracket on the necklace Lie bialgebra \cite[Equation (2.8)]{Schedler}.
\end{itemize}

Related interpretations of the loop coproduct have appeared in the literature previously:
\begin{itemize}
    \item For a smooth dg category $\cC$ the work of Rivera, Takeda and Wang (see \cite[Section 3.2]{RiveraTakedaWang}) constructs an explicit homotopy $G$ between Hochschild complexes of $\cC$ which we expect to be a chain-level model of the homotopy $\vee'$ from \cref{fig:TFThomotopyIntro}. They conjecture that the secondary coproduct coincides with the loop coproduct for an appropriate choice of the lift of $\bbDelta(1)$ (which is denoted by $E$ in their paper). \Cref{maintheorem:tqftcoproduct} states that an appropriate choice is the Pontryagin--Thom lift $\lambda_{\PT}(M)$, which by \cref{maintheorem:florian} is the same as the Whitehead lift $\tr(\lambda_{\Wh}(M))$. Moreover, they show in \cite[Theorem 6.13]{RiveraTakedaWang} that the corresponding secondary coproduct is coassociative for $\dim M \geq 3$.
    \item For a closed oriented Riemannian manifold $M$ the work of Cieliebak, Hingston and Oancea (see \cite[Section 3.2]{CieliebakHingstonOancea}) constructs a homotopy $\lambda^F$ between certain Floer chain complexes of the unit disk cotangent bundles $D^* M$ (following Abbondandolo and Schwarz \cite{AbbondandoloSchwarz}). The analog of $\bbDelta(1)$ in that case is the secondary continuation quadratic vector $Q_0^F$. Moreover, they show in \cite[Theorem 6.1]{CieliebakHingstonOancea} that the resulting secondary coproduct on the reduced symplectic homology of $D^* M$ coincides with the loop coproduct under the Viterbo--Abbondandolo--Schwarz isomorphism $\SH^\bullet(D^* M)\cong \H_{-\bullet}(LM)$.
\end{itemize}

\subsection*{Acknowledgements}

We would like to thank Manuel Rivera and Nathalie Wahl for useful discussions.

\section{Duality and dimensions}

\subsection{Categorical background}
\label{sect:background}
	
Throughout the paper we use the language of $\infty$-categories. We denote by $\cS$ the $\infty$-category of spaces (homotopy types or $\infty$-groupoids). Given a topological space $X$, we denote the corresponding homotopy type by $\Sing(X)\in\cS$. We denote by $\Sp$ the $\infty$-category of spectra. As a convention, we denote by smash product in $\Sp$ by $\otimes$ (more classically, it is denoted by $\wedge$). Similarly, the direct sum in $\Sp$ is denoted by $\oplus$ (more classically, it is denoted by $\vee$). For an $\infty$-category $\cC$ we denote by $\cC^\omega\subset \cC$ the subcategory of compact objects.

We denote by $\Catperf$ the symmetric monoidal $(\infty, 2)$-category whose objects are idempotent-complete small stable $\infty$-categories and 1-morphisms exact functors. The unit object $\Sp^\omega\in\Catperf$ is the $\infty$-category of finite spectra.

We denote by $\PrSt$ the symmetric monoidal $(\infty, 2)$-category whose objects are stable presentable $\infty$-categories and 1-morphisms colimit-preserving functors. The unit object $\Sp\in\PrSt$ is the $\infty$-category of spectra. For an $\infty$-category $\cC$ we denote by $\Hom_\cC(-, -)\in\cS$ the mapping space. If $\cC\in\PrSt$ we denote by $\bHom_\cC(-, -)\in\Sp$ the mapping spectrum.

        \subsection{Dimensions}
        In this section we recall the formalism of traces in $\infty$-categories explained in \cite{ToenVezzosi,HSS,BZNNonlinear,CCRY}. We refer to \cite{DoldPuppe,PontoShulman} for a 1-categorical treatment. We denote by $\sigma$ the braiding on a symmetric monoidal $\infty$-category.

	\begin{defn}
		Let $\cA$ be a symmetric monoidal category. An object $x\in\cA$ is \defterm{dualizable} if there is an object $x^\vee\in\cA$ and a pair of morphisms $\ev\colon x\otimes x^\vee\rightarrow \bu_\cA$ and $\coev\colon \bu_\cC\rightarrow x^\vee\otimes x$ such that the composites
		\begin{align*}
			x &\xrightarrow{\id\otimes \coev} x\otimes x^\vee\otimes x\xrightarrow{\ev\otimes \id} x \\
			x^\vee &\xrightarrow{\coev\otimes \id} x^\vee\otimes x\otimes x^\vee\xrightarrow{\id\otimes \ev} x^\vee
		\end{align*}
		are the identities. An object of a symmetric monoidal $\infty$-category is dualizable if it is dualizable in the underlying homotopy 1-category.
		\label{def:dualizable}
	\end{defn}

    \begin{remark}
 Explicitly, an object $x\in\cA$ of a symmetric monoidal $\infty$-category is dualizable if it has a dual $x^\vee$, evaluation and coevaluation morphisms
	\[\ev\colon x\otimes x^\vee\rightarrow \bu_\cA,\qquad \coev\colon \bu_\cA\rightarrow x^\vee\otimes x\]
	and cusp 2-isomorphisms
	\[\alpha\colon (\ev\otimes \id_x)\circ (\id_x\otimes \coev)\rightarrow \id_x,\qquad \beta\colon (\id_{x^\vee}\otimes \ev)\circ (\coev\otimes \id_{x^\vee})\rightarrow \id_{x^\vee}.\]
    \end{remark}
    
    \begin{remark}
	We have the following uniqueness statements for duality data (see \cite[Lemma 4.6.1.10]{LurieHA} and \cite{RiehlVerityAdj}):
	\begin{itemize}
		\item For a given $x$, the space $\{x^\vee, \ev\}$ of objects $x^\vee$ and evaluation morphisms $\ev$, for which there exist some compatible triple \{$\coev,\alpha,\beta$\}, is contractible.
		\item For a given $x$, the space $\{x^\vee, \ev,\coev,\alpha\}$ of objects $x^\vee$, evaluation morphisms $\ev$, coevaluation morphisms $\coev$ and the cusp isomorphism $\alpha$, for which there exists some compatible $\beta$, is contractible.
	\end{itemize}
	In other words, we are allowed not to worry about the choices of the duality data as long as we choose the ``right'' amount of information.
    \label{rmk:ddat}
    \end{remark}

    Given dualizable objects $x, y\in\cA$ the object $x\otimes y$ is dualizable with the dual $y^\vee\otimes x^\vee$, the evaluation morphism
    \[x\otimes y\otimes y^\vee\otimes x^\vee\xrightarrow{\id\otimes \ev_y\otimes \id}x\otimes x^\vee\xrightarrow{\ev_x} \bu_\cA\]
    and the coevaluation morphism
    \[\bu_\cA\xrightarrow{\coev_y} y^\vee\otimes y\xrightarrow{\id\otimes \coev_x\otimes \id} y^\vee\otimes x^\vee\otimes x\otimes y.\]

    Given a morphism of dualizable objects, we can pass to its dual.

        \begin{defn}
		Let $\cA$ be a symmetric monoidal category, $x,y\in\cA$ are dualizable objects and $f\colon x\rightarrow y$ is a morphism. The \defterm{dual morphism} $f^\vee\colon y^\vee\rightarrow x^\vee$ is the composite
		\[y^\vee\xrightarrow{\coev_x\otimes\id}x^\vee\otimes x\otimes y^\vee\xrightarrow{\id\otimes f\otimes \id} x^\vee\otimes y\otimes y^\vee\xrightarrow{\id\otimes\ev_y} x^\vee.\]
	\end{defn}

         In particular, we have canonical commutative diagrams (see \cite[Section 3.2]{BZNNonlinear})
	\begin{equation}
		\xymatrix{
			& x\otimes x^\vee \ar^{f\otimes \id}[dr]&  \\
			\bu_\cC \ar^{\coev_x}[ur] \ar_{\coev_y}[dr] && y\otimes x^\vee \\
			& y\otimes y^\vee \ar_{\id\otimes f^\vee}[ur] &
		}
		\qquad
		\xymatrix{
			& y\otimes y^\vee \ar^{\ev_x}[dr] & \\
			x\otimes y^\vee \ar_{\id\otimes f^\vee}[dr] \ar^{f\otimes \id}[ur] && \bu_\cC \\
			& x\otimes x^\vee \ar_{\ev_y}[ur] &
		}
		\label{eq:coevevcompatibility}
	\end{equation}

        \begin{lm}
        Let $\cA$ be a symmetric monoidal $\infty$-category and $x\in\cA$ a dualizable object with evaluation $\ev\colon x\otimes x^\vee\rightarrow \bu_\cA$ and coevaluation $\coev\colon \bu_\cA\rightarrow x^\vee\otimes x$. Then there is a canonical 2-isomorphism
        \begin{equation}\label{eq:evdual}
        (\ev)^\vee\cong \sigma\circ \coev.
        \end{equation}
        \end{lm}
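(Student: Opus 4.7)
My plan is to unwind $(\ev)^\vee$ using the definition of the dual morphism together with the standard duality data for a tensor product, and then reduce to a triangle identity for $x^\vee$.

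First, by the definition of the dual morphism applied to $\ev\colon x\otimes x^\vee\to\bu_\cA$, and using $\bu_\cA^\vee\cong\bu_\cA$ (with $\ev_{\bu_\cA}=\id$) together with the identification $(x\otimes x^\vee)^\vee \cong (x^\vee)^\vee\otimes x^\vee \cong x\otimes x^\vee$, the morphism $(\ev)^\vee$ is canonically 2-isomorphic to
\[
\bu_\cA \xrightarrow{\coev_{x\otimes x^\vee}} (x\otimes x^\vee)\otimes(x\otimes x^\vee) \xrightarrow{\id\otimes\ev} x\otimes x^\vee.
\]

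Next, I would substitute the tensor-product formula for $\coev_{x\otimes x^\vee}$ recalled above (taking $y=x^\vee$), together with the identification $\coev_{x^\vee}\cong \sigma\circ\coev$ coming from $(x^\vee)^\vee\cong x$. This rewrites $(\ev)^\vee$ as
\[
\bu_\cA \xrightarrow{\coev} x^\vee\otimes x \xrightarrow{\sigma} x\otimes x^\vee \xrightarrow{\id_x\otimes\coev\otimes\id_{x^\vee}} x\otimes x^\vee\otimes x\otimes x^\vee \xrightarrow{\id_{x\otimes x^\vee}\otimes\ev} x\otimes x^\vee.
\]
The last two arrows factor as $\id_x\otimes\bigl[(\id_{x^\vee}\otimes\ev)\circ(\coev\otimes\id_{x^\vee})\bigr]$, where the bracketed composite is the zig-zag $x^\vee\to x^\vee\otimes x\otimes x^\vee \to x^\vee$. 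Applying the triangle identity 2-cell $\beta$ for $x^\vee$, this zig-zag is canonically 2-isomorphic to $\id_{x^\vee}$, producing the desired 2-isomorphism $(\ev)^\vee\cong \sigma\circ\coev$.

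The main obstacle will be coherence bookkeeping: each identification used above (the tensor-product duality formulas, $\bu_\cA^\vee\cong\bu_\cA$, $(x^\vee)^\vee\cong x$, and the 2-cell $\beta$) is determined only up to contractible choice by the uniqueness statements in \cref{rmk:ddat}, and one must verify that assembling them yields a canonical 2-isomorphism rather than one depending on arbitrary choices. A sanity check in $\cA = \mathrm{Vect}_k$ confirms the computation: both sides send $1\mapsto \sum_i e_i\otimes e_i^*$ for any basis $e_i$ of $x$.
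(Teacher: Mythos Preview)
Your proof is correct and follows essentially the same route as the paper: both arrive at the composite
\[
\bu_\cA \xrightarrow{\sigma\circ\coev} x\otimes x^\vee \xrightarrow{\id_x\otimes\coev\otimes\id_{x^\vee}} x\otimes x^\vee\otimes x\otimes x^\vee \xrightarrow{\id_{x\otimes x^\vee}\otimes\ev} x\otimes x^\vee
\]
and then apply the cusp 2-isomorphism $\beta$ to collapse the last two arrows to the identity. The paper simply asserts this composite is $(\ev)^\vee$ ``by definition'', whereas you spell out the intermediate step of unwinding $\coev_{x\otimes x^\vee}$ via the tensor-product duality formulas; your coherence remarks are reasonable caution but not something the paper addresses explicitly.
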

        \begin{proof}
        By definition the dual $(\ev)^\vee$ is given by the composite
        \[\bu_\cA\xrightarrow{(\sigma\circ \coev)} x\otimes x^\vee\xrightarrow{\id_x\otimes \coev\otimes \id_{x^\vee}} x\otimes x^\vee\otimes x\otimes x^\vee\xrightarrow{\id_{x\otimes x^\vee}\otimes \ev} x\otimes x^\vee.\]
        Applying the cusp 2-isomorphism $\beta_x$ we obtain the 2-isomorphism $(\ev)^\vee\rightarrow \coev$.
        \end{proof}

        Dualizable objects have dimensions defined as follows.

 	\begin{defn}
		Let $\cA$ be a symmetric monoidal $\infty$-category and $x\in\cA$ a dualizable object. Its \defterm{dimension} $\dim(x)\in\End_\cA(\bu_\cA)$ is the composite
		\[\dim(x)\colon \bu_\cA\xrightarrow{\coev} x^\vee\otimes x\xrightarrow{\sigma} x\otimes x^\vee\xrightarrow{\ev}\bu_\cA.\]
            More generally, for an endomorphism $f\colon x\rightarrow x$ of a dualizable object we define the \defterm{trace} $\tr(f)\in\End_\cA(\bu_\cA)$ to be the composite
            \[\tr(f)\colon \bu_\cA\xrightarrow{\coev} x^\vee\otimes x\xrightarrow{\id\otimes f} x^\vee\otimes x\xrightarrow{\sigma}x\otimes x^\vee\xrightarrow{\ev}\bu_\cA.\]
		\label{def:dimension}
	\end{defn}

 \begin{remark}
 To simplify the notation, we will often omit the braiding, implicitly identifying coevaluation maps $\bu_\cA\rightarrow x^\vee\otimes x$ with maps $\bu_\cA\rightarrow x\otimes x^\vee$ using the braiding.
 \end{remark}
 
 \Cref{rmk:ddat} makes it clear how to extend $\dim$ to a map $\cA^\dual \to \End_\cA(\bu)$ where $\cA^\dual$ is the groupoid core of dualizable objects in $\cA$ (or, equivalently, the $\infty$-groupoid of tuples $(x,x^\vee, \ev, \coev, \alpha)$ satisfying the above conditions).

   Now let $\cA$ be a symmetric monoidal $(\infty, 2)$-category. In this case the construction of the dimension $\dim$ extends as follows. By definition, an adjunction in $\cA$ is the same as an adjunction in the corresponding homotopy 2-category. In other words, it is specified by the condition that there exist two 1-morphisms $f\colon x\rightarrow y$ and $f^\R\colon y\rightarrow x$ together with two 2-morphisms $\eta\colon \id_x\Rightarrow f^\R f$ and $\epsilon\colon ff^\R\Rightarrow \id_y$ together with 3-isomorphisms witnessing the triangle equalities. We refer to \cite[Appendix F.5]{RiehlVerity} for a proof that in $\PrSt$ this notion of adjunction is equivalent to the one given in \cite[Definition 5.2.2.1]{LurieHTT}.

        \begin{defn}
        Let $\cA$ be a symmetric monoidal $(\infty, 2)$-category, $f\colon x\rightarrow y$ is a morphism of dualizable objects which admits a right adjoint $f^\R\colon y\rightarrow x$. The \defterm{transfer map}
        \[\dim(f)\colon \dim(x)\longrightarrow \dim(y)\]
        is given by traversing the diagram
		\[
		\xymatrix@C=1cm{
			& x\otimes x^\vee \ar_{f\otimes \id}[dr] \ar@{=}[rr] & \ar@{}[d]^(.2){}="a"^(.63){}="b" \ar^{\eta_f}@{=>} "a";"b" & x\otimes x^\vee \ar^{\ev_x}[dr] & \\
			\bu_\cA \ar^{\coev_x}[ur] \ar_{\coev_y}[dr] && y\otimes x^\vee \ar_{f^\R\otimes \id}[ur] \ar^{\id\otimes (f^\R)^\vee}[dr] \ar@{}[d]^(.2){}="c"^(.63){}="d" \ar^{\epsilon_f}@{=>} "c";"d" && \bu_\cA \\
			& y\otimes y^\vee \ar^-{\id\otimes f^\vee}[ur] \ar@{=}[rr] &&  y\otimes y^\vee \ar_{\ev_y}[ur]
		}
		\]
		where the two squares commute using \eqref{eq:coevevcompatibility}.
        \label{def:transfermap}
        \end{defn}

    \begin{remark}
    In formulas, the transfer map is given by the composite
    \begin{align*}
        \dim(x)&=\ev_x \circ \coev_x\\
        &\xrightarrow{\eta_f} \ev_x \circ (f^\R f \otimes \id)\circ \coev_x\\
        &\cong \ev_x \circ (f^\R \otimes f^\vee)\circ \coev_y\\
        &\cong \ev_y \circ (\id\otimes (f^\R)^\vee\circ f^\vee)\circ\coev_y \\
        &\longrightarrow \ev_y \circ \coev_y=\dim(y).
    \end{align*}
    \end{remark}

    We are mainly interested in the example where $\cA = \PrSt$. The following statement gives a description of some of the objects in $\cA^\dual$ (a characterization of all dualizable objects in $\PrSt$ is given in \cite[Proposition D.7.3.1]{LurieSAG}). 

	\begin{prop}[{\cite[Proposition D.7.2.3]{LurieSAG}}]
		Suppose $\cC\in\PrSt$ is compactly generated, i.e. $\cC=\Ind(\cC^\omega)$. Then $\cC$ is dualizable. The dual is $\cC^\vee = \Ind(\cC^{\omega, \op})$ and the evaluation pairing $\ev\colon \cC\otimes \cC^\vee\rightarrow \Sp$ is given by ind-extending the mapping spectrum functor $\bHom_\cC(-, -)\colon \cC^\omega\times \cC^{\omega, \op}\rightarrow \Sp$.
		\label{prop:Stdualizable}
	\end{prop}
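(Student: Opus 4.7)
The plan is to construct explicit evaluation and coevaluation morphisms in $\PrSt$ that exhibit $\cC^\vee = \Ind(\cC^{\omega, \op})$ as dual to $\cC$, and then verify the triangle identities. Throughout I would use the standard identification $\Ind(\cA) \otimes \Ind(\cD) \cong \Ind(\cA \otimes \cD)$ in $\PrSt$, where the right-hand tensor product is in $\Catperf$; in particular both $\cC \otimes \cC^\vee$ and $\cC^\vee \otimes \cC$ are identified with $\Ind(\cC^\omega \otimes \cC^{\omega, \op})$.

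The evaluation $\ev \colon \cC \otimes \cC^\vee \to \Sp$ is defined by ind-extending the mapping spectrum $\bHom_\cC \colon \cC^\omega \otimes \cC^{\omega, \op} \to \Sp$; this descends to a morphism in $\Catperf$ because the mapping spectrum is exact in each variable separately, since $\cC^\omega$ is stable.

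The subtler input is the coevaluation $\coev \colon \Sp \to \cC^\vee \otimes \cC$, which amounts to specifying an object of $\cC^\vee \otimes \cC \cong \Ind(\cC^{\omega, \op} \otimes \cC^\omega)$. I would pin down this object by the chain of equivalences
$$\Ind(\cC^{\omega, \op} \otimes \cC^\omega) \cong \mathrm{Fun}^{\mathrm{ex}}(\cC^\omega, \Ind(\cC^\omega)) \cong \FunL(\cC, \cC),$$
the first of which ultimately rests on the classical identification $\mathrm{Fun}^{\mathrm{ex}}(\cE, \Sp) \cong \Ind(\cE^{\op})$ for any small stable idempotent-complete category $\cE$, combined with the universal property of $\Ind$. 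Under these identifications I would take $\coev(\bS)$ to correspond to the identity functor $\id_\cC$.

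Finally I would verify the triangle identity $(\ev \otimes \id_\cC) \circ (\id_\cC \otimes \coev) \simeq \id_\cC$; the companion identity on $\cC^\vee$ is formally symmetric. Evaluated on a compact $x \in \cC^\omega$, the composite unwinds to a filtered colimit of the form $\colim_y \bHom_\cC(x, y) \otimes y$, which returns $x$ by the co-Yoneda (density) lemma applied to $\cC^\omega$. The main obstacle is the bookkeeping: one must trace through the equivalences above to confirm that $\id_\cC \in \FunL(\cC, \cC)$ is sent to the correct diagram in $\Ind(\cC^{\omega, \op} \otimes \cC^\omega)$, so that after pairing with $\ev$ the triangle identity reduces to the tautological Yoneda colimit rather than some twisted variant.
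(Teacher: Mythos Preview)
The paper does not give its own proof of this proposition; it is stated with a citation to \cite[Proposition D.7.2.3]{LurieSAG} and no proof environment follows. So there is nothing in the paper to compare against beyond the reference itself.

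Your outline is the standard argument one finds in the cited reference and is correct in broad strokes: construct $\ev$ by ind-extending $\bHom_\cC$, identify $\cC^\vee\otimes\cC$ with $\FunL(\cC,\cC)$ via $\Ind(\cE^{\op})\simeq\Fun^{\mathrm{ex}}(\cE,\Sp)$ and the universal property of $\Ind$, take $\coev(\bS)$ to correspond to $\id_\cC$, and reduce the triangle identity to the density/co-Yoneda formula $x\simeq\colim_y\bHom_\cC(y,x)\otimes y$ for $x\in\cC^\omega$. The only comment is that your ``bookkeeping'' caveat is where all the actual content lives: making the chain of equivalences sufficiently natural so that the composite $(\ev\otimes\id)\circ(\id\otimes\coev)$ really does unwind to the Yoneda colimit is the substance of the proof, and in a fully rigorous treatment one typically argues via the universal property of $\Ind$ (colimit-preserving functors out of $\Ind(\cA)$ are determined by their restriction to $\cA$) rather than by inspecting the colimit formula on objects. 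But as a proof sketch this is fine, and it is essentially what Lurie does.
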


        \begin{example}
        Let $I$ be a small $\infty$-category. Then $\Sp^I = \Fun(I, \Sp)$ is dualizable with dual given by $\Sp^{I^{\op}}$. The evaluation pairing $\ev\colon \Sp^I\otimes \Sp^{I^{\op}}\rightarrow \Sp$ is
        \[\ev(F, G) = \int^{i\in I} F(i)\otimes G(i)\]
        and the coevaluation pairing $\coev\colon \Sp\rightarrow \Sp^{I^{\op}}\otimes \Sp^I\cong \Sp^{I^{\op}\times I}$ sends $\bS$ to the functor $i, j\mapsto \Sigma^\infty_+ \Hom_I(i, j)$. In particular,
        \[\dim(\Sp^I)\cong \int^{i\in I} \Sigma^\infty_+ \Hom_I(i, i)\]
        and for a functor $f\colon I\rightarrow J$ the transfer
        \[\dim(f_\sharp)\colon \dim(\Sp^I)\longrightarrow \dim(\Sp^J)\]
        is given by the composite
        \[\int^{i\in I} \Sigma^\infty_+\Hom_I(i, i)\longrightarrow \int^{i\in I}\Sigma^\infty_+\Hom_J(f(i), f(i))\longrightarrow \int^{j\in J}\Sigma^\infty_+\Hom_J(j, j).\]
        \label{ex:presheafdual}
        \end{example}
	
	\begin{example}
		Suppose $\cC,\cD\in\PrSt$ are compactly generated and $F\colon \cC\rightarrow \cD$ is a functor preserving compact objects. Then it admits a colimit-preserving right adjoint; in other words, it is a right-adjointable 1-morphism in $\PrSt$. By \cite[Proposition 4.24]{HSS} we have $\dim(\cC)\cong \THH(\cC^\omega)$ and the transfer map fits into the commutative diagram
		\[
		\xymatrix@C=2cm{
			\dim(\cC) \ar^{\dim(F)}[r] \ar^{\sim}[d] & \dim(\cD) \ar^{\sim}[d] \\
			\THH(\cC^\omega) \ar^{\THH(F)}[r] & \THH(\cD^\omega).
		}
		\]
	\end{example}
	
	\begin{example}
		Suppose $\cC\in\PrSt$ is compactly generated and $x\in\cC$ is a compact object. Consider the functor $F_x\colon \Sp\rightarrow \cC$ given by $V\mapsto V\otimes x$ which admits a colimit-preserving right adjoint. By the previous example the class $[x]\in \Omega^\infty\THH(\cC^\omega)$ is computed as the image of the transfer map
		\[\bS\cong \dim(\Sp)\xrightarrow{\dim(F_x)} \dim(\cC).\]
        This is the \defterm{Chern character} of the object $x$ \cite{McCarthy,Keller,BZNNonlinear}.
	\end{example}

   The construction of transfer maps can be formulated coherently as follows. The authors of \cite{HSS} define an $(\infty,1)$-category $\cA^\dual$ with the following informal description:
    \begin{enumerate}
        \item Objects of $\cA^\dual$ are dualizable objects in $\cA$.
        \item For $x, y \in \cA^\dual$ morphisms $x\rightarrow y$ in $\cA^\dual$ are the same as morphisms $x\rightarrow y$ in $\cA$ admitting a right adjoint.
    \end{enumerate}

    The $\infty$-category $\cA^\dual$ allows one to formulate the functoriality of dimensions as follows (see \cite[Definition 2.9]{HSS}).

    \begin{thm}
        There exists a symmetric monoidal functor
        \[
        \dim(-) \colon \cA^\dual \to \End_\cA(\bu),
        \]
        given on objects by $x\mapsto \dim(x)$ and on morphisms by $(f\colon x\rightarrow y)\mapsto (\dim(f)\colon \dim(x)\rightarrow \dim(y))$.
        \label{thm:ExistenceOfDim}
    \end{thm}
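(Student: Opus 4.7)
The plan is to follow the strategy of \cite{HSS}, realizing $\dim(-)$ as an instance of the general trace functor in a symmetric monoidal $(\infty,2)$-category. The first step is to give a clean model of $\cA^\dual$. One convenient approach is to define $\cA^\dual$ as the pullback of the core of $\cA$ along the inclusion of the sub-$\infty$-groupoid of dualizable objects, together with the wide subcategory of right-adjointable 1-morphisms. The symmetric monoidal structure is inherited from $\cA$ since the tensor product of dualizable objects is dualizable (with explicit duality data as recalled above), and the tensor product of right-adjointable 1-morphisms is right-adjointable with right adjoint given by the tensor of the right adjoints.

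Next, I would construct $\dim(-)$ on objects and 1-morphisms exactly as in \Cref{def:dimension,def:transfermap}, and the main work is to exhibit functoriality and monoidality coherently. The cleanest way to do this is to observe that the transfer map $\dim(f)\colon \dim(x)\to \dim(y)$ can be rewritten as the composite of two elementary pieces: the unit $\eta_f$ applied to $\coev_x$ (which uses that $\id_x$ sits inside $f^\R f$), followed by the compatibility triangle of \eqref{eq:coevevcompatibility} applied to $f^\vee$. Both of these pieces are parts of structure that is itself functorial: the unit of an adjunction is functorial in the adjunction (via the $(\infty,1)$-category of adjunctions, cf.\ \cite[Appendix F.5]{RiehlVerity}), and the assignment $f\mapsto f^\vee$ upgrades to a symmetric monoidal functor on $\cA^\dual$ using \Cref{rmk:ddat}. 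Composing these two natural pieces gives the required coherent functoriality of $\dim(-)$.

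For the symmetric monoidal structure one uses the compatibility of duality data with the tensor product: the evaluation and coevaluation for $x\otimes y$ factor through those of $x$ and $y$, and under this factorization the transfer map for $f\otimes g$ is manifestly $\dim(f)\otimes \dim(g)$ up to canonical 2-isomorphism coming from the braiding. This produces the data of a lax symmetric monoidal functor, and strictness follows since all the structural 2-cells involved are invertible.

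The principal obstacle is managing the tower of higher coherences implicit in working in an $(\infty,2)$-category: the argument sketched above really only produces the homotopy-1-categorical shadow of $\dim(-)$. To handle this honestly one should either (i) invoke the cobordism hypothesis as in \cite{LurieCobordism} to reduce the construction to the universal case of the framed 1-dimensional bordism $(\infty,2)$-category, where $\dim$ is a structural feature, or (ii) follow the explicit model-independent construction of \cite[Section 2]{HSS} that builds $\dim$ directly from the adjoint functor $\infty$-category $\Fun(\bD,\cA)$, where $\bD$ is the universal adjunction. I would pursue route (ii), since \cite{HSS} already verifies that the resulting $\dim(-)\colon \cA^\dual\to \End_\cA(\bu)$ agrees with \Cref{def:dimension,def:transfermap} on objects and 1-morphisms, and carries a symmetric monoidal refinement. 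This gives the theorem.
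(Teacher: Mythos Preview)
Your proposal is correct and aligns with the paper's approach: the paper does not give an independent proof of this theorem but simply records it as a known result, citing \cite[Definition 2.9]{HSS} immediately before the statement. Your route (ii), deferring to the explicit construction in \cite[Section 2]{HSS}, is exactly what the paper does; the additional sketch you provide of the 1-categorical shadow is accurate but not something the paper attempts to spell out.
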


    Using the functoriality of the dimensions given by \cref{thm:ExistenceOfDim}, we see that the Chern character $\dim(F_x)\colon \bS\rightarrow \dim(\cC)$ when $x\in\cC^\omega$ ranges over compact objects of $\cC$ defines a map
    \begin{equation}
    (\cC^\omega)^\sim\longrightarrow \Omega^\infty\dim(\cC)
    \label{eq:cherncharacters}
    \end{equation}
    from the groupoid core of compact objects in $\cC$.

    \subsection{Relative dualizability}
	
	We will now specialize the setting of the previous section to the case $\cA=\PrSt$. We begin with the following observation.
 
    \begin{lm}
    \label{lm:reldualiscompact}
    Let $\cC\in\PrSt$ be a stable presentable $\infty$-category and $x\in\cC$ an object. Then $x$ is compact if, and only if, the functor $\Sp \to \cC$ given by $V \mapsto V \otimes x$ admits a right adjoint in $\PrSt$.
	\end{lm}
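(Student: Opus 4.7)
The plan is to make the right adjoint explicit and then read off when it lies in $\PrSt$. Since $F_x\colon \Sp\to \cC$, $V\mapsto V\otimes x$, is colimit-preserving (the tensoring of $\Sp$ on $\cC$ preserves colimits in each variable), it automatically admits a right adjoint $G_x\colon \cC\to\Sp$ at the level of $\infty$-categories by the adjoint functor theorem for presentable $\infty$-categories. The point is that to have a right adjoint \emph{in $\PrSt$} means additionally that $G_x$ is itself colimit-preserving.

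First I would identify $G_x$ as the mapping spectrum functor $G_x(y)=\bHom_\cC(x,y)$, using the universal property
\[
\Hom_\Sp(V,\bHom_\cC(x,y))\cong \Hom_\cC(V\otimes x, y),
\]
which is how the spectral enrichment of $\cC$ is defined from the $\Sp$-module structure.

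Next I would analyze when $\bHom_\cC(x,-)\colon \cC\to \Sp$ preserves colimits. Since $\cC$ and $\Sp$ are stable and the mapping spectrum is exact in each variable, this functor automatically preserves finite colimits. Colimit-preservation therefore reduces to preservation of filtered colimits. By the standard characterization of compactness in a stable presentable $\infty$-category, $\bHom_\cC(x,-)$ preserves filtered colimits if and only if $\Omega^\infty\bHom_\cC(x,-)=\Hom_\cC(x,-)$ does (since $\Omega^\infty\colon \Sp\to \cS$ detects and preserves filtered colimits, combined with stability to recover all the deloopings $\Hom_\cC(x,\Sigma^n(-))$), and this is precisely the definition of $x$ being compact.

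Combining these observations gives the equivalence: $F_x$ admits a right adjoint in $\PrSt$ iff $G_x=\bHom_\cC(x,-)$ preserves colimits iff $x$ is compact. The only subtlety worth spelling out carefully is the last step, namely the equivalence between the $\cS$-valued and $\Sp$-valued definitions of compactness in the stable setting; everything else is formal manipulation of adjunctions and the universal property of the mapping spectrum.
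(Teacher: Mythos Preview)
Your proposal is correct and follows essentially the same approach as the paper: identify the right adjoint as the mapping spectrum $\bHom_\cC(x,-)$, reduce colimit-preservation to filtered colimits via exactness, and then argue the equivalence with compactness using that the functors $\Omega^{\infty+n}$ preserve filtered colimits and are jointly conservative. The paper spells out that last subtlety explicitly by noting that $x$ compact implies $x[n]$ compact for all $n$, which is exactly what you allude to with ``stability to recover all the deloopings.''
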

	\begin{proof}
        By definition, the functor $V\mapsto V\otimes x$ admits a right adjoint given by the mapping spectrum functor $\bHom_\cC(x, -)\colon \cC\rightarrow \Sp$. It preserves finite colimits since $\cC$ is stable; therefore, it defines a right adjoint in $\PrSt$ if it preserves filtered colimits.

        Suppose $\bHom_\cC(x, -)\colon \cC\rightarrow \Sp$ preserves filtered colimits. Then the composite \[\Hom_\cC(x, -)\colon \cC\xrightarrow{\bHom_\cC(x, -)} \Sp\xrightarrow{\Omega^\infty} \cS\]
        preserves filtered colimits. Therefore, $x\in\cC$ is compact.

        Conversely, suppose $x\in\cC$ is compact. Then $x[n]\in\cC$ is compact for every $n\in\Z$. Therefore, $\Omega^{\infty+n}\bHom_\cC(x, -)\colon \cC\rightarrow \cS$ preserves filtered colimits. As the functors $\{\Omega^{\infty+n}\colon \Sp\rightarrow \cS\}_{n\in\Z}$ are jointly conservative and preserve filtered colimits, this implies that $\bHom_\cC(x, -)\colon \Sp\rightarrow \cS$ preserves filtered colimits.
	\end{proof}
 The previous statement rephrases the property of an object of a stable presentable $\infty$-category being compact internally to the 2-category $\PrSt$. We are now going to use this characterization to introduce a duality for compact objects in dualizable (for example, compactly generated) stable presentable $\infty$-categories.
    
	\begin{defn}
 Let $\cC\in\PrSt$ be a dualizable stable presentable $\infty$-category with the dual $\cC^\vee\in\PrSt$, evaluation functor $\ev\colon \cC\otimes \cC^\vee\rightarrow \Sp$ and the coevaluation functor $\coev\colon \Sp\rightarrow \cC^\vee\otimes \cC$. We say an object $x\in\cC$ is \defterm{relatively (right) dualizable} if there is an object $x^\vee\in\cC^\vee$ together with morphisms $\epsilon\colon x^\vee\boxtimes x\rightarrow \coev(\bS)$ and $\eta\colon \bS\rightarrow \ev(x, x^\vee)$ satisfying the obvious analogs of the duality axioms in \cref{def:dualizable}.
	\end{defn}
	
	\begin{prop}
 Let $\cC\in\PrSt$ be a dualizable stable presentable $\infty$-category and $x\in\cC$. The following are equivalent:
		\begin{enumerate}
            \item $x\in\cC$ has a relative dual $x^\vee\in\cC^\vee$ with evaluation pairing $\epsilon\colon x^\vee\boxtimes x\rightarrow \coev(\bS)$.
            \item The functor $\Sp\rightarrow \cC$ given by $\bS\mapsto x$ admits a colimit-preserving right adjoint $H_x\colon \cC\rightarrow \Sp$ such that the induced map $x^\vee\rightarrow (\id\otimes H_x)\coev(\bS)$ is an isomorphism, i.e. the functor $\Sp\rightarrow \cC^\vee$ given by $\bS\mapsto x^\vee$ is dual to $H_x$.
            \item $x$ is compact.
		\end{enumerate}
		\label{prop:CWduality}
	\end{prop}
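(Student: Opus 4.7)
The plan is to show $(2) \Leftrightarrow (3)$ essentially tautologically from \cref{lm:reldualiscompact}, and to establish $(1) \Leftrightarrow (2)$ by constructing the adjunction data from the relative duality data and vice versa. For $(2) \Leftrightarrow (3)$: \cref{lm:reldualiscompact} shows that $x$ is compact iff $F_x \colon V \mapsto V \otimes x$ admits a colimit-preserving right adjoint $H_x \colon \cC \to \Sp$ in $\PrSt$; given such an $H_x$, the formula $x^\vee := (\id_{\cC^\vee} \otimes H_x)(\coev(\bS))$ makes the second clause of (2) tautological, since the map there becomes the identity on the definition of $x^\vee$.

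For $(1) \Rightarrow (2)$, given duality data $(x^\vee, \epsilon, \eta)$, I would define the candidate right adjoint $H_x \colon \cC \to \Sp$ by $y \mapsto \ev(y \boxtimes x^\vee)$, which is colimit-preserving because $\ev$ is. The unit $V \to H_x F_x(V) = V \otimes \ev(x, x^\vee)$ is obtained by tensoring $\eta$ with $V$, and the counit $F_x H_x(y) = \ev(y, x^\vee) \otimes x \to y$ is built by applying $\epsilon \colon x^\vee \boxtimes x \to \coev(\bS)$ inside the canonical identification $y \cong (\ev \otimes \id_\cC)(y \boxtimes \coev(\bS))$ that expresses $\id_\cC$ as a composite of $\coev$ and $\ev$. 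The triangle identities for this adjunction then reduce to the duality axioms for $(\epsilon, \eta)$, which are the usual zig-zag identities shifted ``one categorical level down.''

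For the converse $(2) \Rightarrow (1)$, setting $x^\vee := (\id \otimes H_x)\coev(\bS)$, I would construct $\eta \colon \bS \to \ev(x, x^\vee)$ and $\epsilon \colon x^\vee \boxtimes x \to \coev(\bS)$ from the unit and counit of $F_x \dashv H_x$ by composing with the ambient $\coev$ and $\ev$ of $\cC$, and verify that the triangle identities yield the relative duality axioms. The main technical obstacle I anticipate is the coherent bookkeeping of duality data: one is essentially exchanging an external adjunction in the symmetric monoidal $2$-category $\PrSt$ for an internal relative duality in $\cC$, which is an instance of the general formal fact that a $1$-morphism between dualizable objects in a symmetric monoidal $2$-category is right-adjointable iff it is ``relatively dualizable'' with respect to the ambient duality. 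Once the data is organized properly, this should reduce to a routine string-diagram chase using \eqref{eq:coevevcompatibility} and the uniqueness of duality data from \cref{rmk:ddat}.
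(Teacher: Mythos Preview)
Your proposal is correct and follows essentially the same route as the paper: both reduce $(2)\Leftrightarrow(3)$ to \cref{lm:reldualiscompact} and identify the relative duality data $(\epsilon,\eta)$ with the unit/counit of the adjunction $F_x\dashv H_x$ where $H_x(z)=\ev(z,x^\vee)$. The only cosmetic difference is that the paper packages your construction of the counit via the equivalence $\cC^\vee\otimes\cC\cong\Fun(\cC,\cC)$, under which $\epsilon$ becomes the natural transformation $\ev(-,x^\vee)\otimes x\to\id$ directly, rather than unwinding the zig-zag identity $y\cong(\ev\otimes\id)(y\boxtimes\coev(\bS))$ by hand.
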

	\begin{proof}
		Assume $x\in\cC$ has a relative dual $x^\vee\in\cC^\vee$. The relative dualizability data is provided by maps
		\begin{align*}
			\epsilon&\colon x^\vee\boxtimes x\longrightarrow \coev(\bS) \\
			\eta&\colon \bS\longrightarrow \ev(x, x^\vee)
		\end{align*}
		where $\epsilon$ is a morphism in $\cC^\vee\otimes \cC$ and $\eta$ is a morphism in $\Sp$. Identifying $\cC^\vee\otimes \cC\cong \Fun(\cC, \cC)$ via $y\boxtimes x\mapsto \ev(-, y)\otimes x$ the data of $\epsilon$ is equivalent to the data of the morphism
		\[\overline{\epsilon}\colon \ev(-, x^\vee)\otimes x\longrightarrow \id\]
		in $\Fun(\cC, \cC)$.
		
		We see that the relative dualizability data $(\epsilon, \eta)$ for $(x, x^\vee)$ is equivalent to the data witnessing the functors $(\bS\mapsto x, z\mapsto \ev(z, x^\vee))$ as being adjoint. In turn, the functor $\cC\rightarrow \Sp$ given by $z\mapsto \ev(z, x^\vee)$ is dual to the functor $\Sp\rightarrow \cC^\vee$ given by $\bS\mapsto x^\vee$ which proves the equivalence of the first two statements.

  The equivalence of the last two statements is the content of \cref{lm:reldualiscompact}.
	\end{proof}
 
 This perspective on compact objects allows us to give a formula for the Chern character.

	\begin{prop}
		Suppose $\cC\in\PrSt$ is dualizable and $x\in\cC$ is compact. Then its Chern character $\dim(F_x)\colon \bS\rightarrow \dim(\cC) = \ev(\coev(\bS))$ is given by the composite
		\[\bS\xrightarrow{\eta}\ev(x, x^\vee)\xrightarrow{\epsilon} \ev(\coev(\bS)).\]
		\label{cor:ChernCW}
	\end{prop}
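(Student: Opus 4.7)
The plan is to apply \Cref{def:transfermap} to the 1-morphism $F_x\colon \Sp\to\cC$ in $\PrSt$, which admits a right adjoint in $\PrSt$ by \Cref{lm:reldualiscompact}, and to translate every piece of adjunction data appearing in that definition into the relative duality data $(\epsilon,\eta)$ via the correspondence established in the proof of \Cref{prop:CWduality}.

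First I would recall that $\dim(\Sp)=\bS$ and $\dim(\cC)=\ev_\cC\circ\coev_\cC(\bS)$, so the transfer is a map $\bS\to\ev_\cC(\coev_\cC(\bS))$. The right adjoint to $F_x$ is $H_x=\bHom_\cC(x,-)\colon\cC\to\Sp$, and \Cref{prop:CWduality} identifies $H_x$ with $\ev_\cC(-,x^\vee)$ and exhibits it as the dual in $\PrSt$ of the functor $\Sp\to\cC^\vee$ sending $\bS\mapsto x^\vee$. Under this identification, the unit $\eta_{F_x}\colon \id_\Sp\Rightarrow H_xF_x$, evaluated at $\bS$, is the map $\bS\to\ev_\cC(x,x^\vee)$ selecting $\id_x\in\bHom_\cC(x,x)$, which is exactly the map $\eta$ from the relative duality data. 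Dually, under the equivalence $\cC^\vee\otimes\cC\cong\FunL(\cC,\cC)$ that sends $\coev_\cC(\bS)$ to $\id_\cC$ and $x^\vee\boxtimes x$ to the functor $\ev_\cC(-,x^\vee)\otimes x$, the counit $\epsilon_{F_x}\colon F_xH_x\Rightarrow\id_\cC$ corresponds to the relative duality counit $\epsilon\colon x^\vee\boxtimes x\to\coev_\cC(\bS)$.

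Next I would substitute these identifications into the five-step composite unwound in the remark following \Cref{def:transfermap}. The unit step yields the map $\bS\xrightarrow{\eta}\ev_\cC(x,x^\vee)$; the two middle isomorphisms are the duality identification of $H_x$ with the dual of $F_{x^\vee}$ and a rebracketing of evaluations and coevaluations; and the final step applies $\epsilon$ inside $\ev_\cC$, yielding $\ev_\cC(x,x^\vee)\to\ev_\cC(\coev_\cC(\bS))$. Composing produces exactly the stated formula.

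The main obstacle is purely bookkeeping: the identifications of $H_x$ with $\ev_\cC(-,x^\vee)$ and of the unit/counit of $F_x\dashv H_x$ with the relative duality data must be compatible with the braiding and with the equivalence $\cC^\vee\otimes\cC\cong\FunL(\cC,\cC)$. Once the correspondence from the proof of \Cref{prop:CWduality} is invoked, the claim reduces to unfolding \Cref{def:transfermap} symbol by symbol.
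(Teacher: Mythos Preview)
Your proposal is correct and takes essentially the same approach as the paper: the paper's proof is the single sentence ``the statement is obtained by unpacking \cref{def:transfermap},'' and you have simply carried out that unpacking explicitly, using \cref{prop:CWduality} to identify the adjunction data of $F_x\dashv H_x$ with the relative duality data $(\eta,\epsilon)$.
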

    \begin{proof}
        The statement is obtained by unpacking \cref{def:transfermap}.
    \end{proof}

    \subsection{\texorpdfstring{$K$}{K}-theory}

    Given a small stable $\infty$-category $\cC$ we have the connective $K$-theory spectrum $\bK(\cC)$ defined as in \cite[Section 7]{BlumbergGepnerTabuadaK}. We denote by $K(\cC)=\Omega^\infty \bK(\cC)$ the underlying space. By construction, there is a natural map $\cC^\sim\rightarrow K(\cC)$ from the groupoid core of objects in $\cC$ to $K(\cC)$; so, an object $x\in\cC$ defines a point $[x]\in K(\cC)$.

    There is a \defterm{Dennis trace map}
    \[\tr\colon \bK(\cC)\longrightarrow \THH(\cC)\]
    defined, for instance, by the universal property in \cite[Theorem 10.6]{BlumbergGepnerTabuadaK}. As explained in \cite[Remark 6.12]{HSS}, it is compatible with the dimension map defined previously as follows.

    \begin{prop}
    Let $\cC$ be a stable compactly generated $\infty$-category. Then there is a commutative diagram
    \[
    \xymatrix{
    (\cC^\omega)^\sim \ar^-{\eqref{eq:cherncharacters}}[r] & \Omega^\infty\dim(\cC) \\
    K(\cC^\omega) \ar[u] \ar^-{\tr}[r] & \THH(\cC^\omega) \ar^{\sim}[u]
    }
    \]
    \end{prop}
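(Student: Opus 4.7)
The plan is to combine the additivity of the Chern character with the universal property of the Dennis trace. My first step is to check that \eqref{eq:cherncharacters} is additive: for a cofiber sequence $x' \to x \to x''$ of compact objects in $\cC$, the corresponding functors $F_{x'}, F_x, F_{x''}\colon \Sp \to \cC$ form a cofiber sequence of right-adjointable morphisms in $\PrSt$, and applying the symmetric monoidal functor $\dim(-)$ from \Cref{thm:ExistenceOfDim} yields a corresponding additivity relation $\dim(F_x) \simeq \dim(F_{x'}) + \dim(F_{x''})$ in $\Omega^\infty \dim(\cC)$. In particular, the Chern character of $x \oplus y$ is the sum of the Chern characters of $x$ and $y$.

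By the universal property of connective algebraic $K$-theory \cite[Section 7]{BlumbergGepnerTabuadaK}, any additive invariant of $\cC^\omega$ valued in a connective spectrum factors uniquely through $\bK(\cC^\omega)$. Applied to \eqref{eq:cherncharacters}, this produces a canonical factorization $\bK(\cC^\omega) \to \dim(\cC) \simeq \THH(\cC^\omega)$ which is natural in $\cC \in \Catperf$, thanks to the functoriality inherent in \Cref{thm:ExistenceOfDim}.

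The main obstacle is to identify this extension with the Dennis trace $\tr$, rather than with merely some natural transformation $\bK \to \THH$. To settle this, I would appeal to the universal characterization of $\tr$ given in \cite[Theorem 10.6]{BlumbergGepnerTabuadaK}, which pins it down as the initial lax symmetric monoidal trace-theoretic transformation $\bK \to \THH$ on $\Catperf$. The extended Chern character inherits lax symmetric monoidality from the symmetric monoidality of $\dim$, and restricts on $(\cC^\omega)^\sim$ to the tautological map sending a compact object to its class in $\THH$; universality then forces agreement with $\tr$. A detailed instance of exactly this kind of comparison appears in \cite[Remark 6.12]{HSS}, which I would follow in executing the argument.
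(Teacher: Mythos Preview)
The paper does not actually prove this proposition; it simply attributes it to \cite[Remark~6.12]{HSS}. You arrive at the same reference in your final sentence, so in that sense you and the paper agree.

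That said, the sketch you give before reaching that citation has a genuine gap. Your first step claims that because $\dim(-)$ is a symmetric monoidal functor, applying it to a cofiber sequence $F_{x'}\to F_x\to F_{x''}$ of right-adjointable functors yields an additivity relation $\dim(F_x)\simeq\dim(F_{x'})+\dim(F_{x''})$. Symmetric monoidality does not give this: it tells you $\dim(\cC\otimes\cD)\simeq\dim(\cC)\otimes\dim(\cD)$, i.e.\ multiplicativity, and says nothing about cofiber sequences. Additivity of traces in stable $\infty$-categories is a separate (and not entirely formal) theorem; you cannot read it off from \Cref{thm:ExistenceOfDim}. Relatedly, the universal property you quote from \cite[Section~7]{BlumbergGepnerTabuadaK} is misstated: that result characterizes $\bK$ as the universal additive invariant among functors $\Catperf\to\Sp$, not $\bK(\cC^\omega)$ as a universal target for ``additive'' maps out of $(\cC^\omega)^\sim$ for a fixed $\cC$.

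A cleaner line, and the one underlying the HSS remark, avoids additivity altogether. Both the Chern character \eqref{eq:cherncharacters} and the composite $(\cC^\omega)^\sim\to K(\cC^\omega)\xrightarrow{\tr}\Omega^\infty\THH(\cC^\omega)\simeq\Omega^\infty\dim(\cC)$ are natural in $\cC$ along right-adjointable functors in $\PrSt$ (for the first, this is exactly the functoriality in \Cref{thm:ExistenceOfDim}; for the second, it is functoriality of $\bK$, $\THH$, and the Dennis trace). Given $x\in\cC^\omega$, naturality along $F_x\colon\Sp\to\cC$ reduces the comparison to the single case $\cC=\Sp$, $x=\bS$, where both sides are visibly the unit map. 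No appeal to additivity or to the universal property of $\bK$ is needed.
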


    Recall from \cite[Definition 6.1]{BlumbergGepnerTabuadaK} that an additive invariant is a functor $F\colon \Catperf\rightarrow \Sp$ which preserves filtered colimits, sends Morita equivalences to isomorphisms and sends split-exact sequences to exact sequences of spectra. We will use that algebraic $K$-theory $\bK\colon \Catperf\rightarrow \Sp$ as well as topological Hochschild homology $\THH\colon \Catperf\rightarrow \Sp$ are additive functors, see \cite[Propositions 7.10 and 10.2]{BlumbergGepnerTabuadaK}.

        \subsection{Smooth objects}
        Throughout this section $\cA$ denotes a symmetric monoidal $(\infty, 2)$-category.

        \begin{defn}\label{def:smoothproper}
		Let $x\in\cA$ be a dualizable object.
  \begin{itemize}
      \item $x$ is \defterm{smooth} if $\ev$ admits a left adjoint $\ev^\L$.
      \item $x$ is \defterm{proper} if $\ev$ admits a right adjoint $\ev^\R$.
  \end{itemize}
	\end{defn}
	
	\begin{remark}
		For any dualizable object the evaluation map $\ev$ is dual to the coevaluation map $\coev$, so $\ev$ admits a left adjoint if, and only if, $\coev$ admits a right adjoint $\coev^\R$ and vice versa. In particular, for $\cC \in \PrSt$ we obtain that $\cC$ is smooth if, and only if, $\coev(1) \in \cC \otimes \cC^\vee$ is compact.
	\end{remark}

        \begin{example}
        Suppose $\cA=\PrSt$ and $\cC\in\PrSt$ is compactly generated. Using the duality data from \cref{prop:Stdualizable} we see that $\cC$ is proper if, and only if, for every compact objects $x,y\in\cC$ the mapping spectrum $\bHom_\cC(x, y)$ is finite.
        \end{example}

        If $\cC\in\PrSt$ is smooth, by adjunction we obtain an equivalence
        \[\dim(\cC)\cong \bHom_{\cC\otimes \cC^\vee}(\ev^\L(\bS), \coev(\bS)).\]

        In the smooth case the data of a relative dualizability of an object $x\in\cC$ can be phrased in terms of the evaluation $\epsilon\colon x\boxtimes x^\vee\rightarrow \coev(\bS)$ and coevaluation $\eta\colon \ev^\L(\bS)\rightarrow x\boxtimes x^\vee$ satisfying the duality axioms. In this case the Chern character of a relatively dualizable object $x\in\cC$ in a smooth $\infty$-category can be written as the composite
	\begin{equation}
	\ev^\L(\bS)\xrightarrow{\eta} x\boxtimes x^\vee\xrightarrow{\epsilon} \coev(\bS)
        \label{eqn:cherncharevL}
	\end{equation}

        If $\cD$ is a monoidal $\infty$-category and $\cM$ a $\cD$-module $\infty$-category equipped with an object $M\in\cM$, the \defterm{center} of $M$ is an object $\fZ(M)\in\cD$ satisfying the universal property
        \[\Hom_{\cD}(D, \fZ(M))\cong \Hom_{\cM}(D\otimes M, M)\]
        for every $D\in\cD$, see \cite[Definition 5.3.1.6]{LurieHA} for more details. We will use the center in the following way.

        \begin{defn}
        Let $x\in\cA$ be an object. The \defterm{center} of $x$ is an object $\fZ(x)\in\End_\cA(\bu)$ which is the center of $\id_x\in\End_\cA(x)$, where we view $\End_\cA(x)$ as an $\End_\cA(\bu)$-module $\infty$-category.
        \end{defn}

        \begin{remark}
        As $\id_x$ is an $\bE_1$-algebra in $\End_\cA(x)$, one can show that if the center $\fZ(x)\in\End_\cA(\bu)$ exists, then $\fZ(x)$ upgrades to an $\bE_2$-algebra in $\End_\cA(\bu)$.
        \end{remark}

        \begin{remark}
        Considering $\cA$ as the bicategory of small categories, we see that for a category $\cC\in\cA$ its center $\fZ(\cC)$ is identified with the set of natural endomorphisms of the identity functor.
        \end{remark}

        \begin{prop}
        Let $x\in\cA$ be a smooth object. Then $\coev^\R\circ \coev$ is the center $\fZ(x)$.
        \end{prop}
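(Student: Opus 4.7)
The plan is to verify the universal property of the center directly using the adjunction $\coev \dashv \coev^\R$ coming from smoothness. Concretely, I need to exhibit, for every $D \in \End_\cA(\bu)$, a natural equivalence
\[
\Hom_{\End_\cA(\bu)}(D, \coev^\R \circ \coev) \;\simeq\; \Hom_{\End_\cA(x)}(D \otimes \id_x, \id_x).
\]

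The first step is to recall the standard equivalence
\[
\End_\cA(x) \;\simeq\; \Hom_\cA(\bu, x^\vee \otimes x)
\]
induced by dualizability: a 1-endomorphism $f\colon x \to x$ is sent to $(\id_{x^\vee} \otimes f) \circ \coev$, and the inverse uses $\ev$ together with the cusp 2-isomorphisms. Under this equivalence, $\id_x$ corresponds to $\coev$ itself, and the $\End_\cA(\bu)$-action by tensoring with $D \otimes \id_x$ corresponds (via the interchange law) to precomposition with $D\colon \bu \to \bu$; that is, $D \otimes \id_x$ matches the 1-morphism $\coev \circ D\colon \bu \to x^\vee \otimes x$.

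The second step is to apply the adjunction. Smoothness of $x$ provides the right adjoint $\coev^\R$ of $\coev$, which by the usual characterization of adjunctions in a 2-category yields a natural equivalence of mapping spaces
\[
\Hom(\coev \circ D, \coev) \;\simeq\; \Hom(D, \coev^\R \circ \coev)
\]
inside $\Hom_\cA(\bu, \bu) = \End_\cA(\bu)$. Combining with the translation of the first step gives precisely the desired universal property, so $\coev^\R \circ \coev$ represents the functor $D \mapsto \Hom_{\End_\cA(x)}(D \otimes \id_x, \id_x)$ and is therefore canonically equivalent to $\fZ(x)$.

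The only mildly subtle point is the compatibility in step one: that passing from $\End_\cA(x)$ to $\Hom_\cA(\bu, x^\vee \otimes x)$ is not merely an equivalence of $\infty$-groupoids of 2-cells but is also $\End_\cA(\bu)$-linear, so that $D \otimes \id_x$ really does correspond to $\coev \circ D$. This is where the interchange law in the symmetric monoidal $(\infty,2)$-category $\cA$ enters, using that $D$ is an endomorphism of the monoidal unit $\bu$ and thus commutes past the coevaluation up to canonical 2-isomorphism. Everything else is a formal consequence of duality data (\cref{rmk:ddat}) and the adjunction $\coev \dashv \coev^\R$.
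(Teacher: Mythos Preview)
Your proof is correct and follows essentially the same approach as the paper: both use the dualizability equivalence $\End_\cA(x)\simeq\Hom_\cA(\bu,x^\vee\otimes x)$ sending $\id_x\mapsto\coev$ and $D\otimes\id_x\mapsto\coev\circ D$, and then apply the adjunction $\coev\dashv\coev^\R$ to obtain the universal property. Your additional remark about the interchange law making the equivalence $\End_\cA(\bu)$-linear is a helpful clarification that the paper leaves implicit.
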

        \begin{proof}
        Since $x$ is dualizable, we may identify $\End_\cA(x)\cong \Hom_\cA(\bu, x^\vee\otimes x)$ and under this identification $\id_x\mapsto \coev$. Moreover, since $x$ is smooth, $\coev$ has a right adjoint $\coev^\R$. For $D\in\End_\cA(\bu)$ we have natural isomorphisms
        \[\Hom_{\End_\cA(x)}(D\otimes \id, \id)\cong \Hom_{\Hom_\cA(\bu, x^\vee\otimes x)}(\coev\circ D, \coev)\cong \Hom_{\End_\cA(\bu)}(D, \coev^\R\circ \coev).\]
        \end{proof}

\section{Parametrized spectra}

In this section we describe $\infty$-categorical functoriality of the category of parametrized spectra introduced in \cite{MaySigurdsson} following \cite{AndoBlumbergGepner,HaugsengBG,HopkinsLurie}.

\subsection{Functor categories}

Let $I$ be a small $\infty$-category and $\cC$ a stable presentable $\infty$-category. Consider
\[\Fun(I, \cC)\in\PrSt.\]
As it has small colimits and limits, it is tensored and cotensored over spaces. For a functor $f\colon I\rightarrow J$ of small $\infty$-categories we have the induced functors
\[
\xymatrix@C=2cm{
\Fun(I, \cC) \ar@/^2.0pc/^{f_\sharp}[r] \ar@/_2.0pc/^{f_*}[r] & \Fun(J, \cC) \ar_{f^*}[l]
}
\]
where $(f^* G)(i) = G(f(i))$ is the restriction functor and its left and right adjoints are the left and right Kan extensions given by the (co)ends
\begin{equation}
(f_\sharp F)(j) = \int^{i\in I} \Hom_J(f(i), j)\otimes F(i),\qquad (f_* F)(j) = \int_{i\in I} \Hom(\Hom_J(j, f(i)), F(i)),
\label{eq:Kanextensions}
\end{equation}
see \cite[Lemma A.11]{Ariotta}. We have $f_\sharp\dashv f^* \dashv f_*$. In particular, $f_\sharp$ and $f^*$ are colimit-preserving functors, but note that $f_*$ is not, in general, colimit-preserving. Let us now state some basic properties of functor categories that we will use.

Recall the notion of a smooth functor $f\colon I\rightarrow J$ from \cite[Definition 4.4.15]{Cisinski} (see also \cite[Defnition 4.1.2.9]{LurieHTT}). By \cite[Lemma 4.29]{Barkan} an equivalent way to define it is as follows.

\begin{defn}
A functor $f\colon I\rightarrow J$ is \defterm{proper} if for every $j\in J$ the functor $\{j\}\times_J I\rightarrow I_{/j}$ is cofinal. A functor $f$ is \defterm{smooth} if $f^{\op}\colon I^{\op}\rightarrow J^{\op}$ is proper.
\end{defn}

We will encounter smooth functors coming from the following two examples:
\begin{itemize}
    \item If $f\colon X\rightarrow Y$ is a morphism of $\infty$-groupoids, then it is smooth by \cite[Proposition 4.4.11]{Cisinski}.
    \item The projection $p\colon I\rightarrow \pt$ is smooth by \cite[Proposition 4.4.12]{Cisinski} for any $I$.
\end{itemize}

\begin{prop}[Base change formula]
Let $\cC\in\PrSt$ be a stable presentable $\infty$-category. Suppose that
\[
\xymatrix{
\tilde{I} \ar^{\tilde{g}}[r] \ar^{\tilde{f}}[d] & I \ar^{f}[d] \\
\tilde{J} \ar^{g}[r] & J
}
\]
is a Cartesian diagram of small $\infty$-categories, where $g$ is smooth. Then the natural transformation
\[\tilde{f}_\sharp\tilde{g}^*\Rightarrow g^* f_\sharp\]
of functors $\Fun(I, \cC)\rightarrow \Fun(\tilde{J}, \cC)$ is an equivalence.
\label{prop:basechange}
\end{prop}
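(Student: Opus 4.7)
The plan is to check the natural transformation is an equivalence pointwise, reducing the problem to a cofinality statement which is then handled by the smoothness of $g$. This is essentially the smooth base change theorem for left Kan extensions, proved in Cisinski's book.

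First I would fix $\tilde j\in \tilde J$ and apply the coend formula \eqref{eq:Kanextensions} to both sides. Using the Cartesian identification $\tilde I\cong I\times_J \tilde J$, which gives $\tilde I\times_{\tilde J}\tilde J_{/\tilde j}\cong I\times_J \tilde J_{/\tilde j}$ and identifies $\tilde g$ with the projection onto $I$, the two sides of the transformation at $\tilde j$ become colimits over comma categories:
\[
(g^* f_\sharp F)(\tilde j)\cong \colim_{(i,\alpha)\in I\times_J J_{/g(\tilde j)}} F(i),\qquad (\tilde f_\sharp \tilde g^* F)(\tilde j)\cong \colim_{(i,(\tilde k,\tilde\alpha))\in I\times_J \tilde J_{/\tilde j}} F(i).
\]
The comparison map is induced by a functor $h_{\tilde j}\colon I\times_J \tilde J_{/\tilde j}\to I\times_J J_{/g(\tilde j)}$, which is the identity on the first factor and the forgetful functor $\tilde J_{/\tilde j}\to J_{/g(\tilde j)}$ on the second. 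It therefore suffices to show that $h_{\tilde j}$ is cofinal, whereupon the two colimits in $\cC$ agree via the natural map.

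Second, I would observe that the forgetful functor $\tilde J_{/\tilde j}\to J_{/g(\tilde j)}$ is itself cofinal: both slices have terminal objects (the respective identity morphisms $\id_{\tilde j}$ and $\id_{g(\tilde j)}$), and the functor carries one terminal object to the other, which is a sufficient criterion for cofinality between $\infty$-categories with terminal objects. The remaining task is to show that this cofinality is preserved after base change along $f\colon I\to J$. This is the technical core of the argument and is precisely the content of the smooth base change theorem for $\infty$-categories (\cite[Section 6.4]{Cisinski}): the smoothness of $g$, equivalently the coinitiality of each fiber inclusion $\tilde J_j\hookrightarrow \tilde J_{j/}$, is exactly what ensures that cofinality of functors between categories over $J$ is preserved when pulled back along an arbitrary map into $J$.

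The main obstacle is this final step, namely the transfer of cofinality through the base change along $f$. One can either invoke the smooth base change theorem as a black box, or unfold it directly using the $\infty$-categorical Quillen Theorem A \cite[Theorem 4.1.3.1]{LurieHTT} combined with the characterization of smoothness in terms of fibers. I would expect to proceed via the black-box route, since all the technical content is cleanly packaged in the cited reference. Note that the argument does not use stability or presentability of $\cC$, only that $\cC$ admits small colimits; the same formula holds for any cocomplete target.
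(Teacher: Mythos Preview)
Your approach is correct and essentially identical to the paper's: both reduce immediately to Cisinski's smooth/proper base change theorem. The paper simply notes that replacing $\cC$ by $\cC^{\op}$ turns the statement into the proper base change formula for right Kan extensions, which is \cite[Theorem 6.4.13]{Cisinski}; your pointwise reduction to a cofinality statement is the standard first step in Cisinski's own proof, and you then invoke the same reference as a black box.

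One small remark on precision: your sentence ``the smoothness of $g$ \dots is exactly what ensures that cofinality of functors between categories over $J$ is preserved when pulled back along an arbitrary map into $J$'' is not literally correct as a general principle. Smoothness of $g$ does not make arbitrary cofinal functors over $J$ stable under arbitrary base change; what is true is the more specific statement that the particular functor $\tilde J_{/\tilde j}\to J_{/g(\tilde j)}$ stays cofinal after pulling back along $f$, and this is what Cisinski proves (via Quillen's Theorem A and the fiberwise characterization of smoothness). Since you explicitly opt for the black-box route this does not affect the validity of your argument, but if you ever want to unfold the proof, the cofinality check is at the level of the specific comma categories, not a general preservation principle. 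Your closing observation that only cocompleteness of $\cC$ is used is correct and worth noting.
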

\begin{proof}
The dual claim (replacing $\cC$ by $\cC^{\op}$ and left Kan extensions by right Kan extensions) is shown in \cite[Theorem 6.4.13]{Cisinski}.
\end{proof}

\begin{prop}
Suppose $\cC$ is compactly generated. Then the category $\Fun(I, \cC)$ is compactly generated. If $I$ is finite and $\Hom_I(x, y)$ is a finite space for every $x,y\in I$, then $\Fun(I, \cC^\omega) = \Fun(I, \cC)^\omega$ considered as subcategories of $\Fun(I, \cC)$.
\label{prop:compactcharacterization}
\end{prop}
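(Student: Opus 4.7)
The plan is to treat the two assertions in turn.

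First, I would establish compact generation of $\Fun(I, \cC)$ by transferring compact generators of $\cC$ along left Kan extension. Let $\{c_\alpha\} \subseteq \cC^\omega$ be a set of compact generators of $\cC$. For each $i \in I$ and each $\alpha$, consider the left Kan extension $i_\sharp c_\alpha \in \Fun(I, \cC)$ along the inclusion $\{i\} \hookrightarrow I$, whose value is $(i_\sharp c_\alpha)(j) = \Hom_I(i, j) \otimes c_\alpha$ via the tensoring of $\cC$ over spaces. By adjunction,
\[
\Hom_{\Fun(I, \cC)}(i_\sharp c_\alpha, F) \simeq \Hom_\cC(c_\alpha, F(i)).
\]
Since filtered colimits in $\Fun(I, \cC)$ are computed pointwise, $\ev_i$ preserves them, and compactness of $c_\alpha$ then makes the right-hand side preserve filtered colimits in $F$. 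Thus each $i_\sharp c_\alpha$ is compact, and these objects jointly detect zero since $F \simeq 0$ if and only if every $F(i) \simeq 0$.

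For the identification $\Fun(I, \cC^\omega) = \Fun(I, \cC)^\omega$ under the finiteness hypothesis, the inclusion $\Fun(I, \cC)^\omega \subseteq \Fun(I, \cC^\omega)$ is the easy direction and follows immediately from the previous paragraph. Any compact $F$ is a retract of a finite colimit of the generators $i_\sharp c_\alpha$, whose values $\Hom_I(i, j) \otimes c_\alpha$ are tensors of a compact object with a finite space, hence compact in the stable $\infty$-category $\cC$. Closure of $\cC^\omega$ under retracts and finite colimits then yields $F(j) \in \cC^\omega$ for every $j$.

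The reverse inclusion is the main obstacle. The plan is to express each pointwise compact $F$ as a finite colimit of compact objects in $\Fun(I, \cC)$ via the coYoneda formula
\[
F \simeq \int^{i \in I} i_\sharp F(i).
\]
Each summand $i_\sharp F(i)$ is compact: the right adjoint $\ev_i$ preserves filtered colimits, so $i_\sharp$ preserves compactness, and $F(i)$ is compact by hypothesis. Under the finiteness assumption on $I$, this coend is indexed by the twisted arrow category $\TwArr(I)$, which itself has finitely many objects and finite mapping spaces. The technical point, and the principal obstacle, is to verify that this makes $\TwArr(I)$ sufficiently finite in $\Cat_\infty$ for the coend to behave as a finite colimit, i.e. to commute with filtered colimits in $\cC$. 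Granting this, a finite colimit of compact objects in a stable presentable $\infty$-category is compact, so $F \in \Fun(I, \cC)^\omega$.
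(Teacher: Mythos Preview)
The paper does not argue this; it simply cites \cite[Corollary 2.3 and Proposition 2.8]{AokiTensor}. Your outline is the natural one, and the obstacle you isolate is exactly the crux --- but note that the property you verify for $\TwArr(I)$ (finitely many objects and finite mapping spaces) is \emph{not} what makes a colimit behave as a finite colimit.

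Indeed, if ``$I$ is finite'' were read as ``$I$ has finitely many equivalence classes of objects'', the proposition would be false. Take $I = B(\Z/2)$: one object, $\Hom(\ast,\ast)=\Z/2$ a finite discrete space. The constant functor $\bS \in \Fun(B\Z/2,\Sp)$ lies in $\Fun(I,\Sp^\omega)$, yet it is not compact, since $\bHom(\bS,-)\simeq (-)^{h\Z/2}$ does not preserve filtered colimits ($B\Z/2\simeq\bR P^\infty$ is not finitely dominated; compare \cref{prop:finitetypeproper}). So ``finitely many objects with finite Hom-spaces'' cannot be the hypothesis you are working under, and in particular your observation that $\TwArr(I)$ inherits this property does not suffice.

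The intended hypothesis --- and the one in Aoki's result --- is that $I$ be a \emph{finite $\infty$-category}, i.e.\ Joyal-equivalent to a simplicial set with finitely many non-degenerate simplices; this is satisfied in the paper's only application, to finite posets. Under this reading your argument closes: since $\TwArr(I)_n \cong I_{2n+1}$, a short pigeonhole count on degeneracies shows that $\TwArr(I)$ is again a finite simplicial set, so the coend is a genuine finite colimit. Equivalently and more directly, the end computing $\bHom_{\Fun(I,\cC)}(F,G)$ is then a finite limit and hence commutes with filtered colimits in $G$ once each $F(i)$ is compact. The separate hypothesis on the $\Hom$-spaces is used only for the inclusion $\Fun(I,\cC)^\omega \subset \Fun(I,\cC^\omega)$, exactly as you have it.
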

\begin{proof}
For the first claim see \cite[Corollary 2.3]{AokiTensor} and for the second claim see \cite[Proposition 2.8]{AokiTensor}.
\end{proof}

\begin{prop}
For two small $\infty$-categories $I, J$ and $\cC,\cD\in\PrSt$ there is a natural equivalence
\[\boxtimes\colon \Fun(I, \cC)\otimes \Fun(J, \cD)\longrightarrow \Fun(I\times J, \cC\otimes\cD).\]
\label{prop:externaltensor}
\end{prop}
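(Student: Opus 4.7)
The plan is to reduce the claim to the case $\cC=\cD=\Sp$ using an absorption formula for functor categories, and then to invoke the universal property of presheaf $\infty$-categories. First I would construct the comparison map directly: the external product functor
\[
\Fun(I,\cC)\times \Fun(J,\cD)\longrightarrow \Fun(I\times J,\cC\otimes \cD),\qquad (F,G)\longmapsto \bigl((i,j)\mapsto F(i)\otimes G(j)\bigr),
\]
is bicocontinuous (colimits in functor categories are computed pointwise and $\otimes$ in $\PrSt$ preserves colimits in each variable), so the universal property of the tensor product in $\PrSt$ yields a colimit-preserving functor $\boxtimes$ out of $\Fun(I,\cC)\otimes \Fun(J,\cD)$.

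Next I would invoke the standard absorption equivalence $\Fun(I,\cC)\simeq \Fun(I,\Sp)\otimes \cC$ in $\PrSt$. This follows from Lurie's identification $\PSh_\Sp(K)\otimes \cC\simeq \Fun(K^\op,\cC)$ applied with $K=I^\op$, together with the fact that every $\cC\in\PrSt$ is canonically tensored over $\Sp$. Applying this absorption to both factors on the left-hand side and, with $I$ replaced by $I\times J$, to the right-hand side, and then using associativity and symmetry of $\otimes$, the problem reduces to the case $\cC=\cD=\Sp$: the equivalence
\[
\Fun(I,\Sp)\otimes \Fun(J,\Sp)\simeq \Fun(I\times J,\Sp).
\]

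For this last equivalence one may argue via Yoneda, using that $\Fun(K,\Sp)=\PSh_\Sp(K^\op)$ is the free stable presentable $\infty$-category on $K^\op$: for any $\cE\in\PrSt$,
\[
\FunL(\Fun(I,\Sp)\otimes \Fun(J,\Sp),\cE)\simeq \Fun(I^\op,\Fun(J^\op,\cE))\simeq \Fun((I\times J)^\op,\cE)\simeq \FunL(\Fun(I\times J,\Sp),\cE),
\]
and Yoneda promotes this to the desired equivalence of $\PrSt$-objects.

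The main technical point, and the only real obstacle, is to verify that the equivalence assembled through these identifications actually agrees with the explicitly described map $\boxtimes$ from the first paragraph. Since both functors are colimit-preserving in each variable, it suffices to check agreement on a class of generators, e.g.\ on pairs of the form $\bigl(\Sigma^\infty_+\Hom_I(-,i)\otimes C,\ \Sigma^\infty_+\Hom_J(-,j)\otimes D\bigr)$: both maps send such a pair to $\Sigma^\infty_+\bigl(\Hom_I(-,i)\times \Hom_J(-,j)\bigr)\otimes (C\otimes D)\in\Fun(I\times J,\cC\otimes \cD)$, so they coincide by density.
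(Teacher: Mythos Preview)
Your proof is correct and follows essentially the same strategy as the paper: reduce via the absorption formula $\Fun(I,\cC)\simeq \Fun(I,-)\otimes \cC$ to the case of the base category, then identify the tensor of two presheaf categories with presheaves on the product via a universal property. The paper carries out the reduction with base $\cS$ rather than $\Sp$ (using Aoki's $\Fun(I,\cC)\cong\Fun(I,\cS)\otimes\cC$), and in the base case it invokes the identification $\cC\otimes\cD\simeq\Fun^{\R}(\cC^{\op},\cD)$ together with \cite[Theorem 5.1.5.6]{LurieHTT} rather than your Yoneda argument via $\FunL$; these are dual presentations of the same computation. Your additional step of constructing the explicit $\boxtimes$ and checking on generators that it agrees with the abstract equivalence is not in the paper (which only asserts existence of a natural equivalence), but it is correct and arguably makes the statement more usable.
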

\begin{proof}
By \cite[Corollary 2.2]{AokiTensor} we have $\Fun(I, \cC)\cong \Fun(I, \cS)\otimes\cC$, so it is enough to prove the claim for $\cC=\cD=\cS$. Recall that by \cite[Proposition 4.8.1.17]{LurieHA} there is a natural identification between $\cC\otimes\cD$ and the $\infty$-category $\Fun^{\R}(\cC^{\op}, \cD)$ of limit-preserving functors $\cC^{\op}\rightarrow \cD$. Therefore,
\[\Fun(I, \cS)\otimes \Fun(J, \cS)\cong \Fun^{\R}(\Fun(I, \cS)^{\op}, \Fun(J, \cS)).\]
By \cite[Theorem 5.1.5.6]{LurieHTT} we have
\[\Fun^{\R}(\Fun(I, \cS)^{\op}, \Fun(J, \cS))\cong \Fun(I, \Fun(J, \cS))\]
which is naturally equivalent to $\Fun(I\times J, \cS)$.
\end{proof}

\subsection{Duality for stable presheaves}

Consider a small $\infty$-category $I$ as before and the $\infty$-category $\Sp^I = \Fun(I, \Sp)$ of functors. Recall from \cref{ex:presheafdual} that $\Sp^I$ is dualizable with dual $\Sp^{I^{\op}}$.

\begin{prop}
Let $f\colon I\rightarrow J$ be a functor of small $\infty$-categories and $f^{\op}\colon I^{\op}\rightarrow J^{\op}$ its opposite. Consider the duality $(\Sp^I)^\vee\cong\Sp^{I^{\op}}$ and $(\Sp^J)^\vee\cong \Sp^{J^{\op}}$ from \cref{ex:presheafdual}. Then $f_\sharp\colon \Sp^I\rightarrow \Sp^J$ is dual to $(f^{\op})^*\colon \Sp^{J^{\op}}\rightarrow \Sp^{I^{\op}}$. Moreover, the following conditions are equivalent:
\begin{itemize}
    \item The functor $f^*\colon \Sp^J\rightarrow \Sp^I$ admits a colimit-preserving right adjoint $f_*\colon \Sp^I\rightarrow \Sp^J$.
    \item The functor $(f^{\op})_\sharp\colon \Sp^{I^{\op}}\rightarrow \Sp^{J^{\op}}$ admits a left adjoint $(f^{\op})^\sharp\colon \Sp^{J^{\op}}\rightarrow \Sp^{I^{\op}}$.
\end{itemize}
\label{prop:presheafduality}
\end{prop}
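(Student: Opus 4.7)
The plan is to unpack both claims from the explicit duality data recorded in \cref{ex:presheafdual}: the evaluation $\ev_{\Sp^I}\colon \Sp^I\otimes \Sp^{I^{\op}}\to \Sp$ is the coend pairing $(X,Y)\mapsto \int^{i\in I} X(i)\otimes Y(i)$, and the coevaluation $\Sp\to \Sp^{I^{\op}\times I}$ picks out the Hom-bifunctor $\Sigma^\infty_+\Hom_I(-,-)$.

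For the first claim, I would use the universal property characterizing the dual of a cocontinuous functor $F\colon\cC\to\cD$ between dualizable objects of $\PrSt$: namely, $F^\vee\colon \cD^\vee\to\cC^\vee$ is the unique cocontinuous functor equipped with a natural equivalence $\ev_\cD(F(X),Y)\simeq \ev_\cC(X,F^\vee(Y))$ in $X\in\cC$ and $Y\in\cD^\vee$ (this follows by applying Yoneda to the dualizable $\infty$-category $\cC$). Applied to $F=f_\sharp$, the left-hand side unfolds via \eqref{eq:Kanextensions} into a double coend over $j\in J$ and $i\in I$. Swapping the order of integration by Fubini for coends and then applying co-Yoneda to the contravariant functor $Y\in\Sp^{J^{\op}}$ collapses the inner $j$-coend to $Y(f(i))$, yielding $\int^i X(i)\otimes Y(f(i))=\ev_{\Sp^I}(X,(f^{\op})^*Y)$. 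This identifies $(f_\sharp)^\vee\simeq (f^{\op})^*$.

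For the second claim I would invoke the standard principle, valid in any symmetric monoidal $(\infty,2)$-category, that dualization exchanges right- and left-adjointability: a morphism $F$ of dualizable objects admits a right adjoint $F^\R$ if, and only if, $F^\vee$ admits a left adjoint, in which case $(F^\R)^\vee$ is itself that left adjoint (the unit and counit of one adjunction pass to duals to produce the unit and counit of the other). In the setting of $\PrSt$ this translates existence of a cocontinuous right adjoint to existence of a left adjoint. It therefore suffices to identify $(f^*)^\vee$. A coend computation entirely parallel to the first part---this time applying co-Yoneda to the covariant functor $Y\in\Sp^J$---shows $\ev_{\Sp^I}(f^*(Y),X)\simeq \ev_{\Sp^J}(Y,(f^{\op})_\sharp X)$, whence $(f^*)^\vee\simeq (f^{\op})_\sharp$, completing the equivalence.

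I do not expect a substantial obstacle: the argument is essentially formal once the duality data is written down. The only bookkeeping hazard is keeping careful track of covariance versus contravariance, so that the correct form of co-Yoneda is applied in each coend manipulation.
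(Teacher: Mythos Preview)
Your proposal is correct and follows essentially the same route as the paper: both arguments unpack the explicit duality data from \cref{ex:presheafdual} and reduce the first claim to a co-Yoneda computation, then deduce the second claim from the formal principle that dualization in a symmetric monoidal $(\infty,2)$-category exchanges left and right adjointability. The only cosmetic difference is that the paper verifies commutativity of the \emph{coevaluation} square \eqref{eq:coevevcompatibility} directly (reducing to $\Sigma^\infty_+\Hom_J(f(i),j)\cong \int^{k}\Sigma^\infty_+\Hom_I(k,i)\otimes\Sigma^\infty_+\Hom_J(f(k),j)$), whereas you verify the \emph{evaluation} side; and the paper obtains $(f^*)^\vee\cong (f^{\op})_\sharp$ by dualizing the adjunction $f_\sharp\dashv f^*$ rather than by a second direct coend computation.
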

\begin{proof}
For the first statement we have to show that the diagram
\[
\xymatrix@C=2cm{
\Sp \ar^{\coev}[r] \ar^{\coev}[d] & \Sp^{I^{\op}}\otimes \Sp^I \ar^{\id\otimes f_\sharp}[d] \\
\Sp^{J^{\op}}\otimes \Sp^J \ar^{(f^{\op})^*\otimes \id}[r] & \Sp^{I^{\op}}\otimes \Sp^J
}
\]
naturally commutes. This follows from the natural equivalence
\[\Sigma^\infty_+\Hom_J(f(i), j)\cong \int^{i\in I} \Sigma^\infty_+\Hom(k, i)\otimes \Sigma^\infty_+\Hom(f(k), j).\]

Given the adjunction $f^*\dashv f_*$, passing to the duals we get an adjunction $(f_*)^\vee\dashv (f^*)^\vee\cong (f^{\op})_\sharp$ which shows the second statement.
\end{proof}

Consider the projection $p\colon I\rightarrow \ast$ and the diagonal map $\Delta\colon I\rightarrow I\times I$. Denote by $\bS_I=p^*\bS\in\Sp^I$ the constant object $i\in I\mapsto\bS$. Consider the following additional assumption on the $\infty$-category $I$.

\begin{defn}
A small $\infty$-category $I$ \defterm{admits duality} if the functor $\Delta_\sharp p^*\colon \Sp\rightarrow \Sp^I\otimes \Sp^I$ is the coevaluation of a self-duality of $\Sp^I$.
\label{def:admitduality}
\end{defn}

\begin{remark}
The evaluation is dual to coevaluation, so the functor $\Delta_\sharp p^*\colon\Sp\rightarrow \Sp^I\otimes \Sp^I$ is the coevaluation of a self-duality of $\Sp^I$ if, and only if, the functor $(p^{\op})_\sharp (\Delta^{\op})^*\colon \Sp^{I^{\op}}\otimes \Sp^{I^{\op}}\rightarrow \Sp$ is the evaluation of a self-duality of $\Sp^{I^{\op}}$. This is similar, but not identical, to the notion of a \emph{Verdier poset} from \cite[Definition 1.8]{AokiPosets}, where a finite poset $P$ is called Verdier if $p_*\Delta^*$ is the evaluation of a self-duality of $\Sp^P$ (note the difference in the choice of the pushforward functors).
\end{remark}

If $I$ admits duality, then we may compare the self-duality of $\Sp^I$ to the duality between $\Sp^I$ and $\Sp^{I^{\op}}$ from \cref{ex:presheafdual} using the duality functor
\[\bD\colon \Sp^{I^{\op}}\longrightarrow \Sp^I\]
given by
\[\bD(\cF)(i) = \colim_j \Hom(j, i)\otimes \cF(j).\]

\begin{remark}
For an $\infty$-category $I$ admitting duality the functor $\bD$ is an equivalence. This functor is a generalization of the Costenoble--Waner duality functor, see \cref{rmk:CWduality}.
\end{remark}

\begin{prop}
Let $f\colon I\rightarrow J$ be a functor of small $\infty$-categories which admit duality and consider the corresponding self-duality of $\Sp^I$ and $\Sp^J$. Then there is a natural transformation
\begin{equation}\label{eq:pushforwarddual}
(f_\sharp)^\vee\longrightarrow f^*.
\end{equation}
If $f$ is smooth, \eqref{eq:pushforwarddual} is an equivalence and the following conditions are equivalent:
\begin{itemize}
    \item The functor $f^*\colon \Sp^J\rightarrow \Sp^I$ admits a colimit-preserving right adjoint $f_*\colon \Sp^I\rightarrow \Sp^J$.
    \item The functor $f_\sharp\colon \Sp^I\rightarrow \Sp^J$ admits a left adjoint $f^\sharp\colon \Sp^J\rightarrow \Sp^I$.
\end{itemize}
\label{prop:presheafselfduality}
\end{prop}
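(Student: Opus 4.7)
The plan is to reduce the statement to \cref{prop:presheafduality} (the standard duality case) by using the functor $\bD$ as the comparison between the standard duality $(\Sp^K)^\vee \cong \Sp^{K^{op}}$ and the self-duality $(\Sp^K)^\vee \cong \Sp^K$. First I would check that for any $K$ admitting duality the equivalence $\bD_K\colon \Sp^{K^{op}} \to \Sp^K$ implements the comparison between the two dualities, in the sense that $(\bD_K \otimes \id) \circ \coev_{\mathrm{std}} \cong \Delta_\sharp p^*$; this is a direct calculation with the coend formula. By uniqueness of duality data (\cref{rmk:ddat}), the dual $(f_\sharp)^\vee$ with respect to the self-dualities on source and target is then equivalent to the composite $\bD_I \circ (f^{op})^* \circ \bD_J^{-1}$, where $(f^{op})^*$ is the dual with respect to the standard dualities, as provided by \cref{prop:presheafduality}. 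Producing the natural transformation \eqref{eq:pushforwarddual} thus reduces to constructing a natural transformation $\bD_I \circ (f^{op})^* \to f^* \circ \bD_J$. Unwinding the coend $\bD_K(G)(k) = \colim_{(k' \to k) \in K_{/k}} G(k')$, both sides become colimits over slice categories, and the comparison map is induced by the evident functor $\phi_i\colon I_{/i} \to J_{/f(i)}$ sending $(i' \to i)$ to $(f(i') \to f(i))$.

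For the equivalence when $f$ is smooth, I would either express $\bD$ via the twisted arrow category as $(\pi_t)_\sharp \pi_s^*$ and reduce the comparison to a Beck--Chevalley instance of \cref{prop:basechange} after factoring through a Cartesian replacement, or argue directly that each $\phi_i$ is cofinal. The cofinality criterion reduces the latter to contractibility of the appropriate slice categories, which is the content the notion of categorical smoothness (i.e., properness of $f^{op}$) is designed to yield. Once $(f_\sharp)^\vee \cong f^*$ is established, the equivalence of the two adjointability conditions is formal: in the $(\infty,2)$-category $\PrSt$, a morphism $g$ between dualizable objects admits a left adjoint iff $g^\vee$ admits a right adjoint, since dualization exchanges left and right adjoints. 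Applied to $g = f_\sharp$ with $g^\vee \cong f^*$, this gives that $f_\sharp$ admits a left adjoint $f^\sharp$ iff $f^*$ admits a colimit-preserving right adjoint $(f^\sharp)^\vee$.

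The main obstacle is the cofinality/base-change step under smoothness. The naive square involving the twisted arrow categories over $f\colon I \to J$ is not Cartesian, so \cref{prop:basechange} does not apply directly; the argument must either factor through a Cartesian replacement where base change does apply, or establish cofinality of $\phi_i$ directly. Either way, the heart of the argument is identifying the categorical condition of smoothness with the cofinality of the comparison functor $\phi_i$.
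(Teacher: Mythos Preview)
Your route through the comparison equivalence $\bD$ is conceptually reasonable but genuinely different from, and more circuitous than, the paper's argument. The paper does not pass through the standard duality $(\Sp^K)^\vee\cong\Sp^{K^{\op}}$ at all. Instead it works directly with the self-duality coevaluation $\Delta_\sharp\bS_K$ and observes that the square
\[
\xymatrix{
I \ar^{(f\times\id)\circ\Delta^I}[r] \ar^{f}[d] & J\times I \ar^{\id\times f}[d] \\
J \ar^{\Delta^J}[r] & J\times J
}
\]
is Cartesian. The associated Beck--Chevalley transformation $(f_\sharp\boxtimes\id)\Delta^I_\sharp f^*\Rightarrow(\id\boxtimes f^*)\Delta^J_\sharp$, evaluated at $\bS_J$, gives a map $(f_\sharp\boxtimes\id)\Delta^I_\sharp\bS_I\to(\id\boxtimes f^*)\Delta^J_\sharp\bS_J$ in $\Sp^{J\times I}$, which \emph{is} the desired natural transformation $(f_\sharp)^\vee\to f^*$ once one unwinds the self-duality. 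When $f$ is smooth, so is $\id\times f$ (stability under base change), and \cref{prop:basechange} applies directly to this square, giving the equivalence with no further work. The equivalence of adjointability conditions is then handled exactly as you do, by dualizing.

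The obstacle you flag---that the square you would naturally write down (via twisted arrows or slice categories) is not Cartesian, so \cref{prop:basechange} does not apply directly---is real for your approach but is simply absent in the paper's: the diagonal square above \emph{is} Cartesian, and the smoothness hypothesis lands exactly on the map where the base change formula needs it. Your reduction to cofinality of $\phi_i\colon I_{/i}\to J_{/f(i)}$ is not obviously the same condition as smoothness of $f$, and establishing it would require an additional argument you have not supplied. So while your outline could likely be completed, the paper's choice of square is the idea that makes the proof short; it is worth internalizing that when the self-duality is \emph{defined} via $\Delta_\sharp$, the natural comparison squares to look for are the ones built from diagonals rather than from the $\bD$-comparison with the standard duality.
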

\begin{proof}
Consider the Cartesian diagram of $\infty$-categories
\[
\xymatrix@C=2cm{
I \ar^{(f\times \id)\circ \Delta^I}[r] \ar^{f}[d] & J\times I \ar^{\id\times f}[d] \\
J \ar^{\Delta^J}[r] & J\times J
}
\]
and the corresponding base change natural transformation
\[(f_\sharp\boxtimes \id)\Delta^I_\sharp f^*\longrightarrow (\id\boxtimes f^*)\Delta^J_\sharp.\]
Applying it to $\bS_J$ we get a morphism
\[(f_\sharp\boxtimes \id)\Delta^I_\sharp \bS_I\longrightarrow (\id\boxtimes f^*)\Delta^J_\sharp \bS_J\]
which produces a natural transformation $(f_\sharp)^\vee\rightarrow f^*$.

If $f\colon I\rightarrow J$ is smooth, then $(f\times \id)\colon J\times I\rightarrow J\times J$ is smooth, since smooth functors are stable under base change \cite[Proposition 4.4.2]{Cisinski}. In particular, by the base change formula from \cref{prop:basechange} the morphism $(f_\sharp\boxtimes \id)\Delta^I_\sharp \bS_I\rightarrow (\id\boxtimes f^*)\Delta^J_\sharp \bS_J$ is an isomorphism.

The second statement is proven analogously to the second statement in \cref{prop:presheafduality}.
\end{proof}

If $I$ is a small $\infty$-category which admits duality, the product $I\times I$ admits duality. Then \eqref{eq:pushforwarddual} gives a natural transformation
\begin{equation}\label{eq:Deltasharpdual}
\Delta_\sharp^\vee\longrightarrow \Delta^*.
\end{equation}

The functor $\Delta^*$ defines a symmetric monoidal structure on $\Sp^I$ with the unit $\bS_I$. Its left adjoint, $\Delta_\sharp$, defines a symmetric comonoidal structure with the counit $p_\sharp$. Passing to the dual we obtain a \emph{new} symmetric monoidal structure on $\Sp^I$ given by $\Delta_\sharp^\vee$ with the same unit: $(p_\sharp)^\vee(\bS)\cong p^*\bS \cong \bS_I$. In particular, there is the unitor natural isomorphism
\begin{equation}\label{eq:Deltasharpunitor}
\Delta_\sharp^\vee(\bS_I \boxtimes \cF)\cong \cF.
\end{equation}

Then \eqref{eq:Deltasharpdual} defines a lax symmetric monoidal structure on the identity functor with respect to the two symmetric monoidal structures $\Delta_\sharp^\vee$ and $\Delta^*$.

If we assume that $I$ admits duality and $\bS_I$ is compact, we have further structure:
\begin{itemize}
    \item $\bS_I$ admits a relative dual $\zeta_I=p^\sharp\bS\in\Sp^I$. From \cref{prop:CWduality} we get the formula
    \[\zeta_I = (p_*\boxtimes \id)\Delta_\sharp \bS_I.\]
    The adjunction $p^\sharp\dashv p_\sharp$ is given by a natural equivalence
    \begin{equation}\label{eq:zetapushforward}
    \bHom_{\Sp^I}(\zeta_I, -)\cong p_\sharp(-).
    \end{equation}
    \item We obtain a formula for the evaluation as
    \begin{equation}\label{eq:evzeta}
    \ev_{\Sp^I} \xrightarrow[\sim]{\eqref{eq:evdual}} \coev_{\Sp^I}^\vee \cong (p^*)^\vee \circ \Delta_\sharp^\vee \xrightarrow[\sim]{\eqref{eq:pushforwarddual}} p_\sharp\circ \Delta_\sharp^\vee\xrightarrow[\sim]{\eqref{eq:zetapushforward}} \bHom_{\Sp^I}(\zeta_I, \Delta_\sharp^\vee(- \boxtimes -)).
    \end{equation}
\end{itemize}

\begin{lm}\label{lm:counitforunit}
    Under the equivalence
    \[
    \ev_{\Sp^I}(\bS_I \boxtimes \zeta_I) \xrightarrow[\sim]{\eqref{eq:evzeta}} \bHom_{\Sp^I}(\zeta_I, \Delta_\sharp^\vee(\bS_I \boxtimes \zeta_I)) \xrightarrow[\sim]{\eqref{eq:Deltasharpunitor}} \bHom_{\Sp^I}(\zeta_I, \zeta_I),
    \]
    the coevaluation $\bS \to \ev_{\Sp^I}(\bS_I \boxtimes \zeta_I)$ of the relative duality between $\bS_I$ and $\zeta_I$ is mapped to the identity morphism $\zeta_I\rightarrow \zeta_I$.
\end{lm}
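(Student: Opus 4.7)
The statement is essentially a tautology of the relative duality formalism, and my plan is to recognize that the composite of \eqref{eq:evzeta} with the unitor \eqref{eq:Deltasharpunitor} is precisely the adjunction isomorphism that characterizes $\zeta_I$ as the relative dual of $\bS_I$ via \cref{prop:CWduality}. Once this is established, the image of $\eta$ is the identity by the basic fact that, for any adjunction $L\dashv R$, the unit at $V$ corresponds under $\bHom(V, RLV) \simeq \bHom(LV, LV)$ to $\id_{LV}$.

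Concretely, I would first unpack \eqref{eq:evzeta} at the pair $(\bS_I, \zeta_I)$, using the unitor \eqref{eq:Deltasharpunitor} to replace the inner $\Delta_\sharp^\vee(\bS_I \boxtimes \zeta_I)$ by $\zeta_I$, so that the chain reduces to
\[
\ev_{\Sp^I}(\bS_I \boxtimes \zeta_I) \xrightarrow{\sim} p_\sharp(\zeta_I) \xrightarrow[\eqref{eq:zetapushforward}]{\sim} \bHom_{\Sp^I}(\zeta_I, \zeta_I),
\]
whose final arrow is by definition the $(p^\sharp, p_\sharp)$-adjunction isomorphism applied at $(\bS, \zeta_I)$. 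I would then verify that the first part of the chain (coming from \eqref{eq:evdual} and \eqref{eq:pushforwarddual}) assembles $\eta$ into the unit of $p^\sharp \dashv p_\sharp$ at $\bS$, by invoking \cref{prop:CWduality} to re-express the relative duality data $(\bS_I, \zeta_I, \epsilon, \eta)$ as the adjunction data for an adjunction $F_{\bS_I} = p^* \dashv H_{\bS_I}$ with $\zeta_I \simeq (\id \boxtimes H_{\bS_I})\coev_{\Sp^I}(\bS)$.

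The main obstacle is this last identification: one must match the adjunction $F_{\bS_I} \dashv H_{\bS_I}$ from \cref{prop:CWduality} with the $(p^\sharp, p_\sharp)$-adjunction through the self-duality of $\Sp^I$, keeping careful track of how the duality functor $\bD$ interacts with both the formula $x^\vee \simeq (\id \boxtimes H_x)\coev(\bS)$ and the $\coev_{\Sp^I} = \Delta_\sharp p^*$ of \cref{def:admitduality}. Once the $(p^\sharp, p_\sharp)$-adjunction is recognized as the underlying adjunction of the relative duality, the conclusion is a one-line consequence of the universal property applied to the identity $\zeta_I \to \zeta_I$.
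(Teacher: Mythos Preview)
Your strategy matches the paper's: reduce to showing that the composite $\ev_{\Sp^I}(\bS_I\boxtimes -)\xrightarrow{\eqref{eq:evzeta},\eqref{eq:Deltasharpunitor}}\bHom_{\Sp^I}(\zeta_I,-)$ agrees with the canonical adjunction isomorphism $(p^*)^\vee\xrightarrow{\eqref{eq:pushforwarddual},\eqref{eq:zetapushforward}}\bHom_{\Sp^I}(\zeta_I,-)$, so that the coevaluation (which is the unit of $p^\sharp\dashv (p^*)^\vee$) is sent to the identity. The paper carries out precisely this verification by unpacking \eqref{eq:evzeta}, cancelling the unitor \eqref{eq:Deltasharpunitor} against \eqref{eq:evdual}, and then checking that the residual automorphism of $(p^*)^\vee$ is trivial via a short dualization argument; your proposed route through \cref{prop:CWduality} and the functor $\bD$ would ultimately amount to the same check, though the paper's direct computation avoids tracking $\bD$ explicitly.
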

\begin{proof}
First note that the relative duality between $\bS_I$ and $\zeta_I = p^\sharp \bS$ is established by the adjunction
\[
p^\sharp \dashv (p^*)^\vee.
\]
By \cref{lm:reldualiscompact} it follows that the unit of the adjunction $\bS \to (p^*)^\vee(\zeta_I) = \ev_{\Sp^I}(\bS \boxtimes \zeta_I)$ is the coevaluation of the relative duality. But the unit of the adjunction is determined by the requirement that under the isomorphism
\[
\ev_{\Sp^I}(\bS_I \boxtimes \zeta_I) = (p^*)^\vee(\zeta_I) \overset{\eqref{eq:pushforwarddual}}{\cong} p_\sharp(\zeta_I) \overset{\eqref{eq:zetapushforward}}{\cong} \bHom_{\Sp^I}(\zeta_I, \zeta_I)
\]
it is sent to the identity. The statement thus follows once we show that the natural transformation
\[
    (p^*)^\vee(-) = \ev_{\Sp^I}(\bS_I \boxtimes -) \xrightarrow[\sim]{\eqref{eq:evzeta}} \bHom_{\Sp^I}(\zeta_I, \Delta_\sharp^\vee(\bS_I \boxtimes -)) \xrightarrow[\sim]{\eqref{eq:Deltasharpunitor}} \bHom_{\Sp^I}(\zeta_I, -),
\]
is equivalent to
\[
(p^*)^\vee(-) \overset{\eqref{eq:pushforwarddual}, \eqref{eq:zetapushforward}}{\cong} \bHom_{\Sp^I}(\zeta_I, -).
\]
Unpacking the definition of \eqref{eq:evzeta} we obtain
\[
    (p^*)^\vee(-) = \ev_{\Sp^I}(\bS_I \boxtimes -) \xrightarrow[\sim]{\eqref{eq:evdual}} (p^*)^\vee \circ \Delta_\sharp^\vee (\bS_I \boxtimes -) \overset{\eqref{eq:Deltasharpunitor}}{\cong} (p^*)^\vee(-) \xrightarrow[\sim]{\eqref{eq:pushforwarddual}, \eqref{eq:zetapushforward}} \bHom_{\Sp^I}(\zeta_I, -).
\]
We claim that the automorphism of $(p^*)^\vee$ appearing is homotopic to the identity. Spelling out the definition of \eqref{eq:Deltasharpunitor} we arrive at the following presentation of that automorphism
\[
    (p^*)^\vee = \ev_{\Sp^I} \circ (p^* \otimes \id) \xrightarrow[\sim]{\eqref{eq:evdual}} (p^*)^\vee \circ \Delta_\sharp^\vee \circ (p^* \otimes \id) \xleftarrow[\sim]{\eqref{eq:pushforwarddual}} (p^*)^\vee \circ \Delta_\sharp^\vee \circ (p_\sharp^\vee \otimes \id) \cong (p^*)^\vee (( p_\sharp \otimes \id) \circ \Delta_\sharp)^\vee \cong (p^*)^\vee.
\]
By passing to duals, it is given by
\[
p^* = (p^*)^{\vee \vee} =  ((p^*)^\vee \otimes \id)(\Delta_\sharp p^*) \xleftarrow[\sim]{\eqref{eq:pushforwarddual}} (p_\sharp \otimes \id)(\Delta_\sharp p^*) = p^*.
\]
which is equivalent to
\[
p^* \xleftarrow[\sim]{\eqref{eq:pushforwarddual}} p_\sharp^\vee =  (p_\sharp \otimes \id)(\Delta_\sharp p^*) = p^*.
\]
To show that this composite is homotopic to the identity it suffices to show that the adjoint map $p_\sharp p^* \to \id$ is the counit of the adjunction $p_\sharp \dashv p^*$. But \eqref{eq:pushforwarddual} is defined by its adjoint map $p_\sharp p_\sharp^\vee(\bS) = (p_\sharp \otimes p_\sharp)\Delta_\sharp p^*\bS \to \bS$ as the composition of
\[
(p_\sharp \otimes p_\sharp)\Delta_\sharp p^* \cong (\Delta_{\pt})_\sharp p_\sharp p^* = p_\sharp p^* \to \id,
\]
where we used the equalities $(p \times p)\circ \Delta_I = \Delta_{\pt} \circ p = p$,
so that precomposing with $p^* \cong (\id \otimes p_\sharp)\Delta_\sharp p^*$ gives the desired counit.
\end{proof}

\subsection{The case of spaces}

In this section we specialize the previous discussion to the case when $I=X$ is a space.

\begin{defn}
The $\infty$-category of \defterm{parametrized spectra} is
\[\Sp^X = \Fun(X, \Sp).\]
\end{defn}

\begin{remark}
Let $M$ be a CW complex and $X=\Sing(M)$ its underlying homotopy type. Then $M$ is locally of singular shape in the sense of \cite[Definition A.4.15]{LurieHA} and so one may identify $\Sp^X$ with the full subcategory $\Shv_{\mathrm{lc}}(M; \Sp)\subset \Shv(M; \Sp)$ of locally constant sheaves of spectra over $M$ \cite[Theorem A.1.15]{LurieHA}.
\end{remark}

\begin{remark}
All constructions and theorems in this section apply equally well to the $\infty$-category of local systems $\LocSys(X) = \Fun(X, \Mod_k)$, where $k$ is an $\bE_\infty$-ring. To simplify the notation, in this section we only consider the case $k=\bS$.
\end{remark}

A map of spaces $f\colon X\rightarrow Y$ induces functors
\[
\xymatrix@C=2cm{
\Sp^X \ar@/^2.0pc/^{f_\sharp}[r] \ar@/_2.0pc/^{f_*}[r] & \Sp^Y \ar_{f^*}[l]
}
\]
with $f_\sharp\dashv f^* \dashv f_*$. In this case the formulas \eqref{eq:Kanextensions} reduce to
\begin{equation}
(f_\sharp \cL)_y\cong \colim_{x\in f^{-1}(y)} \cL_x,\qquad (f_*\cL)_y\cong \lim_{x\in f^{-1}(y)} \cL_x
\label{eq:pushforwardformula}
\end{equation}
as follows from \cite[Theorem 1.2]{HaugsengCoends}. For instance, for $p\colon X\rightarrow \pt$ we have the constant parametrized spectrum $\bS_X=p^*\bS$ and
\[p_\sharp \bS_X\cong \colim_{x\in X} \bS\cong \Sigma^\infty_+ X.\]

The construction of parametrized spectra has the following compatibility with colimits.

\begin{prop}
Consider a functor $I\rightarrow \cS$ given by $i\mapsto X_i$ with colimit $X\in\cS$ and denote the natural projections by $f_i\colon X_i\rightarrow X$. Then the counits $(f_i)_\sharp (f_i)^*\rightarrow \id$ identify
\[\colim_i (f_i)_\sharp (f_i)^*\cong \id.\]
\label{prop:colimitapproximation}
\end{prop}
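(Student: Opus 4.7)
The plan is to verify the equivalence pointwise. Since $\Sp^X = \Fun(X, \Sp)$, the family $\{y^*\colon \Sp^X\to \Sp\}_{y\in X}$ is jointly conservative, so it suffices to show that for each $y\in X$ and each $\cL\in\Sp^X$ the map $\colim_i y^*(f_i)_\sharp(f_i)^*\cL \to y^*\cL$ assembled from the counits is an equivalence.

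Fix $y\in X$ and form the Cartesian square
\[
\xymatrix{
f_i^{-1}(y) \ar^{g_i}[r] \ar_{p_i}[d] & X_i \ar^{f_i}[d] \\
\{y\} \ar_{y}[r] & X.
}
\]
Since $y\colon \pt\to X$ is a morphism of $\infty$-groupoids, it is smooth, so \cref{prop:basechange} provides a natural equivalence $y^*(f_i)_\sharp\cong (p_i)_\sharp g_i^*$. Combined with $f_i\circ g_i = y\circ p_i$ this yields
\[y^*(f_i)_\sharp(f_i)^*\cL \cong (p_i)_\sharp p_i^* y^*\cL \cong \Sigma^\infty_+ f_i^{-1}(y)\otimes \cL_y,\]
under which the counit corresponds to the canonical projection induced by $f_i^{-1}(y)\to \pt$. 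Taking the colimit over $i$, which commutes with $\Sigma^\infty_+$ and with $-\otimes \cL_y$, the left-hand side becomes $\Sigma^\infty_+\bigl(\colim_i f_i^{-1}(y)\bigr)\otimes \cL_y$.

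To finish I would invoke the universality of colimits in the $\infty$-topos $\cS$: for any point $y$ of the colimit $X=\colim_i X_i$ one has $\colim_i f_i^{-1}(y)\cong \bigl(\colim_i X_i\bigr)\times_X \{y\}\cong \pt$, so the expression collapses to $\cL_y$ and the comparison map becomes the identity. Hence the counit is pointwise an equivalence, and the result follows. The only bookkeeping I expect to be a mild obstacle is confirming that under the base change equivalence the counit of $(f_i)_\sharp \dashv (f_i)^*$ matches the counit of $(p_i)_\sharp \dashv p_i^*$; this amounts to unpacking the definition of the transformation in \cref{prop:basechange}, after which the rest of the argument is formal.
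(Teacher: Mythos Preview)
Your proof is correct and follows essentially the same approach as the paper: both reduce the claim to a pointwise check, identify the stalk of $(f_i)_\sharp(f_i)^*\cL$ at $y$ with $\Sigma^\infty_+ f_i^{-1}(y)\otimes \cL_y$, and conclude by universality of colimits in $\cS$ that $\colim_i f_i^{-1}(y)\simeq \pt$. The only cosmetic difference is that the paper invokes the explicit pushforward formula \eqref{eq:pushforwardformula} directly rather than phrasing it via \cref{prop:basechange}, so your mild bookkeeping worry about matching counits is already absorbed in that formula.
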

\begin{proof}
Consider $x\in X$ and $\cF\in\Sp^X$. Then we have to show that
\[\colim_i \colim_{y\in f_i^{-1}(x)} \cF_{f_i(y)}\rightarrow \cF_x\]
is an isomorphism. This is equivalent to showing that $\colim_i \colim_{y\in f_i^{-1}(x)} \pt\cong \colim_i f_i^{-1}(x)\in\cS$ is contractible. But since colimits in $\cS$ are universal (\cite[Lemma 6.1.3.14]{LurieHTT}), we have $\colim_i f_i^{-1}(x)\cong \id^{-1}(x)$.
\end{proof}

Consider the $\infty$-category $\cS_{/X}$ of spaces over $X$ and $(\cS_{/X})_\ast$ of retractive spaces over $X$. There are fiberwise stabilization functors
\[\Sigma^\infty_X\colon (\cS_{/X})_\ast\longrightarrow \Sp^X,\qquad \Sigma^\infty_{+X}\colon \cS_{/X}\longrightarrow \Sp^X.\]
For a space $f\colon Y\rightarrow X$ over $X$ we may identify $f_\sharp \bS_Y\cong \Sigma^\infty_{+X} Y$.

Our next goal is to establish a self-duality of the $\infty$-category of parametrized spectra. For this we will extend parametrized spectra to a bivariant functor using the results of \cite{GaitsgoryRozenblyum1,Macpherson}. Since $\cS$ is freely generated under colimits by $\pt\in\cS$, there is a unique colimit-preserving functor
\[\Sp_\sharp\colon \cS\rightarrow \PrSt\]
which sends $\pt\in\cS$ to $\Sp\in\PrSt$. Moreover, the symmetric monoidal structure on $\Sp$ induces a symmetric monoidal structure on the functor $\Sp_\sharp$. Explicitly, $\Sp_\sharp$ sends $X$ to $\Sp^X$ and $f\colon X\rightarrow Y$ to $f_\sharp\colon \Sp^X\rightarrow \Sp^Y$.

Consider the symmetric monoidal $(\infty, 2)$-category $\Corr(\cS)$ which has the following informal description:
\begin{itemize}
\item Its objects are spaces.

\item Its 1-morphisms from $X$ to $Y$ are correspondences $X\leftarrow C\rightarrow Y$ of spaces.

\item Its 2-morphisms from $X\leftarrow C_1\rightarrow Y$ to $X\leftarrow C_2\rightarrow Y$ are morphisms or correspondences: diagrams of the shape
\[
\xymatrix{
& C_1 \ar[d] \ar[ddl] \ar[ddr] & \\
& C_2 \ar[dl] \ar[dr] & \\
X && Y
}
\]
\end{itemize}
We refer to \cite[Chapter 7]{GaitsgoryRozenblyum1} for a construction of this $(\infty, 2)$-category.

There is a natural symmetric monoidal functor $\cS\rightarrow \Corr(\cS)$ which is the identity on objects and which sends $f\colon X\rightarrow Y$ to $X\xleftarrow{\id} X\xrightarrow{f} Y$. In particular, any symmetric monoidal functor out of $\Corr(\cS)$ restricts to a symmetric monoidal functor out of $\cS$.

\begin{thm}
There is a unique symmetric monoidal functor of $(\infty, 2)$-categories
\[\Sp_\sharp^*\colon \Corr(\cS)\longrightarrow \PrSt,\]
whose restriction to $\cS$ is $\Sp_\sharp$.
\label{thm:Spbivariant}
\end{thm}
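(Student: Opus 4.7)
The plan is to invoke the universal property of $\Corr(\cS)$ proven by Macpherson (and Gaitsgory--Rozenblyum): a symmetric monoidal functor of $(\infty,2)$-categories $\Corr(\cS)\to\cD$ into a symmetric monoidal $(\infty,2)$-category $\cD$ is equivalent data to a symmetric monoidal functor $F\colon\cS\to\cD$ (where $\cS$ is viewed as an $(\infty,1)$-category, hence an $(\infty,2)$-category with only invertible 2-morphisms) satisfying two conditions: (i) for every morphism $f\colon X\to Y$ in $\cS$, the image $F(f)$ admits a right adjoint in $\cD$; and (ii) for every Cartesian square in $\cS$, the induced Beck--Chevalley 2-morphism obtained by pasting units and counits of these adjunctions is invertible. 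Granting this, both existence and uniqueness of $\Sp_\sharp^*$ will follow once we verify (i) and (ii) for $F=\Sp_\sharp$.

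For step (i), given $f\colon X\to Y$, the functor $f_\sharp\colon\Sp^X\to\Sp^Y$ has right adjoint $f^*\colon \Sp^Y\to\Sp^X$. Since $X$ and $Y$ are $\infty$-groupoids, the formula \eqref{eq:pushforwardformula} shows that $f^*$ is pointwise restriction; in particular, it preserves all colimits (which are computed pointwise in $\Sp^X$), so it is a 1-morphism in $\PrSt$ and hence a right adjoint of $f_\sharp$ in the 2-category $\PrSt$.

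For step (ii), consider a Cartesian square
\[
\xymatrix{
\tilde{X} \ar[r]^{\tilde{g}} \ar[d]_{\tilde{f}} & X \ar[d]^{f} \\
\tilde{Y} \ar[r]_{g} & Y
}
\]
in $\cS$. Any map of $\infty$-groupoids is smooth, so \cref{prop:basechange} gives the base change equivalence $\tilde{f}_\sharp\tilde{g}^*\simeq g^*f_\sharp$. Passing to mates, this is precisely the Beck--Chevalley condition relating the adjunctions $f_\sharp\dashv f^*$ and $\tilde{f}_\sharp\dashv\tilde{f}^*$ with the restriction functors $g^*$ and $\tilde{g}^*$. Hence condition (ii) is satisfied.

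The main obstacle is citing the universal property in the precise form needed: one has to ensure that the version of the cobordism/correspondence universal theorem cited (e.g.\ from \cite{Macpherson} or \cite[Chapter 7]{GaitsgoryRozenblyum1}) applies to symmetric monoidal functors into a general symmetric monoidal $(\infty,2)$-category and produces both existence and uniqueness, rather than merely a functor out of $\coCorr(\cS)$ or one satisfying only a lax property. Once the correct form is identified, the verification above is essentially formal: the pointwise nature of colimits in $\Sp^X$ supplies right adjoints, and \cref{prop:basechange} supplies Beck--Chevalley.
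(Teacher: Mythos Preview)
Your proposal is correct and follows essentially the same approach as the paper: verify that $\Sp_\sharp$ satisfies the left Beck--Chevalley condition (via \cref{prop:basechange}) and then invoke the universal property of $\Corr(\cS)$ from Gaitsgory--Rozenblyum. The paper is slightly more precise in its citations, separating the extension of the underlying functor (\cite[Chapter~7, Theorem~3.2.2]{GaitsgoryRozenblyum1}) from the extension of the symmetric monoidal structure (\cite[Chapter~9, Proposition~3.1.5]{GaitsgoryRozenblyum1}), which addresses exactly the concern you flag at the end.
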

\begin{proof}
By \cref{prop:basechange} the functor $\Sp_\sharp\colon \cS\rightarrow \PrSt$ satisfies the left Beck--Chevalley condition from \cite[Chapter 7, Definition 3.1.5]{GaitsgoryRozenblyum1}. Therefore, by \cite[Chapter 7, Theorem 3.2.2]{GaitsgoryRozenblyum1} the functor $\Sp_\sharp$ uniquely extends to the functor $\Sp_\sharp^*$ and by \cite[Chapter 9, Proposition 3.1.5]{GaitsgoryRozenblyum1} the symmetric monoidal structure uniquely extends.
\end{proof}

The functor $\Sp_\sharp^*\colon \Corr(\cS)\rightarrow \PrSt$ constructed in the previous theorem has the following informal description:
\begin{itemize}
\item On the level of objects it sends $X$ to $\Sp^X$.

\item On the level of 1-morphisms it sends $X\xleftarrow{f} C\xrightarrow{g} Y$ to $g_\sharp f^*\colon \Sp^X\rightarrow \Sp^Y$.

\item On the level of 2-morphisms it sends
\[
\xymatrix{
& C \ar[d] \ar_{f}[ddl] \ar^{g}[ddr] & \\
& \tilde{C} \ar^{\tilde{f}}[dl] \ar_{\tilde{g}}[dr] & \\
X && Y
}
\]
to the natural transformation $g_\sharp f^*\Rightarrow \tilde{g}_\sharp \tilde{f}^*$ induced by $C\rightarrow \tilde{C}$.
\end{itemize}

Let us state several important corollaries of this construction. First, $\Sp^X$ carries a natural symmetric monoidal structure given by $\Sp^X\otimes \Sp^X\cong \Sp^{X\times X}\xrightarrow{\Delta^*} \Sp^X$, where $\Delta\colon X\rightarrow X\times X$ is the diagonal map. This symmetric monoidal structure gives rise to a self-duality datum on $\Sp^X$.

\begin{prop}
Let $X$ be a space and denote by $p\colon X\rightarrow \pt$ the natural projection. The functors
\[\ev\colon \Sp^X\otimes \Sp^X\xrightarrow{\Delta^*} \Sp^X\xrightarrow{p_\sharp} \Sp\]
and
\[\coev\colon \Sp\xrightarrow{p^*} \Sp^X\xrightarrow{\Delta_\sharp}\Sp^{X\times X}\cong\Sp^X\otimes \Sp^X\]
establish a self-duality of $\Sp^X\in\PrSt$. In particular, $X$ admits duality in the sense of \cref{def:admitduality}.
\label{prop:Spselfdual}
\end{prop}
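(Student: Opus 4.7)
The plan is to bootstrap the self-duality from $\Corr(\cS)$ using the symmetric monoidal functor $\Sp_\sharp^*\colon \Corr(\cS)\to \PrSt$ supplied by \cref{thm:Spbivariant}. The key observation is that every space $X$ is canonically self-dual as an object of $\Corr(\cS)$, with evaluation and coevaluation given by the correspondences
\[
X\times X\xleftarrow{\Delta} X\xrightarrow{p}\pt,\qquad \pt\xleftarrow{p} X\xrightarrow{\Delta} X\times X.
\]
Since any symmetric monoidal functor carries dualizable objects to dualizable objects together with their duality data, applying $\Sp_\sharp^*$ to this self-duality in $\Corr(\cS)$ produces a self-duality on $\Sp^X$. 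Unpacking the informal description of $\Sp_\sharp^*$ on correspondences, the correspondence $X\times X\xleftarrow{\Delta} X\xrightarrow{p}\pt$ goes to $p_\sharp\circ\Delta^*\colon \Sp^{X\times X}\to\Sp$, which via the identification $\Sp^{X\times X}\cong\Sp^X\otimes\Sp^X$ from \cref{prop:externaltensor} is the claimed evaluation functor; analogously for $\coev$.

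The only real content to verify is that $X$ is self-dual in $\Corr(\cS)$, i.e. the snake identities. These reduce to direct fiber-product computations. For example, the composite 1-morphism $X\xrightarrow{\id\otimes\coev} X\otimes X\otimes X\xrightarrow{\ev\otimes \id} X$ in $\Corr(\cS)$ is obtained by composing the two correspondences
\[
X\xleftarrow{\mathrm{pr}_1} X\times X\xrightarrow{(a,b)\mapsto(a,b,b)} X^3,\qquad X^3\xleftarrow{(c,d)\mapsto(c,c,d)} X\times X\xrightarrow{\mathrm{pr}_2} X,
\]
whose composition is computed by the fiber product of the two maps into $X^3$. This fiber product is $\{(a,b,c,d)\mid a=c,\ b=c,\ b=d\}\cong X$, with both legs to $X$ being the identity; hence the composite is 2-isomorphic to $\id_X$. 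The other snake identity is verified analogously. This is the only technically nontrivial step, and it is a short diagram chase.

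Finally, noting that under $\Sp_\sharp^*$ the coevaluation correspondence is sent precisely to $\Delta_\sharp\circ p^*$, we conclude that $X$ satisfies the condition of \cref{def:admitduality}. The main obstacle, modest as it is, lies in matching up the formal self-duality in $\Corr(\cS)$ with the concrete formulas for $\ev$ and $\coev$ in $\PrSt$; once \cref{thm:Spbivariant} and \cref{prop:externaltensor} are in hand this is entirely bookkeeping.
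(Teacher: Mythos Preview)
Your proposal is correct and follows essentially the same approach as the paper: the paper simply cites \cite[Corollary 12.5]{HaugsengSpans} (and \cite[Proposition 4.1]{BZNNonlinear}) for the self-duality of $X$ in $\Corr(\cS)$ and then pushes through the symmetric monoidal functor $\Sp_\sharp^*$, whereas you additionally sketch the snake-identity verification by hand. Both arguments are the same in substance.
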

\begin{proof}
By \cite[Corollary 12.5]{HaugsengSpans} (see also \cite[Proposition 4.1]{BZNNonlinear}) every object $X\in\Corr(\cS)$ is self-dual with the evaluation map $X\times X\xleftarrow{\Delta} X\xrightarrow{p} \pt$ and the coevaluation map $\pt\xleftarrow{p} X\xrightarrow{\Delta} X\times X$. Since $\Sp_\sharp^*$ is symmetric monoidal, $\Sp(X)$ becomes self-dual with the asserted evaluation and coevaluation functors.
\end{proof}

\begin{remark}\label{rmk:CWduality}
Using the self-duality data of $\Sp^X$ from \cref{prop:Spselfdual} we see that a relative duality for $\cF\in\Sp^X$ consists of an object $\cF^\vee\in\Sp^X$ together with maps $\cF\boxtimes\cF^\vee\rightarrow \Delta_!\bS_X$ and $\bS\rightarrow p_\sharp(\cF\otimes \cF^\vee)$ satisfying the duality axioms. This data coincides with the notion of Costenoble--Waner duality of parametrized spectra from \cite[Chapter 18]{MaySigurdsson}.
\end{remark}

Using the duality data we will now compute the dimension of $\Sp^X$. Consider the composite functor
\[\cS\xrightarrow{\Sp_\sharp} \PrDual\xrightarrow{\dim} \Sp\]
which sends $X$ to $\dim(\Sp^X)$ and $f\colon X\rightarrow Y$ to $\dim(f_\sharp)\colon \dim(\Sp^X)\rightarrow \dim(\Sp^Y)$.

\begin{prop}
There is a natural equivalence
\[\dim(\Sp^{-})\cong \Sigma^\infty_+ L(-)\]
of functors $\cS\rightarrow \Sp$. For instance, for a map $f\colon X\rightarrow Y$ the transfer $\dim(f_\sharp)$ fits into a commutative diagram
\[
\xymatrix{
\dim(\Sp^X) \ar^{\sim}[r] \ar^{\dim(f_\sharp)}[d] & \Sigma^\infty_+ LX \ar^{Lf}[d] \\
\dim(\Sp^Y) \ar^{\sim}[r] & \Sigma^\infty_+ LY
}
\]
\label{prop:LocSysTHH}
\end{prop}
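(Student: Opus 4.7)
The plan is to apply the symmetric monoidal $(\infty, 2)$-functor $\Sp_\sharp^*\colon \Corr(\cS)\rightarrow \PrSt$ from \cref{thm:Spbivariant}. Since $\Sp_\sharp^*$ is symmetric monoidal, it preserves dualizable objects together with all of their duality data, and in particular preserves dimensions and transfers (by the functoriality in \cref{thm:ExistenceOfDim}). By \cref{prop:Spselfdual} the self-duality on $\Sp^X$ is precisely the image under $\Sp_\sharp^*$ of the natural self-duality on $X\in\Corr(\cS)$. Consequently it suffices to compute $\dim(X)$ as a 1-endomorphism of $\pt\in\Corr(\cS)$ and the transfer $\dim(f)$ associated to $f\colon X\rightarrow Y$ inside $\Corr(\cS)$, and then apply $\Sp_\sharp^*$.

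In $\Corr(\cS)$, the 1-morphism $\dim(X) = \ev \circ \sigma \circ \coev$ is computed by iterated pullback of correspondences. Because the braiding on $X\times X$ is the swap map, which commutes with the diagonal $\Delta\colon X\rightarrow X\times X$, composing the three correspondences
\[
\pt \xleftarrow{p} X \xrightarrow{\Delta} X\times X, \qquad X\times X \xrightarrow{\sigma} X\times X, \qquad X\times X \xleftarrow{\Delta} X \xrightarrow{p} \pt
\]
reduces to the correspondence $\pt \leftarrow X\times_{X\times X} X \rightarrow \pt$, where the fiber product is along two copies of $\Delta$. For an $\infty$-groupoid $X$, the pullback $X\times_{X\times X} X$ is naturally equivalent to $LX = \Map(S^1, X)$. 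Applying $\Sp_\sharp^*$ then produces the functor $V\mapsto V\otimes \Sigma^\infty_+ LX$ on $\Sp$, whence $\dim(\Sp^X)\cong \Sigma^\infty_+ LX$.

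For the transfer, every 1-morphism in $\Corr(\cS)$ admits both adjoints; concretely, the right adjoint of $f = (X\xleftarrow{\id}X\xrightarrow{f}Y)$ is $f^R = (Y\xleftarrow{f}X\xrightarrow{\id}X)$, with unit $\eta_f$ given by the diagonal $X\rightarrow X\times_Y X$ and counit $\epsilon_f$ given by the fold map of the pullback $Y\times_Y Y \rightarrow Y$. Tracing these through the diagram of \cref{def:transfermap} in $\Corr(\cS)$ produces a 2-morphism of correspondences $X\times_{X\times X}X \rightarrow Y\times_{Y\times Y}Y$ induced componentwise by $f$, which under the equivalence with free loop spaces is precisely $Lf$. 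Applying $\Sp_\sharp^*$ yields the asserted commutative square. The main obstacle is to carry out the last diagram chase carefully in $\Corr(\cS)$: one must verify that the formal composite appearing in \cref{def:transfermap} reduces, after the various pullback identifications, to the naive map on pullbacks induced by $f$; once this is done, the identification with $Lf$ is forced by the construction.
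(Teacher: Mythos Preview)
Your approach is correct and is essentially the same as the paper's: reduce to the computation of $\dim(X)$ and its functoriality in $\Corr(\cS)$, and then push forward along the symmetric monoidal functor $\Sp_\sharp^*$. The only difference is that the paper cites \cite[Theorem 3.18]{CCRY} for the natural equivalence $\dim(X)\cong LX$ in $\Corr(\cS)$ (including the functoriality under $f$), whereas you sketch this computation by hand; the diagram chase you flag as the ``main obstacle'' is exactly what that reference carries out.
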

\begin{proof}
We refer to \cite[Theorem 3.18]{CCRY} for the natural equivalence $\dim(X)\cong LX$ for $X\in\Corr(\cS)$. Post-composing with $\Sp^*_\sharp$ and using the fact that $\dim$ is symmetric monoidal we get the result.
\end{proof}

Finally, let us describe the case when $\bS_X\in\Sp^X$ is compact.

\begin{defn}
A space $X\in\cS$ is \defterm{finitely dominated} if it is a retract of a finite space.
\end{defn}

Equivalently, $X$ is finitely dominated if, and only if, it is a compact object in $\cS$ (see \cite[Remark 5.4.1.6]{LurieHTT}).

\begin{remark}
Concretely, suppose $M$ is a CW complex. Then $\Sing(M)\in\cS$ is finitely dominated in the above sense if there is a finite CW complex $N$ together with maps $i\colon M\rightarrow N$ and $r\colon N\rightarrow M$ such that $r\circ i$ is homotopic to the identity.
\end{remark}

\begin{prop}
Suppose $X$ is finitely dominated and let $p\colon X\rightarrow \pt$. Then the functor $p_*\colon \Sp^X\rightarrow \Sp$ preserves colimits. In other words, $\bS_X\in\Sp^X$ is compact.
\label{prop:finitetypeproper}
\end{prop}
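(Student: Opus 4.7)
The plan is to argue by induction on a generating class of compact spaces. Let
\[
\cE := \{X \in \cS : \bS_X \in \Sp^X \text{ is compact}\} \subseteq \cS.
\]
By \cref{prop:CWduality} applied to $\cC = \Sp^X$ (self-dual by \cref{prop:Spselfdual}) and the object $\bS_X = p^*\bS$, compactness of $\bS_X$ is equivalent to the functor $p^*\colon \Sp \to \Sp^X$, $V \mapsto V \otimes \bS_X$, admitting a colimit-preserving right adjoint, i.e.\ to $p_*$ preserving colimits, so the two formulations in the statement are indeed equivalent. Since finitely dominated spaces are precisely the compact objects of $\cS$, and $\cS^\omega$ is the smallest full subcategory of $\cS$ containing $\pt$ and closed under finite colimits and retracts, it suffices to verify that $\cE$ enjoys these three closure properties.

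The base case $\pt \in \cE$ holds because $\bS \in \Sp^{\pt} = \Sp$ is compact. For closure under finite colimits, let $X \cong \colim_{i \in I} X_i$ for a finite diagram with each $X_i \in \cE$ and canonical maps $f_i\colon X_i \to X$. Applying \cref{prop:colimitapproximation} gives
\[
\bS_X \simeq \colim_{i \in I}\, (f_i)_\sharp (f_i)^* \bS_X \simeq \colim_{i \in I}\, (f_i)_\sharp \bS_{X_i},
\]
and each $(f_i)_\sharp$ preserves compact objects because its right adjoint $(f_i)^*$ is colimit-preserving, so $\bS_X$ is a finite colimit of compact objects of $\Sp^X$ and hence compact.

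The retract step is the heart of the argument. Suppose $X$ is a retract of $Y \in \cE$ via $i\colon X \to Y$ and $r\colon Y \to X$ with $r \circ i \simeq \id_X$. I would exhibit $\bS_X$ as a retract of $r_\sharp \bS_Y = r_\sharp r^* \bS_X \in \Sp^X$, which is itself compact since $\bS_Y$ is compact and $r_\sharp$ preserves compacts (again because $r^*$ is colimit-preserving). The retraction is the counit $(\epsilon_r)_{\bS_X}\colon r_\sharp r^* \bS_X \to \bS_X$ of $r_\sharp \dashv r^*$. For the section, note that under the identifications $(ri)_\sharp \simeq r_\sharp i_\sharp$ and $(ri)^* \simeq i^* r^*$ from \cref{thm:Spbivariant}, the counit of $(ri)_\sharp \dashv (ri)^*$ at $\bS_X$ decomposes as
\[
\bS_X \xleftarrow{\sim} r_\sharp i_\sharp i^* r^* \bS_X \xrightarrow{r_\sharp (\epsilon_i)_{r^* \bS_X}} r_\sharp r^* \bS_X \xrightarrow{(\epsilon_r)_{\bS_X}} \bS_X,
\]
and because $r \circ i \simeq \id_X$ this composite is homotopic to $\id_{\bS_X}$. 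Taking $\alpha$ to be everything except the last arrow produces a section of $(\epsilon_r)_{\bS_X}$, so $\bS_X$ is a retract of a compact object and hence compact.

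The main obstacle, modest as it is, is the naturality statement underlying the retract step: for composable morphisms $f,g$ the counit of $(gf)_\sharp \dashv (gf)^*$ agrees with $\epsilon_g \circ g_\sharp(\epsilon_f)g^*$ under the canonical identifications, and specializing this to $g \circ f = r \circ i \simeq \id_X$ is what forces the composite above to be homotopic to $\id_{\bS_X}$. This coherence is a formal consequence of the symmetric monoidal $(\infty,2)$-functoriality of $\Sp_\sharp^*$ on $\Corr(\cS)$ established in \cref{thm:Spbivariant}, so it requires no additional work in our setting.
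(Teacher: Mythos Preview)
Your proof is correct and takes a genuinely different route from the paper's. The paper argues directly that $p_*=\lim_{x\in X}(-)_x$ preserves filtered colimits: it reduces to the statement that filtered colimits in $\Sp$ commute with limits indexed by finitely dominated spaces, which it imports from \cite[Lemma 4.8]{HaugsengBG} (a retract-stable strengthening of ``filtered colimits commute with finite limits''). Your argument instead proceeds by structural induction on $\cS^\omega$: the base case, closure under finite colimits via \cref{prop:colimitapproximation} and the fact that each $(f_i)_\sharp$ preserves compact objects, and closure under retracts by exhibiting $\bS_X$ as a retract of $r_\sharp\bS_Y$ through the factorisation of the counit of $(ri)_\sharp\dashv (ri)^*$.

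The two approaches trade off differently. The paper's proof is shorter and isolates the single analytic fact that is really being used. Your proof is more self-contained within the paper---it does not require the external citation---and has the pleasant feature that it works at the level of objects rather than functors, making the compactness of $\bS_X$ visible as a retract of a finite colimit of pushforwards of compact objects. The coherence you flag in the retract step (compatibility of counits under composition) is indeed a formal consequence of the $(\infty,2)$-functoriality of $\Sp_\sharp$, so no gap there; one could also shortcut it by observing that $r^*$ is fully faithful since $ri\simeq\id$ implies $i^*r^*\simeq\id$, whence the counit $r_\sharp r^*\to\id$ is already an equivalence on the essential image of $r^*$, in particular on $\bS_X$.
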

\begin{proof}
The functor $p_*\colon \Sp^X\rightarrow \Sp$ preserves finite limits, so by stability (see \cite[Proposition 1.1.4.1]{LurieHA}) it preserves finite colimits. Therefore, we only have to show that it preserves filtered colimits.

Let $F\colon I\rightarrow \Sp^X$ be a functor, where $I$ is filtered. The map
\[\colim_{i\in I} (p_* F(i))\longrightarrow p_*(\colim_{i\in I} F(i))\]
is given by
\[\colim_{i\in I} \lim_{x\in X} F(i)_x\longrightarrow \lim_{x\in X} \colim_{i\in I} F(i)_x.\]
In $\Sp$ filtered colimits preserve finite limits. By \cite[Lemma 4.8]{HaugsengBG} it follows that filtered colimits preserve limits indexed by finitely dominated spaces.
\end{proof}

\begin{cor}
Let $X$ be a finitely dominated space. Then $\Sp^X$ is smooth.
\label{cor:finitelydominatedsmooth}
\end{cor}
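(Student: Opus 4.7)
The plan is to invoke the characterization from the remark following \cref{def:smoothproper}: for $\cC \in \PrSt$, $\cC$ is smooth if and only if $\coev(\bS) \in \cC \otimes \cC^\vee$ is a compact object. Combined with the explicit self-duality of $\Sp^X$ from \cref{prop:Spselfdual}, this reduces the claim to showing that
\[
\coev(\bS) = \Delta_\sharp p^* \bS = \Delta_\sharp \bS_X
\]
is compact in $\Sp^{X \times X}$.

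To see this, I would first use the finite domination hypothesis: by \cref{prop:finitetypeproper}, it ensures precisely that $\bS_X \in \Sp^X$ is compact. It then remains to observe that the left Kan extension $\Delta_\sharp \colon \Sp^X \to \Sp^{X \times X}$ preserves compact objects, since its right adjoint $\Delta^*$ is itself a left adjoint (namely to $\Delta_*$), hence preserves all colimits. Applying $\Delta_\sharp$ to $\bS_X$ therefore produces a compact object, which gives the smoothness of $\Sp^X$. There is no serious obstacle here; the corollary is essentially a bookkeeping consequence of \cref{prop:Spselfdual}, \cref{prop:finitetypeproper}, and the general principle that left adjoints whose right adjoints preserve filtered colimits preserve compactness.
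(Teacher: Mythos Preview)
Your argument is correct and essentially the same as the paper's. The paper phrases it at the level of functors---observing that $\coev = \Delta_\sharp \circ p^*$ has right adjoint $p_* \circ \Delta^*$, with $\Delta^*$ always colimit-preserving and $p_*$ colimit-preserving by \cref{prop:finitetypeproper}---whereas you use the equivalent object-level criterion that $\coev(\bS)=\Delta_\sharp\bS_X$ is compact; the underlying inputs (\cref{prop:Spselfdual}, \cref{prop:finitetypeproper}, and colimit-preservation of $\Delta^*$) are identical.
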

\begin{proof}
The functor $\coev=\Delta_\sharp\circ p^*$ admits a right adjoint $p_*\circ \Delta^*$. The functor $\Delta^*$ is always colimit-preserving and $p_*$ is colimit-preserving by \cref{prop:finitetypeproper} since $X$ is finitely dominated.
\end{proof}

If $X$ is finitely dominated, by \cref{prop:finitetypeproper} $\bS_X\in\Sp^X$ is compact and, therefore, it admits a relative dual $\zeta_X\in\Sp^X$.

\begin{remark}
By \cite[Corollary 5.1]{Klein}, if $M$ is a finite CW complex and $X=\Sing(M)$, the parametrized spectrum $\zeta_X$ is a suspension of the Spivak normal fibration of $M$.
\end{remark}

From \cref{prop:CWduality} and the explicit formula for the evaluation $\ev=p_\sharp\Delta^*$ of the self-duality of $\Sp^X$ we get the following.

\begin{prop}
Suppose $X$ is a finitely dominated space. Then there is a natural isomorphism
\[p_*(-)\cong p_\sharp\Delta^*((-)\boxtimes \zeta_X)\]
of functors $\Sp^X\rightarrow \Sp$.
\label{prop:zetapushforward}
\end{prop}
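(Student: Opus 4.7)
The plan is to identify $p_*$ with the dual of the functor $F_{\zeta_X}\colon\Sp\to\Sp^X$, $V\mapsto V\otimes\zeta_X$, under the self-duality of $\Sp^X$ established in \cref{prop:Spselfdual}. The key inputs will be \cref{prop:CWduality} (characterizing relative duals of compact objects) together with the explicit formula $\ev_{\Sp^X}=p_\sharp\Delta^*$ coming from the self-duality.

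First I would invoke \cref{prop:finitetypeproper}: finite domination of $X$ implies that $p_*\colon\Sp^X\to\Sp$ preserves colimits, equivalently that $\bS_X$ is a compact object of $\Sp^X$. Applying \cref{prop:CWduality} to the self-dual $\Sp^X$ with the compact object $\bS_X$, the relative dual $\bS_X^\vee$ is (by definition) $\zeta_X$, while the right adjoint $H_{\bS_X}=\bHom_{\Sp^X}(\bS_X,-)$ coincides with $p_*$ via the adjunction $p^*\dashv p_*$. Part (2) of \cref{prop:CWduality} then states that $F_{\zeta_X}$ is dual to $H_{\bS_X}=p_*$; since biduality is the identity on dualizable objects, this yields $p_*\cong F_{\zeta_X}^\vee$.

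The final step is to unravel $F_{\zeta_X}^\vee$ explicitly. By definition of the dual functor in $\PrSt$, $F_{\zeta_X}^\vee\colon\Sp^X\to\Sp$ is the composite
\[
\Sp^X\cong\Sp\otimes\Sp^X\xrightarrow{F_{\zeta_X}\otimes\id}\Sp^X\otimes\Sp^X\xrightarrow{\ev_{\Sp^X}}\Sp,
\]
so on objects it sends $\cF$ to $\ev_{\Sp^X}(\zeta_X,\cF)$. Substituting the formula $\ev_{\Sp^X}=p_\sharp\Delta^*$ from \cref{prop:Spselfdual} and using the symmetry of the self-duality pairing gives the desired identification $p_*(\cF)\cong p_\sharp\Delta^*(\cF\boxtimes\zeta_X)$.

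I do not anticipate a genuine obstacle; the proposition is essentially an unpacking of the duality formalism set up in Section~1. The only point requiring care is correctly matching the two duality structures on $\Sp^X$ (the self-duality and the tautological one relating $\Sp^X$ with $\Sp^{X^{\op}}$) when applying part (2) of \cref{prop:CWduality}, which is legitimate because $X$ is an $\infty$-groupoid and so these dualities agree via \eqref{eq:pushforwarddual}.
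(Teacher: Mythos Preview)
Your proposal is correct and follows exactly the approach the paper indicates: the paper's proof consists of the single sentence ``From \cref{prop:CWduality} and the explicit formula for the evaluation $\ev=p_\sharp\Delta^*$ of the self-duality of $\Sp^X$ we get the following,'' and you have simply unpacked this. The only remark is that your closing caveat about reconciling the two dualities via \eqref{eq:pushforwarddual} is unnecessary here, since \cref{prop:CWduality} is stated for an arbitrary dualizable $\cC$ and applies directly to the self-duality of \cref{prop:Spselfdual}.
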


\subsection{The case of posets}

In this section we consider functor categories $\Fun(P, \cC)$, where $P$ is a poset. We begin with the computation of additive invariants of $\Fun(P, \cC)$.

\begin{thm}
Let $P$ be a finite poset and $\cC$ an idempotent-complete small stable $\infty$-category. Denote by $P^\delta$ the same set equipped with the trivial poset structure and $\pi\colon P^\delta\rightarrow P$ the functor given by the identity map on objects. Let $F\colon \Catperf\rightarrow \Sp$ be an additive invariant. Then the functors $\pi_\sharp, \pi_*\colon \Fun(P^\delta, \cC)\rightarrow \Fun(P, \cC)$ exist and induce an equivalence
\[\pi_\sharp, \pi_*\colon F(\Fun(P^\delta,\cC))\longrightarrow F(\Fun(P, \cC)).\]
\label{thm:posetadditivity}
\end{thm}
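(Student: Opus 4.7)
The plan is to induct on $|P|$. The base case $|P|=1$ is trivial since then $P^\delta = P$. For the inductive step, pick a maximal element $p_0\in P$, set $P' := P\setminus\{p_0\}$, and write $i\colon P'\hookrightarrow P$, $j\colon \{p_0\}\hookrightarrow P$ for the inclusions, together with discrete analogues $i^\delta$ and $j^\delta$. Since $P$ is finite, all Kan extensions along these functors (and along $\pi$) exist as pointwise finite (co)limits by \eqref{eq:Kanextensions}, so in particular $\pi_\sharp$ and $\pi_*$ are well-defined.

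The key structural input is that maximality of $p_0$ collapses certain Kan extensions: the pointwise formulas give $(i_* F)(p_0) = \lim_\emptyset = 0$ and $(j_\sharp A)(p') = \colim_\emptyset = 0$ for every $p'\in P'$, so both $i_*$ and $j_\sharp$ act by extension by zero (and are fully faithful). This produces two split-exact sequences in $\Catperf$:
\[
\Fun(P', \cC) \xrightarrow{i_*} \Fun(P, \cC) \xrightarrow{j^*} \cC \qquad (\ast)
\]
\[
\cC \xrightarrow{j_\sharp} \Fun(P, \cC) \xrightarrow{i^*} \Fun(P', \cC) \qquad (\ast\ast)
\]
split by $i^*\dashv i_*$, $j^*\dashv j_*$ and $j_\sharp \dashv j^*$, $i_\sharp \dashv i^*$ respectively, with all the required functors preserving colimits since they are computed by finite (co)limits. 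For the discrete poset $P^\delta = (P')^\delta \sqcup \{p_0\}$ one has $\Fun(P^\delta, \cC) \simeq \Fun((P')^\delta, \cC) \times \cC$, and the analogous sequences are trivially split.

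The heart of the argument is to verify the intertwining identities
\[
\pi_* \circ i^\delta_* \simeq i_* \circ \pi'_*, \qquad j^* \circ \pi_* \simeq j^{\delta,*},
\]
\[
\pi_\sharp \circ j^\delta_\sharp \simeq j_\sharp, \qquad i^* \circ \pi_\sharp \simeq \pi'_\sharp \circ i^{\delta,*},
\]
by direct pointwise computation; for example $(\pi_* G)(p_0)$ is a product indexed by $\{p^\delta\in P^\delta : p^\delta \geq p_0\} = \{p_0\}$ and hence equals $G(p_0) = j^{\delta,*}G$. The first pair shows that $\pi_*$ fits into a morphism of split-exact sequences of type $(\ast)$, while the second pair shows $\pi_\sharp$ fits into one of type $(\ast\ast)$. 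Applying $F$ yields morphisms of cofiber sequences of spectra whose outer vertical maps are $F(\pi'_*)$ (respectively $F(\pi'_\sharp)$) and identity on $F(\cC)$; the former are equivalences by the inductive hypothesis, whence so are $F(\pi_*)$ and $F(\pi_\sharp)$.

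The principal subtlety is that $\pi_*$ and $\pi_\sharp$ must be analyzed through \emph{different} split-exact sequences. This reflects the asymmetry between $i_*$ and $i_\sharp$: maximality of $p_0$ forces $i_*$ to extend by zero but leaves $i_\sharp F$ with the nontrivial value $\colim_{p'\leq p_0,\, p'\in P'} F(p')$ at $p_0$. Consequently $\pi_\sharp \circ i^\delta_\sharp \not\simeq i_\sharp\circ \pi'_\sharp$ in general, so sequence $(\ast)$ cannot be reused for the $\pi_\sharp$ statement; this dichotomy is the reason the theorem records both functors as giving equivalences. The main technical obstacle is thus establishing the split-exactness of $(\ast)$ and $(\ast\ast)$ in the sense preserved by additive invariants, which hinges on the $p_0$-maximality argument above.
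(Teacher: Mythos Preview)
Your proof is correct and follows essentially the same strategy as the paper: induction on $|P|$ via a split-exact sequence obtained by extension-by-zero at an extremal element, together with additivity of $F$.

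The only cosmetic difference is in the bookkeeping. The paper treats $\pi_\sharp$ by splitting off a \emph{minimum} element $m$ (so that $j_\sharp$ and $i_*$, with $j\colon P\setminus m\hookrightarrow P$ and $i\colon\{m\}\hookrightarrow P$, are extension by zero) and then remarks that $\pi_*$ is handled ``analogously by splitting off a maximum.'' You instead fix a single \emph{maximal} element $p_0$ and use the two sequences $(\ast)$ and $(\ast\ast)$, which are the two halves of the same semi-orthogonal decomposition $\langle j_\sharp\cC,\, i_*\Fun(P',\cC)\rangle$. Your observation that $\pi_\sharp$ and $\pi_*$ require the two different sequences (because $\pi_\sharp\circ i^\delta_\sharp\not\simeq i_\sharp\circ\pi'_\sharp$ in general) is exactly the dual of the paper's choice to switch between a minimum and a maximum; neither formulation buys anything over the other.
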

\begin{proof}
The existence of the Kan extension functors $\pi_\sharp,\pi_*$ follows since the relevant limits and colimits in \eqref{eq:Kanextensions} are finite and $\cC$ admits finite limits and colimits.

Let us now prove the claim that $\pi_\sharp\colon F(\Fun(P^\delta,\cC))\rightarrow F(\Fun(P, \cC))$ is an equivalence. We will use induction on the cardinality of $P$ by mimicking the proof of \cite[Theorem A]{AokiPosets}. The claim is obvious for $P$ empty. For an arbitrary finite poset $P$ choose a minimum element $m\in P$ and let $i\colon \{m\}\rightarrow P$ be the inclusion of this element and $j\colon P\setminus m\rightarrow P$ the inclusion of the complement. Consider the sequences
\[
\xymatrix@C=2cm{
\Fun(P\setminus m, \cC) \ar@/^1.0pc/^{j_\sharp}[r] & \Fun(P, \cC) \ar@/^1.0pc/_{j^*}[l] \ar@/^1.0pc/^{i^*}[r] & \cC \ar@/^1.0pc/_{i_*}[l]
}
\]
where the top functors are left adjoint to the bottom functors. From the formulas \eqref{eq:Kanextensions} we see that the functors $j_\sharp$ and $i_*$ are given by extension by zero, so this is a split exact sequence (see also \cite[Lemma 4.1]{AokiPosets}). Therefore, by additivity we get an exact sequence of spectra
\[
F(\Fun(P\setminus m, \cC))\xrightarrow{j_\sharp} F(\Fun(P, \cC)) \xrightarrow{i^*} F(\cC).
\]
Consider a commutative diagram
\[
\xymatrix{
\Fun(P\setminus m, \cC) \ar^{j_\sharp}[r] & \Fun(P, \cC) \ar^{i^*}[r] & \cC \\
\Fun(P^\delta\setminus m, \cC) \ar^{j_\sharp}[r] \ar^{\pi_\sharp}[u] & \Fun(P^\delta, \cC) \ar^{i^*}[r] \ar^{\pi_\sharp}[u] & \cC \ar^{\id}[u]
}
\]
Here the bottom sequence is defined analogously to the top sequence replacing the posets by the same sets with the trivial partial order. The commutativity of the square on the left is obvious and the commutativity of the square on the right follows since $m$ is a minimum. Therefore, we obtain a commutative diagram of spectra
\[
\xymatrix{
F(\Fun(P\setminus m, \cC))\ar^{j_\sharp}[r] & F(\Fun(P, \cC)) \ar^-{i^*}[r] & F(\cC) \\
F(\Fun(P^\delta\setminus m, \cC))\ar^{j_\sharp}[r] \ar^{\pi_\sharp}[u] & F(\Fun(P^\delta, \cC)) \ar^-{i^*}[r] \ar^{\pi_\sharp}[u] & F(\cC) \ar^{\id}[u]
}
\]
where both rows are exact. The claim that the middle vertical map is an equivalence then follows by induction.

The fact that $\pi_*$ induces an equivalence is proven by an analogous induction by splitting off a maximum.
\end{proof}

\begin{example}
Consider the poset $P=\{0\leq 1\}=\Delta^1$. Then $\Fun(P^\delta, \cC)=\cC\times\cC$ and the functors
\[\pi_\sharp,\pi_*\colon \cC\times\cC\longrightarrow \Fun(\Delta^1, \cC)\]
are given by
\[\pi_\sharp(x, y) = (x\rightarrow x\oplus y),\qquad \pi_*(x, y) = (x\oplus y\rightarrow y).\]
\end{example}

The posets we encounter will be the face posets of triangulations. Namely, let $M$ be a compact polyhedron and choose a triangulation of $M$ with face poset $T$. For $\tau \in T$ let $\ost(\tau)\subset M$ denote its open star, i.e. the union of the interiors of simplices containing $\tau$.

\begin{thm}
Let $T$ be the face poset of a triangulation of a closed PL manifold $M$. Then:
\begin{enumerate}
    \item $(p^{\op})^\sharp(\bS) = \zeta_{T^{\op}}\in\Sp^{T^{\op}}$ is locally constant and invertible, i.e. for every $\tau\in T$ the spectrum $\zeta_{T^{\op}}(\tau)$ is invertible and for every $\tau\subseteq \sigma$ the induced map $\zeta_{T^{\op}}(\sigma)\rightarrow \zeta_{T^{\op}}(\tau)$ is an isomorphism.
    \item $T^{\op}$ admits duality in the sense of \cref{def:admitduality}.
\end{enumerate}
\label{thm:faceposetduality}
\end{thm}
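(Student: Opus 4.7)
The plan is to leverage two structural features of PL manifolds: (i) the link of each simplex is a PL sphere, so the closed star $\overline{\mathrm{St}}(\tau) = \tau \ast L(\tau)$ is a PL $d$-ball, and (ii) the identification $|N(T^{\op})|\cong M$ coming from the barycentric subdivision. Both facts translate combinatorial statements about the poset $T^{\op}$ into geometric statements about disks on $M$.

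For part (1), I would compute $\zeta_{T^{\op}}(\tau)$ by a local analysis at each $\tau\in T$. Since $T$ is finite, $\bS_{T^{\op}}$ is compact so $\zeta_{T^{\op}} := (p^{\op})^\sharp\bS$ exists. Using the recollement of $T^{\op}$ into the upward-closed open-star subposet $\mathrm{St}^\circ(\tau) := \{\sigma : \tau \le \sigma\}$ and its downward-closed complement, together with the PL fact that $|N(\mathrm{St}^\circ(\tau))|\simeq \ost(\tau)\simeq \bR^d$, I would reduce the stalk computation at $\tau$ to a Thom-spectrum calculation for the PL $d$-disk $\overline{\mathrm{St}}(\tau)$ modulo its $(d-1)$-sphere boundary. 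This yields $\zeta_{T^{\op}}(\tau)\simeq \bS[-d]$ up to orientation twist, an invertible spectrum. Local constancy under a face inclusion $\tau \subseteq \sigma$ follows because the induced inclusion $\overline{\mathrm{St}}(\sigma)\hookrightarrow \overline{\mathrm{St}}(\tau)$ is a PL embedding of $d$-disks that is a PL homotopy equivalence relative to boundary, hence induces an equivalence of the Thom-spectrum calculations.

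For part (2), I would use (1) to set up the self-duality with coevaluation $\Delta_\sharp p^*$. Since $\zeta_{T^{\op}}$ is invertible and locally constant, pulled back from the dualizing spectrum $\zeta_M\in\Sp^M$, I would construct the evaluation by combining the natural pairing $p_\sharp\Delta^*$ with a correction involving $\zeta_{T^{\op}}^{-1}$, exploiting the invertibility. The resulting zig-zag identity should reduce, locally over each open star, to the standard self-duality of $\Sp^M$ given by \cref{prop:Spselfdual}, and the global statement would follow by descent along the open-star cover (whose nerve is $T^{\op}$ itself).

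The main obstacle is verifying the zig-zag compatibility in part (2): the diagonal $\Delta\colon T^{\op}\to T^{\op}\times T^{\op}$ is not a smooth functor, so the base change formula of \cref{prop:basechange} does not apply directly. It is precisely the invertible twist $\zeta_{T^{\op}}$, encoding PL Poincar\'e--Lefschetz duality, that corrects this combinatorial failure of base change and restores the self-duality; making this correction precise at the level of natural transformations in $\PrSt$ requires carefully tracking orientation and dimension shifts across the PL disk structure near each simplex.
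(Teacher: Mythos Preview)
Your approach to part~(1) is morally the same as the paper's, though the paper executes it more directly. Rather than recollement, the paper writes down the formula $\zeta_{T^{\op}}(\tau)\simeq \lim_{\sigma\supseteq\tau}\bS$, passes to the Spanier--Whitehead dual $\xi(\tau)=\colim_{\sigma\supseteq\tau}\bS$, and then identifies $\xi(\tau)$ via a cofiber sequence with $\Sigma^\infty_+ M/\Sigma^\infty_+(M\setminus\ost(\tau))$. Local constancy is checked by comparing both to $\Sigma^\infty_+ M/\Sigma^\infty_+(M\setminus\{x\})$ for a point $x$ in the interior of $\tau$, using that the complements of open stars deformation retract onto the complement of a point. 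Invertibility is then immediate since $M/(M\setminus\{x\})\simeq S^d$. Your closed-star/link description and the paper's open-star/complement description are equivalent, but the latter avoids having to track relative boundaries.

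Your approach to part~(2) has a real gap, and the paper's method is genuinely different from what you propose. You want to construct the evaluation by hand and verify the zig-zag identities, reducing to the groupoid case by descent along the open-star cover. But the localization $t\colon T^{\op}\to \Sing(M)$ is not an equivalence, and the self-duality of $\Sp^{T^{\op}}$ with coevaluation $\Delta_\sharp p^*$ is a statement about an equivalence of functors between presentable categories that does not localize to the open stars in any evident way. You correctly flag the failure of smoothness of $\Delta$ as the obstruction, but you do not actually say how to get around it; ``carefully tracking orientation and dimension shifts'' is not a proof.

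The paper sidesteps this completely. It invokes an external result (\cite[Proposition~6.6.1]{CDHHLMNNS1}) that $\Sp^T$ (note: $T$, not $T^{\op}$) is self-dual with evaluation $p_*\Delta^*$; this is a Verdier-type duality whose zig-zags are already verified elsewhere. Transporting that self-duality across the canonical duality $\Sp^T\simeq(\Sp^{T^{\op}})^\vee$ of \cref{ex:presheafdual}, and using \cref{prop:presheafduality} to compute dual functors, yields a self-duality of $\Sp^{T^{\op}}$ with coevaluation $\Delta_\sharp\zeta_{T^{\op}}$. Now part~(1) enters: since $\zeta_{T^{\op}}$ is locally constant and invertible, tensoring with $\zeta_{T^{\op}}^{-1}\boxtimes\bS_{T^{\op}}$ is an autoequivalence of $\Sp^{T^{\op}\times T^{\op}}$ carrying $\Delta_\sharp\zeta_{T^{\op}}$ to $\Delta_\sharp\bS_{T^{\op}}$, so the latter is also a coevaluation. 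No zig-zag needs to be checked directly.
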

\begin{proof}
Let us first prove the first statement. Since $T$ is finite, $\bS_T\in\Sp^T$ is compact. In particular, $(p^{\op})^\sharp$ is well-defined. Let $\coev(\bS) \in \Sp^{T^{\op}}\otimes \Sp^T$ be the coevaluation from \cref{ex:presheafdual}. Then by \cref{prop:presheafduality} we get the formula $\zeta_{T^{\op}} \cong (\id\boxtimes p_*) \coev(\bS)$, i.e.
\[\zeta_{T^{\op}}(\tau)\cong\lim_{\sigma\supseteq \tau}\bS.\]
This object is Spanier--Whitehead dual to
\[\xi(\tau)\cong \colim_{\sigma\supseteq \tau}\bS.\]
So, it is enough to show that $\xi\in\Sp^T$ is locally constant and invertible. This object fits into a cofiber sequence
\[\colim_{\sigma\not\supseteq \tau} \bS\longrightarrow \colim_\sigma \bS\longrightarrow \xi(\tau).\]
The first colimit may be identified with $\Sigma^\infty_+ (M\setminus \ost(\tau))$ and the second colimit with $\Sigma^\infty_+ M$. Therefore,
\[\xi(\tau)\cong \Sigma^\infty_+ M / \Sigma^\infty_+(M\setminus \ost(\tau)).\]
Consider $\tau_1\subseteq \tau_2$ and a point $x$ in the interior of $\tau_1$. Then
\[\Sigma^\infty_+ M / \Sigma^\infty_+(M\setminus \ost(\tau_i))\longrightarrow \Sigma^\infty_+ M / \Sigma^\infty_+(M\setminus \{x\})\]
is an isomorphism since $M\setminus \ost(\tau_i)\rightarrow M\setminus \{x\}$ is a deformation retract. This shows that $\xi(\tau_1)\rightarrow \xi(\tau_2)$ is an isomorphism. Since $M$ is a PL manifold, the cofiber $M/(M\setminus \{x\})$ is homotopy equivalent to $S^n$ for some $n$ which finishes the proof of the first statement.

Next, let us prove the second statement. It is shown in \cite[Proposition 6.6.1]{CDHHLMNNS1} that $\Sp^T$ is self-dual with the evaluation pairing $\ev_T = p_*\Delta^*$. Using the duality between $\Sp^T$ and $\Sp^{T^{\op}}$ from \cref{ex:presheafdual} we transfer the self-duality of $\Sp^T$ to a self-duality of $\Sp^{T^{\op}}$ with the coevaluation pairing given by $\coev_{T^{\op}}(\bS) = \Delta_\sharp \zeta_{T^{\op}}$ by using that the coevaluation is dual to evaluation and the computation of dual functors from \cref{prop:presheafduality}. Since $\zeta_{T^{\op}}$ is locally constant and invertible, we have a natural isomorphism
\[\Delta_\sharp \bS_{T^{\op}}\cong (\zeta_{T^{\op}}^{-1}\boxtimes \bS_{T^{\op}}) \otimes \Delta_\sharp \zeta_{T^{\op}}\]
and hence $\widetilde{\coev}_{T^{\op}}(\bS)=\Delta_\sharp \bS_{T^{\op}}$ is also the coevaluation of a self-duality of $\Sp^{T^{\op}}$.
\end{proof}

\section{Assembly maps and the Euler characteristic}
\label{sect:assembly}

In this section we introduce assembly maps, the Euler characteristic and its lifts along the assembly map.

\subsection{Assembly map}

For the following definition see e.g. \cite{WeissWilliams}.

\begin{defn}
    Let $\bF\colon \cS\rightarrow \Sp$ be a functor from spaces to spectra. The \defterm{universal excisive approximation} is a colimit-preserving functor $\bF^{\%}\colon \cS\rightarrow \Sp$ together with a natural transformation $\bF^{\%}(-)\rightarrow \bF(-)$ which is an equivalence on $\pt$. We call this morphism the \defterm{assembly map}. In particular, we can always write it as
    \[
    \alpha \colon \Sigma^\infty_+X \otimes \bF(\pt) \longrightarrow \bF(X).
    \]
\end{defn}

We will mainly be concerned with the following examples:
\begin{itemize}
    \item For $X\in\cS$ a topological space let $\bA(X)\in\Sp$ be the $K$-theory spectrum of the stable $\infty$-category $(\Sp^X)^\omega$, the \defterm{$A$-theory} of the space $X$. Denote by $A(X) = \Omega^\infty \bA(X)$ the underlying space. Then we have the assembly map
    \[\alpha\colon \Sigma^\infty_+X\otimes \bA(\pt)\longrightarrow \bA(X).\]
    
    \item Consider the functor $X\mapsto \dim(\Sp^X)\cong \Sigma^\infty_+ LX$ (see \cref{prop:LocSysTHH} for the equivalence). The assembly map is given by the inclusion of constant loops
    \[\alpha\colon \Sigma^\infty_+ X\longrightarrow \Sigma^\infty_+ LX.\]

    \item The Dennis trace for $A$-theory defines a morphism of spectra
    \[\tr\colon \bA(X)\longrightarrow \Sigma^\infty_+ LX.\]
    By the universal property of the assembly map, we have a commutative diagram
    \[
    \xymatrix{
    \Sigma^\infty_+X\otimes \bA(\pt) \ar^{\alpha}[r] \ar^{\tr}[d] & \bA(X) \ar^{\tr}[d] \\
    \Sigma^\infty_+ X \ar^{\alpha}[r] & \Sigma^\infty_+ LX.
    }
    \]
\end{itemize}

\begin{remark}
    We follow the definition of the $A$-theory space given in \cite[Lecture 21]{LurieKtheory}, which coincides with the space $A(X)$ defined in \cite{DwyerWeissWilliams} in terms of the Waldhausen category of (homotopy) \emph{finitely dominated} retractive spaces over $X$. The original definition due to Waldhausen \cite{Waldhausen} uses the category of (homotopy) \emph{finite} retractive spaces over $X$. The difference only affects $\pi_0$.
\end{remark}

Given a compact parametrized spectrum $\cF\in\Sp^X$, we will be interested in lifts of its class $[\cF]\in A(X)$ under the assembly map, so let us introduce the relevant space. For $\chi\in A(X)$ denote by
\[\Lift(\chi,\alpha) = \pi_0 \hofib_{\chi}\left(\alpha\colon \Omega^\infty(\Sigma^\infty_+X\otimes \bA(\pt))\rightarrow A(X)\right),\]
where the homotopy fiber is taken at $\chi\in A(X)$. We will use the same notation for the other assembly maps.

Let us now introduce the Euler characteristic. Suppose $X$ is a finitely dominated space. Then the constant parametrized spectrum $\bS_X\in\Sp^X$ is compact and hence it defines a class $[\bS_X]\in A(X)$ in $A$-theory; applying the Dennis trace we obtain an element $[\bS_X]\in\Omega^\infty\Sigma^\infty_+ LX$.

\begin{defn}
    Let $X$ be a finitely dominated space. The \defterm{$A$-theoretic Euler characteristic} is
    \[\chi_A(X) = [\bS_X]\in A(X)\]
    and the \defterm{$\THH$ Euler characteristic} is
    \[\chi_\THH(X) = [\bS_X]\in \Omega^\infty \Sigma^\infty_+ LX.\]
\end{defn}

Denote by $p\colon X\rightarrow \pt$ the natural projection. Then under the composite
\[\pi_0(A(X))\longrightarrow \pi_0(A(\pt)) = \Z\]
the image of $\chi_A(X)$ is the usual Euler characteristic of $X$. So, $\chi_A(X)$ may be considered as a local version of the usual Euler characteristic.

\subsection{Lifts of the Euler characteristic}

In this section we recall connections between the assembly map for $A$-theory and (simple) homotopy theory. Recall the following space introduced in \cite{Waldhausen}.

\begin{defn}
    Let $X$ be a space. The \defterm{PL Whitehead spectrum} $\bWh^{PL}(X)$ is the cofiber of the assembly map $\alpha\colon \Sigma^\infty_+X\otimes \bA(\pt)\rightarrow \bA(X)$. The \defterm{PL Whitehead space} is the underlying space $\Wh^{PL}(X) = \Omega^\infty\bWh^{PL}(X)$.
\end{defn}

\begin{remark}
    Suppose $X$ is connected and based. Then
    \[\pi_0(\Wh^{PL}(X))\cong \tilde{K}_0(\Z[\pi_1(X)])\]
    is the reduced $K$-theory group and
    \[\pi_1(\Wh^{PL}(X))\cong \Wh(\pi_1(X))\]
    is the Whitehead group.
\end{remark}

\begin{defn}
    Let $X$ be a finitely dominated space. \defterm{Wall's finiteness obstruction} $w(X)\in\pi_0(\Wh^{PL}(X))$ is the image of $\chi_A(X)\in A(X)$ under $A(X)\rightarrow \Wh^{PL}(X)$.
\end{defn}

The existence of a fiber sequence
\[\Omega^\infty(\Sigma^\infty_+X\otimes \bA(\pt))\xrightarrow{\alpha} A(X)\longrightarrow \Wh^{PL}(X)\]
implies the following:
\begin{itemize}
    \item $\Lift(\chi_A(X), \alpha)$ is nonempty if, and only if, Wall's finiteness obstruction $w(X)$ vanishes.
    \item If $\Lift(\chi_A(X), \alpha)$ is nonempty, it is a torsor over the Whitehead group $\pi_1(\Wh^{PL}(X))$.
\end{itemize}

A fundamental result due to Wall \cite{Wall} asserts that $w(X) = 0$ if, and only if, there is a finite CW complex $M$ (equivalently, a finite polyhedron) such that $X\cong \Sing(M)$. We will need an explicit construction of the trivialization of Wall's finiteness obstruction.

Let $M$ be a finite polyhedron with $X=\Sing(M)$ and choose a PL triangulation of $M$ with face poset $T$. The collection of open stars assembles into a functor $\ost\colon T^{\op}\rightarrow \cU(M)$ to the category of open subsets of $M$.

\begin{lm}
The inclusion $\ost(\tau)\subset M$ of open stars induces an isomorphism
\[\colim_{\tau\in T^{\op}} \Sing(\ost(\tau))\cong \Sing(M).\]
\label{lm:openstargoodcover}
\end{lm}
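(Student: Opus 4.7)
The plan is to identify $\{\ost(\tau)\}_{\tau\in T}$ as a good open cover of $M$ indexed essentially by $T^{\op}$, and then deduce the lemma either by hypercover descent in the $\infty$-topos $\cS$ or, equivalently, by a direct computation exploiting contractibility of the cover.

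First I would verify the geometric facts. Every $\ost(\tau)$ is contractible, being star-shaped with respect to the barycenter of $\tau$, and $\{\ost(\tau)\}_{\tau\in T}$ covers $M$ since each point lies in the relative interior of a unique simplex $\sigma$ and hence in $\ost(\sigma)$. For any nonempty finite family $\tau_1,\ldots,\tau_k\in T$, the intersection $\ost(\tau_1)\cap\cdots\cap\ost(\tau_k)$ is nonempty exactly when the $\tau_i$ share a common coface, in which case it equals $\ost(\sigma)$ for the smallest such $\sigma$. Thus every nonempty finite intersection is again a contractible open star, and the poset of these intersections under inclusion of open sets is canonically $T^{\op}$: the assignment $\sigma\mapsto\ost(\sigma)$ is an antitone bijection, with $\ost(\sigma)\subseteq\ost(\tau)$ if and only if $\tau\subseteq\sigma$.

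With this good-cover structure, the functor $F\colon T^{\op}\to\cS$, $\tau\mapsto\Sing(\ost(\tau))$, sends every object to a contractible space and every morphism to an equivalence. Its Grothendieck construction is therefore a Kan fibration over the classifying space $|T^{\op}|$ with contractible fibers, giving $\colim F\simeq |T^{\op}|=|T|$; but $|T|$ is the order complex of the face poset, i.e.\ the barycentric subdivision $M'$ of $M$, which is canonically homeomorphic to $M$. The main obstacle I anticipate is checking that this equivalence agrees with the canonical comparison map $\colim_\tau\Sing(\ost(\tau))\to\Sing(M)$ induced by the inclusions $\ost(\tau)\hookrightarrow M$; this can be verified using the natural transformation from $F$ to the constant diagram at $\Sing(M)$, or sidestepped by appealing directly to descent in $\cS$ for the hypercover associated to a good open cover, which yields the claim immediately.
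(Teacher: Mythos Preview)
Your proposal is correct but takes a somewhat different route from the paper. The paper applies Lurie's Seifert--van Kampen theorem \cite[Theorem A.3.1]{LurieHA} directly: for each $x\in M$, the subposet $T^{\op}_x\subset T^{\op}$ of simplices $\tau$ with $x\in\ost(\tau)$ is identified with $(T_{\leq\sigma})^{\op}$, where $\sigma$ is the simplex containing $x$ in its interior, and this has contractible nerve (it has a terminal object, or equivalently its nerve is the barycentric subdivision of $\sigma$). This single local check immediately gives that the canonical map $\colim_\tau\Sing(\ost(\tau))\to\Sing(M)$ is an equivalence. You instead argue via the good-cover property, reducing $F$ to the constant functor at a point so that $\colim F\simeq|T^{\op}|\simeq|T|\cong\Sing(M)$. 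This is fine, but as you correctly flag, it produces an abstract equivalence rather than directly showing the \emph{canonical} comparison map is one; your suggested fix via hypercover descent is essentially what Lurie's theorem packages, so in the end the two arguments converge. The paper's approach is shorter and avoids the detour through $|T|$, while yours makes the connection to the barycentric subdivision explicit.
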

\begin{proof}
For $x\in M$ denote by $T^{\op}_x\subset T^{\op}$ the subcategory of simplices $\tau$ such that $x\in\ost(\tau)$. If $x$ lies in the interior of $\sigma\in T$, then $T^{\op}_x\cong (T_{\leq \sigma})^{\op}$. The category $T_{\leq \sigma}$ is the face poset of the simplex $\sigma$ and hence its nerve is weakly contractible. The claim then follows from \cite[Theorem A.3.1]{LurieHA}.
\end{proof}

The colimit $\colim_{\tau\in T^{\op}} \pt\in\cS$ is the localization of the category $T^{\op}$ obtained by inverting all morphisms. So, we obtain a functor of $\infty$-categories
\begin{equation}
t\colon T^{\op}\longrightarrow X.
\label{eq:Tlocalization}
\end{equation}
Therefore, restriction along $t$ and the left Kan extension define an adjunction
\[
\xymatrix@C=2cm{
\Sp^{T^{\op}} \ar@/^1.0pc/^{t_\sharp}[r] & \Sp^X \ar@/^1.0pc/_{t^*}[l]
}
\]
with $t^*$ fully faithful. In particular, $t_\sharp t^*\bS_X\cong \bS_X$. Since $T$ is finite, $\bS_T=t^*\bS_X\in\Sp^{T^{\op}}$ is compact, and so we obtain a lift of $[\bS_X]\in A(X)$ along
\[t_\sharp\colon \bK(\Sp^{T^{\op}, \omega})\longrightarrow \bA(X).\]
Next, let $T^\delta$ be the set $T$ equipped with the trivial poset structure and $\pi\colon T^\delta\rightarrow T^{\op}$ the functor given by the identity on objects. By \cref{thm:posetadditivity} applied to $\cC=\Sp^\omega$ (note that $\Fun(T^{\op}, \Sp^\omega)=\Fun(T^{\op}, \Sp)^\omega$ by \cref{prop:compactcharacterization}) the map
\[\pi_\sharp\colon \bA(T^\delta)\longrightarrow \bK(\Sp^{T^{\op}, \omega})\]
is an equivalence and hence we obtain a lift of $[\bS_X]\in A(X)$ along
\[(t\circ \pi)_\sharp\colon \bA(T^\delta)\longrightarrow \bA(X).\]
Now consider a commutative diagram of assembly maps
\[
\xymatrix{
\bA(T^\delta) \ar[r] & \bA(X) \\
\Sigma^\infty_+T^\delta\otimes \bA(\pt) \ar^{t\circ \pi}[r] \ar^{\alpha}[u] & \Sigma^\infty_+X\otimes \bA(\pt) \ar^{\alpha}[u]
}
\]

Since $T^\delta$ is a finite set, the assembly map $\Sigma^\infty_+T^\delta\otimes A(\pt)\rightarrow A(T^\delta)$ is an equivalence and hence in this way we obtain a lift of $[\bS_X]\in A(X)$ along the assembly map
\[\alpha\colon \Sigma^\infty_+ X\otimes \bA(\pt)\longrightarrow \bA(X).\]

\begin{defn}
Let $M$ be a finite polyhedron with underlying homotopy type $X=\Sing(M)$. The lift of $[\bS_X]\in A(X)$ defined above is the \defterm{Whitehead lift} $\lambda_{\Wh}(M)\in\Lift(\chi_A(X), \alpha)$.
\label{def:Whiteheadlift}
\end{defn}

\begin{remark}
    The construction of the assembly map is equivalent to \cite[Construction 9.]{LurieKtheory}. More precisely, in loc. cit. it is defined as the dual of $\Sp^X \to \Fun(T, \Sp)$ (thought of as inclusion of locally constant into $T$-constructible sheaves) and Verdier duality is used to identify $\Fun(T, \Sp)^\vee \cong \Fun(T, \Sp)$. In the above description we instead take $\Fun(T, \Sp)^\vee \cong \Fun(T^{\op}, \Sp)$ and use \cref{prop:presheafduality} to identify the dual functor.
\end{remark}
	
	Let $f\colon M_1\rightarrow M_2$ be a homotopy equivalence of finite polyhedra, inducing an isomorphism $f\colon X_1\rightarrow X_2$ of their homotopy types. The commutative diagram
	\[
	\xymatrix{
		\Sigma^\infty_+X_1\otimes \bA(\pt) \ar^-{\alpha}[r] \ar^{f}[d] & \bA(X_1) \ar^{f_\sharp}[d] \\
		\Sigma^\infty_+X_2\otimes \bA(\pt) \ar^-{\alpha}[r] & \bA(X_2)
	}
	\]
	as well as the natural equivalence $f_\sharp\bS_{X_1}\cong \bS_{X_2}$ gives a map
	\[f\colon \Lift(\chi_A(X_1), \alpha)\longrightarrow \Lift(\chi_A(X_2), \alpha).\]
	In particular, we obtain an element
	\[\tau(f) = f(\lambda_{\Wh}(M_1)) - \lambda_{\Wh}(M_2)\in\pi_1(\Wh^{PL}(X_2)).\]

	The following statement follows by comparing the Whitehead torsion of a bounded acyclic complex of free modules to the path in the K-theory space obtained using the additivity theorem (see \cite[Lecture 27, Proposition 5]{LurieKtheory}).
	
	\begin{prop}
		Let $f\colon M_1\rightarrow M_2$ be a homotopy equivalence of finite polyhedra, inducing an isomorphism $f\colon X_1\rightarrow X_2$ of their underlying homotopy types. The element $\tau(f)\in \pi_1(\Wh^{PL}(X_2))$ constructed above coincides with the Whitehead torsion of $f$.
		\label{prop:whiteheadtorsion}
	\end{prop}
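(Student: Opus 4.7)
The plan is to reduce the statement to a computation of Whitehead torsion in terms of paths in the $K$-theory space, then appeal to Lurie's identification of such paths with torsion of acyclic complexes.

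\textbf{Step 1: Independence of triangulation and reduction to a subcomplex inclusion.} First I would check that $\lambda_{\Wh}(M)$ is independent of the chosen PL triangulation. Given two triangulations with a common subdivision $T'$, the functor $t$ of \eqref{eq:Tlocalization} factors through the subdivision, and the compatibility of $\cref{thm:posetadditivity}$ with subdivision (which splits a simplex into a colimit of its barycentric pieces) yields equality of the resulting lifts. Given this, I would replace $f$ with the inclusion $i_1\colon M_1\hookrightarrow \cM_f$ of $M_1$ into the mapping cylinder: the inclusion $i_2\colon M_2\hookrightarrow \cM_f$ is a simple deformation retract onto a subcomplex and so by construction satisfies $i_2(\lambda_{\Wh}(M_2))=\lambda_{\Wh}(\cM_f)$. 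Triangulating $\cM_f$ so that $M_1$ is a full subcomplex, this reduces the problem to the case where $f$ is the inclusion $i\colon N\hookrightarrow M$ of a subcomplex and a homotopy equivalence.

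\textbf{Step 2: Unfolding the difference of lifts.} In this subcomplex setting, the inclusion of face posets $T_N\hookrightarrow T_M$ is the identity on the objects added in going from $N$ to $M$, and gives rise to a split cofiber sequence of functor categories. Tracing through the definition of $\lambda_{\Wh}$ and the identification $\pi_\sharp\colon \bA(T_M^\delta)\xrightarrow{\sim} \bK(\Sp^{T_M^{\op},\omega})$ of $\cref{thm:posetadditivity}$, the element $\tau(i)=i(\lambda_{\Wh}(N))-\lambda_{\Wh}(M)\in\pi_1\Wh^{PL}(X)$ is represented by the loop in $K(\Sp^{X,\omega})$ obtained from the two nullhomotopies of $\chi_A$ in $\Omega^\infty(\Sigma^\infty_+X\otimes\bA(\pt))$. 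Under the pushforward to $\bA(X)$, this loop is the one produced via Waldhausen's additivity theorem from the skeletal filtration of the relative cellular object
\[
\Sigma^\infty_{+X}N\hookrightarrow \Sigma^\infty_{+X}N^{(1)}\hookrightarrow\cdots\hookrightarrow \Sigma^\infty_{+X}M
\]
by the relative cells of $M$ over $N$.

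\textbf{Step 3: Identification with the classical formula.} Since $i$ is a homotopy equivalence, the relative cellular object above defines a bounded acyclic chain complex $C_\bullet(M,N)$ of finitely generated free $\bS[\Omega X]$-module spectra. By \cite[Lecture 27, Proposition 5]{LurieKtheory}, the loop in $\pi_1\bK(\bS[\Omega X])$ produced by the additivity theorem from such an acyclic complex is exactly its classical Whitehead torsion in $\Wh(\pi_1 X)$. Combined with the standard identification of the Whitehead torsion of $f$ with that of the relative chain complex of its mapping cylinder, this gives $\tau(i)=\tau(f)$.

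The main obstacle will be Step 2: precisely matching the path in $K(\Sp^{X,\omega})$ produced by the abstract difference-of-lifts construction with the additivity path coming from the skeletal filtration. This requires unpacking $\pi_\sharp\colon \bA(T^\delta)\to\bK(\Sp^{T^{\op},\omega})$ at the level of homotopies (rather than classes) and verifying that the inductive argument of $\cref{thm:posetadditivity}$ on the cardinality of $T$ is compatible with the cellular filtration of $M$ over $N$, so that the two descriptions agree on the nose in $\pi_1$.
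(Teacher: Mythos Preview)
Your proposal is correct and follows essentially the same approach as the paper: the paper's entire ``proof'' is the one-sentence remark that the statement follows by comparing the Whitehead torsion of a bounded acyclic complex of free modules to the path in $K$-theory obtained via the additivity theorem, citing \cite[Lecture 27, Proposition 5]{LurieKtheory}, which is exactly the reference you invoke in Step~3. Your Steps~1 and~2 are a reasonable unpacking of the details the paper leaves implicit, and your identification of Step~2 as the main technical point is accurate.
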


	Homotopy equivalences $f\colon M_1\rightarrow M_2$ with vanishing Whitehead torsion $\tau(f)\in\pi_1(\Wh^{PL}(X_2))$ are known as \defterm{simple homotopy equivalences}. Therefore, we see that the lift $\lambda_{\Wh}(M)\in\Lift(\chi_A(X), \alpha)$ is simple homotopy invariant, but is not homotopy invariant.

 \begin{example}
 If $f\colon M_1\rightarrow M_2$ is a homeomorphism of finite polyhedra, it is a simple homotopy equivalence \cite{Chapman}.
 \end{example}

\section{Pontryagin--Thom lift}

The goal of this section is to construct a lift of the $\THH$ Euler characteristic along the assembly map using intersection theory and compare it to the Whitehead lift from \cref{def:Whiteheadlift}. More precisely, we construct a $\THH$ lift using a version of Pontryagin--Thom collapse, which we define using the configuration space of two points. Next, we show that this lift coincides with the trace of the Whitehead lift defined in the previous section.

\subsection{Lift diagrams}\label{sec:liftdiagrams}

To construct a lift of the $\THH$ Euler characteristic geometrically, it will be convenient to present the data of a lift in terms of a commutative diagram, which we call a \emph{lift diagram}. In fact, with a view towards the proof of \cref{thm:florian}, we will define the notion of lifts for a nice class of $\infty$-categories, which, in particular, contains finitely dominated spaces (viewed as $\infty$-groupoids). Let $I$ be a small $\infty$-category.

Consider the commutative diagram
\[
\xymatrix@C=1cm{
\Sp^I \ar^{\Delta_\sharp}[r] \ar^{\Delta_\sharp}[d] & \Sp^I\otimes \Sp^I \ar^{\id\boxtimes \Delta_\sharp}[d] \\
\Sp^I\otimes \Sp^I \ar^-{\Delta_\sharp\boxtimes \id}[r] & \Sp^I\otimes \Sp^I\otimes \Sp^I
}
\]
Passing to right adjoints of the horizontal functors, we obtain a natural transformation
\[\Delta_\sharp\circ \Delta^*\Rightarrow (\Delta^*\boxtimes \id)\circ (\id\boxtimes \Delta_\sharp).\]
Pre- and post-composing it with $\bS_I$ and $p_\sharp$, we obtain the natural transformation
\begin{equation}\label{eqn:transftoT}
\id\Rightarrow \left(\cT_I=(p_\sharp\boxtimes \id)\circ(\Delta^*\boxtimes \id) \circ (\id\boxtimes \Delta_\sharp(\bS_I))\right).
\end{equation}
Taking its trace we obtain a morphism
\[\beta_I\colon \THH(\Sp^{I, \omega})\cong\dim(\Sp^I)\longrightarrow \tr(\cT_I)\cong p_\sharp \Delta^*\Delta_\sharp \bS_I.\]
\begin{remark}
The morphism $\beta_I$ may be identified with
\[\int^{i\in I}\Sigma^\infty_+\Hom_I(i, i)\longrightarrow \colim_{i,j\in I} \Sigma^\infty_+(\Hom_I(i, j)\times \Hom_I(i, j))\]
induced by the map $\Hom_I(i, i)\rightarrow \Hom_I(i, i)\times \Hom_I(i, i)$ sending $f \mapsto f \times \id_i$.
Moreover, $\tr(\cT_I)$ can be identified with the suspension spectrum of the geometric realization of the category of parallel arrows $\Fun(\Delta^1 \sqcup_{\partial \Delta^1} \Delta^1, I)$.
\end{remark}

The unit map $\bS_I\rightarrow \Delta^*\Delta_\sharp \bS_I$ induces a morphism
\[\alpha_I\colon p_\sharp \bS_I\longrightarrow p_\sharp\Delta^*\Delta_\sharp \bS_I.\]

If we further assume that $\bS_I\in\Sp^I$ is compact, then we may define the element $\chi_{\THH}(I) = [\bS_I]\in\Omega^\infty\dim(\Sp^I)$.

\begin{defn}\label{def:liftcategory}
Let $I$ be a small $\infty$-category with $\bS_I\in\Sp^I$ compact. A \defterm{lift of $\chi_{\THH}(I)$} is an element $e(I)\in \Omega^\infty p_\sharp \bS_I$ together with a homotopy $\alpha_I(e(I))\sim \beta_I([\bS_I])$. We denote by
\[\Lift(\beta_I(\chi_{\THH}(I)), \alpha_I) = \pi_0\hofib_{\beta_I(\chi_{\THH}(I))}(\alpha_I\colon \Omega^\infty p_\sharp\bS_I\rightarrow \tr(\cT_I))\]
the set of lifts.
\end{defn}

\begin{example}
If $X$ is an $\infty$-groupoid, the morphism $\id\Rightarrow \cT_X$ is an equivalence and hence the morphism
\[\beta_X\colon \dim(\Sp^X)\rightarrow p_\sharp \Delta^*\Delta_\sharp \bS_X\]
is an equivalence. The composite
\[\Sigma^\infty_+ X\xrightarrow{\alpha_X} \tr(\cT_X)\xleftarrow[\beta_X]{\sim} \dim(\Sp^X)\cong \Sigma^\infty_+ LX\]
is given by the inclusion of constant loops, and hence it coincides with the $\THH$ assembly map. Therefore, if $X$ is finitely dominated (so that $\bS_X\in\Sp^X$ is compact), the set of lifts $\Lift(\beta_X(\chi_{\THH}(X)), \alpha_X)$ is naturally isomorphic to the set of lifts $\Lift(\chi_{\THH}(X), \alpha)$ of the $\THH$ Euler characteristic along the assembly map.
\label{ex:generalliftspace}
\end{example}

\begin{example}
If $T$ is a poset, the unit $\id\rightarrow \Delta^*\Delta_\sharp$ is an equivalence, which follows from the fact that $\Hom_T(i, j)\rightarrow \Hom_T(i, j)\times \Hom_T(i, j)$ is an equivalence. In particular,
\[\alpha_T\colon \Sigma^\infty_+ |T|\cong p_\sharp \bS_T\longrightarrow \tr(\cT_T)\]
is an equivalence and $\Lift(\beta_I(\chi_{\THH}(I)), \alpha_I)$ is a singleton (assuming it is defined, i.e. $\bS_I$ is compact).
\label{ex:posetalpha}
\end{example}

\begin{defn}
Let $I$ be a small $\infty$-category which admits duality (see \cref{def:admitduality}). A \defterm{lift diagram} for $I$ is given by the following data:
    \begin{itemize}
        \item An object $\zeta_I\in\Sp^I$.
        \item A morphism $\epsilon \colon \bS_I \boxtimes \zeta_I\rightarrow \Delta_\sharp \bS_I$ which exhibits $\zeta_I$ as the relative dual of $\bS_I$.
        \item An \defterm{Euler class} $e \colon \zeta_I \to \bS_I$.
        \item A homotopy commuting diagram
        \[
        \begin{tikzcd}
            \Delta_\sharp \zeta_I \ar[r] \ar[d, "\Delta_\sharp e"] & \zeta_I \boxtimes \bS_I \ar[dl, "\epsilon"] \\
            \Delta_\sharp \bS_I
        \end{tikzcd}
        \]
    \end{itemize}
\end{defn}

\begin{remark}
By \cref{prop:Spselfdual} any $\infty$-groupoid $X$ admits duality in the sense of \cref{def:admitduality}. Therefore, the previous definition applies to $\infty$-groupoids.
\end{remark}

Our goal now is to relate lift diagrams to lifts of the Euler characteristic from \cref{def:liftcategory}. For this, we begin with a technical lemma.

\begin{lm}\label{lm:descofcT}
    The following diagram commutes:
    \[
    \begin{tikzcd}
        \ev_{\Sp^I} \ar[r, "\eqref{eqn:transftoT}"] & \ev_{\Sp^I} \circ (\cT_I \otimes \id) \\
        p_\sharp \Delta_\sharp^\vee \ar[r, "\eqref{eq:Deltasharpdual}"] \ar[u, "\eqref{eq:evzeta}", "\sim" right] & p_\sharp \Delta^*, \ar[u, "\sim" right]
    \end{tikzcd}
    \]
    where the right vertical map is the identification
    \[
    \ev_{\Sp_I} \circ (\cT_I \otimes \id) (\cong p_\sharp \Delta^*) \circ (\id \boxtimes \ev_{\Sp_I}) \circ (\id \boxtimes \coev_{\Sp_I} \boxtimes \id) \cong p_\sharp \Delta^*
    \]
\end{lm}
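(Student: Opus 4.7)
The plan is to identify both horizontal arrows as manifestations of the same Beck--Chevalley mate, with the vertical equivalences merely translating between the self-duality of $\Sp^I$ from \cref{prop:Spselfdual} and the $\Sp^I \leftrightarrow \Sp^{I^{\op}}$ duality from \cref{ex:presheafdual}. First, I will make the right vertical equivalence concrete. Under \cref{prop:Spselfdual} we have $\coev_{\Sp^I}(\bS) = \Delta_\sharp \bS_I$, so
\[
\cT_I(F) = (p_\sharp \Delta^* \otimes \id)(F \boxtimes \Delta_\sharp \bS_I) = (\ev_{\Sp^I} \otimes \id)(F \boxtimes \coev_{\Sp^I}(\bS)),
\]
and the right vertical arrow $p_\sharp \Delta^* = \ev_{\Sp^I} \xrightarrow{\sim} \ev_{\Sp^I} \circ (\cT_I \otimes \id)$ is precisely the cusp/zigzag isomorphism of the self-duality of $\Sp^I$. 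Under this equivalence, \eqref{eqn:transftoT} corresponds to post-composing (by $\ev_{\Sp^I}$ on the first two factors) the Beck--Chevalley mate
\[
\Delta_\sharp \circ \Delta^* \Longrightarrow (\Delta^*\boxtimes \id)\circ (\id \boxtimes \Delta_\sharp)
\]
associated with the commutative square
\[
\begin{tikzcd}
I \ar[r,"\Delta"] \ar[d,"\Delta"] & I\times I \ar[d,"\id \times \Delta"] \\
I\times I \ar[r,"\Delta \times \id"] & I\times I \times I.
\end{tikzcd}
\]

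Next, I will unpack the bottom row. By the proof of \cref{prop:presheafselfduality} specialized to $f = \Delta$, the natural transformation $\Delta_\sharp^\vee \to \Delta^*$ of \eqref{eq:Deltasharpdual} is the Beck--Chevalley mate for the Cartesian square
\[
\begin{tikzcd}
I \ar[r,"(\Delta \times \id)\circ \Delta"] \ar[d,"\Delta"] & (I\times I)\times I \ar[d,"\id \times \Delta"] \\
I\times I \ar[r,"\Delta"] & (I\times I)\times (I\times I),
\end{tikzcd}
\]
applied to $\bS_{I \times I}$. Unwinding \eqref{eq:evzeta} via \cref{prop:presheafduality}, the left vertical equivalence $p_\sharp \Delta_\sharp^\vee \xrightarrow{\sim} \ev_{\Sp^I}$ is the composite of the self-duality/duality comparison $(p^*)^\vee \xrightarrow{\sim} p_\sharp$ of \eqref{eq:pushforwarddual} with the dual-of-coevaluation identification \eqref{eq:evdual}. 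The two squares above are related by horizontal pasting: the second is obtained from the first by whiskering on the right with the (trivially Cartesian) square associated to $\id \colon I\times I \to I\times I$, so by the functoriality of Beck--Chevalley mates under horizontal pasting of Cartesian squares, the two mates agree after applying $p_\sharp$ to the appropriate factor.

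Assembling these observations, the commutativity of the outer diagram reduces to the statement that the identification $\ev_{\Sp^I} \cong p_\sharp \Delta_\sharp^\vee$ intertwines the two Beck--Chevalley mates, which holds by naturality once the two dualities have been aligned. The final subtlety is to check that the coevaluation of the $\bS_I \dashv \zeta_I$ relative duality behaves as the identity under the chain of equivalences in \eqref{eq:evzeta}; this is exactly the content of \cref{lm:counitforunit}, which rules out an unwanted automorphism of $\zeta_I$ entering the comparison. The main obstacle in carrying out the plan is precisely this bookkeeping: tracking the three distinct duality identifications in play ($\Sp^I$ self-dual, $\Sp^I$ dual to $\Sp^{I^{\op}}$, and the relative duality $\bS_I \dashv \zeta_I$) and ensuring all mates are matched on the nose. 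Once this is done, the equality of the two Beck--Chevalley mates under horizontal pasting makes the compatibility formal.
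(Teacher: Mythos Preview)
Your strategy---identifying both horizontal arrows as Beck--Chevalley mates and then matching them---is the right idea, and it is essentially what the paper does. However, the key step in your argument has a gap. You claim that the square
\[
\begin{tikzcd}
I \ar[r,"(\Delta \times \id)\circ \Delta"] \ar[d,"\Delta"] & I^2\times I \ar[d,"\id \times \Delta"] \\
I^2 \ar[r,"\Delta^{I^2}"] & I^2\times I^2
\end{tikzcd}
\]
is obtained from the square
\[
\begin{tikzcd}
I \ar[r,"\Delta"] \ar[d,"\Delta"] & I^2 \ar[d,"\id \times \Delta"] \\
I^2 \ar[r,"\Delta \times \id"] & I^3
\end{tikzcd}
\]
by ``whiskering on the right with the trivially Cartesian square associated to $\id\colon I^2\to I^2$.'' This is not correct: the bottom-right corners are $I^4$ and $I^3$, respectively, and no pasting with an identity square can change that. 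The two squares are genuinely different, and their mates live in different functor categories; you have not explained how the vertical equivalences in the lemma transport one to the other.

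The paper avoids this bookkeeping by first applying a coevaluation, converting the statement into one about $\coev(\bS)\to(\cT\otimes\id)(\coev(\bS))$ versus $\Delta_\sharp\bS_I\to(\Delta^*)^\vee\bS_I$. After this dualization, both arrows are seen to arise from \emph{literally the same} commutative square (the one expressing $(\Delta_\sharp\otimes\id)\circ(\id\otimes\Delta_\sharp)\circ\Delta_\sharp\cong\Delta_\sharp\circ\Delta_\sharp$), up to a permutation of factors---which is exactly the square used to define $f_\sharp\to(f^*)^\vee$ in \cref{prop:presheafselfduality} for $f=\Delta$. No pasting argument is needed.

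A smaller point: your appeal to \cref{lm:counitforunit} at the end is misplaced. The left vertical arrow in the lemma stops at $p_\sharp\Delta_\sharp^\vee$ and does not pass through the $\zeta_I$-step of \eqref{eq:evzeta}, so no compatibility with the relative duality coevaluation is needed here. (That lemma is used later, in the proof of \cref{thm:liftdiagrams}.)
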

\begin{proof}
Applying another coevaluation we instead show that the diagram
\[
\begin{tikzcd}
    \coev(\bS) \ar[r] & (\cT \otimes \id)(\coev(\bS)) \\
    \Delta_\sharp \bS_I \ar[r] \ar[u, "\sim"] & (\Delta^*)^\vee \bS_I \ar[u, "\sim"]
\end{tikzcd}
\]
commutes. The top arrow is obtained from
\[
\begin{tikzcd}[column sep=2cm]
    \Sp^I \ar[r, "\Delta_\sharp"]  \ar[d, "(\Delta_\sharp \otimes \id) \circ \Delta_\sharp"]& \Sp^I \otimes \Sp^I \ar[d, "\Delta_\sharp \otimes \Delta_\sharp"] \\
    \Sp^I \otimes \Sp^I \otimes \Sp^I \ar[r, "\id \otimes \Delta_\sharp \otimes \id"'] & \Sp^I \otimes \Sp^I \otimes \Sp^I \otimes \Sp^I,
\end{tikzcd}
\]
by passing to horizontal right adjoints and pre-/post-composing with $\bS_I \otimes \bS_I$ and $\id \otimes p_\sharp \otimes \id$, respectively. Note that the diagram is expressing the fact that $(f_\sharp \otimes \id) \circ (\id \otimes f_\sharp)\Delta_\sharp \cong \Delta_\sharp f_\sharp$ for $f = \Delta$ (and up to a permutation of the factors). But this is exactly how the map $f_\sharp \to (f^*)^\vee$ is constructed in \cref{prop:presheafselfduality}.
\end{proof}

\begin{thm}
\label{thm:liftdiagrams}
Let $I$ be a small $\infty$-category which admits duality and such that $\bS_I\in\Sp^I$ is compact. Then the set $\Lift(\beta_I(\chi_{\THH}(I)), \alpha_I)$ is naturally isomorphic to the set of lift diagrams up to homotopy.
\end{thm}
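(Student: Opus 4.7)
The plan is to identify both $\Lift(\beta_I(\chi_{\THH}(I)), \alpha_I)$ and the set of lift diagrams (up to homotopy) with the same set of homotopy classes of morphisms $\Delta_\sharp \zeta_I \to \Delta_\sharp \bS_I$ in $\Sp^{I \times I}$, equipped with a distinguished factorization.

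Since $\bS_I \in \Sp^I$ is compact, \cref{prop:CWduality} yields a canonical relative dual $\zeta_I$ together with an evaluation $\epsilon \colon \bS_I \boxtimes \zeta_I \to \Delta_\sharp \bS_I$, unique up to contractible choice by \cref{rmk:ddat}. Consequently, a lift diagram reduces, up to contractible choice, to the pair consisting of an Euler class $e \colon \zeta_I \to \bS_I$ and the homotopy-commuting triangle. The equivalence \eqref{eq:zetapushforward} together with the adjunction $\Delta_\sharp \dashv \Delta^*$ provide natural identifications
\[
p_\sharp \bS_I \cong \bHom_{\Sp^I}(\zeta_I, \bS_I),\qquad p_\sharp \Delta^*\Delta_\sharp \bS_I \cong \bHom_{\Sp^{I\times I}}(\Delta_\sharp \zeta_I, \Delta_\sharp \bS_I).
\]
Under the first, an element $e(I)$ corresponds to an Euler class $e$. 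Under the second, the definition of $\alpha_I$ as $p_\sharp$ applied to the unit of $\Delta_\sharp \dashv \Delta^*$, combined with naturality, identifies $\alpha_I(e(I))$ with $\Delta_\sharp e$.

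The central step is to identify $\beta_I([\bS_I])$ under the second equivalence with the composite $\Delta_\sharp \zeta_I \to \bS_I \boxtimes \zeta_I \xrightarrow{\epsilon} \Delta_\sharp \bS_I$, where the first arrow is the counit of $\Delta_\sharp \dashv \Delta^*$. By \cref{cor:ChernCW}, $[\bS_I]$ is the composite $\bS \xrightarrow{\eta} \ev(\bS_I \boxtimes \zeta_I) \xrightarrow{\ev(\epsilon)} \ev(\Delta_\sharp \bS_I)$. By \cref{lm:counitforunit}, $\eta$ corresponds under \eqref{eq:evzeta} and the unitor \eqref{eq:Deltasharpunitor} to $\id_{\zeta_I}$, so that $[\bS_I]$ is determined by the adjoint map $\Delta^*\epsilon \colon \zeta_I \to \Delta^*\Delta_\sharp \bS_I$. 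Passing back under $\Delta_\sharp \dashv \Delta^*$ and applying naturality of the counit yields the claimed composite. Finally, \cref{lm:descofcT} ensures that $\beta_I$ acts compatibly with these identifications, translating the presentation of $[\bS_I]$ above into the corresponding presentation of $\beta_I([\bS_I])$.

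With both $\alpha_I(e(I))$ and $\beta_I([\bS_I])$ realized as parallel morphisms $\Delta_\sharp \zeta_I \to \Delta_\sharp \bS_I$, a homotopy between them is precisely the homotopy-commuting triangle appearing in a lift diagram; the inverse construction recovers a lift from a lift diagram via the same identifications. The main obstacle is the central step: the spectra $\dim(\Sp^I)$ and $\tr(\cT_I)$ both present as $p_\sharp \Delta^*\Delta_\sharp \bS_I$ via genuinely distinct equivalences, and one needs \cref{lm:descofcT} together with \eqref{eq:evdual}, \eqref{eq:evzeta} and \eqref{eq:pushforwarddual} to ensure that the Chern character $[\bS_I]$, after applying $\beta_I$, corresponds to the expected composite built from the structure map $\epsilon$.
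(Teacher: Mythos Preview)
Your proposal is correct and follows essentially the same route as the paper: reduce lift diagrams to the fiber of $\Delta_\sharp\colon \bHom_{\Sp^I}(\zeta_I,\bS_I)\to \bHom_{\Sp^{I\times I}}(\Delta_\sharp\zeta_I,\Delta_\sharp\bS_I)$ over the composite $\Delta_\sharp\zeta_I\to \bS_I\boxtimes\zeta_I\xrightarrow{\epsilon}\Delta_\sharp\bS_I$, identify $\alpha_I$ with $\Delta_\sharp$, and then pin down $\beta_I([\bS_I])$ using \cref{lm:counitforunit} and \cref{lm:descofcT}. The only difference is presentational: the paper writes out the explicit pasting diagram for the trace of $(\Sp,\id)\to(\Sp^I,\id)\to(\Sp^I,\cT_I)$ (citing \cite[Lemma 2.4]{HSS}) and simplifies it cell by cell, whereas you package the same manipulation into prose references to \cref{cor:ChernCW}, \cref{lm:counitforunit}, and \cref{lm:descofcT}.
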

\begin{proof}
Since $\bS_I\in\Sp^I$ is compact, by \cref{prop:CWduality} it admits a relative dual. Therefore, by the usual uniqueness of duality data the space of pairs $(\zeta_I, \epsilon)$ is contractible. Given that, we see that the space of lift diagrams is identified with the fiber of
\[\Delta_\sharp\colon \bHom_{\Sp^I}(\zeta_I, \bS_I)\longrightarrow \bHom_{\Sp^{I\times I}}(\Delta_\sharp\zeta_I, \Delta_\sharp\bS_I)\]
over the point given by the composition
\[
\Delta_\sharp \zeta_I \to \zeta_I \boxtimes \bS_I \xrightarrow{\epsilon} \Delta_\sharp \bS_I \in \bHom_{\Sp^{I \times I}}(\Delta_\sharp \zeta_I, \Delta_\sharp\bS_I).
\]
So, the claim follows once we identify the first morphism with $\alpha_I$ and the given point with $\beta_I([\bS_I])$. The first morphism is equivalent to
\[
\Hom_{\Sp^I}(\zeta_I, \bS_I) \to \Hom_{\Sp^I}( \zeta_I, \Delta^* \Delta_\sharp \bS_I)
\]
where we postcompose with the unit $\id \to \Delta^*\Delta_\sharp$, thus coinciding with the map $\alpha_I$ by using $p_\sharp \cong \Hom_{\bS^I}(\zeta_I, -)$. 

The class $\beta_I([\bS_I])$ is obtained by taking the trace of $(\Sp, \id_\Sp) \overset{\bS_I}{\to} (\Sp^I, \id_{\Sp^I}) \to (\Sp^I, \cT_I)$ in $\End \PrSt$. Hence by functoriality of traces and \cite[Lemma 2.4]{HSS} it is computed by traversing the following diagram

\begin{equation}\label{diag:HCEulerClass}
\begin{tikzcd}
    \Sp \ar[r,equal] \ar[d, equal] & \Sp \ar[r, equal] \ar[d, "\bS_I \boxtimes \zeta_I"] \ar[dl, Rightarrow, shorten = 2.5ex] & \Sp \ar[r,equal] \ar[d, "\bS_I \boxtimes \zeta_I"'] & \Sp \ar[d,equal] \ar[dl, Rightarrow, shorten = 2.5ex] \\
    \Sp \ar[r, "\Delta_\sharp \bS_I"] \ar[d, equal] & \Sp^I \otimes \Sp^I \ar[r, equal] \ar[d, equal] & \Sp^I \otimes \Sp^I \ar[r, "\ev_{\Sp^I}"] \ar[d, equal] \ar[dl, Rightarrow, shorten = 2.5ex] & \ar[d, equal] \Sp \\
    \Sp \ar[r, "\Delta_\sharp \bS_I"] & \Sp^I \otimes \Sp^I \ar[r, "\cT \otimes \id"] \ar[rr, bend right=30, "p_\sharp \Delta^*"', ""{name=U}] & \Sp^I \otimes \Sp^I \ar[r, "\ev_{\Sp^I}"] & \Sp,
\end{tikzcd}
\end{equation}
where only the non-invertible two-cells are indicated. \Cref{lm:descofcT} shows that 
\[
\begin{tikzcd}
   \Sp^I \otimes \Sp^I \ar[r, equal] \ar[d, equal] & \Sp^I \otimes \Sp^I \ar[d, equal] \ar[dl, Rightarrow, shorten = 2.5ex] & \\
   \Sp^I \otimes \Sp^I \ar[r, "\cT \otimes \id"] \ar[rr, bend right=30, "p_\sharp \Delta^*"', ""{name=U}] & \Sp^I \otimes \Sp^I \ar[r, "\ev_{\Sp^I}"] & \Sp
\end{tikzcd}
\]
is equivalent to
\[
\begin{tikzcd}
\Sp^I \otimes \Sp^I \ar[r, "{\Delta_\sharp^\vee}" pos=0.45, ""'{name=S}, bend left=30] \ar[r, bend right=30, "\Delta^*"', ""{name=T}] \ar[from=S, to=T, "\eqref{eq:Deltasharpdual}", Rightarrow] & \Sp^I \ar[r, "p_\sharp"] & \Sp,
\end{tikzcd}
\]
and \cref{lm:counitforunit} shows that
\[
\begin{tikzcd}
\Sp \ar[r, equal] \ar[d, "\bS_I \boxtimes \zeta_I"'] & \Sp \ar[d, equal] \ar[dl, Rightarrow, shorten = 2.5ex] \\
\Sp^I \otimes \Sp^I \ar[r, "\ev_{\Sp^I}"] & \Sp 
\end{tikzcd}
 \quad = \quad
\begin{tikzcd}
\Sp \ar[r, equal] \ar[d, "\bS_I \boxtimes \zeta_I"'] & \Sp \ar[r, equal] \ar[d, "\zeta_I"] \ar[dl, Rightarrow, shorten = 2.5ex] & \Sp \ar[d, equal] \ar[dl, Rightarrow, shorten = 2.5ex] \\
\Sp^I \otimes \Sp^I \ar[r, "\Delta_\sharp^\vee"] & \Sp^I \ar[r, "p_\sharp"] & \Sp.
\end{tikzcd}
\]
With that we simplify diagram \eqref{diag:HCEulerClass} and obtain
\[
\begin{tikzcd}
    \Sp \ar[r, equal] \ar[d, equal] & \Sp \ar[r, equal] \ar[d, "\bS_I \boxtimes \zeta_I"] \ar[dl, Rightarrow, shorten = 2.5ex] & \Sp \ar[r, equal] \ar[d, "\zeta_I"] \ar[dl, Rightarrow, shorten = 2.5ex] & \Sp \ar[d, equal] \ar[dl, Rightarrow, shorten = 2.5ex] \\
\Sp \ar[r, "\Delta_\sharp \bS_I"]& \Sp^I \otimes \Sp^I \ar[r, "\Delta_\sharp^\vee", ""'{name=S}, bend left=0] \ar[r, bend right=50, "\Delta^*"', ""{name=T}] & \Sp^I \ar[r, "p_\sharp"] \ar[from=S, to=T, Rightarrow] & \Sp.
\end{tikzcd}
\]
Finally, using the compatibility of the unit $\bS_I$ with $\Delta_\sharp^\vee \xrightarrow{\eqref{eq:Deltasharpdual}} \Delta^*$ this simplifies further to
\[
\begin{tikzcd}
    \Sp \ar[r, equal] \ar[d, equal] & \Sp \ar[r, equal] \ar[d, "\bS_I \boxtimes \zeta_I"] \ar[dl, Rightarrow, shorten = 2.5ex] & \Sp \ar[r, equal] \ar[d, "\zeta_I"] & \Sp \ar[d, equal] \ar[dl, Rightarrow, shorten = 2.5ex] \\
\Sp \ar[r, "\Delta_\sharp \bS_I"']& \Sp^I \otimes \Sp^I \ar[r, "\Delta^*"'] & \Sp^I \ar[r, "p_\sharp"'] & \Sp,
\end{tikzcd}
\]
which shows the claim.
\end{proof}

\subsection{Poincare duality for manifolds}
	\label{sect:manifoldPoincareduality}
	In this section we give a version of (twisted) Poincar\'{e} duality for closed manifolds. We follow the notations of \cite{MaySigurdsson} apart from denoting the left adjoint to $f^*$ by $f_\sharp$:
 \begin{itemize}
     \item For a topological space $M$ we consider the category $\Top_M$ of ex-spaces (alias, retractive spaces) over $M$ as in \cite{MaySigurdsson}, i.e. topological spaces $N$ together with continuous maps $r\colon N\rightarrow M$ and $i\colon M\rightarrow N$ satisfying $r\circ i=\id_M$. It admits a model structure with underlying $\infty$-category $(\cS_{/\Sing(M)})_\ast$ as well as a symmetric monoidal structure $\wedge_M$.
     \item For a continuous map $f\colon X\rightarrow Y$ of topological spaces we consider $(X, f)_+ = X\sqcup Y$ as a retractive space over $Y$.
     \item For a closed subset $K\subset L$ over $M$ we denote by $\C_M(K, L)\in\Top_M$ the unreduced relative mapping cone, see \cite[Chapter 18.4]{MaySigurdsson}. For morphisms $K\rightarrow L\rightarrow M$ in $\cS$ we denote
     \[\C_M(K, L)=L\coprod_K M\in(\cS_{/M})_\ast\]
     the corresponding (homotopy) pushout.
     \item For a vector bundle $V\rightarrow M$ we denote by $S^V\in\Top_M$ the corresponding retractive space obtained by a fiberwise one-point compactification and by $\bS^V=\Sigma^\infty_X(S^V)\in\Sp^{\Sing(M)}$ its fiberwise stabilization.
 \end{itemize}

 Let $M$ be a closed manifold and $X=\Sing(M)$ its underlying homotopy type. Consider an embedding $M\subset V$ into a Euclidean space. Let $\nu$ be the normal bundle of the embedding and
	\[\eta\colon \bS^V\longrightarrow p_\sharp \bS^\nu\]
	the corresponding Pontryagin--Thom collapse map. We denote by the same letter
	\[\eta\colon \bS\longrightarrow p_\sharp\bS^{-\T M}\]
	the map obtained by tensoring it with $\bS^{- V}$. Consider the Riemannian metric on $M$ induced by the embedding $M\subset V$ and the Pontryagin--Thom collapse along the diagonal
	\[\PT_\Delta\colon (M\times M, \id)_+\longrightarrow \Delta_\sharp S^{\T M}\] in $\Ho(\Top_{M\times M})$ induced by the diagram
	\begin{equation}
		\label{eqn:HWThomcollapse}
		(M \times M, \id)_+ \longrightarrow (M^I, \pi)_+ \wedge_{M \times M} (S^{\T M} \times M) \xleftarrow{\sim} \Delta_\sharp S^{\T M},
	\end{equation}
	where the maps are defined as follows. Choose a tubular neighborhood $U\subset M\times M$ of the diagonal which can be identified with the tangent bundle by sending a pair $(n, m)$ to the tangent vector (which we will denote by $m-n$) at $m$ of the unique geodesic $\omega(n, m)$ connecting $n$ and $m$. Given a point $(n, m)\in M\times M$ we send it to the pair $(\omega(n, m), m-n)$. Here we think of $S^{\T M}$ as the quotient of the $\epsilon$-disk bundle inside $\T M$ by its boundary, where $\epsilon$ is smaller than the injectivity radius, so that the formula is well-defined. The second map is given by the inclusion of constant paths and is a weak equivalence. Taking fiberwise suspension spectra we obtain a map
	\begin{equation}
		\PT_\Delta\colon \bS_{X\times X}\longrightarrow \Delta_\sharp \bS^{\T M}.
	\end{equation}
	We denote by the same letter
	\[\PT_\Delta\colon \bS_X\boxtimes \bS^{-\T M}\longrightarrow \Delta_\sharp\bS_X\]
	the tensor product of the previous map with $\bS_M\boxtimes \bS^{-\T M}$. The following is shown in \cite[Theorem 18.6.1]{MaySigurdsson}.

        From the description above we see that applying the flip map exchanging the two factors of $X$, we get the Pontryagin--Thom collapse map post-composed with the action of $(-1)$ along the fibers of $\T M$.

	\begin{prop}
		Consider the self-duality data of $\Sp^X$ from \cref{prop:Spselfdual}. Then the morphisms
		\[\PT_\Delta\colon \bS_X\boxtimes \bS^{-\T M}\longrightarrow \Delta_\sharp\bS_X=\coev(\bS),\qquad \eta\colon \bS\longrightarrow p_\sharp\bS^{-\T M}=\ev(\bS^{-\T M}, \bS_X)\]
		establish a relative duality between $\bS_X$ and $\bS^{-\T M}$. In particular, $\zeta_X \cong \bS^{-\T M}$.
		\label{prop:manifoldCWduality}
	\end{prop}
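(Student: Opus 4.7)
The plan is to reduce the statement to classical parametrized Atiyah duality, exactly as captured in \cite[Theorem 18.6.1]{MaySigurdsson} just cited. Since $M$ is a closed manifold, $X = \Sing(M)$ is finitely dominated, so by \cref{prop:finitetypeproper} the constant parametrized spectrum $\bS_X \in \Sp^X$ is compact. Then \cref{prop:CWduality} guarantees a relative dual $\zeta_X$ exists, and by \cref{rmk:ddat} the space of relative duality data for $\bS_X$ is contractible. It therefore suffices to check that the triple $(\bS^{-\T M}, \PT_\Delta, \eta)$ satisfies one of the triangle identities defining a relative duality between $\bS_X$ and $\bS^{-\T M}$; the identification $\zeta_X \cong \bS^{-\T M}$ is then forced.

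I would verify the triangle identity via the geometric description of the ingredients. Both $\eta$ and $\PT_\Delta$ are realised via the exponential map: $\eta$ collapses a tubular neighbourhood of $M \subset V$ onto the Thom space of the normal bundle $\nu$, while $\PT_\Delta$ collapses a tubular neighbourhood of $\Delta(M) \subset M \times M$ onto the Thom space of $\T M$. The triangle composite concatenates these two collapses; under the geodesic parametrisation, it identifies with the self-duality of $\bS^V$ against $\bS^{-V}$ on a Euclidean space, which is the identity, giving the required snake identity. This is exactly the content of the May--Sigurdsson theorem, whose proof uses precisely the geodesic model \eqref{eqn:HWThomcollapse} for $\PT_\Delta$.

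The remaining work is a translation step: \cite{MaySigurdsson} state their Atiyah duality in the homotopy category of ex-spaces $\Ho(\Top_{M\times M})$ with the fibrewise smash $\wedge_{M\times M}$, whereas we need the statement in $\Sp^X \otimes \Sp^X$ with the external tensor product from \cref{prop:externaltensor}. I would appeal to the equivalence between the derived category of parametrized spectra in May--Sigurdsson's sense and $\Sp^X$, checking that $\Sigma^\infty_X$ intertwines $\wedge_M$ with the monoidal structure on $\Sp^X$ and that the pushforward/pullback functors $\Delta_\sharp$, $p_\sharp$, $\Delta^*$, $p^*$ correspond under this equivalence. I expect this translation to be the main bookkeeping obstacle; once the functors are matched up, the triangle identity in the $\infty$-category follows directly from its model-categorical counterpart, and the identification $\zeta_X \cong \bS^{-\T M}$ asserted at the end of the proposition is automatic from the uniqueness of relative duals.
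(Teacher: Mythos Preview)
Your proposal is correct and matches the paper's approach: the paper does not give its own proof but simply records that the statement is the content of \cite[Theorem 18.6.1]{MaySigurdsson}. Your additional discussion of the translation between the May--Sigurdsson model-categorical framework and the $\infty$-categorical setup of $\Sp^X$ is a reasonable elaboration of what such a citation entails, but the paper itself does not spell this out.
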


	The definition of the Pontryagin--Thom collapse along the diagonal \eqref{eqn:HWThomcollapse} does no longer depend on the embedding $M\subset V$, but a priori does depend on the Riemannian metric on $M$. It will be useful to present another equivalent presentation for the Pontryagin--Thom collapse which does not use a Riemannian metric.
	
	\begin{defn}
		Let $M$ be a manifold. The \defterm{Fulton--MacPherson compactification of the configuration space of two points} $\FM_2(M)$ is the real oriented blowup of $M\times M$ along the diagonal.
	\end{defn}
	
	Let $\UT M = (\T M\setminus M) / \bR_+$ be the unit tangent bundle. By construction $\FM_2(M)$ is a manifold with boundary $\UT M$ and we have a commutative diagram
	\[
	\xymatrix{
		\UT M \ar[r] \ar[d] & \FM_2(M) \ar[d] \\
		M \ar[r] & M\times M.
	}
	\]
	In particular, we obtain a morphism
	\begin{equation}
		\Delta_\sharp \C_M(M, \UT M)\longrightarrow \C_{M\times M}(M\times M, \FM_2(M))
		\label{eq:FM2map}
	\end{equation}
	of unreduced relative mapping cones.

	\begin{lm}
		Suppose
		\[
		\xymatrix{
			X \ar^{f}[r] \ar^{\pi}[d] & Y \ar^{\tilde{\pi}}[d] \\
			X' \ar^{f'}[r] & Y'
		}
		\]
		is a coCartesian diagram in $\cS$. Then
		\[Y'\coprod_X X'\longrightarrow Y'\coprod_Y Y'\]
		is an equivalence. In particular,
		\[f'_\sharp \C_{X'}(X', X)\longrightarrow \C_{Y'}(Y', Y)\]
		is an equivalence in $(\cS_{/Y'})_\ast$.
		\label{lm:pushoutequivalence}
	\end{lm}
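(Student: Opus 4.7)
The plan is to reduce both statements to the pasting law for pushout squares in $\cS$. The coCartesian hypothesis gives $Y' \simeq Y\coprod_X X'$, and in particular the two legs $X\xrightarrow{f} Y\xrightarrow{\tilde\pi} Y'$ and $X\xrightarrow{\pi} X'\xrightarrow{f'} Y'$ present the same map $X\to Y'$. This is the only fact I expect to use.

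For the first assertion, I would set up the horizontal $2\times 3$ diagram
\[
\xymatrix{
X \ar[r]^{f} \ar[d]_{\pi} & Y \ar[r]^-{\tilde\pi} \ar[d]^{\tilde\pi} & Y' \ar[d] \\
X' \ar[r]^{f'} & Y' \ar[r] & Y'\coprod_Y Y'
}
\]
in which the left square is the given coCartesian square and the right square is a pushout by the definition of $Y'\coprod_Y Y'$. By the pasting law for pushouts the outer rectangle is also a pushout, so it presents $Y'\coprod_Y Y'$ as the pushout of $X'\xleftarrow{\pi} X\xrightarrow{\tilde\pi\circ f} Y'$, which is exactly $Y'\coprod_X X'$. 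The canonical comparison map $Y'\coprod_X X'\to Y'\coprod_Y Y'$ induced by $f'\colon X'\to Y'$ is then the equivalence furnished by the universal property.

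For the "in particular" statement, unwind the definitions: $\C_{X'}(X',X) = X'\coprod_X X'$ is a retractive space over $X'$ whose retraction is the fold map induced by $\id_{X'}$, and similarly $\C_{Y'}(Y',Y) = Y'\coprod_Y Y'$. The functor $f'_\sharp\colon(\cS_{/X'})_\ast\to (\cS_{/Y'})_\ast$ sends a retractive space $A\to X'$ to $A\coprod_{X'} Y'$. So applying it to $\C_{X'}(X', X)$ yields $(X'\coprod_X X')\coprod_{X'} Y'$, and one application of the pasting law (now pasting the defining pushout of $X'\coprod_X X'$ with the pushout along $f'$ on its first factor) identifies this with $Y'\coprod_X X'$. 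Combining with the first assertion then gives the desired equivalence $f'_\sharp\C_{X'}(X',X)\simeq \C_{Y'}(Y',Y)$.

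The only step requiring real care — and what I would call the main obstacle — is bookkeeping: verifying that, under these identifications, the map produced by the universal property agrees with the natural comparison map and with the retraction structures over $Y'$, so that the resulting equivalence lives in $(\cS_{/Y'})_\ast$ and not merely in $\cS$. This is a routine check that amounts to tracing the two inclusions of $Y'$ into $Y'\coprod_Y Y'$ through the diagram above and confirming that the distinguished inclusion used to build $\C_{Y'}(Y',Y)$ (namely the second copy of $Y'$) matches the inclusion coming from $f'_\sharp$ (namely the $Y'$ factor of $(X'\coprod_X X')\coprod_{X'} Y'$). Once this compatibility is verified, the statement follows directly from the two pasting arguments.
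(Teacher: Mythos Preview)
Your proposal is correct and follows essentially the same approach as the paper: both arguments reduce the first statement to the pasting law for pushouts (the paper phrases it as ``the right-hand side is the iterated pushout of $X'\leftarrow X\to Y\to Y'$, hence the pushout of the outer square''), and both derive the second statement by unwinding the definitions $\C_{X'}(X',X)=X'\coprod_X X'$ and $f'_\sharp A = Y'\coprod_{X'} A$. Your write-up is more explicit about the compatibility of retraction structures, which the paper leaves implicit.
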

	\begin{proof}
		The first statement follows since the right-hand side is the iterated pushout of
		\[
		\xymatrix{
			X \ar^{f}[r] \ar^{\pi}[d] & Y \ar^{\tilde{\pi}}[r] & Y' \\
			X',
		}
		\]
		and hence the pushout of the outer square.
		The second statement follows from the definitions $C_X(A,B) := X \coprod_B A$ and $f'_\sharp A := Y' \coprod_{X'} A$.
	\end{proof}
	
	\begin{cor}
		The map \eqref{eq:FM2map} is a weak equivalence.
	\end{cor}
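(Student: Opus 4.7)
The plan is to recognise \eqref{eq:FM2map} as a direct instance of \Cref{lm:pushoutequivalence} applied to the commutative square that appears immediately before its statement:
\[
\xymatrix{
\UT M \ar[r] \ar[d] & \FM_2(M) \ar[d] \\
M \ar^-{\Delta}[r] & M\times M.
}
\]
With the assignments $X=\UT M$, $Y=\FM_2(M)$, $X'=M$, $Y'=M\times M$, and $f'=\Delta$, the second assertion of the lemma is precisely the statement that \eqref{eq:FM2map} is an equivalence. So the entire content of the corollary is the verification that the square above is a (homotopy) pushout in $\cS$.

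First I would argue that the square is already a pushout on the point-set level. By construction, $\FM_2(M)$ is the real oriented blowup of $M\times M$ along the diagonal, which means the projection $\tilde{\pi}\colon \FM_2(M)\rightarrow M\times M$ is proper, restricts to a homeomorphism over $M\times M\setminus \Delta(M)$, and its boundary $\UT M\subset \FM_2(M)$ is precisely $\tilde{\pi}^{-1}(\Delta(M))$, with $\tilde{\pi}|_{\UT M}$ equal to the unit tangent bundle projection $\UT M\rightarrow M$. Collapsing $\UT M$ onto $M$ along this projection therefore yields a homeomorphism
\[
\FM_2(M)\coprod_{\UT M} M \;\cong\; M\times M.
\]

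Next I would promote this strict pushout to a homotopy pushout. Since $\FM_2(M)$ is a smooth manifold with boundary $\UT M$, the inclusion $\UT M\hookrightarrow \FM_2(M)$ admits a collar and is in particular a closed cofibration. The standard fact that pushouts along closed cofibrations compute homotopy pushouts then shows the square is coCartesian in $\cS$, and \Cref{lm:pushoutequivalence} concludes the argument.

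There is essentially no obstacle beyond unpacking the defining properties of the oriented blowup; the rest of the proof is formal manipulation of pushouts.
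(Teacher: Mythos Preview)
Your proposal is correct and follows essentially the same approach as the paper: verify that the square is a strict pushout (you give more detail on the blowup than the paper does), observe that the boundary inclusion $\UT M\hookrightarrow \FM_2(M)$ is a cofibration so the pushout is a homotopy pushout, and then invoke \Cref{lm:pushoutequivalence}. The paper phrases the cofibration/homotopy-pushout step via left properness of the $q$-model structure, but the content is identical.
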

	\begin{proof}
		The morphism $\UT M\rightarrow \FM_2(M)$ is an inclusion of the boundary; in particular, it is a $q$-cofibration. The diagram
		\[
		\begin{tikzcd}
			\UT M \ar[r] \ar[d] & \FM_2(M) \ar[d] \\ M \ar[r] & M \times M,
		\end{tikzcd}
		\]
		is a pushout. Since the $q$-model structure on $\Top$ is left proper, it is a homotopy pushout. The claim follows from \cref{lm:pushoutequivalence}.
	\end{proof}
	
	We can thus define another version of the Pontryagin--Thom collapse via the composite
	\begin{equation}
		\label{eqn:FMThomcollaps}
		(M \times M)_+ \to \C_{M\times M}(M \times M, \FM_2(M)) \xleftarrow{\sim}  \Delta_\sharp\C_M(M, \UT M)\cong \Delta_\sharp S^{\T M}.
	\end{equation}
	
	\begin{prop}
		\label{prop:FMequalHW}
		The induced maps \eqref{eqn:FMThomcollaps} and \eqref{eqn:HWThomcollapse} are homotopic. 
	\end{prop}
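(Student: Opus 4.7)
The plan is to construct an explicit geometric comparison between the two collapses by relating the blowup $\FM_2(M)$ to the exponential tubular neighbourhood. Both maps realise the Pontryagin--Thom collapse for the normal bundle $\T M$ of the diagonal $\Delta \subset M \times M$: the HW version \eqref{eqn:HWThomcollapse} uses the geodesic tubular neighbourhood of $\Delta$ (identified with the disk bundle $D_\epsilon \T M$ via $\exp$), while the FM version \eqref{eqn:FMThomcollaps} uses the blowup $\FM_2(M)$, which is a manifold with boundary $\UT M$.

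The key step is to relate the two models using the Riemannian metric already implicit in \eqref{eqn:HWThomcollapse}. Define $\widetilde{\exp}\colon [0, \epsilon] \times \UT M \to \FM_2(M)$ by $\widetilde{\exp}(t, v) = (\pi(v), \exp_{\pi(v)}(tv))$, where $\pi\colon \UT M \to M$ is the bundle projection. For $\epsilon$ smaller than the injectivity radius this is a homeomorphism onto a collar neighbourhood of the boundary $\UT M \subset \FM_2(M)$, and it fits into a commutative square with the exponential $\exp\colon D_\epsilon \T M \to M \times M$ via the polar coordinate map $(t,v) \mapsto tv$. Quotienting by the ``outer'' boundary on each side (i.e.\ $\{t=\epsilon\} \times \UT M$ on one side, and $\partial D_\epsilon \T M$ on the other) yields compatible models of $S^{\T M}$, and hence of $\Delta_\sharp S^{\T M}$.

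Using this comparison, I would trace through the definitions: the HW assignment $(n, m) \mapsto (\omega(n, m), m - n)$ corresponds, under the identification above, to the FM assignment sending $(n, m)$ to its image in the blowup. This holds because $\omega(n, m)$ is by construction $\exp$ applied to the radial segment from $0$ to $m - n \in \T_n M$, so the ``path plus endpoint tangent vector'' data and the ``polar coordinate in the blowup'' data are two packagings of the same geometric information. The zig-zag through $(M^I, \pi)_+ \wedge_{M \times M}(S^{\T M}\times M)$ then becomes homotopic to the zig-zag through $\C_{M\times M}(M\times M, \FM_2(M))$, once one notes that the inclusion of constant paths in $M^I$ is compatible, under these identifications, with the weak equivalence $\Delta_\sharp \C_M(M, \UT M) \to \C_{M \times M}(M \times M, \FM_2(M))$.

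The main obstacle is careful bookkeeping of parametrisations. One must verify that the tangent vector ``at $m$'' used in \eqref{eqn:HWThomcollapse} matches the polar coordinate of the blowup under the two identifications with $S^{\T M}$, including compatibility of orientations. Morally this is the statement that the Pontryagin--Thom collapse for a normal bundle is well-defined up to homotopy independently of whether one builds it from a tubular neighbourhood or from a real blowup, but unwinding the specific zig-zags in \eqref{eqn:HWThomcollapse} and \eqref{eqn:FMThomcollaps} into a single explicit homotopy is what requires attention.
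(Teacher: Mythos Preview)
Your approach is correct and will work, but the paper takes a different and somewhat slicker route that sidesteps exactly the ``careful bookkeeping of parametrisations'' you flag as the main obstacle.

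Rather than building an explicit comparison of the two tubular-neighbourhood models via the collar $\widetilde{\exp}\colon [0,\epsilon]\times \UT M \to \FM_2(M)$, the paper observes that the first map in \eqref{eqn:HWThomcollapse}, precomposed with $\FM_2(M)\to M\times M$, admits a canonical null-homotopy. The point is that the assignment $(x,y)\mapsto(\omega(x,y),\,y-x)$ involves the tangent vector $y-x\in\T_xM$, and its \emph{direction} (the unit vector) extends continuously over all of $\FM_2(M)$, including the boundary $\UT M$. Sliding the image point in $S^{\T M}$ outward along this unit direction to the point at infinity gives the null-homotopy. This immediately yields a factorisation of the HW collapse through the cone $\C_{M\times M}(M\times M,\FM_2(M))$, and one checks that on the diagonal it restricts to the standard identification $\C_M(M,\UT M)\cong S^{\T M}$, producing the commuting square \eqref{eq:FMequalHW}.

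What each approach buys: yours gives a very explicit identification of the two Thom models via polar coordinates, which is conceptually transparent but requires tracking orientations and parametrisations through both zig-zags. The paper's ``push to infinity along the residual direction'' trick is less explicit about the underlying homeomorphism but packages the whole comparison into a single null-homotopy, so the commuting square falls out with almost no bookkeeping.
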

	\begin{proof}
		It suffices to show that the diagrams \eqref{eqn:HWThomcollapse} and \eqref{eqn:FMThomcollaps} fit into a commuting square. For that, note first that the first map in \eqref{eqn:HWThomcollapse}, when restricted to $\FM_2(M)$ admits a canonical null-homotopy. Recall that the map sends a pair of nearby points $(x,y)$ to the pair consisting of the connecting geodesic $\omega(x,y)$ and the connecting tangent vector (based at $x$). The assignment of the corresponding unit tangent vector extends continuously to $\FM_2(M)$. The required homotopy moves the point in $S^{\T M}$ along that unit tangent vector to the basepoint at $\infty$. We have thus constructed a commutative diagram
		\begin{equation}
			\begin{tikzcd}
				(M \times M)_+ \ar[r] \ar[dr] & (M^I,\pi)_+ \wedge_{M \times M} (S^{\T M} \times M) & \ar[l] \Delta_\sharp S^{\T M} \\
				&  \C_{M \times M}(M \times M, \FM_2(M)) \ar[u] & \ar[l] \ar[u] \Delta_\sharp \C_M(M, \UT M),
			\end{tikzcd}
			\label{eq:FMequalHW}
		\end{equation}
		where we noticed that the above defined homotopy, when restricted to the diagonal, merely identifies $\C_M(M, \UT M)$ with $S^{\T M}$.
	\end{proof}

\subsection{Pontryagin--Thom lift}
\label{sect:PTlift}

	Let $M$ be a closed manifold and $X$ its underlying homotopy type. In this section we construct a lift of the $\THH$ Euler characteristic $\chi_\THH(X)\in\Omega^\infty\Sigma^\infty_+ LX$ along the $\THH$ assembly, i.e. along the inclusion of constant loops
	\[\alpha\colon \Sigma^\infty_+ X\longrightarrow \Sigma^\infty_+ LX.\]

	Consider the map
	\[0_M\colon (M, \id)_+\rightarrow S^{\T M}\]
	of ex-spaces given by the zero section of the tangent bundle. Its suspension gives rise to a morphism
	\[e(M)\colon \bS_X\longrightarrow \bS^{\T M}\]
	which we will call the \defterm{Euler class}. Note that $\Hom_{\Sp^X}(\bS_X, \bS^{\T M})\cong \Omega^\infty\Sigma^\infty_+ X$ and we may equivalently consider $e(M)\in \Omega^\infty\Sigma^\infty_+ X$.
	
    Consider the Fulton--MacPherson compactification $\FM_2(M)$ of the configuration space of two points on $M$. It fits into a pushout diagram
		\begin{equation}
		\xymatrix{
			\UT M\ar[r] \ar[d] & \FM_2(M) \ar[d] \\
			M \ar^{\Delta}[r] & M\times M.
		}
            \label{eq:FM2diagram}
		\end{equation}
		The unreduced relative mapping cone of the left vertical map gives the morphism
		\[0_M\colon (M, \id)_+\rightarrow S^{\T M}\]
		on ex-spaces. Taking relative suspensions, we obtain a commutative diagram
            \[
		\xymatrix{
			\Delta_\sharp \bS_X \ar[r] \ar_{\Delta_\sharp e(M)}[d] & \bS_{X\times X} \ar[dl] \\
			\Delta_\sharp \bS^{\T M} &
		}
		\]
		of parameterized spectra over $X\times X$, where the diagonal map $\bS_{X\times X}\rightarrow \Delta_\sharp \bS^{\T M}$ is the Pontryagin--Thom collapse map along the diagonal by \cref{prop:FMequalHW}. Twisting by $\bS^{-\T M}$, we obtain a lift diagram
            \begin{equation}
		\xymatrix{
			\Delta_\sharp \bS^{-\T M} \ar[r] \ar_{\Delta_\sharp e(M)}[d] & \bS_X\boxtimes \bS^{-\T M} \ar[dl] \\
			\Delta_\sharp \bS_X &
		}
            \label{eq:FM2triangle}
		\end{equation}
            where we note that by \cref{prop:manifoldCWduality} the diagonal map is nondegenerate. Thus, by \cref{thm:liftdiagrams} from this lift diagram we obtain a lift (the \defterm{Pontryagin--Thom lift})
		\[\lambda_{\PT}(M)\in\Lift(\chi_{\THH}(X), \alpha)\]
		of $[\bS_X]\in\Omega^\infty \Sigma^\infty_+ LX$ along the assembly map.

    \begin{remark}
    The composite $X\rightarrow LX\rightarrow X$ is the identity, where the first map is the inclusion of constant loops and the second map is the evaluation at the basepoint of $S^1$. Therefore, the image of $[\bS_X]$ under the projection $LX\rightarrow X$ coincides with the Euler class $e(M)\in \Omega^\infty\Sigma^\infty_+ X$.
    \end{remark}

	The Dennis trace map from $K$-theory to $\THH$ gives a commutative diagram of assembly maps
	\[
	\xymatrix{
		\Sigma^\infty_+ X\otimes \bA(\pt) \ar^-{\alpha}[r] \ar^{\tr}[d] & \bA(X) \ar^{\tr}[d] \\
		\Sigma^\infty_+ X \ar^{\alpha}[r] & \Sigma^\infty_+ LX
	}
	\]
	
	As $\tr(\chi_A(X)) = \chi_{\THH}(X)$, the above commutative diagram induces a map
	\[\tr\colon \Lift(\chi_A(X), \alpha)\longrightarrow \Lift(\chi_{\THH}(X), \alpha).\]

    \begin{thm}
    Let $M$ be a closed manifold with underlying homotopy type $X=\Sing(M)$. Then the lifts $\tr(\lambda_{\Wh}(M))$ and $\lambda_{\PT}(M)$ of the $\THH$ Euler characteristic $\chi_{\THH}(X)$ are equivalent.
    \label{thm:florian}
    \end{thm}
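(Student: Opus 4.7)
The plan is to exhibit both lifts as the pushforward along a triangulation's face poset of a common, uniquely determined lift diagram. Fix a PL triangulation of $M$ with face poset $T$ and consider the localization functor $t\colon T^{\op}\rightarrow X$ of \eqref{eq:Tlocalization}. By \cref{thm:faceposetduality}, $T^{\op}$ admits duality and $\bS_{T^{\op}}$ is compact, so \cref{thm:liftdiagrams} identifies lift diagrams on $T^{\op}$ up to homotopy with $\Lift(\beta_{T^{\op}}(\chi_{\THH}(T^{\op})),\alpha_{T^{\op}})$; by \cref{ex:posetalpha} this set is a singleton and produces a unique lift diagram $\lambda_T$ on $T^{\op}$.

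First I would verify that the pushforward $(t\times t)_\sharp$ sends a lift diagram on $T^{\op}$ to one on $X$. Since $t$ is a localization (by \cref{lm:openstargoodcover}), $t^*$ is fully faithful and $t_\sharp \bS_{T^{\op}}\cong t_\sharp t^*\bS_X\cong \bS_X$. Moreover $(t\times t)_\sharp$ commutes with external tensor products (\cref{prop:externaltensor}) and with diagonal Kan extension by functoriality of left Kan extensions, so applying $(t\times t)_\sharp$ to the relative duality data $(\zeta_{T^{\op}},\epsilon)$, the Euler class $e\colon \zeta_{T^{\op}}\rightarrow \bS_{T^{\op}}$, and the compatibility homotopy yields a triangle of the required shape over $X$; the resulting $t_\sharp\zeta_{T^{\op}}$ is a relative dual of $\bS_X$ and hence identified with $\bS^{-\T M}$ by uniqueness of relative duals (\cref{prop:CWduality}, \cref{prop:manifoldCWduality}).

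Next I would argue that $\tr(\lambda_{\Wh}(M))=t_\sharp(\lambda_T)$. By construction in \cref{def:Whiteheadlift}, $\lambda_{\Wh}(M)$ is assembled from the compact class $[\bS_{T^{\op}}]\in\bK(\Sp^{T^{\op},\omega})$ pushed forward along $t_\sharp$. Applying the Dennis trace termwise, $\tr(\lambda_{\Wh}(M))$ is the image under $t_\sharp$ of the $\THH$-lift on $T^{\op}$ corresponding to the compact object $\bS_{T^{\op}}$. By \cref{ex:posetalpha} this $\THH$-lift is unique, hence equal to $\lambda_T$, giving $\tr(\lambda_{\Wh}(M))=t_\sharp(\lambda_T)$.

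The main technical task is then to show $\lambda_{\PT}(M)=t_\sharp(\lambda_T)$. For this I would refine the Fulton--MacPherson diagram \eqref{eq:FM2diagram} over the triangulation: for each $(\sigma,\tau)\in T^{\op}\times T^{\op}$ restrict $\FM_2(M)\rightarrow M\times M$ to $\ost(\sigma)\times\ost(\tau)$ and $\UT M\rightarrow M$ to $\ost(\tau)$, and after fiberwise suspension and a twist by $\bS^{-\T M}$ assemble these into a lift diagram $\lambda_T^{\PT}$ on $T^{\op}$. Using the good-cover property from \cref{lm:openstargoodcover} together with \cref{prop:basechange} and \cref{prop:colimitapproximation}, applying $(t\times t)_\sharp$ to $\lambda_T^{\PT}$ recovers \eqref{eq:FM2triangle}, so $t_\sharp(\lambda_T^{\PT})=\lambda_{\PT}(M)$. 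By uniqueness in \cref{ex:posetalpha}, $\lambda_T^{\PT}=\lambda_T$, completing the proof. The hard step is this geometric refinement: the Fulton--MacPherson compactification and the Pontryagin--Thom collapse must be shown to restrict compatibly to products of open stars, and the associated parametrized diagram must descend, via the colimit over $T^{\op}\times T^{\op}$, to the global collapse \eqref{eq:FM2triangle}.
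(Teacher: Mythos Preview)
Your proposal follows essentially the same architecture as the paper's proof: reduce to the singleton $\Lift(\beta_{T^{\op}}(\chi_{\THH}(T^{\op})),\alpha_{T^{\op}})$ via the triangulation, identify $\tr(\lambda_{\Wh}(M))$ as the image of this unique lift under $t_\sharp$, and then refine the Fulton--MacPherson square over products of open stars to exhibit $\lambda_{\PT}(M)$ in the same image. The one step you gloss over is the verification that the restricted evaluation $\epsilon\colon \bS_{T^{\op}}\boxtimes \bS^{-\T M_\bullet}\to \Delta_\sharp \bS_{T^{\op}}$ actually exhibits a relative duality on $T^{\op}$ (equivalently, that its pushforward does on $X$); the paper isolates this as a separate lemma (\cref{lm:constructibleevaluation}), whose proof relies on the local constancy of $\zeta_{T^{\op}}$ from \cref{thm:faceposetduality} and the full faithfulness of $t^*$, rather than on any general preservation of relative duals under $t_\sharp$.
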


\subsection{Proof of \texorpdfstring{\cref{thm:florian}}{theorem \ref{thm:florian}}}

Fix a triangulation of $M$ with face poset $T$. Denote by $T^\delta$ the same set with the trivial poset structure and let $\pi\colon T^\delta\rightarrow T^{\op}$ be the functor given by the identity on objects. Let $t\colon T^{\op}\rightarrow X$ be the localization functor from \eqref{eq:Tlocalization} and $M_\tau=\ost(\tau)$ the open star of the simplex $\tau\in T$.

Let us recall that the Whitehead lift is obtained by traversing the diagram
\[
\begin{tikzcd}
    & \bS \ar[d, "{[\bS_{T^\op}]}"] \ar[dr, "{[\bS_X]}"] \\
    \bA(\Sp^{T^\delta, \omega}) \ar[r, "\sim" below, "\pi_\sharp" above] & \bA(\Sp^{T^{\op}, \omega}) \ar[r, "t_\sharp"]  & \bA(\Sp^{X, \omega}) \\
    \Sigma^\infty_+ T^\delta \otimes \bA(\pt) \ar[u, "\alpha", "\sim"'] \ar[rr] && \Sigma^\infty_+ X \otimes \bA(\pt), \ar[u, "\alpha"]
\end{tikzcd}
\]
where we used that the natural morphism $t_\sharp \bS_{T^{\op}}\rightarrow \bS_X$ is an equivalence. By naturality of the Dennis trace map it follows that the trace of the Whitehead lift is obtained from the analog diagram where $\bA$ is replaced with $\THH$. Using the definitions from \cref{sec:liftdiagrams} the diagram can be extended as follows:


\[
\begin{tikzcd}
    & \bS \ar[d, "{[\bS_T]}"] \ar[dr, "{[\bS_X]}"] \\
    \THH(\Sp^{T^\delta, \omega}) \ar[r, "\sim" below, "\pi_\sharp" above] \ar[d, "\sim" right, "\beta_{T^\delta}" left] & \THH(\Sp^{T^{\op}, \omega}) \ar[r, "t_\sharp"] \ar[d, "\beta_T" left] & \THH(\Sp^{X, \omega}) \ar[d, "\sim" right, "\beta_X" left] \\
    \tr(\cT_{T^\delta}) \ar[r] & \tr(\cT_{T^\op}) \ar[r] & \tr(\cT_X) \\
    \Sigma^\infty T^\delta \ar[r] \ar[u, "\sim" right, "\alpha_{T^\delta}" left] & \Sigma^\infty |T^\op| \ar[u, "\sim" right, "\alpha_T" left] \ar[r, "\sim"] & \Sigma^\infty X. \ar[u, "\alpha_X" left]
\end{tikzcd}
\]

In other words, the Whitehead lift $\tr(\lambda_{\Wh}(M))$ is the unique lift lying in the image of
\[t_\sharp\colon \Lift(\beta_{T^{\op}}(\chi_{\THH}(T^{\op})), \alpha_{T^{\op}})\longrightarrow \Lift(\beta_X(\chi_{\THH}(X)), \alpha_X) \cong \Lift(\chi_{\THH}(X), \alpha),\]
where the set $\Lift(\beta_{T^{\op}}(\chi_{\THH}(T^{\op})), \alpha_{T^{\op}})$ consists of a single element as $\alpha_T$ is an isomorphism.

To show that $\tr(\lambda_{\Wh}(M))=\lambda_{\PT}(M)$, it suffices to show that the Pontryagin--Thom lift is compatible with the triangulation, i.e. it also lies in the image of $t_\sharp$. For this, recall from \cref{thm:faceposetduality} that $T^{\op}$ admits duality, i.e. $\Delta_\sharp p^*\colon \Sp\rightarrow \Sp^{T^{\op}}\otimes \Sp^{T^{\op}}$ defines the coevaluation of a self-duality of $\Sp^{T^{\op}}$. In particular, we may identify elements of $\Lift(\beta_{T^{\op}}(\chi_{\THH}(T^{\op})), \alpha_{T^{\op}})$ with lift diagrams using \cref{thm:liftdiagrams}. By \cref{thm:faceposetduality} the relative dual of $\bS_T$ is locally constant. To construct a model of the relative dual, we will use the following observation.

\begin{lm}
Consider the self-dualities of $\Sp^{T^{\op}}$ and $\Sp^X$ from \cref{def:admitduality}. Consider an object $\zeta_X\in\Sp^X$ together with a morphism
\[\epsilon\colon \bS_{T^{\op}}\boxtimes t^*\zeta_X\longrightarrow \Delta^{T^{\op}}_\sharp \bS_{T^{\op}}.\]
Then $\epsilon$ exhibits $t^*\zeta_X$ as the relative dual of $\bS_{T^{\op}}$ if, and only if,
\[(t_\sharp\boxtimes t_\sharp)(\epsilon)\colon \bS_X\boxtimes \zeta_X\longrightarrow \Delta^X_\sharp \bS_X\]
exhibits $\zeta_X$ as the relative dual of $\bS_X$.
\label{lm:constructibleevaluation}
\end{lm}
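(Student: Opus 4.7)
The plan is to use \cref{prop:CWduality}, which characterizes relative dualities of compact objects, combined with the contractibility of the space of duality data. By \cref{thm:faceposetduality} the object $\bS_{T^{\op}}$ is compact, and by \cref{prop:finitetypeproper} so is $\bS_X$ (since $X=\Sing(M)$ is finitely dominated). Both thus admit relative duals with contractible spaces of duality data; accordingly it suffices to prove one implication, the converse following from uniqueness.

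The crucial input is that the pushforward $t_\sharp\colon\Sp^{T^{\op}}\to\Sp^X$ preserves the self-duality data of the two $\infty$-categories. Since $t$ realizes $X$ as the $\infty$-groupoidification of $T^{\op}$ (\cref{lm:openstargoodcover}), the restriction $t^*$ is fully faithful, so the counit $t_\sharp t^*\to\id$ is an equivalence and $t_\sharp\bS_{T^{\op}}\cong\bS_X$. As $(t\times t)\circ\Delta^{T^{\op}}=\Delta^X\circ t$, functoriality of left Kan extensions gives a natural equivalence
\[
(t\times t)_\sharp\,\Delta^{T^{\op}}_\sharp\;\cong\;\Delta^X_\sharp\,t_\sharp.
\]
Applied to $\bS_{T^{\op}}$ this identifies the coevaluations $(t_\sharp\boxtimes t_\sharp)(\Delta^{T^{\op}}_\sharp\bS_{T^{\op}})\cong\Delta^X_\sharp\bS_X$, and combined with $t_\sharp t^*\cong\id$ gives $(t_\sharp\boxtimes t_\sharp)(\bS_{T^{\op}}\boxtimes t^*\zeta_X)\cong \bS_X\boxtimes\zeta_X$. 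Hence $(t_\sharp\boxtimes t_\sharp)\epsilon$ is a well-defined map of the claimed form.

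For the main step, suppose $(t^*\zeta_X,\epsilon)$ is a relative duality; then there is a coevaluation $\eta\colon\bS\to\ev_{\Sp^{T^{\op}}}(\bS_{T^{\op}}\boxtimes t^*\zeta_X)=q_\sharp(t^*\zeta_X)$ (with $q\colon T^{\op}\to\pt$) satisfying the triangle identities. Using $q_\sharp\cong p_\sharp\circ t_\sharp$ (composition of left adjoints, with $p\colon X\to\pt$) and $t_\sharp t^*\cong\id$, the target rewrites as $p_\sharp\zeta_X=\ev_{\Sp^X}(\bS_X\boxtimes\zeta_X)$, yielding a coevaluation $\tilde\eta$ on the $X$-side. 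Applying $(t_\sharp\boxtimes t_\sharp)$ to the triangle-identity diagrams for $(\epsilon,\eta)$ and using the natural equivalences above produces the triangle identities for $((t_\sharp\boxtimes t_\sharp)\epsilon,\tilde\eta)$, so $(\zeta_X,(t_\sharp\boxtimes t_\sharp)\epsilon)$ is a relative duality. The main obstacle is this last coherence verification: one must check that the sequence of natural isomorphisms composes compatibly with the zigzag composites appearing in the triangle identities. This reduces to the naturality of the equivalences $(t\times t)_\sharp\Delta^{T^{\op}}_\sharp\cong\Delta^X_\sharp\,t_\sharp$, $q_\sharp\cong p_\sharp\circ t_\sharp$, and $t_\sharp t^*\cong\id$, together with the fact that the triangle identities are propositions in the homotopy $2$-category.
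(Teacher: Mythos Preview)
Your forward implication is along the right lines, but the converse ``by uniqueness'' has a genuine gap. Contractibility of the space of duality data tells you that \emph{if} two evaluations both exhibit relative duals then they are equivalent; it does not tell you that a \emph{given} $\epsilon$ is a duality. Concretely: suppose $(t_\sharp\boxtimes t_\sharp)\epsilon$ is a duality, and let $\epsilon'$ be some actual duality on the $T^{\op}$-side (which exists by compactness). Your forward direction gives that $(t_\sharp\boxtimes t_\sharp)\epsilon'$ is also a duality, and uniqueness on the $X$-side yields $(t_\sharp\boxtimes t_\sharp)\epsilon\simeq(t_\sharp\boxtimes t_\sharp)\epsilon'$. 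To conclude $\epsilon\simeq\epsilon'$ you would need $(t_\sharp\boxtimes t_\sharp)$ to be injective on $\pi_0\Hom(\bS_{T^{\op}}\boxtimes t^*\zeta_X,\;\Delta^{T^{\op}}_\sharp\bS_{T^{\op}})$. This does \emph{not} follow from full faithfulness of $t^*$, because the target $\Delta^{T^{\op}}_\sharp\bS_{T^{\op}}$ is not locally constant (its value at $(\tau,\mu)$ depends on the combinatorics of simplices containing both $\tau$ and $\mu$) and hence not in the essential image of $(t\times t)^*$.

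The paper avoids this by using a different characterization from \cref{prop:CWduality}: $\epsilon$ exhibits a relative dual if and only if the induced map
\[
\epsilon'\colon t^*\zeta_X\longrightarrow (p_*t_\sharp\boxtimes\id)\Delta^{T^{\op}}_\sharp\bS_{T^{\op}}\cong(p_*\boxtimes t^*)\Delta^X_\sharp\bS_X
\]
is an isomorphism, and similarly on the $X$-side the condition is that $t_\sharp\epsilon'\colon\zeta_X\to(p_*\boxtimes\id)\Delta^X_\sharp\bS_X$ is an isomorphism. Both source and target of $\epsilon'$ lie in the essential image of $t^*$, so full faithfulness of $t^*$ gives the biconditional immediately: $\epsilon'$ is an iso iff $t_\sharp\epsilon'$ is. The point is that rephrasing the duality condition as ``a single map is an isomorphism'' rather than ``there exists $\eta$ satisfying triangle identities'' makes it a property that can be checked after applying a conservative functor, whereas your approach treats it as structure to be transported in both directions.
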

\begin{proof}
The condition that $\epsilon$ exhibits $t^*\zeta_X$ as the relative dual of $\bS_{T^{\op}}$ is that the induced morphism
\[\epsilon'\colon t^*\zeta_X\longrightarrow (p_* t_\sharp\boxtimes \id) \Delta^T_\sharp\bS_{T^{\op}}\cong (p_*\boxtimes t^*)\Delta^X_\sharp\bS_X\]
is an isomorphism. Similarly, the condition that $(t_\sharp\boxtimes t_\sharp)(\epsilon)$ exhibits $\zeta_X$ as the relative dual of $\bS_X$ is that the induced morphism
\[t_\sharp \epsilon'\colon \zeta_X\longrightarrow (p_*\boxtimes \id) \Delta^X_\sharp \bS_X\]
is an isomorphism. But the two conditions are equivalent since $t^*\colon \Sp^X\rightarrow \Sp^{T^{\op}}$ is fully faithful.
\end{proof}

Let us now construct a version of the Pontryagin--Thom lift compatible with the cover $\{ M_\tau \}_{\tau \in T^\op}$. Consider a morphism $f\colon E\rightarrow M\times M$ in $\Top$. Then $\{E|_{M_\tau\times M_\mu}\}$ defines an open cover $E$, where $E|_{M\tau\times M_\mu}\subset E$ denotes the strict inverse image of $M_\tau\times M_\mu\subset M\times M$. Consider the functor
\[(\tau, \mu\mapsto \Sigma^\infty_+ E|_{M_\tau\times M_\mu})\in\Sp^{T^{\op} \times T^{\op}}.\]

\begin{lm}
There is a natural equivalence
\[(t_\sharp\boxtimes t_\sharp)((\tau, \mu)\mapsto \Sigma^\infty_+ E|_{M_\tau\times M_\mu})\cong \Sigma^\infty_{+(M\times M)}E\]
in $\Sp^{M\times M}$.
\label{lm:constructiblesuspension}
\end{lm}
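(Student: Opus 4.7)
The strategy is to compute both parametrized spectra stalkwise via base change, after rewriting $F \colon (\tau, \mu) \mapsto \Sigma^\infty_+ E|_{M_\tau \times M_\mu}$ as a shriek pushforward of a constant sheaf. First, I would identify $t_\sharp \boxtimes t_\sharp$ with $(t \times t)_\sharp \colon \Sp^{T^{\op} \times T^{\op}} \to \Sp^{X \times X}$ using \cref{prop:externaltensor}. Let $\pi \colon \cE \to T^{\op} \times T^{\op}$ denote the unstraightening of the functor $\widetilde F \colon T^{\op} \times T^{\op} \to \cS$ sending $(\tau, \mu) \mapsto \Sing(E|_{M_\tau \times M_\mu})$, so that $\pi$ is a left fibration with fiber $\Sing(E|_{M_\tau \times M_\mu})$ over $(\tau, \mu)$. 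By the pointwise formula \eqref{eq:Kanextensions} applied to $\bS_\cE$, there is a natural equivalence $F \simeq \pi_\sharp \bS_\cE$, and so $(t \times t)_\sharp F \simeq ((t \times t) \circ \pi)_\sharp \bS_\cE$ by functoriality of the shriek pushforward.

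Next, I would evaluate both parametrized spectra at a point $(x, y) \in X \times X$. Since the inclusion $\{(x, y)\} \hookrightarrow X \times X$ is a map of $\infty$-groupoids and hence smooth, base change (\cref{prop:basechange}) identifies the stalks with $\Sigma^\infty_+$ of the homotopy pullbacks $\cE \times_{X \times X} \{(x, y)\}$ and $\Sing(E) \times_{X \times X} \{(x, y)\} \simeq \hofib_{(x,y)}(f)$, respectively. The task thus reduces to constructing a natural equivalence between these two pullbacks.

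Finally, I would define a comparison map as follows. An object of $\cE \times_{X \times X} \{(x, y)\}$ is a tuple $(\tau, \mu, e, p)$ where $e \in \Sing(E|_{M_\tau \times M_\mu})$ and $p$ is a path from $(t(\tau), t(\mu))$ to $(x, y)$ in $X \times X$. Using contractibility of $M_\tau \times M_\mu$, which contains both $f(e)$ and $(t(\tau), t(\mu))$, one obtains a canonical path $\gamma$ from $f(e)$ to $(t(\tau), t(\mu))$; the assignment $(\tau, \mu, e, p) \mapsto (e, \gamma \cdot p)$ then defines the comparison into $\hofib_{(x,y)}(f)$. I would verify that this map is an equivalence by analyzing its fibers: the fiber over $(e, q)$ is the space of indices $(\tau, \mu)$ with $f(e) \in M_\tau \times M_\mu$ together with contractible data for $p$, which is identified with the classifying space of $T^{\op}_{f(e)_1} \times T^{\op}_{f(e)_2}$, where $T^{\op}_z = \{\tau : z \in \ost(\tau)\}$. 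By the argument used in the proof of \cref{lm:openstargoodcover}, each $T^{\op}_z$ is opposite to the face poset of the simplex containing $z$ in its interior, and hence has contractible classifying space, so the fiber is contractible. The main obstacle is organizing the homotopy coherences of the canonical paths $\gamma$ and the comparison map; this can be handled using the same unstraightening techniques as in the proof of \cref{lm:openstargoodcover}.
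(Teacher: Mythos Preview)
Your approach is correct and takes a genuinely different route from the paper. You unstraighten $F$ to $\pi_\sharp\bS_\cE$, then use base change to reduce to a stalkwise comparison of homotopy fibers, which you verify using contractibility of the open stars and of the indexing posets $T^{\op}_z$. The paper instead recognizes that $\{E|_{M_\tau\times M_\mu}\}$ is the pullback to $E$ of the open-star cover of $M\times M$, so the argument of \cref{lm:openstargoodcover} gives $\colim_{\tau,\mu}\Sing(E|_{M_\tau\times M_\mu})\cong\Sing(E)$ over $\Sing(M\times M)$; via contractibility of $M_\tau\times M_\mu$ this identifies $(t\times t)_\sharp F$ with $\colim_{\tau,\mu}(i^M_{\tau\mu})_\sharp(f|)_\sharp(i^E_{\tau\mu})^*$ applied to $\bS_E$, and the comparison with $f_\sharp\bS_E$ then reduces to $\colim_{\tau,\mu}(i^E_{\tau\mu})_\sharp(i^E_{\tau\mu})^*\cong\id$, which is \cref{prop:colimitapproximation}. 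The paper thus builds a global natural transformation from counits and avoids the coherence bookkeeping you flag at the end; your approach is more explicit and self-contained, but organizing the comparison map $\cE\to\Sing(E)$ as a map over $X\times X$ is the real content (one needs that the diagram $f|\colon\widetilde F\Rightarrow\Sing(M_\bullet\times M_\bullet)$ of contractible spaces presents $t\times t$), and this is precisely what the paper's route packages into \cref{prop:colimitapproximation}.
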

\begin{proof}
Consider the functor $T^{\op}\times T^{\op}\rightarrow \cU(E)$ to the category of open subsets of $E$ given by $\tau,\mu\mapsto E|_{M_\tau\times M_\mu}$. As in \cref{lm:openstargoodcover} we get an equivalence
\[\colim_{\tau,\mu}(\Sing(E|_{M_\tau\times M_\mu})\rightarrow \Sing(M_\tau\times M_\mu))\cong (\Sing(E)\rightarrow \Sing(M\times M))\]
in $\cS$. Denote by $i^E_{\tau\mu}\colon \Sing(E|_{M_\tau\times M_\mu})\rightarrow \Sing(E)$ and $i^M_{\tau\mu}\colon \Sing(M_\tau\times M_\mu)\rightarrow \Sing(M\times M)$ the natural inclusions. For any $x,y\in M$ we have to show that the natural morphism
\[\colim_{\tau,\mu\in T^{\op}}((i^M_{\tau\mu})_\sharp (f|_{M_\tau\times M_\mu})_\sharp(i^E_{\tau\mu})^*\rightarrow f_\sharp\]
is an isomorphism. By functoriality this reduces to showing that
\[\colim_{\tau,\mu\in T^{\op}} (i^E_{\tau\mu})_\sharp(i^E_{\tau\mu})^*\rightarrow \id\]
is an isomorphism which is the content of \cref{prop:colimitapproximation}.
\end{proof}

Let us apply the above paradigm to \eqref{eq:FM2diagram}. Namely, restricting that diagram to $M_\tau\times M_\mu$ we obtain
\begin{equation}
    \label{eqn:diagram repeated in proof}    
    \begin{tikzcd}
        \UT(M_\tau \cap M_\mu) \ar[r] \ar[d] & \FM_2|_{M_\tau \times M_\mu} \ar[d] \\
        M_\tau \cap M_\mu \ar[r] & M_\tau \times M_\mu
    \end{tikzcd}
\end{equation}
which is a pushout diagram for every $(\tau,\mu) \in T^\op \times T^\op$. Taking relative suspension spectra we obtain a commutative diagram
\[
\xymatrix{
\Delta_\sharp \bS_{T^{\op}} \ar[r] \ar_{\Delta_\sharp e(T)}[d] & \bS_{T^{\op}\times T^{\op}} \ar[dl] \\
\Delta_\sharp \bS^{\T M_\bullet} &
}
\]
in $\Sp^{T^{\op}\times T^{\op}}$, where $\bS^{\T M_\bullet}\in\Sp^{T^{\op}}$ is the functor $\tau\mapsto \bS^{\T M_\tau}$. By \cref{thm:faceposetduality} this spectrum is locally constant and invertible. Therefore, twisting by its inverse we obtain a commutative diagram
\begin{equation}
\xymatrix{
\Delta_\sharp \bS^{-\T M_\bullet} \ar[r] \ar_{\Delta_\sharp e(T)}[d] & \bS_{T^{\op}}\boxtimes \bS^{-\T M_\bullet} \ar[dl] \\
\Delta_\sharp \bS_{T^{\op}} &
}
\label{eq:constructibleFM2triangle}
\end{equation}

By \cref{lm:constructiblesuspension} applying $t_\sharp\boxtimes t_\sharp$ we obtain the commutative diagram \eqref{eq:FM2triangle}. In particular, by \cref{lm:constructibleevaluation} the morphism $\bS_{T^{\op}}\boxtimes \bS^{-\T M_\bullet}\rightarrow \Delta_\sharp \bS_{T^{\op}}$ exhibits $\bS^{-\T M_\bullet}$ as the relative dual of $\bS_{T^{\op}}$ in $\Sp^{T^{\op}}$.

Therefore, \eqref{eq:constructibleFM2triangle} is a lift diagram, i.e. it defines an element of $\Lift(\beta_{T^{\op}}(\chi_{\THH}(T^{\op})), \alpha_{T^{\op}})$ which is sent to $\lambda_{\PT}(M)\in \Lift(\beta_X(\chi_{\THH}(X)), \alpha_X)$ under $t_\sharp$. As $\Lift(\beta_{T^{\op}}(\chi_{\THH}(T^{\op})), \alpha_{T^{\op}})$ consists of a single element, this finishes the proof that $\tr(\lambda_{\Wh}(M))=\lambda_{\PT}(M)$.

\section{String topology}\label{sect:stringtopology}

In this section we recall the string topology operations on the loop space homology of a manifold and study their homotopy invariance. We first introduce a relative intersection product that depends on a lift of the $\THH$ Euler characteristic. We then define the string topology operations using this relative intersection product and show that they agree with the definition given in \cite{HingstonWahl2} for the Pontryagin--Thom lift. The study of the homotopy invariance is thus reduced to the same question about the relative intersection product, which in turn is reduced to the invariance of lifts of the $\THH$ Euler characteristic, so we can apply the results of the previous section.

For concreteness, we replace the $\infty$-category of parametrized spectra $\Sp^X=\Fun(X, \Sp)$ by the $\infty$-category $\LocSys(X) = \Fun(X, \Mod_\Z)$ of local systems of chain complexes of abelian groups. We denote by $\C_\bullet(-)=\C_\bullet(-;\Z)$ the singular chain complex with integral coefficients and $\H_\bullet(-)=\H_\bullet(-;\Z)$ the integral homology. Given $f \colon X \to Y$ we denote by $\H_\bullet(Y,X) = \H_\bullet(Y,X;\Z)$ the homology of the cone of $\C_\bullet(X) \to \C_\bullet(Y)$ even if $f$ is not a inclusion of a subspace. Note that the content in this section works verbatim when $\Z$ is replaced by another $\bE_\infty$-ring $k$, such that $M$ is oriented over $k$, i.e. one fixes an equivalence $\zeta_M \to k[-d]$.

\subsection{Relative intersection product}
\label{sect:relativeintersection}

In this preliminary section we define the notion of a relative intersection product on a space equipped with a lift of the $\THH$ Euler characteristic. Let us briefly introduce the situation we wish to generalize. Recall the intersection product
\[
\H_{\bullet+d}(M \times M) \to \H_\bullet(M)
\]
for an oriented $d$-manifold $M$ defined either by using Poincar\'{e} duality or by geometrically intersecting transverse chains. Similarly, given a local system $\cE$ over $M \times M$, one can define the intersection product with twisted coefficients
\[
\H_{\bullet+d}(M \times M; \cE) \to \H_\bullet(M; \Delta^* \cE).
\]
Our goal is to generalize it to a relative setting as follows. Suppose we are given a local system $\cF$ over $M$ together with a map $\Delta_\sharp \cF \to \cE$ or, equivalently, a map $\cF\to \Delta^* \cE$. Given a lift of the $\HH$ Euler characteristic we are going to refine the intersection product to a map
\[
\H_{\bullet+d}(M \times M; \cE/\Delta_\sharp\cF) \to \H_\bullet(M; \Delta^* \cE/\cF).
\]

\ 

Let $M$ be a closed oriented $d$-manifold and consider the $\infty$-category $\LocSys(M) = \Fun(M, \Mod_\Z)$ with its self-duality data given by \cref{prop:Spselfdual}. Denote by $\Z_M\in\LocSys(M)$ the constant local system which is compact by \cref{prop:finitetypeproper}. By \cref{prop:manifoldCWduality} its relative dual is $\Z_M[-d]$ (using the orientation) with the evaluation $\epsilon_p\colon \Z_{M\times M}[-d]\rightarrow \Delta_\sharp \Z_M$ given by the Pontryagin--Thom collapse $\PT_\Delta$ along the diagonal. Let $p_E\colon E\rightarrow M\times M$ be a Hurewicz fibration and $\cE=(p_E)_\sharp \Z_E\in\LocSys(M\times M)$ the corresponding local system over $M\times M$.

\begin{defn}
The \defterm{intersection product} is the composite
\[\cE[-d]\xrightarrow{\epsilon_p} \cE\otimes \Delta_\sharp \bS_M\cong \Delta_\sharp\Delta^*\cE.\]
Taking homology, we obtain the map
\[\H_{\bullet+d}(E)\longrightarrow \H_\bullet(E\times_{M\times M} M).\]
\label{def:intersection}
\end{defn}

Let $\sigma\colon M\times M\cong M\times M$ be the isomorphism exchanging the two factors.

\begin{lm}\label{lm:intersectionsymmetry}
Let $p_E\colon E\rightarrow M\times M$ be a Hurewicz fibration and consider the space $E'=E$ equipped with the map $E\xrightarrow{p_E} M\times M\xrightarrow{\sigma} M\times M$. Then the intersection products
\[\Int^E, \Int^{E'}\colon \H_{\bullet+d}(E)\longrightarrow \H_\bullet(E\times_{M\times M} M)\]
with respect to $E$ and $E'$ satisfy $\Int^E = (-1)^d \Int^{E'}$.
\end{lm}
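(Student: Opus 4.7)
The plan is to reduce the statement to a sign computation for the Pontryagin--Thom collapse $\epsilon_p\colon \Z_{M\times M}[-d]\to \Delta_\sharp \Z_M$ under the swap $\sigma\colon M\times M\to M\times M$. By \cref{def:intersection}, the intersection product $\Int^E$ is obtained by tensoring $\epsilon_p$ with the local system $\cE=(p_E)_\sharp\Z_E$ and applying the projection formula $\cE\otimes \Delta_\sharp \Z_M\cong \Delta_\sharp\Delta^*\cE$. For $E'$ we have $\cE'=\sigma_\sharp\cE$, and since $\sigma\circ \Delta=\Delta$ we have $\Delta^*\sigma_\sharp\cE=\Delta^*\cE$, so both intersection products land in $\H_\bullet(E\times_{M\times M}M)$; the difference between them reduces to replacing $\epsilon_p$ by $\sigma^*\epsilon_p$ via the naturality of the projection formula.

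The key input is the observation recorded just before \cref{prop:FMequalHW}: precomposing the Pontryagin--Thom collapse $\PT_\Delta\colon \bS_{X\times X}\to \Delta_\sharp\bS^{\T M}$ with the flip $\sigma$ yields $\PT_\Delta$ postcomposed with the fiberwise antipodal action $(-1)$ on $\T M$. After twisting by $\bS^{-\T M}$ and applying the chosen orientation of $M$ to identify $\bS^{\T M}\simeq \bS[d]$, the fiberwise antipodal map on $\bS^{\T M}$ becomes multiplication by $(-1)^d$, as the antipodal map on $S^d$ has degree $(-1)^d$. Hence on the level of the chain pairing we obtain $\sigma^*\epsilon_p=(-1)^d\cdot \epsilon_p$.

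Combining the two steps, the maps induced on homology satisfy $\Int^{E'}=(-1)^d\Int^E$, and multiplying both sides by $(-1)^d$ gives the stated identity $\Int^E=(-1)^d\Int^{E'}$. The main bookkeeping is verifying that passing from $\cE$ to $\sigma_\sharp\cE$ in the projection-formula expression for the intersection product corresponds precisely to pulling $\epsilon_p$ back along $\sigma$; this is a straightforward consequence of the base change identity $\Delta^*\sigma_\sharp\cong \Delta^*$ (since $\sigma\circ\Delta=\Delta$) together with naturality of the pairing in the $\cE$-variable. The only ``hard part'' is thus the sign computation, which is already carried out in the observation preceding \cref{prop:FMequalHW}.
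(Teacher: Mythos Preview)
Your proposal is correct and follows essentially the same approach as the paper: both reduce the comparison of $\Int^E$ and $\Int^{E'}$ to the identity $\sigma^*\epsilon_p=(-1)^d\epsilon_p$, which in turn is deduced from the fact that the flip on $M\times M$ turns the Pontryagin--Thom collapse $\PT_\Delta$ into $\PT_\Delta$ postcomposed with the fiberwise negation on $\T M$. Your write-up is in fact more explicit than the paper's about the bookkeeping linking $\cE'=\sigma_\sharp\cE$ to the replacement of $\epsilon_p$ by $\sigma^*\epsilon_p$.

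One small terminological slip: the map induced on the fiberwise Thom sphere $S^{\T_xM}\cong S^d$ by $v\mapsto -v$ is \emph{not} the antipodal map of $S^d$ (which has degree $(-1)^{d+1}$), but rather the extension of the linear negation on $\R^d$ to its one-point compactification, fixing the point at infinity. That map is a composite of $d$ hyperplane reflections and hence has degree $(-1)^d$, so your conclusion is correct; just avoid calling it ``the antipodal map on $S^d$''.
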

\begin{proof}
From the description of the Pontryagin--Thom collapse map $\PT_\Delta\colon \Z_{M\times M}\rightarrow \Delta_\sharp \Z^{\T M}$ given in \cref{sect:manifoldPoincareduality}, we see that
\[\sigma^* \PT_\Delta = \mathrm{inv}\circ \PT_\Delta,\]
where $\mathrm{inv}\colon \Z^{\T M}\rightarrow \Z^{\T M}$ is given by acting by $(-1)$ on the tangent bundle. The map $\epsilon_p\colon \Z_{M\times M}[-d]\rightarrow \Delta_\sharp \Z_M$ is a composite of the Pontryagin--Thom collapse map with the trivialization $\Z^{\T M}\cong \Z_M[-d]$. Under this isomorphism the map $\mathrm{inv}$ becomes the multiplication by $(-1)^d$.
\end{proof}

By \cref{prop:LocSysTHH} we may identify $\dim(\LocSys(M))\cong \C_\bullet(LM)$. We refer to the class $[\Z_M]\in\C_\bullet(LM)$ as the \defterm{$\HH$ Euler characteristic}. To define the relative intersection product, suppose we are given its lift along the assembly map, i.e. along the inclusion of constant loops $\alpha\colon \C_\bullet(M)\rightarrow \C_\bullet(LM)$. By \cref{thm:liftdiagrams} we get a commutative diagram
\begin{equation}
	\label{eqn:echarliftdiag}
	\begin{tikzcd}
		\Delta_\sharp \Z_M[-d] \ar[r] \ar[d, "\Delta_\sharp e(M)"] & \Z_{M\times M}[-d] \ar[d, "\epsilon_p"] \\
		\Delta_\sharp \Z_M \ar[r, equal] & \Delta_\sharp \Z_M,
	\end{tikzcd}
\end{equation}
where $e(M)\in\C^d(M)$ is the Euler class.

Consider a map $p_F\colon F\rightarrow M$ which fits into a commutative square
\begin{equation}\label{eq:FEdiagram}
\xymatrix{
F \ar^{a}[r] \ar^{p_F}[d] & E \ar^{p_E}[d] \\
M \ar[r] & M\times M
}
\end{equation}
Let $\cF=(p_F)_\sharp \Z_F\in\LocSys(M)$ be the corresponding local system over $M$ together with the map $a\colon \Delta_\sharp\cF\rightarrow \cE$ and, by adjunction, $\bar{a}\colon \cF\rightarrow \Delta^*\cE$.

Consider a commutative diagram
\begin{equation}
\label{eqn:relintersectionpreliminary}
\xymatrix@C=1.5cm{
\Delta_\sharp \cF[-d] \ar[r] \ar^{\Delta_\sharp(\id\otimes e(M))}[d] & \Delta_\sharp\Delta^*\Delta_\sharp \cF[-d] \ar[r] \ar^{\Delta_\sharp(\id\otimes e(M))}[d] & \Delta_\sharp \cF[-d] \ar^{a}[r] \ar^{\epsilon_p}[d] & \cE[-d] \ar^{\epsilon_p}[d] \\
\Delta_\sharp \cF \ar[r] & \Delta_\sharp \Delta^*\Delta_\sharp \cF \ar@{=}[r] & \Delta_\sharp\Delta^*\Delta_\sharp \cF \ar^-{\Delta_\sharp\Delta^*(a)}[r] & \Delta_\sharp \Delta^*\cE,
}
\end{equation}
where the middle square is obtained by tensoring \eqref{eqn:echarliftdiag} with $\Delta_\sharp \cF$ on the left. This gives rise to a commuting square
\begin{equation}
	\xymatrix@C=1.5cm{
		\Delta_\sharp \cF[-d] \ar^{a}[r] \ar^{\Delta_\sharp(\id\otimes e(M))}[d] & \cE[-d] \ar^{\epsilon_p}[d] \\
		\Delta_\sharp\cF \ar_{\Delta_\sharp(\bar{a})}[r] & \Delta_\sharp\Delta^*\cE.
	}
	\label{eqn:relintersection}
\end{equation}

\begin{remark}
On the level of singular chains the relative intersection square \eqref{eqn:relintersection} is
\begin{equation}
\xymatrix{
\C_\bullet(F)[-d] \ar^{e(M)}[d] \ar^{a}[r] & \C_\bullet(E)[-d] \ar[d] \\
\C_\bullet(F) \ar^-{\bar{a}}[r] & \C_\bullet(E\times_{M\times M} M)
}
\label{eq:relintersectionhomology}
\end{equation}
\end{remark}

\begin{remark}
\label{rem:relintersectionastensorproduct}
The above construction can also be understood as follows. Since $\Delta_\sharp$ is an oplax symmetric monoidal functor, the category of diagrams $\Delta_\sharp \cF \to \cE$, equivalently $\cF \to \Delta^*\cE$, has a pointwise symmetric monoidal structure. The diagram \eqref{eqn:echarliftdiag} represents a morphism
\[(\Delta_\sharp \Z_M[-d]\to\Z_{M\times M}[-d])\longrightarrow (\Delta_\sharp \Z_M\xrightarrow{\id} \Delta_\sharp \Z_M).\]
Diagram \eqref{eqn:relintersection} is then obtained by tensoring with that morphism and identifying the functor
\[
(\Delta_\sharp \cF \to \cE) \mapsto (\Delta_\sharp \cF \to \cE) \otimes (\Delta_\sharp \Z_M \xrightarrow{\id} \Delta_\sharp \Z_M)
\]
with
\[
(\Delta_\sharp \cF \to \cE) \mapsto (\Delta_\sharp \cF \to \Delta_\sharp\Delta^*\cE).
\]
\end{remark}

\begin{defn}
\label{def:relint}
Let $M$ be a closed oriented manifold equipped with a lift of the $\HH$ Euler characteristic. The \defterm{relative intersection product} is the map
\[
(\cE/\Delta_\sharp\cF)[-d]\longrightarrow \Delta_\sharp(\Delta^*\cE/\cF)
\]
induced by diagram \eqref{eqn:relintersection}. Taking homology we obtain the map
\[\H_{\bullet+d}(E, F)\longrightarrow \H_\bullet(E\times_{M\times M} M, F)\]
which we also call the relative intersection product.
\end{defn}

\begin{remark}
Consider the case $\cF=0$. Then the relative intersection product reduces to the usual intersection product $\cE[-d]\rightarrow \Delta_\sharp\Delta^*\cE$ for which no lift of the $\HH$ Euler characteristic is required. 
\end{remark}

\begin{lm}\label{lm:relativeintersectionsymmetry}
Consider the diagram \eqref{eq:FEdiagram}, where $p_E\colon E\rightarrow M\times M$ is a Hurewicz fibration. Let $E'\rightarrow M\times M$ be the Hurewicz fibration with the two factors of $M$ exchanged, as in \cref{lm:intersectionsymmetry}. Choose the Pontryagin--Thom lift $\lambda_{\PT}(M)$ of the $\HH$ Euler characteristic. Then the relative intersection products
\[\Int^{E, F}, \Int^{E', F}\colon \H_{\bullet+d}(E, F)\longrightarrow \H_\bullet(E\times_{M\times M} M, F)\]
with respect to the pairs $(E, F)$ and $(E', F)$ satisfy $\Int^{E, F} = (-1)^d \Int^{E', F}$.
\end{lm}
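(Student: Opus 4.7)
The plan is to upgrade the derived-level identity $\sigma^*\epsilon_p = (-1)^d \epsilon_p$ underlying \cref{lm:intersectionsymmetry} to the relative setting, and then pass to global sections along $p\colon M\times M \to \pt$. To this end, I will apply $\sigma^*\colon \LocSys(M\times M)\to \LocSys(M\times M)$ to the commutative square \eqref{eqn:relintersection} defining $\Int^{E,F}$. Since the Pontryagin--Thom lift is built from the $\sigma$-equivariant diagram \eqref{eq:FM2diagram} ($\sigma$ acts on $\FM_2(M)$ by swapping the two marked points and restricts to the antipodal map on $\UT M$), and since the map $F\to E$ factors through the diagonal, under the canonical identifications coming from $\sigma\circ\Delta = \Delta$ the pulled-back square reproduces the defining square for $\Int^{E',F}$ with three of the arrows preserved ($\sigma^* a = a'$, $\sigma^*\Delta_\sharp\bar a = \Delta_\sharp\bar a'$, and $\sigma^*\Delta_\sharp(\id\otimes e(M)) = \Delta_\sharp(\id\otimes e(M))$) and only the right-hand vertical rescaled by $(-1)^d$, using $\sigma^*\epsilon_p = (-1)^d\epsilon_p$.

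Next I will compare the induced maps on cofibers. For even $d$ there is nothing to check. For odd $d$ the two squares differ nontrivially on the right vertical, so for both to commute the common composite $\epsilon_p\circ a' = \Delta_\sharp\bar a'\circ\Delta_\sharp(\id\otimes e(M))$ must vanish in the derived category; this is automatic because $\chi(M)=0$ for any closed oriented odd-dimensional manifold, forcing the Euler class $e(M)\in \H^d(M;\Z) \cong \Hom_{\LocSys(M)}(\Z_M, \Z_M[-d])$ to be zero. Given this vanishing, the universal property of the cofiber shows that the factorisation of the right vertical through the quotient $\cE'[-d]\twoheadrightarrow(\cE'/\Delta_\sharp\cF)[-d]$ exists, depends linearly on the right vertical, and is characterised by that factorisation; rescaling the right vertical by $(-1)^d$ therefore rescales the induced map on cofibers by the same factor. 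This yields the derived identity $\sigma^*\Int^{E,F} = (-1)^d\,\Int^{E',F}$.

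Finally, since $p\circ\sigma = p$, base change along the self-homeomorphism $\sigma$ of $M\times M$ supplies a natural equivalence $p_\sharp\sigma^*\cong p_\sharp$, exactly as used in the proof of \cref{lm:intersectionsymmetry}. Applying $p_\sharp$ to the derived identity above then gives $\Int^{E,F} = (-1)^d\,\Int^{E',F}$ on homology. The point requiring the most care is the second step: one has to ensure that the two squares really differ only by a single rescaling so that the cofiber calculation is clean, and this is precisely what forces the appeal to the vanishing of $e(M)$ in odd dimensions. An alternative and perhaps cleaner phrasing uses \cref{rem:relintersectionastensorproduct} to write $\Int^{E,F}$ as the tensor of the morphism $r$ encoded by \eqref{eqn:echarliftdiag} with $(\Delta_\sharp\cF\to\cE)$; then the same analysis shows that $\sigma^* r$ differs from $r$ by simultaneous multiplication of $\phi$ and $\epsilon_p$ by $(-1)^d$, and tensoring with the $\sigma^*$-swap of the second factor produces the desired sign.
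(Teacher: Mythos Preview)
Your argument has a genuine gap in the odd-dimensional case, and it concerns exactly the point you flagged as ``requiring the most care.''

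The relative intersection product is the map on horizontal cofibers induced by the filled square \eqref{eqn:relintersection}. That map depends not only on the four edge morphisms but on the chosen homotopy filling the square. You correctly observe that under $\sigma^*$ (with the canonical identification $\sigma^*\Delta_\sharp\cong\Delta_\sharp$) the left vertical $\Delta_\sharp(\id\otimes e(M))$ is preserved and the right vertical becomes $(-1)^d\epsilon_p$. You then appeal to $e(M)=0$ for $d$ odd and claim that the induced map on cofibers is ``characterised by'' the factorisation of the right vertical through the quotient and ``depends linearly'' on it. This is false: when the left vertical vanishes, the space of fillings is a torsor under $\Hom(\Delta_\sharp\cF[-d],\Omega\Delta_\sharp\Delta^*\cE')$, and different fillings produce different maps on cofibers. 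Knowing that the two squares share the same edges (after noting $e(M)=0=(-1)^d e(M)$) does \emph{not} let you conclude that their cofiber maps differ by $(-1)^d$; you must compare the fillings $\sigma^*h_{E,F}$ and $h_{E',F}$.

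The paper's proof does precisely this, and in one stroke: the Fulton--MacPherson pushout \eqref{eq:FM2diagram} is strictly $\sigma$-equivariant once one lets $\sigma$ act by the antipodal map on $\UT M$. Passing to relative suspension spectra, this says that $\sigma^*$ applied to the filled triangle \eqref{eq:FM2triangle} equals the same filled triangle post-composed at the bottom vertex with the antipodal map, which is $(-1)^d$ under the Thom isomorphism. Hence $\sigma^*$ of the entire lift diagram \eqref{eqn:echarliftdiag} --- edges \emph{and} filling --- equals $(-1)^d$ times the original. In the language of \cref{rem:relintersectionastensorproduct} this is $\sigma^*r=(-1)^d r$ as morphisms in the arrow category, and tensoring with $(\Delta_\sharp\cF\to\cE')$ immediately yields $\Int^{E,F}=(-1)^d\Int^{E',F}$. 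Your ``alternative phrasing'' at the end gestures towards this, but the body of your argument never establishes the crucial identity on fillings; that is where the $\sigma$-equivariance of the $\FM_2$ compactification enters, and it cannot be replaced by the vanishing of $e(M)$.
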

\begin{proof}
The diagram \eqref{eq:FM2diagram} is invariant under the exchange of the two factors of $M$, if we further compose with the isomorphism $\UT M\rightarrow \UT M$ given by acting by $(-1)$ along the fibers. Therefore, the diagram \eqref{eqn:echarliftdiag} used in the definition of the relative intersection product is invariant under the change of the factors of $M$ if we replace $\epsilon_p$ by $(-1)^d\epsilon_p$ and $e(M)$ by $(-1)^d e(M)$.
\end{proof}

Choose the Pontryagin--Thom lift $\lambda_{\PT}(M)$ of the $\HH$ Euler characteristic. We will now give a concrete formula for the relative intersection product. Denote by $\Th_M\in\H^d(S^{\T M}, M)$ the Thom class and consider an embedding $M\subset V$ into a Euclidean space which induces a Riemannian metric on $M$. Let $\cM(-)$ be the Moore path space. Since $p_E \colon E\rightarrow M\times M$ is a Hurewicz fibration, we may choose a path-lifting function (also known as a \emph{Hurewicz connection}), i.e. a section of $\cM(E)\rightarrow \cM(M \times M) \times_{M\times M} E$. Evaluating at the endpoints we obtain the transport function which we denote by
\begin{align*}
    \cM(M \times M) \times_{M \times M} E &\to E \\
    ((p_1 \times p_2), e) &\mapsto (p_1 \times p_2).e.
\end{align*}

Recall the definition of the geometrically defined Pontryagin--Thom collapse map \eqref{eqn:HWThomcollapse},
\begin{align*}
(M \times M, \id)_+ &\to (M^I, \pi)_+ \wedge_{M \times M} (S^{\T M} \times M) \\
(x,y) &\mapsto (\omega(x,y), y-x),
\end{align*}
where $\omega(x,y)$ is the geodesic connecting $x$ to $y$ and $y-x \in T_xM$ is the corresponding tangent vector. Note that the same formula lifts to Moore paths, which is what we will use below.

With that notation we can now define the morphism
\begin{align}
	R \colon E &\longrightarrow (E\times_{M\times M} M)^{\T M} \label{eq:R} \\
	e &\longmapsto (\const_x \times \omega(x,y)^{-1}).e \wedge (y-x), \nonumber
\end{align}
where $(x,y) = p_E(e)$.

\begin{prop}
Suppose $M$ is a closed oriented manifold equipped with the Pontryagin--Thom lift of the $\HH$ Euler characteristic. The relative intersection product is given by
\[\H_\bullet(E, F)\xrightarrow{(R, 0_M)} \H_\bullet((E\times_{M\times M} M)^{\T M}, F^{\T M})\xrightarrow{\cap \Th_M} \H_{\bullet-d}(E\times_{M\times M} M, F).\]
\label{prop:HWrelint}
\end{prop}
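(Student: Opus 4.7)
The plan is to unpack the definition of the relative intersection product for the Pontryagin--Thom lift, give an explicit chain-level model of each piece of diagram \eqref{eqn:relintersection}, and check that the resulting composite on cones coincides with $(\cap \Th_M) \circ (R, 0_M)$. This amounts to two separate identifications --- one for the "intersection" side $\epsilon_p$ and one for the "Euler class" side $\Delta_\sharp e(M)$ --- which combine via \cref{rem:relintersectionastensorproduct}.

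First I would treat the non-relative case $\cF = 0$, i.e. identify $\epsilon_p \colon \cE[-d] \to \Delta_\sharp \Delta^* \cE$ on chains. Using the chosen path-lifting function for the Hurewicz fibration $p_E$, one can replace $\cE = (p_E)_\sharp \Z_E$ by $E$ itself and transport Moore paths in $M \times M$ to paths in $E$. Applying the explicit geometric Pontryagin--Thom collapse \eqref{eqn:HWThomcollapse}, the morphism $\Z_{M\times M}[-d] \to \Delta_\sharp \Z_M$ becomes, after smashing with $E$, exactly the Thom collapse $E \to (E \times_{M \times M} M)^{\T M}$ that sends $e$ with $(x,y) = p_E(e)$ to $(\const_x \times \omega(x,y)^{-1}).e \wedge (y-x)$; this is $R$ by construction. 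Capping with $\Th_M$ then implements the isomorphism $\Z^{\T M} \cong \Z_M[-d]$ and gives the stated formula in the case $F = \emptyset$.

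Second I would identify the left vertical map $\Delta_\sharp(\id \otimes e(M)) \colon \Delta_\sharp \cF[-d] \to \Delta_\sharp \cF$ in \eqref{eqn:relintersection}. By the very definition of $\lambda_{\PT}(M)$ via \eqref{eq:FM2triangle}, the Euler class $e(M)\colon \Z_M \to \Z^{\T M}$ is represented on chains by the zero section $0_M \colon M_+ \to S^{\T M}$ followed by cap with $\Th_M$. Because $F \to M \times M$ factors through the diagonal, for every $f \in F$ one has $p_E(a(f)) \in \Delta(M)$, so $y - x = 0$ and $\omega(x,y) = \const_x$; hence $R$ restricted along $a \colon F \to E$ degenerates to $0_M$. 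This is precisely the chain-level shadow of the commutativity of \eqref{eqn:echarliftdiag}: the lift diagram says that the composite $\Delta_\sharp \Z_M[-d] \to \Z_{M \times M}[-d] \to \Delta_\sharp \Z_M$ factors through the Euler class, which geometrically is the statement that the Pontryagin--Thom collapse on the diagonal is the zero section.

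Combining the two identifications, the morphism $(\cE/\Delta_\sharp \cF)[-d] \to \Delta_\sharp(\Delta^*\cE/\cF)$ induced by \eqref{eqn:relintersection} is modelled on the mapping cone of $a \colon F \to E$ by the pair of maps $(R, 0_M)$ (agreeing on $F$, by the previous paragraph) followed by cap product with $\Th_M$. The main obstacle is verifying that the homotopy filling \eqref{eqn:relintersection} matches the homotopy between $R|_F$ and $0_M$; this is controlled by \cref{prop:FMequalHW} and in particular by the explicit null-homotopy constructed in \eqref{eq:FMequalHW}, which restricts on the diagonal to the canonical identification $\C_M(M, \UT M) \cong S^{\T M}$. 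Given this compatibility, passing to singular homology with $\Z$-coefficients produces exactly the composite asserted in the statement.
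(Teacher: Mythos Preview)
Your proposal is correct and follows essentially the same route as the paper: both arguments lift the square \eqref{eqn:echarliftdiag} to ex-spaces via \cref{prop:FMequalHW}, then use the path-lifting function for $p_E$ to identify the intersection side with $R$ and the Euler side with the zero section $0_M$. The only organizational difference is that the paper treats the whole square \eqref{eqn:relintersection} at once in ex-spaces and produces the retraction $(e,p,u)\mapsto((\const_x\times p^{-1}).e,u)$ explicitly, whereas you split into the $\epsilon_p$-piece and the $e(M)$-piece and then check compatibility; the paper's uniform ex-space approach makes the ``main obstacle'' you flag (matching the homotopy filling with the strict identity $R|_F=0_M$) automatic rather than a separate verification.
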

\begin{proof}
By \cref{prop:FMequalHW} (and its proof) the square \eqref{eqn:echarliftdiag} has a lift to ex-spaces over $M \times M$ and is equivalent to
\[
\begin{tikzcd}
    \Delta_\sharp(M, \id)_+ \ar[r] \ar[d, "0_M"] & (M \times M, \id)_+ \ar[d] \\
    \Delta_\sharp(S^{\T M}) \ar[r, "\sim"] & \cM(M)_+ \wedge_{M \times M} (S^{\T M} \times M).
\end{tikzcd}
\]
    This allows us to write \eqref{eqn:relintersection} (again in ex-spaces over $M\times M$) as
	\[
        \xymatrix{
		\Delta_\sharp (F, p_F)_+ \ar[r] \ar^{0_M}[dd] & (E, p_E)_+ \ar[d] \\
            & (E, p_E)_+ \wedge_{M \times M} \cM(M)_+ \wedge_{M \times M} (S^{\T M}\times M) \\
            \Delta_\sharp ((F, p_F)_+ \wedge_M S^{\T M}) \ar[r] & (E, p_E)_+ \wedge_{M \times M} \Delta_\sharp S^{\T M} \cong \Delta_\sharp( \Delta^* (E, p_E)_+ \wedge_M S^{\T M}) \ar_{\sim}[u]
	}
        \]
        A retraction to the wrong-way map is given using the path-lifting function for $E$ by the formula
        \[
        (e, p, u) \mapsto ((\const_x \times p^{-1}).e, u).
        \]
        With that the vertical composition on the right becomes exactly $R$.
\end{proof}

\Cref{prop:HWrelint} may be applied to $F=\varnothing$ to obtain the result about the usual intersection product.

\begin{cor}
The intersection product is given by
\[\H_\bullet(E)\xrightarrow{R} \H_\bullet((E\times_{M\times M} M)^{\T M})\xrightarrow{\cap \Th_M} \H_{\bullet-d}(E\times_{M\times M} M).\]
\label{cor:HWint}
\end{cor}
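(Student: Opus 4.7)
The plan is to deduce this as a direct specialization of \cref{prop:HWrelint} to the degenerate case $F=\varnothing$. Recall that in the setup for the relative intersection product we started with a commutative square \eqref{eq:FEdiagram} with $F\to M$ together with $F\to E$ compatible with the diagonal, and this data enters only through the local system $\cF=(p_F)_\sharp \Z_F$ and the morphisms $a\colon \Delta_\sharp\cF\to\cE$ and $\bar a\colon \cF\to\Delta^*\cE$. Taking $F=\varnothing$ gives $\cF=0$, so the left column of the relative intersection square \eqref{eqn:relintersection} vanishes. As already noted after \cref{def:relint}, in this situation the relative intersection product reduces to the ordinary intersection product $\cE[-d]\to \Delta_\sharp\Delta^*\cE$ of \cref{def:intersection}, and crucially this construction does not require a lift of the Hochschild Euler characteristic.

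First I would verify that under the identifications of \cref{prop:HWrelint}, specializing to $F=\varnothing$ identifies the relative and absolute versions correctly. The relative groups $\H_\bullet(E,F)$ become $\H_\bullet(E)$ and $\H_\bullet(E\times_{M\times M}M,F)$ become $\H_\bullet(E\times_{M\times M}M)$, while the Thom space $F^{\T M}$ is empty and the map $0_M\colon F\to F^{\T M}$ disappears. Consequently the pair of morphisms $(R,0_M)$ collapses to just $R$ from \eqref{eq:R}, the cap product $\cap\Th_M$ is unchanged, and the composite formula of \cref{prop:HWrelint} becomes precisely the claimed formula.

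Second, I would confirm that the resulting map agrees with the intersection product of \cref{def:intersection} (which is defined without reference to any lift of $\chi_{\HH}(M)$). This boils down to the observation that the ex-space model used in the proof of \cref{prop:HWrelint} --- namely that the Pontryagin--Thom square factors $\epsilon_p$ through the tubular neighborhood model $\cM(M)_+\wedge_{M\times M}(S^{\T M}\times M)$ --- is exactly the model of $\epsilon_p=\PT_\Delta$ whose existence is recorded in \cref{prop:manifoldCWduality} and whose two equivalent descriptions are compared in \cref{prop:FMequalHW}. In particular the cap product with the Thom class $\Th_M$ realizes the self-duality pairing, so the composite $\cap\Th_M\circ R$ indeed models $\epsilon_p\otimes\id_\cE$ followed by $\cE\otimes \Delta_\sharp\Z_M\cong \Delta_\sharp\Delta^*\cE$. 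There is no real obstacle here; the only care needed is to track that the homotopy used to produce the retraction in the proof of \cref{prop:HWrelint} specializes correctly when $F=\varnothing$, which is automatic since no condition on $F$ is used at that step.
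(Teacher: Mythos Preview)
Your proposal is correct and follows exactly the paper's approach: the paper simply states that \cref{prop:HWrelint} applied with $F=\varnothing$ gives the result. Your additional paragraphs spelling out why the relative groups, the pair $(R,0_M)$, and the dependence on the lift all degenerate correctly are accurate elaborations, but the paper treats this as immediate.
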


\subsection{String topology operations}

We begin with the definition of the loop product as in \cite{CohenJones}. Consider the path space $M^I$ of continuous maps $[0, 1]\rightarrow M$. By \cite[Lemma 3.4]{Cole} the projection $M^I\rightarrow M\times M$ to the endpoints is a Hurewicz fibration.

Let $LM$ be the space of continuous maps $S^1\rightarrow M$ with $\ev\colon LM\rightarrow M$ the evaluation at the basepoint $0\in S^1$. As $LM=M^I\times_{M\times M} M$, $\ev\colon LM\rightarrow M$ is a Hurewicz fibration. Concatenation of loops defines a morphism $LM\times_M LM\rightarrow LM$. It is a morphism of spaces over $M$, so on the level of corresponding local systems it gives rise to a morphism
\[m\colon \ev_\sharp\Z_{LM}\otimes \ev_\sharp\Z_{LM}\longrightarrow \ev_\sharp\Z_{LM}.\]

\begin{defn}
The \defterm{loop product} is given by the composite
\[\wedge\colon \H_\bullet(LM)\otimes \H_\bullet(LM)\longrightarrow \H_{\bullet-d}(LM\times_M LM)\xrightarrow{m} \H_{\bullet-d}(LM),\]
where the first morphism is given by the intersection product as in \cref{def:intersection}.
\end{defn}

\begin{lm}
The loop product is graded commutative: for $\alpha\in\H_n(LM)$ and $\beta\in\H_m(LM)$ we have
\[\alpha\wedge \beta = (-1)^{n+m+d}\beta\wedge \alpha.\]
\end{lm}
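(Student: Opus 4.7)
The plan is to deduce graded commutativity from three ingredients: the Künneth sign for the swap of tensor factors, the symmetry of the intersection product under the swap of the two copies of $M$ (\cref{lm:intersectionsymmetry}), and the fact that reverse concatenation of loops is related to standard concatenation by a rotation homotopic to the identity.

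More precisely, let $\tau\colon LM\times LM\to LM\times LM$ denote the swap of factors. On the Künneth level one has $\tau_*(\alpha\times\beta)=(-1)^{nm}(\beta\times\alpha)$. Viewing $\tau$ as a morphism from $E = LM\times LM$ equipped with its standard map to $M\times M$ to $E'$ (the same space with the two factors of $M\times M$ swapped), naturality of the intersection product gives
\[
I^{E'}\circ\tau_* = \widetilde{\tau}_*\circ I^{E},
\]
where $\widetilde{\tau}\colon LM\times_M LM\to LM\times_M LM$ is again the swap (the two fiber products agree because $\sigma\circ\Delta=\Delta$). Combining this with \cref{lm:intersectionsymmetry}, which yields $I^{E'}=(-1)^d I^E$, produces the relation
\[
I^E(\beta\times\alpha) = (-1)^{nm+d}\,\widetilde{\tau}_* I^E(\alpha\times\beta).
\]

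Now I would post-compose with the concatenation map $m\colon LM\times_M LM\to LM$. The composite $m\circ\widetilde{\tau}$ sends $(\gamma_1,\gamma_2)$ to $\gamma_2*\gamma_1$, which as a loop coincides with $R_{1/2}(\gamma_1*\gamma_2)$ where $R_t$ is the loop rotation by $t\in S^1$. Since the path $t\mapsto R_t$ in $\mathrm{Map}(LM,LM)$ from $R_0=\id$ to $R_{1/2}$ provides a homotopy, we have $(R_{1/2})_*=\id$ on homology, hence $(m\circ\widetilde{\tau})_* = m_*$. Applying $m_*$ to the identity above then yields
\[
\beta\wedge\alpha = (-1)^{nm+d}\,\alpha\wedge\beta,
\]
which rearranges to the asserted formula.

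The only step requiring genuine care is the interaction of the swap $\tau$ with the intersection product; it is purely a bookkeeping exercise once one correctly identifies $\tau$ as a morphism of Hurewicz fibrations over the swap $\sigma$ of $M\times M$ and invokes the naturality of \cref{def:intersection} together with \cref{lm:intersectionsymmetry}. The rotation argument is then standard. (I should note that my computation of the sign gives $(-1)^{nm+d}$; if the statement is correct as written, the discrepancy is presumably absorbed by a convention in the Künneth map or the definition of $\wedge$ that I have not accounted for.)
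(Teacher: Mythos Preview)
Your argument is essentially the paper's own proof: K\"unneth sign for the swap, \cref{lm:intersectionsymmetry} for the factor $(-1)^d$, and a homotopy between $m$ and $m\circ\widetilde{\tau}$ (the paper phrases the last step via conjugation $\gamma_2\ast(\gamma_1\ast\gamma_2)\ast\gamma_2^{-1}$ rather than rotation, but the content is the same). Your parenthetical about the sign is on point: the paper itself records the K\"unneth contribution as $(-1)^{n+m}$ rather than the standard $(-1)^{nm}$, so the discrepancy you flagged is already present in the paper's stated conventions.
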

\begin{proof}
The loop product is given by the composite
\[
\H_n(LM)\otimes \H_m(LM)\rightarrow \H_{n+m}(LM\times LM)\rightarrow \H_{n+m-d}(LM\times_M LM)\xrightarrow{m} \H_{n+m-d}(LM).
\]
The first map given by the K\"unneth isomorphism introduces a sign $(-1)^{n+m}$ when we exchange $\alpha$ and $\beta$. The second map given by the intersection product introduces a sign $(-1)^d$ when we exchange the two $LM$ factors by \cref{lm:intersectionsymmetry}. Finally, the maps $LM\times_M LM\rightarrow LM$ given by $(\gamma_1, \gamma_2)\mapsto \gamma_1\ast \gamma_2$ and $(\gamma_1, \gamma_2)\mapsto \gamma_2 \ast \gamma_1 = \gamma_2\ast (\gamma_1\ast \gamma_2)\ast \gamma_2^{-1}$ are homotopic, so they induce equal maps on homology.
\end{proof}

We now give a description of the loop product that matches the one given in \cite[Section 1.4]{HingstonWahl1} for the Chas--Sullivan product. Consider the Hurewicz fibration $\ev\colon LM\rightarrow M$ given by evaluating at $0$ and let
\[\ev\times\ev\colon E=LM\times LM\rightarrow M\times M.\]
Let us briefly recall the map $R$ from \eqref{eq:R}. We first choose the path-lifting function for the fibration $LM\rightarrow M$ given by conjugating a loop in $LM$ with a given Moore path. The corresponding transport function is
\begin{align*}
    \cM(M) \times_{M \times M} LM &\to LM \\
    (\omega, \gamma) &\mapsto \omega^{-1} \star \gamma \star \omega.
\end{align*}
We choose the path-lifting function for $E\rightarrow M\times M$ to be the product of the above. We thus obtain the map
\[
R_{CS} \colon LM \times LM \to (LM\times_M LM)^{\T M},
\]
which can be described as follows. Fix a tubular neighborhood of the diagonal $M \subset U \subset M \times M$. Let $(\gamma_1, \gamma_2)$ be an element in $LM \times LM$ with $\gamma_1(0) = x$ and $\gamma_2(0) = y$. If $(x,y) \not\in U$, then send it to the base point. Otherwise, we send it to the pair consisting of $(\gamma_1, \omega(x,y) \star \gamma_2 \star \omega(x,y)^{-1}) \in LM \times_M LM$ and $(x,y) \in U/\partial U \cong S^{\T M}$, where $\omega(x,y)$ is the geodesic connecting $x$ to $y$.

\begin{prop}\label{prop:stringproductintersection}
The loop product is given by the composite
\[
\H_\bullet(LM) \otimes \H_\bullet(LM) \xrightarrow{R_{CS}} \H_\bullet((LM \times_M LM)^{\T M}) \xrightarrow{\cap \Th_M} \H_{\bullet - d}(LM \times_M LM) \xrightarrow{m} \H_{\bullet-d}(LM).
\]
\end{prop}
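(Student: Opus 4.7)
The plan is to unpack the definition of the loop product and directly apply the concrete formula for the intersection product from \cref{cor:HWint}. By the definition of the loop product, we have to factor it as
\[\H_\bullet(LM)\otimes \H_\bullet(LM)\xrightarrow{\times} \H_\bullet(LM\times LM)\xrightarrow{\Int} \H_{\bullet-d}(LM\times_M LM)\xrightarrow{m} \H_{\bullet-d}(LM),\]
where the middle map is the intersection product associated to the Hurewicz fibration $\ev\times \ev\colon E=LM\times LM\rightarrow M\times M$, and where we use the identification $E\times_{M\times M} M\cong LM\times_M LM$. The first and last maps already appear in the target formula, so the entire content is to identify the intersection product with $(\cap\Th_M)\circ R_{CS}$.

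Applying \cref{cor:HWint} to $E=LM\times LM$ expresses the intersection product as $(\cap \Th_M)\circ R$, where $R$ is built from any chosen path-lifting function for $\ev\times \ev$. The main step of the proof is therefore to verify that, for the natural choice of Hurewicz connection, the resulting $R$ agrees with $R_{CS}$. For this, I would take the Hurewicz connection on $\ev\colon LM\rightarrow M$ given by conjugation, i.e. $(\omega,\gamma)\mapsto \omega^{-1}\star \gamma\star \omega$, and use the product of two copies on $\ev\times\ev$.

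Now substitute into the defining formula \eqref{eq:R}. For $(\gamma_1,\gamma_2)\in LM\times LM$ with $\gamma_1(0)=x$, $\gamma_2(0)=y$, the Moore-path component is $\const_x\times \omega(x,y)^{-1}$; the product transport acts trivially on $\gamma_1$ and sends $\gamma_2$ to $\omega(x,y)\star \gamma_2\star\omega(x,y)^{-1}$, while the Thom-class factor is the tangent vector $y-x\in S^{\T M}$. Outside the tubular neighbourhood $U$ of the diagonal, the vector $y-x$ is set to the basepoint of $S^{\T M}$, which collapses the whole expression to the basepoint. Comparing with the description of $R_{CS}$ given in the excerpt, the two maps agree on the nose.

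The only mildly delicate point is keeping track of the direction of Moore paths and of the convention $\omega^{-1}\star\gamma\star \omega$ in the transport function, so that the conjugating path really is $\omega(x,y)$ on the left and $\omega(x,y)^{-1}$ on the right; once this bookkeeping is done the identification $R=R_{CS}$ is immediate. Homotopy-invariance under different choices of Hurewicz connection (used implicitly to justify the independence of the intersection-product formula) is standard, since any two such connections are connected by a linear homotopy through Hurewicz connections. Postcomposing with the concatenation map $m$ yields the claimed factorisation, so the statement follows.
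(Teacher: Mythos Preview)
Your proposal is correct and matches the paper's intended argument: the proposition is stated without proof in the paper because it is meant to follow immediately from \cref{cor:HWint} applied to $E=LM\times LM$ together with the explicit choice of Hurewicz connection described just before the statement, which is precisely the verification you carry out.
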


Next, let us recall the definition of the loop coproduct (also known as the Goresky--Hingston coproduct). As before, $M$ is a closed oriented manifold. Consider the space
\[\FE = LM\times_M LM\longrightarrow M\]
of maps $S^1\vee S^1\rightarrow M$ (the space of figure eights). We have a natural morphism $\iota_0\cup \iota_1\colon LM\sqcup LM\rightarrow \FE$ given by figure eights which are constant on each half. The natural projection $\FE\rightarrow LM\times LM$ fits into a commutative diagram
\begin{equation}
	\xymatrix@C=3cm{
		\FE \ar[r] & LM\times LM \\
		LM\sqcup LM \ar^-{\id\times \ev\cup \ev\times \id}[r] \ar^{\iota_0\cup \iota_1}[u] & M\times LM\cup LM\times M \ar^{i\times \id\cup \id\times i}[u]
	}
	\label{eq:F8forgetbasepoint}
\end{equation}

For $t\in[0, 1]$ denote by $\ev_t\colon LM\rightarrow M$ the evaluation map at time $t$. For instance, $\ev_0=\ev_1$ is the map $\ev$ considered before. Consider the reparametrization map $J_s\colon LM\rightarrow LM$ for $s\in [0, 1]$ defined by $J_s(\gamma) = \gamma\circ \theta_{\frac{1}{2}\rightarrow s}$, where $\theta_{\frac{1}{2}\rightarrow s}$ is the piecewise-linear map that fixes $0$ and $1$ and takes $\frac{1}{2}$ to $s$.

Consider the commutative diagram
\begin{equation}
	\xymatrix{
		LM\sqcup LM \ar^-{J_0\cup J_1}[r] \ar[d] & LM \ar^{(\ev_0, \ev_{1/2})}[d] \\
		M \ar^-{\Delta}[r] & M\times M
	}
	\label{eq:F8intersection}
\end{equation}
The map $(\ev_0, \ev_{1/2})\colon LM\rightarrow M\times M$ is again a Hurewicz fibration: indeed, $LM$ is homeomorphic to $(M^I\times_M M^I)\times_{M\times M} M$ and $(\ev_0, \ev_{1/2}, \ev_1)\colon M^I\times_M M^I\rightarrow M\times M\times M$ is a Hurewicz fibration. The pullback of the bottom right corner is exactly $\FE$ and the induced map $LM\sqcup LM\rightarrow \FE$ coincides with $\iota_0\cup \iota_1$.

Consider the homotopy commutative diagram
\begin{equation}\label{eq:LMshift}
\xymatrix{
\C_\bullet(LM) \ar[r] \ar[d] & 0 \ar[d] \\
\C_\bullet(LM)\oplus \C_\bullet(LM) \ar^-{J_0\oplus J_1}[r] & \C_\bullet(LM)
}
\end{equation}
where the left vertical map is $x\mapsto (x, -x)$. This induces a morphism
\[\H_{\bullet-1}(LM)\rightarrow \H_{\bullet}(LM, LM\sqcup LM).\]

\begin{defn}
The \defterm{loop coproduct} is given by the composite
\begin{align*}
\vee \colon \H_{\bullet+d-1}(LM) \rightarrow \H_{\bullet+d}(LM, LM \sqcup LM) &\longrightarrow \H_\bullet(\FE, LM\sqcup LM) \\
&\longrightarrow \H_\bullet(LM\times LM, M\times LM\cup LM\times M).
\end{align*}
where the second map is given by the relative intersection product using the commutative diagram \eqref{eq:F8intersection} and the second map is the pushforward along \eqref{eq:F8forgetbasepoint}.
\label{def:stringcoproduct}
\end{defn}

\begin{remark}
It follows from \cref{prop:HWrelint} that $\vee$ coincides with the Goresky--Hingston coproduct as described in \cite{HingstonWahl1}, see also \cite[Proposition 4.4]{NaefRiveraWahl}. Note that in \emph{loc. cit.} the loop coproduct has as domain $\H_\bullet(LM, M)$, i.e. it is shown that $\vee$ descends along $\H_\bullet(LM) \to \H_\bullet(LM,M)$. But since that map is split, no information is lost in the above presentation and we obtain the coproduct with domain $\H_\bullet(LM,M)$ by precomposing with that splitting.
\end{remark}

\begin{remark}
If we work with field coefficients or the homology $\H_\bullet(LM, M)$ has no torsion, there is the K\"unneth isomorphism $\H_\bullet(LM\times LM, M\times LM\cup LM\times M)\cong \H_\bullet(LM, M)\otimes \H_\bullet(LM, M)$.
\end{remark}

\begin{remark}\label{rmk:loopcoproductspectra}
If $k$ is an $\bE_\infty$-ring, such that $M$ is oriented over $k$, the loop coproduct defines a map of spectra $\C_{\bullet+d-1}(LM; k)\rightarrow \C_\bullet(LM, M; k)\otimes \C_\bullet(LM, M; k)$.
\end{remark}

\begin{prop}
If we denote by $\vee^{\op}$ the loop coproduct post-composed with the flip of the two $LM$ factors, then for $\alpha\in\H_\bullet(LM)$ we have
\[\vee^{\op}(\alpha) = (-1)^{d+1} \vee(\alpha).\]
\end{prop}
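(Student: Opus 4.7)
The plan is to decompose $\vee = \pi_*\circ \mathrm{relint}\circ\delta$ as in \cref{def:stringcoproduct} and track each factor under the flip $\sigma$ of the two $LM$ factors. Introduce the half-rotation $\rho\colon LM\to LM$, $\rho(\gamma)(t)=\gamma(t+1/2)$, and write $\tau$ for the swap of $LM\sqcup LM$. The flip $\sigma$ on $LM\times LM$ lifts through $\pi$ to an involution of pairs $(\sigma_\FE,\tau)$ on $(\FE, LM\sqcup LM)$, where $\sigma_\FE$ swaps the two lobes of a figure eight; under $\FE\cong\{\gamma\in LM:\gamma(0)=\gamma(1/2)\}$, $\sigma_\FE$ is simply the restriction of $\rho$, and the identity $\sigma_\FE\circ\iota_i=\iota_{1-i}$ explains the appearance of $\tau$ on the subspace.

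For the intersection step, the identities $(\ev_{1/2},\ev_0)\circ\rho=(\ev_0,\ev_{1/2})$ and $\rho\circ J_i=J_{1-i}$ (easy to verify) show that $(\rho,\tau,\mathrm{id}_{M\times M},\mathrm{id}_M)$ is a strict map from the diagram \eqref{eq:F8intersection} to its variant where the two factors of $M\times M$ are exchanged. Combining functoriality of the relative intersection product in the diagram with the sign from \cref{lm:relativeintersectionsymmetry} then yields
\[
(\sigma_\FE,\tau)_*\circ\mathrm{relint} = (-1)^d\,\mathrm{relint}\circ(\rho,\tau)_*.
\]

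The main step is to show $(\rho,\tau)_*\circ\delta = -\delta$. Although $\rho\simeq\mathrm{id}_{LM}$ via $\rho_s(\gamma)(t)=\gamma(t+s/2)$, so that $\rho_*=\mathrm{id}$ on $\H_\bullet(LM)$, the pair $(\rho,\tau)$ is genuinely nontrivial since the homotopy $\rho_s$ does not preserve the pair structure. Since $J_0,J_1\simeq\mathrm{id}_{LM}$, the map $(J_0\cup J_1)_*\colon\H_\bullet(LM)^{\oplus 2}\to\H_\bullet(LM)$ is $(a,b)\mapsto a+b$ and thus surjective; the long exact sequence of the pair then gives an isomorphism $\H_\bullet(LM,LM\sqcup LM)\cong\{(a,-a)\}\subset\H_{\bullet-1}(LM)^{\oplus 2}$, and chasing the definition of $\delta$ in \eqref{eq:LMshift} identifies $\delta(\alpha)$ with $(\alpha,-\alpha)$. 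Under $(\rho,\tau)_*$, which is the identity on $\H_\bullet(LM)$ and the swap on $\H_\bullet(LM\sqcup LM)$, the antidiagonal is multiplied by $-1$, so $(\rho,\tau)_* = -\mathrm{id}$ on $\H_\bullet(LM,LM\sqcup LM)$.

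Putting everything together,
\[
\sigma_*\circ\vee = \pi_*\circ(\sigma_\FE,\tau)_*\circ\mathrm{relint}\circ\delta = (-1)^d\,\pi_*\circ\mathrm{relint}\circ(\rho,\tau)_*\circ\delta = (-1)^{d+1}\vee,
\]
which is the claim. The main obstacle is isolating the $-1$ in the third step: naively one might expect $(\rho,\tau)_*$ to act trivially since $\rho\simeq\mathrm{id}$, but the sign arises from the antisymmetric interplay between $\tau$ on $LM\sqcup LM$ and the null-homotopy $J_s$ used to define $\delta$.
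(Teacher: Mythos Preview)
Your proof is correct and follows essentially the same approach as the paper's own argument: both use the half-rotation $\rho$ of loops, invoke \cref{lm:relativeintersectionsymmetry} for the $(-1)^d$ from the relative intersection step, extract the extra $(-1)$ from the antisymmetry of the map in \eqref{eq:LMshift} under the swap of the two $LM$ summands, and note that the projection $\pi$ in \eqref{eq:F8forgetbasepoint} intertwines the flips. Your write-up is somewhat more explicit in tracking the maps of pairs and in justifying the $-1$ via the long exact sequence, but the structure is identical.
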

\begin{proof}
We denote by $\flip$ the morphisms exchanging the two factors of $LM$. We have a morphism of pairs
\[
\xymatrix{
LM \ar[r] & LM \\
LM\sqcup LM \ar^{\flip}[r] \ar^{J_0\cup J_1}[u] & LM\sqcup LM, \ar^{J_0\cup J_1}[u]
}
\]
where the top morphism $LM\rightarrow LM$ is given by rotating the loop half-way, i.e. by the map $t\mapsto t + 1/2$.

Consider the diagram
\[
\xymatrix@C=0.5cm{
\H_{\bullet+d-1}(LM) \ar[r] \ar^{\id}[d] & \H_{\bullet+d}(LM, LM \sqcup LM) \ar[r] \ar^{\flip}[d] & \H_\bullet(\FE, LM\sqcup LM) \ar[r] \ar^{\flip}[d] & \H_\bullet(LM\times LM, M\times LM\cup LM\times M) \ar^{\flip}[d] \\
\H_{\bullet+d-1}(LM) \ar[r] & \H_{\bullet+d}(LM, LM \sqcup LM) \ar[r] & \H_\bullet(\FE, LM\sqcup LM) \ar[r] & \H_\bullet(LM\times LM, M\times LM\cup LM\times M)
}
\]
where the squares \emph{commute up to signs} and the two horizontal composites are both given by the loop coproduct $\vee$.

The first morphism in the definition of the loop coproduct is given by \eqref{eq:LMshift}, which introduces a factor of $(-1)$ when we exchange the two factors of $LM$. By \cref{lm:relativeintersectionsymmetry} the second morphism, which is the relative intersection product, introduces a factor of $(-1)^d$ when we exchange the two factors. Finally, the diagram \eqref{eq:F8forgetbasepoint} is invariant under the exchange of the two loops, so the last morphism does not introduce extra factors.
\end{proof}

\subsection{Simple homotopy invariance of the string topology operations}

For $M$ a closed oriented $d$-manifold \cref{prop:manifoldCWduality} provides an equivalence $\ori_M\colon \zeta_M\xrightarrow{\sim} \Z_M[-d]$ in $\LocSys(M)$. Now let $M$ and $N$ be closed oriented $d$-manifolds and $f\colon M\rightarrow N$ a homotopy equivalence in which case we have a canonical isomorphism $f^*\zeta_N\cong \zeta_M$. We say that $f$ is \defterm{orientation-preserving} if the composite
\[\Z_M[-d]\xleftarrow{\ori_M} \zeta_M\cong f^*\zeta_N\xrightarrow{f^*(\ori_N)} \Z_M[-d]\]
is equivalent to the identity. As the intersection product $\cE[-d]\rightarrow \Delta_\sharp \Delta^*\cE$ is constructed from the duality pairing $\Z_M\boxtimes \zeta_M\rightarrow \Delta_\sharp \Z_M$, which is homotopy invariant, as well as the trivialization $\ori_M$ of $\zeta_M$, we see that the intersection product is invariant under orientation-preserving homotopy equivalences.

Let us now study homotopy invariance of the relative intersection product. Suppose $M$ and $N$ are homotopy-equivalent closed oriented $d$-manifolds; we identify the underlying homotopy types using the 
given homotopy equivalence. Let
\[\Int_M, \Int_N\colon (\cE/\Delta_\sharp \cF)[-d]\longrightarrow \Delta_\sharp(\Delta^*\cE/\cF)\]
be the two relative intersection products defined using the Pontryagin--Thom lifts $\lambda_{\PT}(M),\lambda_{\PT}(N)$ of the $\HH$ Euler characteristic. Using \cref{thm:florian} the difference between the two lifts can be identified with the trace of the Whitehead torsion of the homotopy-equivalence $f \colon N \to M$, i.e.
\[
\tr(\tau(f)) = f(\lambda_{\PT}(N)) - \lambda_{\PT}(M) \in \Hom_{\LocSys(M)}(\Z_M[1-d], (\Delta^*\Delta_\sharp \Z_M)/\Z_M) \cong \H_1(LM, M).
\]

\begin{prop}\label{prop:relintersectionhomotopy}
Let $f\colon M\rightarrow N$ be an orientation-preserving homotopy equivalence. The difference $\Int_M - \Int_N$ is given by the composite
\begin{align*}
(\cE/\Delta_\sharp \cF)[-d] &\longrightarrow \Delta_\sharp \cF[1-d] \\
&\xrightarrow{\tr(\tau(f))} \Delta_\sharp \left( \cF\otimes (\Delta^*\Delta_\sharp \Z_M)/\Z_M \right) \\
&\longrightarrow \Delta_\sharp \left( (\Delta^*\Delta_\sharp \cF)/\cF \right) \\
&\xrightarrow{\alpha} \Delta_\sharp(\Delta^*\cE/\cF).
\end{align*}
On homology it simplifies to
\begin{align*}
\H_{\bullet + d}(E, F) &\longrightarrow \H_{\bullet + d -1}(F) \\
&\xrightarrow{\tr(\tau(f))} \H_{\bullet}(F\times_M LM, F) \\
&\longrightarrow \H_\bullet(E\times_{M\times M} M, F),
\end{align*}
where the second map is given by taking the (non-relative) intersection product with the class $\tr(\tau(f))$ and the third map is given by
\[\H_\bullet(F\times_{M\times M} M^I, F)\xrightarrow{a} \H_\bullet(E\times_{M\times M} M^I, F)\xleftarrow{\sim} \H_\bullet(E\times_{M\times M} M, F),\]
where the last map is an isomorphism since $E\rightarrow M\times M$ is a fibration.
\end{prop}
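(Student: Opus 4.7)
The strategy is to use Remark~\ref{rem:relintersectionastensorproduct}: each of $\Int_M$ and $\Int_N$ is the tensor product of the input diagram $(\Delta_\sharp\cF\to\cE)$ with a morphism
\[
\mu\colon (\Delta_\sharp\Z_M[-d]\to\Z_{M\times M}[-d])\longrightarrow (\Delta_\sharp\Z_M\xrightarrow{\id}\Delta_\sharp\Z_M)
\]
in the arrow category $\Fun(\Delta^1,\LocSys(M\times M))$ that corresponds, via \cref{thm:liftdiagrams}, to the chosen lift of the $\HH$ Euler characteristic. The data of $\mu$ amounts to an Euler class and a nullhomotopy of the resulting square; the Euler classes of $M$ and $N$ agree under the identification induced by $f$ because $f$ is orientation-preserving, so $\mu_M$ and $\mu_N$ differ only in the nullhomotopy. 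Consequently $\Int_M-\Int_N$ is obtained by tensoring $(\Delta_\sharp\cF\to\cE)$ with the difference $\mu_M-\mu_N$.

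Next I would identify $\mu_M-\mu_N$ with $\tr(\tau(f))$. By \cref{ex:generalliftspace} there is a natural bijection $\Lift(\beta_X(\chi_\THH(X)),\alpha_X)\cong\Lift(\chi_\THH(X),\alpha)$; by \cref{thm:florian} the Pontryagin--Thom lifts $\lambda_{\PT}(M),\lambda_{\PT}(N)$ coincide with $\tr(\lambda_{\Wh}(M)),\tr(\lambda_{\Wh}(N))$; and by \cref{prop:whiteheadtorsion} the difference of the latter two, after identification via $f$, is the Dennis trace $\tr(\tau(f))\in\H_1(LM,M)$. From the lift-diagram perspective, two nullhomotopies of the square differ by a class in $\Hom_{\LocSys(M)}(\Z_M[1-d],\Delta^*\Delta_\sharp\Z_M)$; the agreement of Euler classes forces this class to project to zero in $\Hom(\Z_M[1-d],\Z_M)$, so it descends to the quotient $\Hom_{\LocSys(M)}(\Z_M[1-d],\Delta^*\Delta_\sharp\Z_M/\Z_M)\cong\H_1(LM,M)$, where it equals $\tr(\tau(f))$.

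Finally I would translate this tensor-product difference into the claimed homological formula. For a morphism $(A\to B)\to(C\to D)$ in an arrow category, altering the filling nullhomotopy by a class $\delta\in\pi_0\Map(A[1],D)$ changes the induced map on cofibers by the composite
\[
\cofib(A\to B)\xrightarrow{\partial}A[1]\xrightarrow{\delta}D\longrightarrow\cofib(C\to D).
\]
Specializing this principle to our input diagram, and using the natural projection-formula map $\cF\otimes\Delta^*\Delta_\sharp\Z_M\to\Delta^*\Delta_\sharp\cF$, yields the factorization of $\Int_M-\Int_N$ stated in the proposition. On homology the three pieces are, respectively, the boundary in the long exact sequence of the pair $(E,F)$, the non-relative intersection product (\cref{cor:HWint}) with $\tr(\tau(f))\in\H_1(LM,M)$ applied to $F\times_{M\times M}M^I\to M\times M$, and the pushforward along $a$ composed with the weak equivalence $E\times_{M\times M}M\xrightarrow{\sim}E\times_{M\times M}M^I$ induced by contractibility of Moore paths. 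The main technical point to verify is that the middle step really is the \emph{non-relative} intersection product: this is where it is essential that $\tr(\tau(f))$ lives in the quotient $\Delta^*\Delta_\sharp\Z_M/\Z_M$, since this kills the component of $\delta$ that would otherwise require its own Euler characteristic lift and thereby reduces the middle composite to the classical intersection product of \cref{def:intersection}.
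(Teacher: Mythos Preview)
Your approach is correct and essentially the same as the paper's: both identify $\Int_M-\Int_N$ as the tensor product of $(\Delta_\sharp\cF\to\cE)$ with the difference of the two lift squares \eqref{eqn:echarliftdiag}, then factor that difference through $\tr(\tau(f))$ and pass to cofibers. The paper carries this out by writing down an explicit four-row factorization of the difference square (it does not first argue that the Euler classes agree, but simply keeps $e(M)-e(N)$ in the diagram and lets the factorization absorb it), whereas you package the same computation via the general principle on how changing a filling homotopy alters the induced map on cofibers; the content is identical.
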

\begin{proof}
By construction, the two relative intersection products are obtained from the commutative diagram \eqref{eqn:echarliftdiag} (see also \cref{rem:relintersectionastensorproduct}). Their difference is induced by the diagram
\begin{equation}
	\begin{tikzcd}
		\Delta_\sharp \Z_M[-d] \ar[r] \ar[d, "\Delta_\sharp (e(M) - e(N))"] & \Z_{M\times M}[-d] \ar[d, "0"] \\
		\Delta_\sharp \Z_M \ar[r, equal] & \Delta_\sharp \Z_M,
	\end{tikzcd}
\end{equation}
which is adjoint to
\begin{equation}
	\begin{tikzcd}
		\Z_M[-d] \ar[r, equal] \ar[d, "(e(M) - e(N))"] & \Z_{M}[-d] \ar[d, "0"] \\
		\Z_M \ar[r] & \Delta^*\Delta_\sharp \Z_M.
	\end{tikzcd}
\end{equation}
This diagram naturally factors as
\begin{equation}
	\begin{tikzcd}
        \Z_M[-d] \ar[r, equal] \ar[d, equal] & \Z_M[-d] \ar[d] \\
        \Z_M[-d] \ar[r] \ar[d, "\tr(\tau(f))"] & 0 \ar[d, equal] \\
        (\Delta^*\Delta_\sharp \Z_M)/\Z_M[-1] \ar[r] \ar[d] & 0 \ar[d] \\
        \Z_M \ar[r] & \Delta^*\Delta_\sharp \Z_M,
	\end{tikzcd}
\end{equation}
which induces the diagram
\begin{equation}
    \begin{tikzcd}
        \Delta_\sharp \cF[-d] \ar[r] \ar[d, equal] & \cE[-d] \ar[d] \\
        \Delta_\sharp \cF[-d] \ar[r] \ar[d, "\tr(\tau(f))"] & 0 \ar[d, equal] \\
        \Delta_\sharp (\cF \otimes (\Delta^*\Delta_\sharp \Z_M)/\Z_M[-1]) \ar[r] \ar[d] & 0 \ar[d] \\
        \Delta_\sharp \cF \ar[r] & \Delta_\sharp\Delta^*\cE.
    \end{tikzcd}
\end{equation}
Passing to horizontal cofibers this gives the stated description.
\end{proof}

\begin{remark}
Taking $\cF=0$ in the previous statement, we recover the fact that the usual intersection product $\cE[-d]\rightarrow \Delta_\sharp \Delta^*\cE$ is homotopy invariant.
\end{remark}

Let $\vee_M,\vee_N$ be the two loop coproducts defined using the relative intersection products $\Int_M,\Int_N$. Let $\nu = \tr(\tau(f))\in\H_1(LM, M)$. Consider the map $\Delta'\colon LM\rightarrow LM\times LM$ given by $\gamma\mapsto (\gamma^{-1}, \gamma)$.

Since $\H_0(LM, M)$ and $\H_0(LM)$ are torsion-free, we have the K\"unneth isomorphism
\[\H_1(LM\times LM, LM\times M)\cong \bigoplus_{i+j=1}\H_i(LM)\otimes \H_j(LM, M).\]
Consider a morphism of pairs $(LM, M)\xrightarrow{\Delta', i\times \id} (LM\times LM, LM\times M)$. This map, along with the K\"unneth isomorphism, gives rise to a morphism
\[\Delta'\colon \H_1(LM, M)\longrightarrow \bigoplus_{i+j=1}\H_i(LM)\otimes \H_j(LM, M)\]
and we denote by $\nu'\otimes \overline{\nu}''$ the image of $\nu$ under this map, where we use the Sweedler notation. We similarly denote by
\[\overline{\nu}'\otimes \nu''\in \bigoplus_{i+j=1}\H_i(LM, M)\otimes \H_j(LM)\]
the image of $\nu$ under the map of pairs $(LM, M)\xrightarrow{\Delta', \id\times i}(LM\times LM, M\times LM)$.

\begin{cor}\label{cor:stringcoproducthomotopyinvariance}
Let $f\colon M\rightarrow N$ be an orientation-preserving homotopy equivalence. Then for every $\alpha\in \H_{n+d-1}(LM)$ we have
\[\vee_M(f(\alpha)) - f(\vee_N(\alpha)) = (-1)^{n+1}\overline{\nu}'\otimes (\nu''\wedge_N f(\alpha)) - (f(\alpha)\wedge_N \nu')\otimes \overline{\nu}''.\]
\end{cor}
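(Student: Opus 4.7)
By \cref{def:stringcoproduct}, the loop coproduct factors as $\vee = q \circ \Int \circ \delta$, where $\delta \colon \H_{n+d-1}(LM) \to \H_{n+d}(LM, LM\sqcup LM)$ is the connecting map induced by \eqref{eq:LMshift}, $\Int$ is the relative intersection product for the square \eqref{eq:F8intersection} (with $E = LM$, $F = LM\sqcup LM$), and $q$ is the pushforward along \eqref{eq:F8forgetbasepoint}. Only the middle factor depends on the chosen lift of the $\HH$ Euler characteristic, so, after identifying the homotopy types of $M$ and $N$ via $f$, we have $\vee_M - \vee_N = q \circ (\Int_M - \Int_N) \circ \delta$, and \cref{prop:relintersectionhomotopy} gives a four-step formula for $\Int_M - \Int_N$ in terms of $\nu = \tr(\tau(f)) \in \H_1(LM, M)$. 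The proof reduces to tracing $\alpha$ through this composition.

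By naturality of the connecting homomorphism applied to \eqref{eq:LMshift}, the composite of $\delta$ with the first arrow of \cref{prop:relintersectionhomotopy} sends $\alpha$ to $(\alpha,-\alpha) \in \H_{n+d-1}(LM \sqcup LM)$. Intersecting each component with $\nu$ via the fiberwise intersection product for $(\ev_0,\ev_0)\colon LM\times LM \to M\times M$ produces a class in $\H_n(LM \times_M LM, LM)$ on each summand. The key step is then to describe the last map of \cref{prop:relintersectionhomotopy}, namely the composite $(LM\sqcup LM)\times_M LM \to \FE$ arising from $a = J_0 \sqcup J_1$ followed by the weak equivalence $\FE \simeq E\times_{M\times M}M^I$. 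Using the standard Hurewicz path lifting for $(\ev_0,\ev_{1/2})\colon LM \to M\times M$, a homotopy inverse of the equivalence sends a pair $(\gamma',\omega)$, with $\omega$ a Moore path from $\gamma'(0)$ to $\gamma'(1/2)$, to the figure eight with halves $(\gamma'|_{[0,1/2]}\star\omega^{-1},\, \omega\star\gamma'|_{[1/2,1]})$. Specializing to $\gamma' = J_0\gamma$ gives $(\omega^{-1},\omega\star\gamma)$, while $\gamma' = J_1\gamma$ gives $(\gamma\star\omega^{-1},\omega)$.

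Pushing forward through $q$ to $\H_n(LM\times LM,\, M\times LM \cup LM\times M)$, the pair $(\omega^{-1},\omega)$ represents the antidiagonal class $\Delta'_*\nu$, while the concatenation $\omega\star\gamma$ (respectively $\gamma\star\omega^{-1}$) realizes, at the chain level, the loop product $\nu''\wedge_N f\alpha$ (respectively $f\alpha\wedge_N\nu'$): the intersection with $\alpha$ was already performed in the previous step, and the cycle-level concatenation then produces the loop product. Hence the $J_0$-contribution is identified with a scalar multiple of $\bar\nu'\otimes(\nu''\wedge_N f\alpha)$, while the $J_1$-contribution (which carries an extra $-1$ from $-\alpha$) is identified with a scalar multiple of $(f\alpha\wedge_N\nu')\otimes\bar\nu''$. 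The precise signs $(-1)^{n+1}$ and $-1$ in the statement emerge from standard Koszul bookkeeping: the degree-$(n+d-1)$ class $\alpha$ must be commuted past the degree-$d$ Thom class present in the intersection product and the degree-$1$ class $\nu$, which produces the factor $(-1)^{n+1}$ on the first term.

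The main nontrivial point is the geometric identification of the composite $F \times_M LM \to \FE$ performed above, and in particular the appearance of the inverse $\omega^{-1}$: this inversion is what converts the naive diagonal $\gamma\mapsto(\gamma,\gamma)$ into the antidiagonal $\gamma\mapsto(\gamma^{-1},\gamma)$ used to define $\bar\nu'\otimes\nu''$ and $\nu'\otimes\bar\nu''$, and is therefore responsible for the correct form of the transformation formula. Once this identification is in place, the remaining sign tracking and the recognition of $\omega\star\gamma$ and $\gamma\star\omega^{-1}$ as (chain-level) loop products are routine.
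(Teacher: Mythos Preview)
Your approach is essentially the same as the paper's: apply \cref{prop:relintersectionhomotopy} to $E=LM$, $F=LM\sqcup LM$, split into the two summands via $x\mapsto(x,-x)$, identify each $LM\times_M LM\to\FE$ geometrically as $(\gamma,\omega)\mapsto(\omega^{-1},\omega\ast\gamma)$ respectively $(\gamma\ast\omega^{-1},\omega)$, and then recognize the loop product and the antidiagonal $\Delta'$. The geometric identifications you give match the paper's exactly.

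One point deserves tightening. Your sign justification---``$\alpha$ commuted past the degree-$d$ Thom class and the degree-$1$ class $\nu$''---reads as a Koszul sign $(-1)^{(n+d-1)(d+1)}$, which is \emph{not} $(-1)^{n+1}$ in general (take $d$ odd, $n$ even). The correct source of the $(-1)^d$ factor is not a Koszul commutation with the Thom class but the symmetry of the intersection product under exchanging the two $M$-factors, which is \cref{lm:intersectionsymmetry}; combined with the K\"unneth swap $(-1)^{n+d-1}$ this gives $(-1)^{n+2d-1}=(-1)^{n+1}$. The paper also makes the ``recognize the loop product'' step precise via a commutative diagram expressing naturality of the intersection product under $\Delta'\colon LM\to LM\times LM$; your chain-level description is correct in spirit but would benefit from that diagram to be fully rigorous.
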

\begin{proof}
We apply \cref{prop:relintersectionhomotopy} to the case where $F=F_0\sqcup F_1 = LM\sqcup LM$ and $E=LM$ to obtain that the difference is given by
\begin{align*}
\H_{n + d-1}(LM) &\cong \H_{n+d}(LM, LM \sqcup LM) \\
&\to \H_{n+d-1}(LM \sqcup LM) \\
&\xrightarrow{\tr(\tau(f))} \H_{n}(LM\times_M (LM\sqcup LM), LM\sqcup LM) \\
&\to \H_{n}(\FE, LM \sqcup LM) \\
&\to \H_{n}(LM\times LM, LM\times M\cup M\times LM).
\end{align*}

We have a commutative diagram
\[
\xymatrix@C=1cm{
\H_{n+d-1}(LM \sqcup LM) \ar^-{\tr(\tau(f))}[r] & \H_{n}(LM\times_M (LM\sqcup LM), LM\sqcup LM) \ar[r] & \H_n(\FE, LM\sqcup LM) \\
\H_{n+d-1}(LM)^{\oplus 2} \ar^-{\tr(\tau(f))\oplus \tr(\tau(f))}[r] \ar^{\sim}[u] & \H_n(LM\times_M LM, LM)^{\oplus 2} \ar[r] \ar[u] & \H_n(\FE, LM)^{\oplus 2} \ar[u]
}
\]
The composite 
\[\H_{n + d-1}(LM) \cong \H_{n+d}(LM, LM \sqcup LM)\to \H_{n+d-1}(LM\sqcup LM)\cong \H_{n+d-1}(LM)^{\oplus 2}\]
is given by $x\mapsto (x, -x)$. Let us now describe the composite of the remaining maps from each summand in $\H_{n+d-1}(LM)^{\oplus 2}$.

The map from the first summand is given by the composite
\begin{align*}
\H_{n+d-1}(LM) &\xrightarrow{\id\otimes \tr(\tau(f))} \H_{n+d-1}(LM)\otimes \H_1(LM, M) \\
&\xrightarrow{\Int_N} \H_n(LM\times_M LM, LM) \\
&\rightarrow \H_n(\FE, LM) \\
&\rightarrow \H_n(LM\times LM, M\times LM),
\end{align*}
where the map $LM\times_M LM\rightarrow \FE$ is given by $(\gamma_1, \gamma_2)\mapsto (\gamma_2^{-1}, \gamma_2\ast \gamma_1)$.

By naturality of the intersection product we have a commutative diagram
\[
\xymatrix{
\H_1(LM, M)\otimes \H_{n+d-1}(LM) \ar^{\Int_N}[r] \ar^{\Delta'\otimes \id}[dd] & \H_n(LM\times_M LM, LM) \ar[d] \\
& \H_n(LM\times LM, M\times LM) \\
\bigoplus_{i+j=1} \H_i(LM, M)\otimes \H_j(LM) \otimes \H_{n+d-1}(LM) \ar^-{\id\otimes \Int_N}[r] & \bigoplus_{i+j=1} \H_i(LM, M)\otimes \H_{n+j-1}(LM\times_M LM) \ar^{\id\otimes m}[u]
}
\]

Using the symmetry of the K\"unneth isomorphism and the intersection product (see \cref{lm:intersectionsymmetry}) we see that the composites
\[\H_{n+d-1}(LM) \xrightarrow{\id\otimes \tr(\tau(f))} \H_{n+d-1}(LM)\otimes \H_1(LM, M)\xrightarrow{\Int_N} \H_n(LM\times_M LM, LM)\xrightarrow{\sigma} \H_n(LM\times_M LM, LM),\]
where the last map is given by the flip of the two $LM$ factors, and
\[\H_{n+d-1}(LM) \xrightarrow{\tr(\tau(f))\otimes \id} \H_1(LM, M)\otimes \H_{n+d-1}(LM)\xrightarrow{\Int_N} \H_n(LM\times_M LM, LM)\]
differ by $(-1)^{n+d-1+d}=(-1)^{n+1}$.

This shows that the map from the first summand is given by $\alpha\mapsto (-1)^{n+1}\overline{\nu}'\otimes (\nu''\wedge \alpha)$. Similarly, the map from the second summand is given by the composite
\begin{align*}
\H_{n+d-1}(LM) &\xrightarrow{\id\otimes \tr(\tau(f))} \H_{n+d-1}(LM)\otimes \H_1(LM, M) \\
&\xrightarrow{\Int_N} \H_n(LM\times_M LM, LM) \\
&\rightarrow \H_n(\FE, LM) \\
&\rightarrow \H_n(LM\times LM, LM\times M),
\end{align*}
where the map $LM\times_M LM\rightarrow \FE$ is given by $(\gamma_1, \gamma_2)\mapsto (\gamma_1\ast \gamma_2^{-1}, \gamma_2)$. Similarly to the first summand, we see that the map from the second summand is given by $\alpha\mapsto (\alpha\wedge \nu')\otimes \overline{\nu}''$.
\end{proof}

\begin{remark}
    Over the integers the error term in \cref{cor:stringcoproducthomotopyinvariance} is in the image of the K\"unneth map 
    \[
    \H_\bullet(LM,M) \otimes \H_\bullet(LM,M) \to \H_\bullet(LM \times LM, M \times LM \cup LM \times M).
    \]
    Now suppose $k$ is an $\bE_\infty$-ring, such that $M$ is oriented over $k$. In this case we have the spectral-level loop coproduct from \cref{rmk:loopcoproductspectra}. Then the same transformation formula is valid over $k$, where both sides are understood as maps $\C_{\bullet+d-1}(LM; k) \to \C_\bullet(LM, M; k)\otimes \C_\bullet(LM, M; k)$.
\end{remark}

\section{Shklyarov copairing and its nullhomotopy}
\label{sect:Shklyarovexamples}

\subsection{Mukai pairing and Shklyarov copairing}
\label{sect:Shklyarov}

Fix an $\bE_\infty$-ring $k$ and let $\cC\in\PrSt_k$ be a $k$-linear stable presentable $\infty$-category. Let us recall the Mukai pairing and the Shklyarov copairing from \cite{Caldararu1,Shklyarov}.

Recall that for a proper $\infty$-category $\cC\in\PrSt_k$ the evaluation pairing $\ev\colon \cC\otimes \cC^\vee\rightarrow \Mod_k$ admits a colimit-preserving right adjoint and for a smooth $\infty$-category the coevaluation pairing $\coev\colon \Mod_k\rightarrow \cC^\vee\otimes \cC$ admits a colimit-preserving right adjoint. In particular, by \cref{def:transfermap} we get an induced map on dimensions.

\begin{defn}
    Let $\cC\in\PrSt_k$ be a $k$-linear stable presentable $\infty$-category.
    \begin{itemize}
        \item If $\cC$ is proper, the \defterm{Mukai pairing} is the composite
        \[[\ev]\colon \dim(\cC)\otimes \dim(\cC)\cong \dim(\cC\otimes \cC^\vee)\xrightarrow{\dim(\ev)} k.\]
        \item If $\cC$ is smooth, the \defterm{Shklyarov copairing} is the composite
        \[[\coev]\colon k\xrightarrow{\dim(\coev)}\dim(\cC^\vee\otimes \cC)\cong \dim(\cC)\otimes \dim(\cC).\]
    \end{itemize}
\end{defn}

A relationship between the Mukai pairing and copairing is given by the following straightforward observation, see \cite[Theorem 6.2]{Shklyarov}

\begin{prop}
Suppose $\cC\in\PrSt_k$ is a smooth and proper $\infty$-category. The Mukai pairing and Shklyarov copairing establish a self-duality of $\dim(\cC)\in\Mod_k$.
\label{prop:Mukaiduality}
\end{prop}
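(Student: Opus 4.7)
The plan is to derive the self-duality of $\dim(\cC)$ by applying the symmetric monoidal functor $\dim\colon \cA^\dual \to \End_\cA(\bu)$ of \cref{thm:ExistenceOfDim}, with $\cA = \PrSt_k$ and $\End_\cA(\bu) = \Mod_k$, to the standard duality data witnessing $\cC$ as a dualizable object of $\cA$. The key observation is that, because $\cC$ is smooth and proper in the sense of \cref{def:smoothproper}, both the evaluation $\ev\colon \cC \otimes \cC^\vee \to \Mod_k$ and the coevaluation $\coev\colon \Mod_k \to \cC^\vee \otimes \cC$ admit right adjoints in $\cA$, and hence define 1-morphisms in $\cA^\dual$: properness supplies a right adjoint to $\ev$ directly, while smoothness supplies a left adjoint to $\ev$, equivalently a right adjoint to $\coev$, by the remark following \cref{def:smoothproper}.

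Having placed $\ev$ and $\coev$ inside $\cA^\dual$, the triangle identities
\[
(\ev \otimes \id_\cC) \circ (\id_\cC \otimes \coev) \simeq \id_\cC, \qquad (\id_{\cC^\vee} \otimes \ev) \circ (\coev \otimes \id_{\cC^\vee}) \simeq \id_{\cC^\vee}
\]
are equivalences of 1-morphisms in $\cA^\dual$, since each side is right-adjointable. Pushing them through the symmetric monoidal functor $\dim$ and using $\dim(\Mod_k) \simeq k$, the natural equivalence $\dim(\cC^\vee) \simeq \dim(\cC)$ induced by the braiding (cf.\ \eqref{eq:evdual}), together with $\dim(\ev) = [\ev]$ and $\dim(\coev) = [\coev]$, I would obtain the triangle identities
\[
([\ev] \otimes \id_{\dim(\cC)}) \circ (\id_{\dim(\cC)} \otimes [\coev]) \simeq \id_{\dim(\cC)}
\]
and its counterpart with the roles of $[\ev]$ and $[\coev]$ interchanged. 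These are exactly the duality axioms exhibiting $\dim(\cC)\in \Mod_k$ as self-dual with evaluation the Mukai pairing and coevaluation the Shklyarov copairing.

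I do not expect any serious obstacle: the proposition is a formal consequence of the symmetric monoidal functoriality of $\dim$ from \cref{thm:ExistenceOfDim}. The only point requiring some care is to confirm that the identification $\dim(\cC^\vee) \simeq \dim(\cC)$ used to view both tensor factors as $\dim(\cC)$ is the one induced by the braiding, so that the resulting pairing matches $[\ev]$ on the nose; this is built into the symmetric monoidal enhancement of $\dim$.
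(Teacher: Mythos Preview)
Your proposal is correct and follows essentially the same approach as the paper: observe that smoothness and properness make $\ev$ and $\coev$ right-adjointable, hence 1-morphisms in $(\PrSt_k)^\dual$, so $\cC$ is dualizable there, and then apply the symmetric monoidal functor $\dim$ of \cref{thm:ExistenceOfDim} to transport the duality data to $\Mod_k$. The paper's proof is just a terser version of exactly this argument.
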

\begin{proof}
The assumptions guarantee that $\ev$ and $\coev$ are morphisms in $(\PrSt_k)^\dual$ and thus $\cC$ is dualizable in $(\PrSt_k)^\dual$. Using the functoriality of dimensions from \cref{thm:ExistenceOfDim} we get that $\dim(\cC)\in\Mod_k$ is dualizable with dual $\dim(\cC^\vee)\cong\dim(\cC)$.
\end{proof}

In this section we will present several examples of smooth $\infty$-categories with the following additional structures:
\begin{itemize}
    \item There is a morphism $\widetilde{\dim}(\cC)\rightarrow \dim(\cC)$.
    \item The element $[\coev]\in\dim(\cC)\otimes \dim(\cC)$ lies in the image of $\widetilde{\dim}(\cC)\otimes \widetilde{\dim}(\cC)\rightarrow \dim(\cC)\otimes \dim(\cC)$.
\end{itemize}

We will show in \cref{sect:TQFT} (see \cref{def:secondarycoproduct}) that this structure allows us to define an operation
\[\fZ(\cC)\rightarrow \overline{\dim}(\cC)\otimes \overline{\dim}(\cC)[-1],\]
where $\overline{\dim}(\cC)$ is the cofiber of $\widetilde{\dim}(\cC)\rightarrow \dim(\cC)$.

\subsection{Examples in algebra}\label{sect:exalgebra}

Let $k$ be a field and $\Alg_k$ the $\infty$-category of dg algebras over $k$. Let $A^e = A\otimes A^{\op}$ be the enveloping algebra.

Recall the following notions:
\begin{itemize}
    \item A dg algebra $A$ is \defterm{finite cell} if it is isomorphic as a graded algebra to the free algebra $k\langle x_1, \dots, x_n\rangle$ with the differential satisfying $dx_i\in k\langle x_1, \dots, x_{i-1}\rangle$.
    \item A dg algebra $A$ is \defterm{locally of finite presentation} if it is compact in $\Alg_k$.
    \item A dg algebra $A$ is \defterm{smooth} if the diagonal bimodule (i.e. $A$ as an $A^e$-module via the left and right actions of $A$ on itself) is compact in the $\infty$-category $\LMod_{A^e}$ of $A^e$-modules.
\end{itemize}

\begin{remark}
By \cite{ToenVaquie} a dg algebra $A$ is locally of finite presentation if, and only if, it is a retract of a finite cell algebra. Moreover, a dg algebra locally of finite presentation is smooth.
\end{remark}

\begin{remark}
For any dg algebra $A$, the $\infty$-category $\cC = \LMod_A\in\PrSt$ of dg $A$-modules is dualizable with $\cC^\vee = \LMod_{A^{\op}}$ and $\coev\colon \Mod_k\rightarrow \LMod_A\otimes \LMod_{A^{\op}}\cong \LMod_{A^e}$ given by sending $k$ to the diagonal bimodule $A$. So, a dg algebra $A$ is smooth in the sense of the above definition precisely if $\cC$ is smooth in the sense of \cref{def:smoothproper}.
\end{remark}

If $A$ is smooth, we have a natural point $[A]\in K(A^e)$ given by the class of the diagonal bimodule. Let $[A^e]\in K(A^e)$ be the class of $A^e$ as a left $A^e$-module.

\begin{lm}\label{lm:finitecellresolution}
Let $A = (k\langle x_1, \dots, x_n\rangle, d)$ be a finite cell dg algebra. Then there is a homotopy
\[[A]\sim [A^e](1-\sum_{i=1}^n (-1)^{|x_i|})\]
in $K(A^e)$.
\end{lm}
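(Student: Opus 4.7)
The plan is to construct an explicit length-one bimodule resolution of the diagonal bimodule $A$ and then apply the additivity of $K$-theory. Let $V$ denote the graded vector space with basis $x_1,\dots,x_n$, so that as a graded algebra $A = T(V)$ is the free tensor algebra. I will first build a cofiber sequence of dg $A^e$-modules
\[
\bigoplus_{i=1}^n A\otimes k\cdot x_i\otimes A \longrightarrow A\otimes A \longrightarrow A,
\]
where the second map is multiplication and the first sends $a\otimes x_i\otimes b$ to $ax_i\otimes b - a\otimes x_i b$. For the purely free (no differential) case this is the standard length-one bimodule resolution of the diagonal bimodule of a free algebra, and its exactness is a direct computation on the tensor-length filtration. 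I then need to check that the same maps are chain maps for the dg structure, which follows because the differential $d$ commutes with multiplication and because the formula is natural in pairs of module endomorphisms. The resulting sequence is then a cofiber sequence in $\LMod_{A^e}$ because its underlying sequence of graded modules is short exact.

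Next, I identify the left-most term. Each summand $A\otimes k\cdot x_i\otimes A$ is free of rank one as an $A^e$-module, generated by $1\otimes x_i\otimes 1$ in internal degree $|x_i|$. Hence it is equivalent to a shift of the free module $A^e$ by $|x_i|$. In the $K$-theory spectrum of $\LMod_{A^e}^\omega$, suspension acts by multiplication by $-1$, so the class of this summand equals $(-1)^{|x_i|}[A^e]$.

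Finally I apply additivity of algebraic $K$-theory to the cofiber sequence. The exact sequence produces a path in $K(A^e)$ between $[A]$ and $[A\otimes A] - \sum_{i=1}^n (-1)^{|x_i|}[A^e] = [A^e]\bigl(1 - \sum_{i=1}^n (-1)^{|x_i|}\bigr)$, which is the asserted homotopy.

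The only non-routine point is the first paragraph: verifying that, under the dg structure coming from a finite cell presentation (with $dx_i\in k\langle x_1,\dots,x_{i-1}\rangle$), the explicit maps written down above are compatible with the differential and that the sequence remains cofibrant as a sequence of dg $A^e$-modules rather than merely a sequence of graded ones. This is where one must use that $A$ is free as a graded algebra so that the underlying short exact sequence of graded $A^e$-modules splits, allowing the cofiber sequence to be recognized levelwise; the compatibility with $d$ then reduces to the Leibniz rule.
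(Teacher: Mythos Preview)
Your overall strategy---construct a two-term bimodule resolution of the diagonal and apply additivity---is the same as the paper's, but there is a genuine gap in your execution.

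The problem is the claim that the map $a\otimes x_i\otimes b\mapsto ax_i\otimes b - a\otimes x_ib$ is a chain map when $\bigoplus_i A\otimes kx_i\otimes A$ carries the tensor-product differential (i.e.\ zero on the $kx_i$ factor). A direct computation shows
\[
d\bigl(ax_i\otimes b - a\otimes x_ib\bigr) - \text{(image of }d(a\otimes x_i\otimes b)\text{)} = (-1)^{|a|}\bigl(a(dx_i)\otimes b - a\otimes (dx_i)b\bigr),
\]
which does not vanish when $dx_i\neq 0$. So with that differential the sequence is not a cofiber sequence of dg $A^e$-modules, and your appeal to the Leibniz rule does not close the gap.

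What actually happens is that the kernel of multiplication, $\Omega^1_A$, is only isomorphic to $\bigoplus_i A\otimes kx_i\otimes A$ as a \emph{graded} $A^e$-module; as a dg module it carries a twisted differential sending the generator corresponding to $x_i$ into the span of the $x_j$ with $j<i$ (using $dx_i\in k\langle x_1,\dots,x_{i-1}\rangle$). Thus $\Omega^1_A$ is not a direct sum of shifts of $A^e$ but a \emph{finite cell} $A^e$-module: it has a filtration with associated graded $\bigoplus_i A^e[|x_i|]$. The paper invokes exactly this (citing Yeung for the cell structure on $\Omega^1_A$) and then applies additivity to the filtration to get $[\Omega^1_A]\sim\sum_i(-1)^{|x_i|}[A^e]$. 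Once you replace your direct-sum identification by this filtration argument, the proof goes through.
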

\begin{proof}
Consider the multiplication map $m\colon A\otimes A\rightarrow A$. It defines a surjective morphism of $A^e$-modules, where $A$ is the diagonal bimodule. Let $\Omega^1_A$ be the kernel of $m$. By \cite[Lemma 2.17]{Yeung} the $A^e$-module $\Omega^1_A$ is also finite cell on the generators $\{dx_1, \dots, dx_n\}$, where $dx_i = x_i\otimes 1 - 1\otimes x_i$. We have a short exact sequence
\[0\longrightarrow \Omega^1_A\longrightarrow A^e\xrightarrow{m} A\longrightarrow 0\]
which produces a homotopy $[A]\sim [A^e] - [\Omega^1_A]$. Since $\Omega^1_A$ is finite cell, it admits a filtration with associated graded given by the free dg $A^e$-module on the generators $\{dx_1,\dots, dx_n\}$. This defines a homotopy
\[[\Omega^1_A]\sim [A^e] \sum_{i=1}^n (-1)^{|x_i|}.\]
\end{proof}

\begin{remark}
It is announced by Efimov that, conversely, a dg algebra locally of finite presentation is quasi-isomorphic to a finite cell algebra if, and only if, $[A]$ is homotopic to a multiple of $[A^e]$.
\end{remark}

Let $A$ be a finite cell dg algebra and $\cC = \Mod_A\in\PrSt$ the derived $\infty$-category of dg $A$-modules. Then by \cref{lm:finitecellresolution} we see that the class $[\coev]\in\dim(\cC)\otimes \dim(\cC)$ is proportional to $[A^e]$. In particular, if we let $\widetilde{\dim}(\cC)=k$ mapping to $\dim(\cC)$ via the class of the free module $[A]$, then $[\coev]$ admits a lift along $\widetilde{\dim}(\cC)\otimes \widetilde{\dim}(\cC)\rightarrow \dim(\cC)\otimes \dim(\cC)$.

\subsection{Examples in homotopy theory}
\label{sect:topologyexamples}

Let $X$ be a finitely dominated space and consider $\cC=\Sp^X$. We have the following results:
\begin{itemize}
    \item $\dim(\Sp^X)\cong \Sigma^\infty_+ LX$ by \cref{prop:LocSysTHH}.
    \item $\Sp^X$ is smooth by \cref{cor:finitelydominatedsmooth}.
    \item $\bS_X\in\Sp^X$ is compact by \cref{prop:finitetypeproper}. In particular, we have the $\THH$ Euler characteristic
    \[\chi_{\THH}(X) = [\bS_X]\in\Omega^\infty\Sigma^\infty_+ LX.\]
    \item By \cref{prop:Spselfdual} and \cref{prop:LocSysTHH} we have
    \[[\coev] = (L\Delta) \chi_{\THH}(X)\in\Omega^\infty\Sigma^\infty_+(LX\times LX),\]
    where $L\Delta\colon LX\rightarrow LX\times LX$ is the diagonal map.
\end{itemize}

Let $\widetilde{\dim}(\Sp^X) = \Sigma^\infty_+ X$ mapping to $\dim(\Sp^X)$ via the inclusion of constant loops. From the above description of the assembly map, we see that if $X$ is equipped with a lift of the $\THH$ Euler characteristic, then $[\coev]$ lifts along $\widetilde{\dim}(\Sp^X)\otimes \widetilde{\dim}(\Sp^X)$.

Having an extra structure on $X$ allows us to give a stronger statement.
 
\begin{defn}
The \defterm{homological Euler class} $e(X)\in\Omega^\infty\Sigma^\infty_+ X$ is the image of $\chi_{\THH}(X)$ under $LX\rightarrow X$ given by evaluating the loop at the basepoint of $S^1$.
\end{defn}

\begin{remark}
We may identify
\[\pi_0(\Omega^\infty\Sigma^\infty_+ X)\cong \H_0(X;\Z).\]
If $X$ is a closed oriented $d$-manifold, we may also identify $\H_0(X;\Z)\cong \H^d(X;\Z)$ and under this identification the homological Euler class goes to the usual Euler class in cohomology.
\end{remark}

\begin{prop}
Let $X$ be a finitely dominated space equipped with the following structure:
\begin{itemize}
    \item A lift of the $\THH$ Euler characteristic along the assembly map.
    \item A trivialization $e(X)\sim 0$ of the homological Euler class.
\end{itemize}
Then $[\coev]\in\dim(\Sp^X)\otimes \dim(\Sp^X)$ admits a nullhomotopy.
\end{prop}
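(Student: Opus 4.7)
The plan is to trace the obstruction through the diagonal. Given the lift $\widetilde\chi\in\Omega^\infty\Sigma^\infty_+ X$ of $\chi_{\THH}(X)$, the first observation is that the composite $X\xrightarrow{\alpha}LX\xrightarrow{\ev_0}X$ of the inclusion of constant loops with the evaluation at the basepoint is the identity on $X$. Applying $(\ev_0)_*$ to the equivalence $\alpha(\widetilde\chi)\simeq \chi_{\THH}(X)$ provided by the lift thus yields, by the very definition of the homological Euler class, a canonical equivalence
\[
\widetilde\chi\;\simeq\;(\ev_0)_*\chi_{\THH}(X)\;=\;e(X)
\]
in $\Omega^\infty\Sigma^\infty_+ X$. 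Consequently, a trivialization $e(X)\sim 0$ supplies a trivialization of $\widetilde\chi$ itself.

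Next, I would use the description $[\coev]=(L\Delta)_*\chi_{\THH}(X)$ recalled in the paragraph preceding the proposition. Since the diagonal $\Delta\colon X\to X\times X$ sends constant loops to pairs of constant loops, the square
\[
\begin{tikzcd}
X \ar[r,"\Delta"] \ar[d,"\alpha"'] & X\times X \ar[d,"\alpha\times\alpha"] \\
LX \ar[r,"L\Delta"'] & LX\times LX
\end{tikzcd}
\]
commutes, so $[\coev]$ is equivalent to the image of $\Delta_*\widetilde\chi\in \Omega^\infty\Sigma^\infty_+(X\times X)$ under $\alpha\otimes\alpha$. Applying $\Delta_*$ and then $\alpha\otimes\alpha$ to the nullhomotopy of $\widetilde\chi$ produced above then yields the desired nullhomotopy of $[\coev]$ in $\dim(\Sp^X)\otimes\dim(\Sp^X)\cong \Omega^\infty\Sigma^\infty_+(LX\times LX)$.

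In short, the argument is pure naturality: among all lifts of $\chi_{\THH}(X)$ along the assembly map, the underlying element in $\Omega^\infty\Sigma^\infty_+ X$ is canonically the homological Euler class, and the Shklyarov copairing is just its image under the diagonal composed with the product assembly map. The only point that requires any care is to check that the three canonical homotopies in play --- the one from $\ev_0\circ\alpha=\id_X$, the one comparing $\widetilde\chi$ to $e(X)$, and the given trivialization --- compose coherently, so there is no substantive obstacle.
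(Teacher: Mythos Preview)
Your argument is correct and follows essentially the same route as the paper: both use the retraction $\ev_0\circ\alpha=\id_X$ to identify the lift element with $e(X)$, then trivialize, then push forward along the diagonal using $[\coev]=(L\Delta)_*\chi_{\THH}(X)$. The paper states the middle step as ``a trivialization of $e(X)$ gives rise to a trivialization of $\chi_{\THH}(X)$'' rather than first trivializing $\widetilde\chi$, but this is the same maneuver viewed from the other side of the equivalence $\alpha(\widetilde\chi)\simeq\chi_{\THH}(X)$.
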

\begin{proof}
	The composite
	\[X\longrightarrow LX\longrightarrow X\]
	is the identity, where the first morphism is the inclusion of constant loops. Therefore, a lift of the $\THH$ Euler characteristic along the assembly map provides identifies $\chi_{\THH}(X)$ with the image of $e(X)$ under the assembly map. So, a trivialization of $e(X)$ gives rise to a trivialization of $\chi_{\THH}(X)$.
	
	The last statement follows from the fact that a connected finite CW complex carries the Whitehead lift of the $\THH$ Euler characteristic and
	\[\pi_0(\Omega^\infty\Sigma^\infty_+ X)\cong \H_0(X; \Z)\cong \Z.\]
\end{proof}

\begin{remark}
Suppose $M$ is a connected finite CW complex and let $X=\Sing(M)$ the underlying homotopy type. Then $X$ carries the Whitehead lift of the $\THH$ Euler characteristic and under the composite
\[\pi_0(\Omega^\infty\Sigma^\infty_+ X)\cong \H_0(X; \Z)\cong \Z\]
we have $e(X)\mapsto \chi(X)$. In particular, if $\chi(X)=0$, there is a trivialization $e(X)\sim 0$ of the homological Euler class.
\end{remark}

\begin{remark}
Given a finite CW complex $M$, the homological Euler class is represented by the $0$-chain $\sum_{a\in A}(-1)^{\dim(a)} \alpha_a$, where $A$ is the set of cells of $M$ and $\alpha_a$ is a point in the interior of $a$. A bounding $1$-chain for the homological Euler class is called a combinatorial Euler structure in \cite{TuraevEuler}.
\end{remark}

\subsection{Examples in symplectic geometry} \label{sect:exsymplecticgeometry}

Let $M$ be a Liouville manifold of dimension $2n$, i.e. an exact symplectic manifold with a convexity condition at infinity, satisfying the nondegeneracy condition (see e.g. \cite[Definition 1.1]{Ganatra}). For example, $M=\T^*N$ for a compact manifold $N$ satisfies the nondegeneracy condition \cite{AbouzaidCotangent}. More generally, any Weinstein manifold satisfies the non-degeneracy condition by \cite{CDGG}. In addition, choose a trivialization of $2c_1(\T_M)$ for some almost complex structure on $M$ (again, for $M=\T^* N$ there is a canonical such structure). Let $k$ be a field.

Let $\SH^\bullet(M)$ be the symplectic cohomology of $M$ (our grading conventions follow \cite{SeidelBiased}) and $\cW(M)$ the wrapped Fukaya category. Non-degeneracy of $M$ has the following implications:
\begin{itemize}
	\item The $\infty$-category $\Mod_\cW(M)=\Fun(\cW(M)^{\op}, \Mod_k)\in\PrSt_k$ of $\cW(M)$-modules is smooth, see \cite[Theorem 1.2]{Ganatra}.
	\item The natural open-closed map
	\[\OC\colon\H^\bullet(\dim(\Mod_\cW(M)))\longrightarrow\SH^\bullet(M)\]
	is an isomorphism, see \cite[Theorem 1.1]{Ganatra}.
\end{itemize}

Let $\widetilde{\dim}(\Mod_\cW(M))$ be the action zero part of symplectic cohomology, so that $\H^\bullet(\widetilde{\dim}(\Mod_\cW(M)))\cong \H^{\bullet+n}(M)$ (see e.g. \cite[Section 5]{Ritter}). The cohomology of the cofiber $\overline{\dim}(\Mod_\cW(M))$ is then the positive action symplectic cohomology $\SH^\bullet_{>0}(M)$. It fits into a long exact sequence
\[\longrightarrow \H^{\bullet+n}(M)\xrightarrow{e^*} \SH^\bullet(M)\longrightarrow \SH_{>0}^\bullet(M)\longrightarrow \H^{\bullet+n+1}(M)\longrightarrow\]

We claim that
\[[\coev]\in \HH_\bullet(\cW(M\times M))\cong \SH^\bullet(M\times M)\]
is equal to the image $e^*[\Delta]$ of the diagonal class $[\Delta]\in\H^{2n}(M\times M)$. Let us sketch the proof. By \cref{cor:Mukaicopairingelbow} the class $[\coev]$ can be computed in terms of the TQFT structure on Hochschild homology $\HH_\bullet(\cW(M))$. We refer to \cite{Ritter} for a construction of a TQFT structure on symplectic cohomology $\SH^\bullet(M)$. It can be shown that the open-closed map $\OC$ intertwines the TQFT structures similarly to the proof of \cite[Proposition 5.3]{Ganatra}. In particular, $[\coev] = \psi_Q(1)$ in the notation of \cite{Ritter}. By \cite[Theorem 6.10]{Ritter} we get $\psi_Q(1) = e^*(\psi_{Q'}(1))$, where $\psi_{Q'}(1)=[\Delta]$ is the diagonal class.

In particular, $[\coev]$ lifts along the map
\[\widetilde{\dim}(\Mod_\cW(M))\otimes \widetilde{\dim}(\Mod_\cW(M))\rightarrow \dim(\Mod_\cW(M))\otimes \dim(\Mod_\cW(M)).\]

\begin{example}
Consider a compact $n$-manifold $N$ and let $M=\T^* N$. Then the diagonal class $[\Delta_M]$ is proportional to $\chi(N)$. In particular, in the case $\chi(N)=0$ it vanishes. In fact, Abouzaid \cite{AbouzaidOpenViterbo} shows that $\Mod_{\cW(\T^* N)}$ is equivalent to the $\infty$-category of $k$-local systems on $N$ twisted by the second Stiefel--Whitney class $w_2(N)\in\H^2(N; \Z/2)$. So, this example fits both in the framework of symplectic geometry and homotopy theory as described in \cref{sect:topologyexamples}.
\end{example}

\subsection{Examples in algebraic geometry}

Throughout this section $X$ will be a quasi-compact and quasi-separated scheme $X$ over the base ring $k\supset \Q$. We denote by $p\colon X\rightarrow \Spec k$ the natural projection.

Let $\QCoh(X)$ be the unbounded derived $\infty$-category of quasi-coherent complexes on $X$. By \cite[Proposition 2.5]{Neeman} (see also \cite[Theorem 3.1.1]{BondalVDB} and \cite[Proposition 9.6.1.1]{LurieSAG}) $\QCoh(X)$ is compactly generated by the subcategory $\Perf(X)$ of perfect complexes. The proof of the following statement is identical to the proof of \cref{prop:Spselfdual}.

\begin{prop}
Let $X$ be a quasi-compact and quasi-separated scheme. Then $\QCoh(X)$ is self-dual with the duality functors
\[\ev\colon \QCoh(X)\otimes \QCoh(X)\xrightarrow{\Delta^*}\QCoh(X)\xrightarrow{p_*} \Mod_k\]
and
\[\coev\colon \Mod_k\xrightarrow{p^*}\QCoh(X)\xrightarrow{\Delta_*} \QCoh(X)\otimes \QCoh(X).\]
\label{prop:QCohselfdual}
\end{prop}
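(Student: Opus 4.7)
The plan is to proceed exactly as in the proof of Proposition~\ref{prop:Spselfdual}. By \cite[Chapter~7, Theorem~3.2.2 and Chapter~9, Proposition~3.1.5]{GaitsgoryRozenblyum1}, the assignment $X\mapsto \QCoh(X)$, $f\mapsto f_*$ extends to a symmetric monoidal functor
\[
\QCoh^*_*\colon \Corr(\Sch^{\mathrm{qcqs}})\longrightarrow \PrSt
\]
sending a correspondence $X\xleftarrow{f} C\xrightarrow{g} Y$ to $g_*f^*$, provided one verifies (a) that $f_*$ admits a colimit-preserving left adjoint $f^*$ for qcqs $f$, which follows from compact generation of $\QCoh$ and the fact that $f^*$ preserves compact objects, and (b) that the base change transformation $g^*f_*\Rightarrow \tilde{f}_*\tilde{g}^*$ is an equivalence for every Cartesian square of qcqs schemes.

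Once the bivariant extension is constructed, I would invoke \cite[Corollary~12.5]{HaugsengSpans} (as already used in the proof of \cref{prop:Spselfdual}): every object $X\in\Corr(\Sch^{\mathrm{qcqs}})$ is self-dual, with evaluation given by the correspondence $X\times X\xleftarrow{\Delta} X\xrightarrow{p}\pt$ and coevaluation by $\pt\xleftarrow{p} X\xrightarrow{\Delta} X\times X$. Since $\QCoh^*_*$ is symmetric monoidal, applying it to these correspondences produces a self-duality datum on $\QCoh(X)\in\PrSt$, with evaluation $p_*\Delta^*$ and coevaluation $\Delta_*p^*$ exactly as asserted in the proposition. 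The triangle identities in $\PrSt$ are inherited from the (easily verified) triangle identities for the self-duality of $X$ in $\Corr(\Sch^{\mathrm{qcqs}})$.

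The main obstacle is the Beck--Chevalley / base change condition. For classical (non-derived) qcqs schemes only flat or Tor-independent base change holds in general, so one must interpret $\QCoh(X)$ as the unbounded derived $\infty$-category and take fiber products in the sense of derived algebraic geometry; the derived base change equivalence for qcqs maps is then a standard result. This is the only place where the argument meaningfully differs from the purely topological analogue of Proposition~\ref{prop:Spselfdual}, where base change for spaces is automatic by universality of colimits in $\cS$.
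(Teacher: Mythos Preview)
Your approach is correct and is exactly what the paper intends: it states that the proof is identical to that of \cref{prop:Spselfdual}, i.e.\ construct the bivariant symmetric monoidal functor on correspondences via \cite{GaitsgoryRozenblyum1} and then transport the self-duality of $X$ in $\Corr$ from \cite[Corollary~12.5]{HaugsengSpans}. Your remark that base change forces one to work with derived fiber products is a genuine point the paper leaves implicit. One small wording issue: in (a) the nontrivial claim is not that $f_*$ has a colimit-preserving left adjoint (that is automatic), but that $f_*$ itself is colimit-preserving so as to be a $1$-morphism in $\PrSt$; your justification via compact generation and $f^*$ preserving perfect complexes is the right argument for that.
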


Denote
\[\HH_\bullet(X) = \HH_\bullet(\Perf(X))\cong \dim(\QCoh(X)).\]
From now on, we assume that $X$ is a smooth and separated scheme, in which case $\QCoh(X)$ is a smooth $\infty$-category by \cite{Lunts}. By \cite{Yekutieli} we have a quasi-isomorphism
\[\HH_\bullet(X) \cong \R\Gamma(X, \Sym(\Omega^1_X[1])).\]
By \cref{prop:QCohselfdual} the Shklyarov copairing is given by
\[[\coev] = \ch(\Delta_*\cO_X)\in\HH_\bullet(X)\otimes \HH_\bullet(X).\]

\begin{prop}
Suppose $X$ is a separated smooth affine variety of nonzero dimension. Then the diagonal class $[\coev]\in\HH_\bullet(X)\otimes \HH_\bullet(X)$ admits a nullhomotopy.
	\label{prop:affineEulerstructure}
\end{prop}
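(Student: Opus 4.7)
The plan is to use the identity $[\coev] = \ch(\Delta_* \cO_X) \in \HH_\bullet(X) \otimes \HH_\bullet(X)$ stated just before the proposition, together with the Hochschild--Kostant--Rosenberg theorem and the vanishing of higher quasi-coherent cohomology on affine schemes. Since $\dim(-)$ is symmetric monoidal we identify $\HH_\bullet(X) \otimes \HH_\bullet(X) \cong \HH_\bullet(X \times X)$, so it suffices to show that $\ch(\Delta_* \cO_X) = 0$ in $\HH_0(X \times X)$; as $\HH_\bullet(X \times X)$ is concentrated in non-negative homological degrees when $X \times X$ is smooth and affine, this vanishing will yield the desired nullhomotopy of the corresponding map $k \to \HH_\bullet(X) \otimes \HH_\bullet(X)$ in $\Mod_k$.

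The key input is HKR: for any smooth separated $k$-scheme $Y$ one has
\[
\HH_n(Y) \cong \bigoplus_{q-p=n} H^p(Y, \Omega^q_Y).
\]
Applied to $Y = X \times X$, which is smooth and affine, the higher quasi-coherent cohomology vanishes and only the $p = 0$ terms contribute. This gives $\HH_n(X \times X) \cong \Omega^n_{X \times X}(X \times X)$ and in particular $\HH_0(X \times X) \cong \cO(X \times X)$. Under this identification the Chern character $\ch \colon K_0(X \times X) \to \HH_0(X \times X)$ reduces to its degree-zero Hodge component $\ch_0 \colon K_0(X \times X) \to H^0(X \times X, \cO_{X \times X})$, which is just the locally constant rank function of a perfect complex.

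It remains to compute the rank of $\Delta_* \cO_X$. Since $X$ is smooth, the diagonal $\Delta \colon X \hookrightarrow X \times X$ is a regular closed immersion of codimension $d := \dim X$, so $\Delta_* \cO_X$ is a perfect complex on $X \times X$ resolved locally by a Koszul complex of length $d$. The alternating sum of ranks in this resolution is $\sum_{i=0}^d (-1)^i \binom{d}{i} = 0$ for $d > 0$; equivalently, $\Delta_* \cO_X$ already vanishes at the generic point of $X \times X$ because the diagonal has positive codimension. Combining the above, $\ch(\Delta_* \cO_X) = 0$ in $\HH_0(X \times X)$, which produces the required nullhomotopy. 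The only mildly non-trivial step is the HKR identification reducing the Chern character to its rank component in the smooth affine setting, which is classical.
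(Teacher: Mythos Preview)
Your proof is correct, but it takes a slightly different and in fact more elementary route than the paper's. The paper applies Grothendieck--Riemann--Roch to identify $\ch(\Delta_*\cO_X)=\Delta_*(\Td(X)^{-1})$, observes that the Gysin pushforward $\Delta_*$ lands in the pieces $\H^n(X\times X,\Omega^n_{X\times X})$ with $n\ge \dim X$, and then invokes affine vanishing of higher sheaf cohomology to kill these. You instead use affine vanishing \emph{first} to collapse $\HH_0(X\times X)$ to $\cO(X\times X)$, so that the Chern character is nothing but the locally constant rank function, and then note that $\Delta_*\cO_X$ has rank $0$ because it is supported in positive codimension (equivalently, the Koszul resolution has Euler characteristic $\sum_i(-1)^i\binom{d}{i}=0$). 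Your argument avoids GRR entirely; the paper's argument, by contrast, actually identifies the class as $\Delta_*(\Td(X)^{-1})$, which is finer information and dovetails with the next proposition on curves. Two minor remarks: the concentration of $\HH_\bullet(X\times X)$ in non-negative degrees is not needed for the reduction to $\HH_0$ (a map out of $k$ is nullhomotopic iff its class in $\HH_0$ vanishes regardless), and the identification $\HH_0(A)\cong A$ for commutative $A$ together with $\ch=$ rank is already elementary trace algebra, so you do not even need HKR for that step.
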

\begin{proof}
We have
\[[\ch(\Delta_*\cO_X)]\in \bigoplus_n \H^n(X, \Omega^n_X).\]
By the Grothendieck--Riemann--Roch theorem \cite[Theorem 15.2]{Fulton} $\ch(\Delta_* \cO_X) = \Delta_* (\Td(X)^{-1})$ whose nonzero components have $n\geq \dim(X)$. But $\H^n(X, \Omega^n_X) = 0$ for $n\geq 1$ since $X$ is affine.
\end{proof}

In the case when $X$ is a curve, we can describe such nullhomotopies geometrically as follows.

\begin{prop}
Suppose $X$ is a smooth and separated curve equipped with a rational one-form $\omega\in\Omega^1(k(X\times X))$ on $X\times X$, which is regular away from the diagonal and with a simple pole along the diagonal with residue the constant function $1$. Then there is a nullhomotopy of $[\coev]\in\HH_\bullet(X)\otimes \HH_\bullet(X)$.
\end{prop}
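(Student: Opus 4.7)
The strategy is to reduce the vanishing of the Shklyarov copairing $[\coev]\in\HH_\bullet(X\times X)$ to the vanishing of the cohomology class $[\Delta]\in\H^1(X\times X,\Omega^1_{X\times X})$ of the diagonal, and then to verify the latter using the Poincar\'e residue sequence together with the rational form $\omega$. By \cref{prop:QCohselfdual}, the Shklyarov copairing is the Chern character of the coevaluation, so $[\coev]=\ch(\Delta_*\cO_X)\in \HH_0(X\times X)$. Applying Grothendieck--Riemann--Roch to $\Delta\colon X\hookrightarrow X\times X$ exactly as in the proof of \cref{prop:affineEulerstructure} yields
\[
[\coev] \;=\; \Delta_*\bigl(\Td(\T_X)^{-1}\bigr) \;=\; [\Delta] \,-\, \tfrac{1}{2}\,[\Delta]^2,
\]
where we used $\Td(\T_X)^{-1}=1-\tfrac{1}{2}c_1(\T_X)$ (since $\dim X=1$), the projection formula, and the self-intersection identity $\Delta^*[\Delta]=c_1(\T_X)$. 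Under the HKR decomposition $\H^0(\HH_\bullet(X\times X))\cong\bigoplus_p\H^p(X\times X,\Omega^p_{X\times X})$, the two summands live in $\H^1(\Omega^1_{X\times X})$ and $\H^2(\Omega^2_{X\times X})$ respectively; higher Todd powers vanish because $\Omega^p_{X\times X}=0$ for $p\geq 3$.

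Next, I interpret $\omega$ as a global section of $\Omega^1_{X\times X}(\log\Delta)$ whose Poincar\'e residue is the constant function $1\in\H^0(\cO_\Delta)$. The Poincar\'e residue short exact sequence
\[
0 \longrightarrow \Omega^1_{X\times X} \longrightarrow \Omega^1_{X\times X}(\log\Delta) \xrightarrow{\ \mathrm{Res}\ } \cO_\Delta \longrightarrow 0
\]
has a connecting homomorphism $\delta\colon\H^0(\cO_\Delta)\to\H^1(\Omega^1_{X\times X})$ which, via the standard identification of $\delta(1)$ with the first Chern class of $\cO_{X\times X}(\Delta)$, sends $1$ to $[\Delta]$. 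The existence of the global lift $\omega$ with $\mathrm{Res}(\omega)=1$ forces $\delta(1)=0$, hence $[\Delta]=0$; consequently $[\Delta]^2=0$ as well, and both components of $[\coev]$ vanish in $\H^0\HH_\bullet(X\times X)$.

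Since $k$ is a field, a map $k\to\HH_\bullet(X\times X)$ in $\Mod_k$ is nullhomotopic if and only if its class in $\H^0$ vanishes, which we have just established; this produces the required nullhomotopy. The main---and essentially only non-formal---point is realizing this vanishing at the chain level rather than merely at the cohomological one, which is precisely what the explicit global logarithmic lift $\omega$ of the constant function $1$ achieves through the connecting map $\delta$; the remainder of the argument is routine bookkeeping with Grothendieck--Riemann--Roch and the Hodge-type decomposition of $\HH_0$.
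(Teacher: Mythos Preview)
Your argument is correct and lands on the same key point as the paper: the Chern character $\ch(\Delta_*\cO_X)$ is a polynomial in $c_1(\cO_{X\times X}(\Delta))$ with no constant term, and the form $\omega$ furnishes a trivialization of that first Chern class. The paper's route is slightly more direct on both steps. Rather than invoking Grothendieck--Riemann--Roch, it uses the two-term resolution
\[
0\longrightarrow\cO_{X\times X}(-\Delta)\longrightarrow\cO_{X\times X}\longrightarrow\Delta_*\cO_X\longrightarrow 0
\]
available for any Cartier divisor to write $\ch(\Delta_*\cO_X)=-c_1(\cL)-\tfrac{1}{2}c_1(\cL)^2$ with $\cL=\cO_{X\times X}(-\Delta)$. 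And rather than the Poincar\'e residue sequence, it reads $\omega$ directly as a connection form: in the rational trivialization of $\cL$ by the section $1$, the formula $\nabla=\d-\omega$ defines a global algebraic connection on $\cL$ precisely because of the pole and residue hypotheses on $\omega$, and a connection on a line bundle is exactly a trivialization of its Atiyah class $c_1(\cL)$. Your log-sequence argument and the paper's connection argument are standard avatars of one another (the connecting map $\delta$ computes the Atiyah class of $\cO(\Delta)$), so the two proofs differ only in packaging; the paper's avoids the GRR machinery but is otherwise the same idea.
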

\begin{proof}
	We have a resolution
	\[0\longrightarrow \cO_{X\times X}(-\Delta)\longrightarrow \cO_{X\times X}\longrightarrow \Delta_*\cO_X\longrightarrow 0,\]
	where $\cL=\cO_{X\times X}(-\Delta)$ is a line bundle, since $\Delta$ is a Cartier divisor. This gives
	\[\ch(\Delta_*\cO_X) = 1 - (1 + c_1(\cL) + c_1(\cL)^2/2).\]
	In particular, a trivialization of $c_1(\cL)$ gives rise to a trivialization of $\ch(\Delta_*\cO_X)$.

	A trivialization of $c_1(\cL)$ is the same as a connection on $\cL$. In the trivialization given by the rational section $1$ of $\cL=\cO_{X\times X}(-\Delta)$, a connection is given by $\nabla=\d-\omega$ for some rational one-form $\omega$ which is regular away from the diagonal. The regularity of the connection on $\cL$ then implies that $\omega$ has a simple pole along the diagonal with residue $1$.
\end{proof}

\section{Topological quantum field theories}
\label{sect:TQFT}

Given a smooth $\infty$-category $\cC\in\PrSt$ together with a partial trivialization of the Shklyarov copairing $[\coev]$ as in \cref{sect:Shklyarov}, the goal of this section is to produce a secondary coproduct
\[\fZ(\cC)\longrightarrow \overline{\dim}(\cC)\otimes \overline{\dim}(\cC)[-1],\]
see \cref{def:secondarycoproduct}. We describe the above operation in terms of framed two-dimensional topological quantum field theories. We will use the cobordism hypothesis, see \cite{LurieCobordism,AyalaFrancis,GradyPavlov2}. Let us mention that for our purposes it is enough to work in the underlying (3, 2)-category of the $(\infty, 2)$-category of two-dimensional extended bordisms, so \cite{Pstragowski,Araujo} give us the relevant results. In \cref{sect:TFTexplicit} we provide a decomposition of bordisms into elementary ones which allows one to forego the cobordism hypothesis and write the relevant operations in terms of the duality data; the cobordism hypothesis then asserts that the operations are independent of the choice of the decomposition of the bordism into elementary ones.

\subsection{Background}
\label{sect:TFTbackground}

For a manifold we denote by $\underline{\bR}^k$ the trivial $k$-dimensional vector bundle.

\begin{defn}
	Let $M$ be a smooth $k$-manifold. An \defterm{$n$-framing} (for $n\geq k$) on $M$ is a trivialization of the stabilized tangent bundle:
	\[\T_M\oplus \underline{\bR}^{n-k}\cong \underline{\bR}^n.\]
\end{defn}

We will work with framings on extended bordisms. We refer to \cite{SchommerPries,Pstragowski} for precise definitions of bordisms and framings on those. Extended bordisms organize into the symmetric monoidal $(\infty, 2)$-category $\Bord_2^{\fr}$ which has the following informal description (see \cite{LurieCobordism,CalaqueScheimbauer} for a precise construction):
\begin{itemize}
	\item Its objects are closed 2-framed 0-manifolds (finite collections of points).
	
	\item Its 1-morphisms from $X$ to $Y$ are compact 2-framed 1-bordisms, i.e. compact 2-framed 1-manifolds $B$ with specified incoming and outgoing boundaries: $\partial B\cong X\sqcup Y$ (with a certain compatibility with the framing in a collared neighborhood of the boundary).
	
	\item Its 2-morphisms from $B_1\colon X\rightarrow Y$ to $B_2\colon X\rightarrow Y$ are 2-framed 2-bordisms, i.e. compact 2-framed 2-manifolds $\Sigma$ with corners whose boundary is decomposed into $X\times [0, 1]\sqcup Y\times [0, 1]$ and $B_1\sqcup B_2$.
\end{itemize}

\begin{defn}
	Let $\cA$ be a symmetric monoidal $(\infty, 2)$-category. An object $x\in\cA$ is \defterm{fully dualizable} if it is dualizable and the evaluation and coevaluation morphisms have a tower of adjoints
	\[\dots\dashv (\ev^\L)^\L\dashv \ev^\L\dashv \ev\dashv \ev^\R\dashv (\ev^\R)^\R\dashv \dots\]
\end{defn}

It turns out that to check that an object is fully dualizable it is enough to perform finitely many checks. The following is \cite[Theorem 3.9]{Pstragowski} and \cite[Proposition 4.2.3]{LurieCobordism}.

\begin{prop}
	Let $\cA$ be a symmetric monoidal $(\infty, 2)$-category. An object of $\cA$ is fully dualizable if, and only if, it is smooth and proper.
\end{prop}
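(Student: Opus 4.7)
The ``only if'' direction is immediate from unfolding definitions. If $x$ is fully dualizable, then in particular $\ev$ admits a left adjoint $\ev^\L$ and a right adjoint $\ev^\R$, which is exactly the assertion that $x$ is smooth and proper in the sense of \cref{def:smoothproper}.

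For the converse direction, my first step is to observe that in a symmetric monoidal $(\infty,2)$-category, $\coev$ is dual to $\ev$ (i.e. they are obtained from one another by applying the symmetric monoidal duality $(-)^\vee$ and the braiding). Consequently, $\coev$ admits a right adjoint if and only if $\ev$ admits a left adjoint, and $\coev$ admits a left adjoint if and only if $\ev$ admits a right adjoint. Thus the hypotheses of smoothness and properness simultaneously yield all four one-step adjoints $\ev^\L, \ev^\R, \coev^\L, \coev^\R$.

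The remaining content, and the substantive part of the proof, is to show that these one-step adjoints themselves admit further adjoints, indefinitely. The standard device is the \emph{Serre automorphism} $S_x \colon x \to x$, constructed as a composite of $\coev$, the braiding $\sigma$, and $\ev^\L$ (suitably tensored with identities on $x$ and $x^\vee$). The key claim is that $S_x$ is a $1$-equivalence in $\cA$: its inverse is given by the analogous composite with $\ev^\R$ in place of $\ev^\L$. One then checks that the various one-step adjoints can be expressed as twists of $\coev$ (or $\ev$) by tensoring with $S_x^{\pm 1}$; more generally, each further adjoint in the tower can be produced by tensoring the previously constructed one with $S_x^{\pm 1}$. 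Since tensoring with an invertible $1$-morphism preserves the existence of adjoints (and of their adjoints, etc.), this yields the full bi-infinite tower by induction.

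The principal technical obstacle lies in the last step: constructing $S_x$ coherently in an $(\infty,2)$-category and verifying its invertibility, together with the adjoint identifications, all require higher-categorical coherence data that must be built carefully. These details are precisely what is worked out in the references cited, namely \cite[Proposition 4.2.3]{LurieCobordism} and \cite[Theorem 3.9]{Pstragowski}, to which we appeal.
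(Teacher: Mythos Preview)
Your proposal is correct and follows exactly the approach of the cited references \cite[Proposition 4.2.3]{LurieCobordism} and \cite[Theorem 3.9]{Pstragowski}; note that the paper itself does not supply a proof but simply invokes these references, so your sketch is in fact more detailed than what appears in the paper.
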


\begin{defn}
	Let $\cA$ be a symmetric monoidal $(\infty, 2)$-category and $x\in\cA$ a smooth object. The \defterm{inverse Serre morphism} is the composite
	\[T\colon x\xrightarrow{\id\otimes\coev} x\otimes x^\vee\otimes x\xrightarrow{\sigma\otimes\id} x^\vee\otimes x\otimes x\xrightarrow{\coev^\R\otimes\id} x.\]
	\label{def:inverseSerre}
\end{defn}

\begin{remark}
	Despite the terminology, the morphism $T\colon x\rightarrow x$ is not always invertible. If we assume that $x\in\cC$ is smooth \emph{and proper}, one may use $\coev^\L$ to similarly define a morphism $S\colon x\rightarrow x$ inverse to $T$ (see the proof of \cite[Proposition 4.2.3]{LurieCobordism}).
\end{remark}

For a symmetric monoidal $(\infty, 2)$-category $\cA$ we denote by $(\cA^{\fd})^{\sim}$ the $\infty$-groupoid consisting of fully dualizable objects of $\cA$. The following is proven in \cite[Theorem 8.1]{Pstragowski} on the level of 2-categories and sketched in \cite[Theorem 2.4.6]{LurieCobordism} on the level of $(\infty, 2)$-categories.

\begin{thm}[Cobordism hypothesis]
	Let $\cA$ be a symmetric monoidal $(\infty, 2)$-category. The evaluation functor $\Fun^{\otimes}(\Bord_2^{\fr}, \cA)\rightarrow (\cA^{\fd})^{\sim}$ given by $Z\mapsto Z(\pt)$ is an equivalence.
	\label{thm:cobordismhyp}
\end{thm}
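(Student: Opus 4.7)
The plan is to follow the strategy of Lurie and Pstragowski, presenting $\Bord_2^{\fr}$ by generators and relations as the free symmetric monoidal $(\infty,2)$-category on a fully dualizable object. First I would verify the forward direction: that $\pt_+$ is fully dualizable in $\Bord_2^{\fr}$. Its dual is $\pt_-$ (the point with opposite framing), the evaluation and coevaluation are the standard ``U-shaped'' 1-bordisms, and the triangle identities are witnessed by the cusp 2-bordisms; further adjunctions $\ev^\L, \ev^\R, \dots$ are realized by elementary 2-bordisms with the appropriate 2-framings. This shows that $Z \mapsto Z(\pt_+)$ lands in $(\cA^{\fd})^{\sim}$, and moreover is functorial in $\cA$.

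For the converse direction, given a fully dualizable object $x\in\cA$, I would build a symmetric monoidal functor $Z\colon \Bord_2^{\fr}\rightarrow \cA$ by induction on dimension. On objects, send framed points to $x$ or $x^\vee$ according to the sign of the framing. On 1-morphisms, exploit a Morse decomposition: every compact 2-framed 1-bordism factors as a composite of elementary bordisms (identity, evaluation, coevaluation), and the assignment is forced by the duality data for $x$. On 2-morphisms, use a handle decomposition of the surface, sending each elementary 2-handle to the corresponding piece of adjunction data (the units, counits, and cusp 2-isomorphisms from the remark following \cref{def:dualizable}), together with the higher coherences available because $x$ is fully dualizable. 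The key technical ingredient is Cerf theory: any two handle decompositions of the same bordism must yield equivalent composites in $\cA$, by translating elementary Cerf moves (handle slides, cusp moves, swallowtail moves) into the coherence equations for duality data that hold automatically in any symmetric monoidal $(\infty,2)$-category.

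The main obstacle is the combination of framing bookkeeping and $(\infty,2)$-categorical coherence. First, the 2-framing must be tracked at each step: a full rotation of the framing along a circle induces the inverse Serre automorphism $T$ from \cref{def:inverseSerre}, so the value of $Z$ on a framed bordism depends nontrivially on the framing through powers of $T$, and one must confirm this is compatible with all adjunction coherences (this is why the paper sees twists like $S^1_n$ in the introduction). Second, one must show that Cerf moves already generate all higher coherences in $\Bord_2^{\fr}$ as an $(\infty,2)$-category, not merely as its homotopy 2-category. Pstragowski handles the 2-categorical truncation by an explicit finite presentation; Lurie sketches the passage to $(\infty,2)$-categories using an inductive reduction to Kan extension along a cofinal inclusion of elementary bordisms. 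An alternative geometric route through the Calaque--Scheimbauer model realizes $\Bord_2^{\fr}$ as a complete $(\infty,2)$-fold Segal space, turning coherence into a statement about homotopy types of spaces of framed embedded bordisms, which is ultimately controlled by a Galatius--Madsen--Tillmann--Weiss type calculation adapted to 2-framings.
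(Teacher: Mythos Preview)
The paper does not supply its own proof of this theorem: it simply cites \cite[Theorem~8.1]{Pstragowski} for the underlying $2$-categorical statement and \cite[Theorem~2.4.6]{LurieCobordism} for the $(\infty,2)$-categorical version. Your proposal is a reasonable sketch of the strategy actually carried out in those references, so it is consistent with what the paper invokes; just be aware that you are reconstructing the cited proofs rather than comparing against anything the paper itself does.
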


We will now consider a variant of the above statement. Consider the $(\infty, 2)$-subcategory $\Bord_2^{\fr, \nc}\subset \Bord_2^{\fr}$ which has the same objects and 1-morphisms, while the 2-morphisms
\[\begin{tikzcd}
	X \arrow[r, bend left=50, ""{name=U, below}, "B"] \arrow[r, bend right=50, ""{name=D}, "B'"{below}] & Y \arrow[Rightarrow, from=U, to=D, "\Sigma"]
\end{tikzcd}\]
are extended bordisms $\Sigma$, such that every connected component of $\Sigma$ has a nonempty intersection with $B'$.

\begin{remark}
	Lurie in \cite[Definition 4.2.10]{LurieCobordism} considers a version $\Bord_2^{\nc}$ of $\Bord_2^{\fr, \nc}$, where the bordisms are oriented and the condition on $\Sigma$ is that every connected component has a nonempty intersection with $B$ instead.
\end{remark}

\begin{thm}[Noncompact cobordism hypothesis]
	Let $\cA$ be a symmetric monoidal $(\infty, 2)$-category. The $\infty$-groupoid $\Fun^{\otimes}(\Bord_2^{\fr, \nc}, \cA)$ is equivalent to the $\infty$-groupoid of smooth objects of $\cA$ with the property that the inverse Serre morphism $T$ is invertible.
	\label{thm:noncompactcobordismhyp}
\end{thm}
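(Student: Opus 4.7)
The plan is to adapt the proof of the framed cobordism hypothesis (\cref{thm:cobordismhyp}) to the non-compact setting, closely paralleling the oriented non-compact case treated in \cite[Theorem 4.2.11]{LurieCobordism}. The key conceptual observation is that the restriction defining $\Bord_2^{\fr,\nc}\subset\Bord_2^{\fr}$—every connected component of a 2-bordism meets the outgoing 1-morphism $B'$—is precisely the restriction that removes those elementary 2-cells which would force the evaluation 1-morphism to admit a right adjoint. Thus, instead of classifying fully dualizable (i.e.\ smooth \emph{and} proper) objects, the non-compact theory should classify objects that are only smooth, subject to the invertibility of the $SO(2)$-framing datum.

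For the forward direction, given $Z\in\Fun^{\otimes}(\Bord_2^{\fr,\nc},\cA)$ I would set $x := Z(\pt)$. The usual elbow 1-bordisms equip $x$ with the structure of a dualizable object. The saddle 2-bordism, viewed as the unit $\id\Rightarrow \ev\circ\ev^\L$ of an adjunction, has outgoing boundary equal to $\ev$ (a non-empty 1-morphism), so it lies in $\Bord_2^{\fr,\nc}$ and witnesses the existence of a left adjoint to $\ev$; hence $x$ is smooth. The inverse Serre morphism $T$ is the image under $Z$ of the interval with a framing twist; since this 1-endomorphism of $\pt$ is invertible in $\Bord_2^{\fr,\nc}$ (its inverse being the oppositely-twisted interval, with the untwisting 2-bordisms having non-empty outgoing boundary), $T$ is necessarily invertible in $\cA$.

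For the reverse direction, given a smooth $x\in\cA$ with invertible $T$, the construction of $Z$ proceeds by a generators-and-relations presentation of $\Bord_2^{\fr,\nc}$: a single generating object, the elbows witnessing duality, the saddle 2-cells witnessing smoothness, and the framing-twist 2-cells which must be invertible. Uniqueness of duality data (\cref{rmk:ddat}) ensures that the chosen data on $x$ assemble coherently, and that the two assignments are mutually inverse. The principal obstacle, as for \cref{thm:cobordismhyp} itself, is verifying this coherence at the full $(\infty,2)$-categorical level; I would handle the underlying $(3,2)$-categorical statement via the methods of \cite{Pstragowski,Araujo} and extend to the $(\infty,2)$-setting using \cite[Section 2.4]{LurieCobordism}. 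A compact alternative would be to deduce the statement from \cref{thm:cobordismhyp} by exhibiting $\Bord_2^{\fr,\nc}$ as a localization of a suitable sub-$(\infty,2)$-category of $\Bord_2^{\fr}$, but this requires making precise which elementary 2-cells are discarded, which is the same bookkeeping as the generators-and-relations approach.
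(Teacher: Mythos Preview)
The paper does not actually prove this theorem: it is stated without proof, as a background result. The surrounding text (the opening paragraph of \cref{sect:TQFT}) points to \cite{LurieCobordism,AyalaFrancis,GradyPavlov2} for the cobordism hypothesis in general, to \cite{Pstragowski,Araujo} for the underlying $(3,2)$-categorical statement, and explicitly remarks that for the paper's purposes one can bypass the full result by decomposing the relevant bordisms into elementary pieces (as done in \cref{sect:TFTexplicit}), using the cobordism hypothesis only to guarantee independence of the decomposition.

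Your proposal is therefore not competing against a proof in the paper but against this deferral to the literature. The outline you give---deduce smoothness and invertibility of $T$ from the elementary bordisms that survive the non-compact restriction, and conversely build $Z$ from a generators-and-relations presentation of $\Bord_2^{\fr,\nc}$ following \cite[Theorem 4.2.11]{LurieCobordism} and \cite{Pstragowski,Araujo}---is exactly the route the paper gestures at, and is a reasonable sketch at the same level of detail as Lurie's treatment of the oriented non-compact case. The honest caveat you flag (that full $(\infty,2)$-coherence is not in the published literature and one falls back to the $(3,2)$-level) is precisely the caveat the paper itself makes.
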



We refer to symmetric monoidal functors $\Bord_2^{\fr, \nc}\rightarrow \cA$ as \defterm{positive boundary framed 2-dimensional TQFTs}. The following is shown in \cite[Section 9.3]{CalaqueScheimbauer}.

\begin{prop}
Let $\cA$ be a symmetric monoidal $(\infty, 2)$-category and $Z\colon \Bord_2^{\fr, \nc}\rightarrow \cA$ a positive boundary framed 2d TQFT. The inverse Serre morphism $T\colon Z(\pt)\rightarrow Z(\pt)$ is the image under $Z$ of a 1-bordism $\pt\rightarrow \pt$ represented by the interval with one twist in the framing.
\label{prop:Serretwist}
\end{prop}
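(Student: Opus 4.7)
My plan is to reduce the statement to a computation inside $\Bord_2^{\fr,\nc}$ itself and then verify it by drawing pictures of the relevant framed bordisms. The inverse Serre morphism $T$ is natural with respect to symmetric monoidal $(\infty,2)$-functors (between smooth objects), since it is built entirely out of $\coev$, $\sigma$ and the right adjoint $\coev^\R$, all of which are preserved by such a functor (adjoints are preserved because $Z(\pt)$ is smooth, see \cref{sect:background}). Therefore it suffices to prove the assertion in the universal case $\cA=\Bord_2^{\fr,\nc}$ with $Z=\id$: namely, that the endomorphism $T$ of the standardly framed positive point $\pt\in\Bord_2^{\fr,\nc}$ is the once-twisted interval. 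Naturality of $T$ then transports this identity to any TQFT.

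To carry this out, I will first identify the duality data of $\pt$ in $\Bord_2^{\fr,\nc}$. The dual object is the point $\pt^{\op}$ with the opposite 2-framing, and the evaluation and coevaluation 1-bordisms are the two elbows (standardly framed U-shaped intervals) exhibiting $\pt$ and $\pt^{\op}$ as dual, with the cusp 2-isomorphisms given by the standard "straightening" 2-bordisms. Since the positive-boundary condition is trivially satisfied for 1-morphisms, the coevaluation 1-bordism $\coev\colon \emptyset\to \pt^{\op}\sqcup \pt$ admits a right adjoint $\coev^\R\colon \pt^{\op}\sqcup\pt\to\emptyset$ (this exists because $\pt$ is smooth in $\Bord_2^{\fr,\nc}$, which is the content of \cref{thm:noncompactcobordismhyp} applied in the universal case). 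I will identify $\coev^\R$ explicitly as the "reverse" elbow, equipped with the framing dictated by the cobordism hypothesis; concretely, by writing down the unit and counit 2-bordisms of the adjunction $\coev\dashv\coev^\R$ and observing that these are the standard saddles (relative bigons) capping off a pair of parallel elbows, one checks that the underlying 1-manifold of $\coev^\R$ is again an interval, but the framing inherited from $\coev$ and from the adjunction differs from the "obvious" framing by a half-twist on each side.

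Finally I will compose the four pieces appearing in \cref{def:inverseSerre}: the identity interval, the right-elbow $\coev$, the braiding $\sigma$ of the middle two points, and the adjoint elbow $\coev^\R$, glued side-by-side according to the formula
\[
\pt\xrightarrow{\id\otimes \coev}\pt\sqcup\pt^{\op}\sqcup\pt\xrightarrow{\sigma\otimes\id}\pt^{\op}\sqcup\pt\sqcup\pt\xrightarrow{\coev^\R\otimes\id}\pt.
\]
Topologically, the underlying 1-manifold of the composite straightens out to a single interval from $\pt$ to $\pt$; the only nontrivial content is the accumulated framing. The braiding $\sigma$ inserts a half-twist (a generator of $\pi_1\SO(2)$), and the mismatch between the framings of $\coev$ and $\coev^\R$ identified in the previous step contributes another half-twist of the same sign, assembling to a single full twist. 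The main obstacle is this bookkeeping of the framing, and in particular the computation of the framing on $\coev^\R$; I will handle it by appealing to \cite[Section 9.3]{CalaqueScheimbauer}, where the right adjoints of elementary bordisms are described explicitly in terms of rotating the framing by $\pi$, so that the two half-twists combine to the asserted $2\pi$-twist. Comparison with the 1-bordism of \cref{prop:Serretwist} then completes the proof.
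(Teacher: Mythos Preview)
The paper does not prove this proposition; it simply records it as a consequence of \cite[Section 9.3]{CalaqueScheimbauer}. Your sketch is precisely an outline of the argument found there: reduce to the universal case $\cA=\Bord_2^{\fr,\nc}$ by naturality of $T$ under symmetric monoidal 2-functors (which preserve adjoints), then track the framing through the composite $(\coev^\R\otimes\id)\circ(\sigma\otimes\id)\circ(\id\otimes\coev)$ after identifying $\coev^\R$ as the opposite elbow with a $\pi$-rotated framing. Since you ultimately also defer to Calaque--Scheimbauer for this last identification, your approach is aligned with the paper's.

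One small comment: you invoke \cref{thm:noncompactcobordismhyp} to deduce that $\pt\in\Bord_2^{\fr,\nc}$ is smooth, but this is unnecessary (and mildly circular). Smoothness of $\pt$ is a concrete statement about the existence of specific 2-bordisms (the saddle and the cap giving the unit and counit of $\coev\dashv\coev^\R$), and the paper records this separately in \cref{sect:TFTexplicit}, again citing \cite[Section 9.3]{CalaqueScheimbauer}; it is an input to the cobordism hypothesis rather than a consequence of it.
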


\subsection{TQFT operations}
\label{sect:TFToperations}

Let $\cA$ be a symmetric monoidal $(\infty, 2)$-category and fix a symmetric monoidal functor $Z\colon \Bord_2^{\fr, \nc}\rightarrow \cA$. By \cref{thm:noncompactcobordismhyp} it corresponds to a smooth object $x=Z(\pt)\in\cA$ whose inverse Serre map $T\colon x\rightarrow x$ is invertible. In this section we describe some bordisms in $\Bord_2^{\fr, \nc}$ and the corresponding operations provided by the positive boundary TQFT.

Consider the circle $S^1$. The space of 2-framings compatible with the given orientation is a torsor over $\Map(S^1, \SO(2))\cong \Z$. Correspondingly, we may parametrize 2-framed circles by integers, and we denote the corresponding 2-framed circle by $S^1_n$ for $n\in\Z$, viewed as a 1-morphism $\varnothing\rightarrow \varnothing$ in $\Bord_2^\fr$. Our conventions are as follows:
\begin{itemize}
	\item The disk provides a 2-morphism $\varnothing\rightarrow S^1_1$ in $\Bord_2^{\fr, \nc}$.
	
	\item $S^1_0$ is the 2-framing on $S^1$ induced by the unique 1-framing.
	
	\item The disk provides a 2-morphism $S^1_{-1}\rightarrow \varnothing$ in $\Bord_2^\fr$. Note that it does not lie in $\Bord_2^{\fr, \nc}$ as this bordism has an empty intersection with the outgoing boundary.
\end{itemize}

\begin{remark}
	We number the 2-framings on the circle as in \cite{SternSzegedy}. The numbering in \cite{TelemanTFT} is slightly different: the three 2-framings above are $S^1_0, S^1_1, S^1_2$, respectively.
\end{remark}

Using \cref{prop:Serretwist} the values of $Z(S^1_n)\in\End_\cA(\bu)$ may be explicitly computed as follows.

\begin{prop}
	We have an equivalence $Z(S^1_n)\cong \ev\circ\sigma\circ (\id\otimes T^n)\circ\coev$. In particular, $Z(S^1_0)\cong \dim(x)$ and $Z(S^1_1)\cong \fZ(x)$.
	\label{prop:S1nvalue}
\end{prop}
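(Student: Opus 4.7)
The plan is to decompose the bordism $S^1_n\colon \varnothing\to\varnothing$ in $\Bord_2^{\fr,\nc}$ into elementary 1-bordisms and then apply the symmetric monoidal functor $Z$ together with \cref{prop:Serretwist}.

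First, I would exhibit a decomposition of 1-morphisms in $\Bord_2^{\fr,\nc}$,
\[
S^1_n \;\cong\; \ev_{\pt}\circ \sigma\circ (\id_{\pt^\vee}\otimes \tau_n)\circ \coev_{\pt},
\]
where $\coev_{\pt}\colon \varnothing\to \pt^\vee\sqcup \pt$ is the framed cup, $\ev_{\pt}\colon \pt\sqcup \pt^\vee\to \varnothing$ is the framed cap, $\sigma$ is the framed symmetry (braid) $\pt^\vee\sqcup \pt\to \pt\sqcup \pt^\vee$, and $\tau_n\colon \pt\to\pt$ is the interval with $n$ twists in its 2-framing (the ``elbow'' from \cref{prop:Serretwist}). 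Geometrically, composing the cup with $n$ twists, the symmetry, and the cap produces a circle whose 2-framing differs from the bounding framing $S^1_{-1}$ by $n+1$ twists, matching $S^1_n$ after the conventions fixed at the start of \cref{sect:TFToperations}. All the factors are 1-morphisms, so the positive-boundary condition on 2-morphisms is not an issue.

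Next, applying the symmetric monoidal $Z$ and using $Z(\pt)=x$, $Z(\pt^\vee)=x^\vee$, together with the standard identifications $Z(\coev_{\pt})=\coev$, $Z(\ev_{\pt})=\ev$, $Z(\sigma)=\sigma$, and $Z(\tau_n)=T^n$ (the last by \cref{prop:Serretwist} applied $n$ times and the fact that compositions of twisted intervals add framing twists), yields the desired formula
\[
Z(S^1_n)\;\cong\;\ev\circ \sigma\circ (\id\otimes T^n)\circ \coev.
\]

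For the specializations: the case $n=0$ is immediate, since $T^0=\id$ and the resulting composite matches \cref{def:dimension} for $\dim(x)$. For $n=1$ one obtains $Z(S^1_1)\cong \tr(T)$ in the sense of \cref{def:dimension}. To identify this with $\fZ(x)=\coev^\R\circ\coev$, I would unpack the definition of $T$ from \cref{def:inverseSerre} inside the trace and perform a diagram chase: the key step is to verify that the composite
\[
x^\vee\otimes x\xrightarrow{\id\otimes T} x^\vee\otimes x\xrightarrow{\sigma} x\otimes x^\vee\xrightarrow{\ev}\bu
\]
equals $\coev^\R$, which follows from the snake identities for the duality $x\dashv x^\vee$ combined with the definition of $T$ (two of the four zigzag legs cancel against one snake, leaving precisely $\coev^\R$). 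Then precomposition with $\coev$ gives $\fZ(x)$.

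The main obstacle is the last identification $\tr(T)=\fZ(x)$: while the bordism decomposition and the action of $Z$ on elementary pieces are essentially formal consequences of the cobordism hypothesis setup of \cref{sect:TFTbackground}, checking the algebraic identity requires a careful string-diagram computation in the $(\infty,2)$-category $\cA$. An alternative, more conceptual route would be to invoke the universal property of the center, $\Hom_\cA(D,\fZ(x))\cong \Hom_{\End_\cA(x)}(D\otimes \id_x,\id_x)$, combined with the fact (coming from the adjunction $\coev\dashv \coev^\R$ and the presentation of $\End_\cA(x)\cong \Hom_\cA(\bu, x^\vee\otimes x)$ with $\id_x\leftrightarrow\coev$) that $\tr(T)$ realizes this same functor.
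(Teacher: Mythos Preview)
Your approach is correct and matches the paper's (implicit) reasoning: the proposition is stated there without a detailed proof, only with the remark that it follows from \cref{prop:Serretwist}, and your argument is a sound elaboration of exactly that. The decomposition of $S^1_n$ into cup, twist, symmetry, and cap, together with symmetric monoidality of $Z$ and \cref{prop:Serretwist}, is precisely the intended route.

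One simplification for the $n=1$ case: rather than the string-diagram chase you outline, note that the paper records in \cref{sect:TFTexplicit} the identity
\[
\coev^\R \;=\; \ev\circ\sigma\circ(\id\otimes T),
\]
citing \cite[Proposition~4.2.3]{LurieCobordism}. Precomposing with $\coev$ immediately gives $Z(S^1_1)\cong \coev^\R\circ\coev$, which is $\fZ(x)$ by the proposition in \cref{sect:Shklyarov} (or rather the earlier section on smooth objects). So your ``key step'' is already available as a stated fact, and no further manipulation is needed.
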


\begin{figure}
	\begin{tikzpicture}[thick]
		\draw (0, 0) [partial ellipse = -180:0:0.3 and 0.15];
		\draw (0, 0) [dashed, partial ellipse = 180:0:0.3 and 0.15];
		\draw (1.5, 0) [partial ellipse = -180:0:0.3 and 0.15];
		\draw (1.5, 0) [dashed, partial ellipse = 180:0:0.3 and 0.15];
		\draw (0.3, 0) arc (180:0:0.45);
		\draw (-0.3,0) to[out=90, in=-90] (0.45,1.7);
		\draw (1.5+0.3,0) to[out=90, in=-90] (0.45+0.6,1.7);
		\draw (0.75, 1.7) ellipse (0.3 and 0.15);
		\draw (0, -0.5) node {$S^1_n$};
		\draw (1.5, -0.5) node {$S^1_m$};
		\draw (0.75, 2.2) node {$S^1_{n+m-1}$};
		
		\draw (3.3, 1) ellipse (0.3 and 0.15);
		\draw (3.3, 1) [partial ellipse = -180:0:0.3 and 0.6];
		\draw (3.3, 1.5) node {$S^1_1$};
		
		\draw (6, 0) [partial ellipse = -180:0:0.3 and 0.15];
		\draw (6, 0) [dashed, partial ellipse = 180:0:0.3 and 0.15];
		\draw (5.25, 1.7) ellipse (0.3 and 0.15);
		\draw (6.75, 1.7) ellipse (0.3 and 0.15);
		\draw (5.7, 0) to[out=90, in=-90] (4.95, 1.7);
		\draw (6.3, 0) to[out=90, in=-90] (7.05, 1.7);
		\draw (5.55, 1.7) arc (-180:0:0.45);
		\draw (6, -0.5) node {$S^1_{n+m+1}$};
		\draw (5.25, 2.2) node {$S^1_n$};
		\draw (6.75, 2.2) node {$S^1_m$};
	\end{tikzpicture}
	\caption{Product, unit and coproduct.}
	\label{fig:TFToperations}
\end{figure}
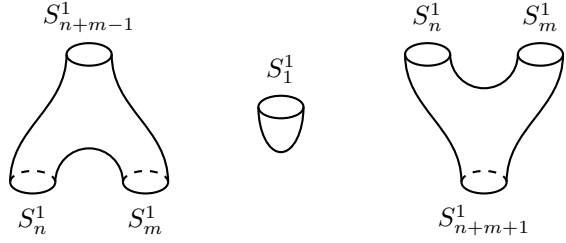

Consider the bordisms from \cref{fig:TFToperations}. Our convention is that the 1-morphisms go from left to right and 2-morphisms go from bottom to top in the picture. Applying the functor $Z$ we obtain the following operations:
\begin{itemize}
	\item A product $\wedge\colon Z(S^1_n)\otimes Z(S^1_m)\rightarrow Z(S^1_{n+m-1})$.
	\item A non-counital coproduct $\bbDelta\colon Z(S^1_1)\rightarrow Z(S^1_0)\otimes Z(S^1_0)$.
\end{itemize}


\begin{figure}[ht]
	\begin{tikzpicture}[thick]
		\draw (-5, 0) [partial ellipse = -180:0:0.3 and 0.15];
		\draw (-5, 0) [dashed, partial ellipse = 180:0:0.3 and 0.15];
		\draw (-4.25, 1.7) ellipse (0.3 and 0.15);
		\draw (-2, 0) ellipse (0.3 and 0.15);
		\draw (-5.3, 0) to[out=90, in=-90] (-4.55, 1.7);
		\draw (-4.7, 0) arc (180:0:0.45);
		\draw (-3.8, 0) arc (-180:0:1.05);
		\draw (-2.3, 0) arc (0:-180:0.45);
		\draw (-3.2, 0) to[out=90, in=-90] (-3.95, 1.7);
		\draw[thin, blue, dashed] (-4.1, 0.3) rectangle (-1.5, -1.25);
		\draw (-5, -0.5) node {$S^1_1$};
		\draw (-4.25, 2.2) node {$S^1_0$};
		\draw (-2, 0.5) node {$S^1_0$};
		\draw[orange, fill=orange] (-4.25, 0.45) circle (2pt);
		\draw[fill=black] (-2.75, -0.45) circle (2pt);
		\draw[orange, fill=orange] (-2.75, -1.05) circle (2pt);
		
		\draw[->, decorate, decoration={snake,amplitude=1pt, segment length=5pt}, -stealth'] (-1.5, 0.85) -- (-0.6, 0.85);
		\draw[->, decorate, decoration={snake,amplitude=1pt, segment length=5pt}, -stealth'] (2.1, 0.85) -- (3, 0.85);
		
		\draw (0.75, 0) [partial ellipse = -180:0:0.3 and 0.15];
		\draw (0.75, 0) [dashed, partial ellipse = 180:0:0.3 and 0.15];
		\draw (0, 1.7) ellipse (0.3 and 0.15);
		\draw (1.5, 1.7) ellipse (0.3 and 0.15);
		\draw (0.45, 0) to[out=90, in=-90] (-0.3, 1.7);
		\draw (1.05, 0) to[out=90, in=-90] (1.8, 1.7);
		\draw (0.3, 1.7) arc (-180:0:0.45);
		\draw (0.75, -0.5) node {$S^1_1$};
		\draw (0, 2.2) node {$S^1_0$};
		\draw (1.5, 2.2) node {$S^1_0$};
		\draw[fill=black] (0.75, 1.25) circle (2pt);
		
		\draw (3.5, 0) ellipse (0.3 and 0.15);
		\draw (5.75, 1.7) ellipse (0.3 and 0.15);
		\draw (6.5, 0) [partial ellipse = -180:0:0.3 and 0.15];
		\draw (6.5, 0) [dashed, partial ellipse = 180:0:0.3 and 0.15];
		\draw (6.8, 0) to[out=90, in=-90] (6.05, 1.7);
		\draw (6.2, 0) arc (0:180:0.45);
		\draw (5.3, 0) arc (0:-180:1.05);
		\draw (3.8, 0) arc (-180:0:0.45);
		\draw (4.7, 0) to[out=90, in=-90] (5.45, 1.7);
		\draw[thin, blue, dashed] (5.6, 0.3) rectangle (3, -1.25);
		\draw (3.5, 0.5) node {$S^1_0$};
		\draw (5.75, 2.2) node {$S^1_0$};
		\draw (6.5, -0.5) node {$S^1_1$};
		\draw[orange, fill=orange] (5.75, 0.45) circle (2pt);
		\draw[fill=black] (4.25, -0.45) circle (2pt);
		\draw[orange, fill=orange] (4.25, -1.05) circle (2pt);
	\end{tikzpicture}
	\caption{The homotopy $\vee'$.}
	\label{fig:TFThomotopy}
\end{figure}
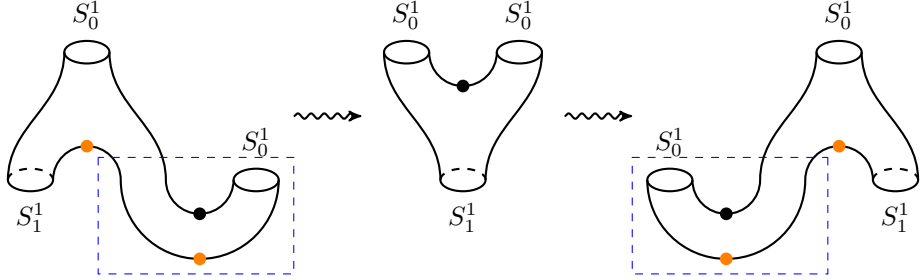

In addition, there is a homotopy between 2-bordisms given by \cref{fig:TFThomotopy} witnessing the Frobenius property. After applying $Z$ it gives rise to the homotopies
\begin{equation}
\vee'\colon ((-)\otimes 1)\wedge \bbDelta(1)\sim \bbDelta(-)\sim \bbDelta(1)\wedge (1\otimes (-))
\label{eq:TFThomotopy}
\end{equation}
of maps $Z(S^1_1)\rightarrow Z(S^1_0)\otimes Z(S^1_0)$. We will explain a connection between this homotopy and the loop coproduct from \cref{def:stringcoproduct} in \cref{thm:TFTHWequivalence}.

\subsection{Explicit description of the operations}
\label{sect:TFTexplicit}

One can give a generators-and-relations presentation of the bordism category $\Bord_2^{\fr, \nc}$ by equipping bordisms with Morse functions. Generators will then be given by bordisms equipped with Morse functions with a unique critical point in the interior. As an example of such approach, we refer to \cite[Theorem 7.1]{Pstragowski} for a presentation of the underlying $2$-category of $\Bord_2^{\fr}$ and \cite[Definition 4.2.1]{Araujo} for a presentation of the underlying $(3, 2)$-category of $\Bord_2^{\fr}$.

All 2-bordisms we draw are equipped with a natural Morse function given by the height. In this section we split the bordisms given in \cref{fig:TFThomotopy} into elementary ones to provide a definition of the TQFT operations independent of the cobordism hypothesis given in \cref{thm:noncompactcobordismhyp}. We begin with a description of the duality data for $\pt\in\Bord_2^{\fr, \nc}$. Note that by \cite[Proposition 4.2.3]{LurieCobordism} we have
\[\coev^\R = \ev\circ\sigma\circ(\id\otimes T).\]
In particular, $\coev^\R$ coincides with $\ev$ up to a twist in the framing. Taking the dual of both sides, we obtain
\[\ev^\L = (T\otimes \id)\circ\sigma\circ\coev.\]

The following is shown in \cite[Section 9.3]{CalaqueScheimbauer}; we will not describe the framings on the bordisms and refer to \cite{CalaqueScheimbauer} for a detailed description.

\begin{prop}
The point $\pt\in\Bord_2^{\fr, \nc}$ is a smooth object. The evaluation and coevaluation are shown in \cref{fig:TFTcoevev}. The counit and unit of the adjunction $\coev\dashv \coev^\R$ are shown in \cref{fig:TFTcoevadjunction}.
\end{prop}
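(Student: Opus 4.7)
The plan is to exhibit explicit bordisms that realize the duality data and the adjunction $\coev\dashv\coev^\R$, and then check that the triangle identities correspond to elementary diffeomorphisms of bordisms. Since we are working in $\Bord_2^{\fr,\nc}$, we need to be attentive to two things throughout: the 2-framings on all 1- and 2-bordisms involved, and the non-compactness condition that every connected component of each 2-bordism meet the outgoing boundary.

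First I would fix $x=\pt$ and take both $\ev,\coev$ to be the ``elbow'' 1-bordisms: $\coev\colon \varnothing\rightarrow \pt\sqcup \pt$ is a single arc with its two endpoints as outgoing boundary, and $\ev\colon \pt\sqcup \pt\rightarrow \varnothing$ is the arc with both endpoints as incoming boundary. The 2-framings on these arcs, as well as the 2-framing on the point $\pt$, are chosen as in \cite[Section~9.3]{CalaqueScheimbauer}. The cusp 2-isomorphisms $\alpha,\beta$ witnessing $\ev\circ(\id\otimes \coev)\sim \id$ and $(\id\otimes \ev)\circ (\coev\otimes \id)\sim \id$ are the standard ``straightening of a zig-zag'' 2-bordisms (saddles); they are diffeomorphic to a strip, hence lie in $\Bord_2^{\fr,\nc}$ as they meet the outgoing boundary. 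This establishes that $\pt$ is dualizable.

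Next I would construct the right adjoint $\coev^\R\colon \pt\sqcup\pt\rightarrow \varnothing$ as the same arc but with the framing twisted so that $\coev^\R$ realizes the formula $\ev\circ\sigma\circ(\id\otimes T)$ of \cite[Proposition~4.2.3]{LurieCobordism}. Concretely, the counit $\epsilon\colon \coev\circ\coev^\R\Rightarrow \id_{\pt\sqcup\pt}$ is the 2-bordism obtained by capping the composite $\coev\circ\coev^\R$ (a pair of disjoint arcs with a ``vertical'' connection) against the two strips $\id_{\pt\sqcup\pt}$; topologically this is a pair of disks. The unit $\eta\colon \id_\varnothing\Rightarrow \coev^\R\circ\coev$ is a 2-bordism whose underlying 2-manifold is a disk: its incoming boundary is empty and its outgoing boundary is the circle formed by composing $\coev$ with $\coev^\R$. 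Both the unit and the counit are unions of disks meeting the outgoing boundary non-trivially, so they lie in $\Bord_2^{\fr,\nc}$.

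The two triangle identities for the adjunction then become the assertions that two particular composites of saddles and disks are equivalent (through an invertible 3-cell in $\Bord_2^{\fr,\nc}$) to identity 2-bordisms. Geometrically, each side of a triangle identity is a 2-bordism built out of a saddle glued to a disk, and after an isotopy relative to boundary this simplifies to a rectangular strip, i.e.\ an identity 2-morphism. The existence of the required diffeomorphisms follows from the standard ``Zorro's lemma'' for surfaces with corners; one only needs to check that the framings agree, which reduces to a computation of winding numbers along the boundary circles, performed as in \cite[Section~9.3]{CalaqueScheimbauer}. I expect this framing bookkeeping to be the main obstacle, but it is a finite check: since the framings on all the ingredient 1- and 2-bordisms are chosen to match the conventions for $S^1_n$ introduced just before \cref{prop:S1nvalue}, and since the inverse Serre twist $T$ is precisely the twist encoded by $\coev^\R$ versus $\ev$, the triangle identities are framed-compatible. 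Finally, the fact that no use is made of $\ev^\R$ (which would be needed for properness) confirms that $\pt$ is smooth but need not be proper in $\Bord_2^{\fr,\nc}$.
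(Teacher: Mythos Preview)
The paper does not prove this proposition; it simply records the statement and defers to \cite[Section~9.3]{CalaqueScheimbauer} for the details, explicitly declining to describe the framings. Your proposal goes further by sketching the actual argument, and the overall strategy --- exhibit the elbows as duality data, identify $\coev^\R$ via $\ev\circ\sigma\circ(\id\otimes T)$, write down the unit $\eta$ as a disk bounding $S^1_1$, and reduce the triangle identities to framed isotopies --- is the correct one and matches what \cite{CalaqueScheimbauer} do.

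There is, however, a concrete topological error in your description of the counit. You write that $\epsilon\colon \coev\circ\coev^\R\Rightarrow \id_{\pt\sqcup\pt}$ is ``topologically a pair of disks''. It is not: trace the boundary of this 2-bordism. The source $\coev\circ\coev^\R$ consists of two disjoint arcs (one joining the two incoming points, one joining the two outgoing points), the target $\id_{\pt\sqcup\pt}$ consists of two intervals pairing incoming to outgoing, and the side pieces are four short intervals at the corners. These eight arcs glue up to a \emph{single} circle, so $\epsilon$ is a single disk with one saddle critical point for the height function --- exactly the saddle drawn in \cref{fig:TFTcoevadjunction}. A pair of disks would have two boundary circles and cannot realize this 2-morphism. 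This matters for the rest of your argument: the triangle identity on the $\coev$ side is the cancellation of this saddle against the cup $\eta$, and if you model $\epsilon$ incorrectly you will not get the right surface to isotope, nor the right framing computation. Once you replace ``pair of disks'' by the saddle, the remainder of your sketch goes through.
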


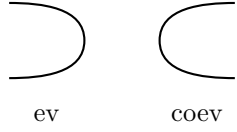
\begin{figure}
	\begin{tikzpicture}[thick]
		\draw (0, 0) [out=0, in=90] to (1, -0.5) [out=-90, in=0] to (0, -1);
		\draw (3, 0) [out=180, in=90] to (2, -0.5) [out=-90, in=-180] to (3, -1);
		\draw (0.5, -1.5) node {$\ev$};
		\draw (2.5, -1.5) node {$\coev$};
	\end{tikzpicture}
	\caption{Evaluation and coevaluation.}
	\label{fig:TFTcoevev}
\end{figure}

\begin{figure}
	\begin{tikzpicture}[thick]
		\draw (0, 0) -- (0, 2) -- (2, 2) -- (2, 0);
		\draw (0, 0.3) [partial ellipse = -90:0:0.4 and 0.3];
		\draw (-0.3, 0.3) [dashed, partial ellipse = 64:0:0.7 and 0.3];
		\draw (-0.3, 0.3) [partial ellipse = 90:64:0.7 and 0.3];
		\draw (2, 0.3) [partial ellipse = 180:270:0.7 and 0.3];
		\draw (1.7, 0.3) [dashed, partial ellipse = 180:90:0.4 and 0.3];
		\draw (0.85, 0.3) [partial ellipse = 180:0:0.45 and 0.6];
		\draw[dashed] (1.7, 0.6) -- (1.7, 2);
		\draw (1.7, 2) -- (1.7, 2.6) -- (-0.3, 2.6) -- (-0.3, 0.6);
		\draw (0.85, -0.3) node {$\epsilon_p$};
		
		\draw (6, 1.7) ellipse (0.7 and 0.35);
		\draw (6, 1.7) [partial ellipse = -180:0:0.7 and 1.4];
		\draw (6, -0.3) node {$\eta$};
	\end{tikzpicture}
	\caption{Adjunction $\coev\dashv \coev^\R$.}
	\label{fig:TFTcoevadjunction}
\end{figure}
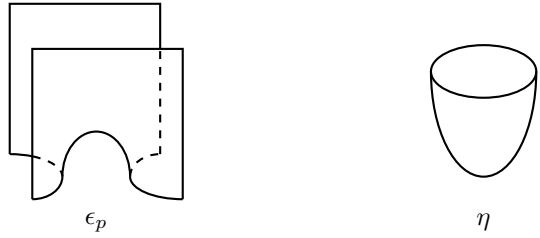

Using the symmetric monoidal functor $Z\colon \Bord_2^{\fr, \nc}\rightarrow \cA$, we obtain the structure of a smooth object on $x=Z(\pt)\in\cA$. We are now ready to describe the TQFT operations. The simplest operation is the unit on $Z(S^1_1)$.

\begin{prop}
	The unit map for $Z(S^1_1)$ is equivalent to the unit
	\[\eta\colon \id_\bu\rightarrow \coev^\R\circ\coev\cong Z(S^1_1)\]
        of the adjunction $\coev\dashv \coev^\R$.
	\label{prop:TFTunit}
\end{prop}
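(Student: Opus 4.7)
The plan is to identify both the unit map for $Z(S^1_1)$ and the adjunction unit $\eta$ as images under $Z$ of the same 2-framed bordism: the disk $D^2$ with outgoing boundary $S^1_1$.

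First, I would use \cref{prop:S1nvalue} together with the identity $\coev^\R\cong \ev\circ\sigma\circ(\id\otimes T)$ recorded just above it to compute
\[
Z(S^1_1)\cong \ev\circ\sigma\circ(\id\otimes T)\circ\coev\cong \coev^\R\circ\coev,
\]
so that both the unit map for $Z(S^1_1)$ and the adjunction unit $\eta\colon \id_\bu\to \coev^\R\circ\coev$ become 2-morphisms with the same source $\id_\bu$ and the same target.

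Next, I would realize this equivalence geometrically. The 1-bordism $\coev^\R\circ\coev\colon \varnothing\to \varnothing$ is obtained by gluing the U-shape $\coev$ of \cref{fig:TFTcoevev} with the twisted upside-down U-shape representing $\coev^\R$, producing a circle equipped with exactly one framing twist, i.e.\ $S^1_1$ in $\Bord_2^{\fr,\nc}$. The adjunction unit $\eta$ of \cref{fig:TFTcoevadjunction} is the 2-bordism filling in this circle by a disk. On the other hand, the unit 2-morphism for $Z(S^1_1)$ is by definition the image under $Z$ of the disk $D^2$ with outgoing $S^1_1$ boundary. Since both come from the same 2-framed bordism, they give the same 2-morphism after applying the symmetric monoidal functor $Z$.

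The only subtlety is compatibility of 2-framings: one must verify that the 2-framing on the boundary circle produced by gluing the elementary pieces $\coev$ and $\coev^\R$ agrees with the $S^1_1$-framing inherited from the standard 2-framing on the disk. This amounts to tracking the single framing twist in $\coev^\R$ relative to $\ev$, which is exactly the content of \cref{prop:Serretwist} applied to the interval with one twist; this is the main (but essentially routine) obstacle to overcome.
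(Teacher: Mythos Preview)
Your proposal is correct and matches the paper's approach: the paper states this proposition without proof, treating it as immediate from the preceding identification of the adjunction data for $\pt\in\Bord_2^{\fr,\nc}$ in terms of bordisms (the unit $\eta$ being the disk in \cref{fig:TFTcoevadjunction}), together with the fact that the unit for $Z(S^1_1)$ is by definition the image of the disk bordism $\varnothing\to S^1_1$ from \cref{fig:TFToperations}. Your argument spells out precisely this reasoning, including the framing compatibility that the paper delegates to \cite{CalaqueScheimbauer}.
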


Next, we consider the multiplication on $Z(S^1_1)$.

\begin{prop}
	\label{def:tftproduct}
	The product $\wedge\colon Z(S^1_1)\otimes Z(S^1_1)\rightarrow Z(S^1_1)$ is equivalent to the composite
	\[Z(S^1_1)\otimes Z(S^1_1)\cong \coev^\R\circ\coev\circ \coev^\R\circ\coev\xrightarrow{\id\circ \epsilon\circ \id} \coev^\R\circ \coev\cong Z(S^1_1).\]
	Similarly, the product $\wedge\colon Z(S^1_1)\otimes Z(S^1_0)\rightarrow Z(S^1_0)$ is equivalent to the composite
	\[Z(S^1_0)\otimes Z(S^1_1)\cong \ev\circ\coev\circ\coev^\R\circ\coev\xrightarrow{\id\circ \epsilon\circ \id} \ev\circ \coev\cong Z(S^1_0).\]
	\label{prop:TFTmultiplication}
\end{prop}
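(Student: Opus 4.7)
The plan is to prove this by decomposing the pair-of-pants bordism into elementary pieces using a Morse function, and then invoking the symmetric monoidal functor $Z$ to identify the resulting composition with the claimed formula. By Proposition~\ref{prop:S1nvalue}, the circle $S^1_1$ (viewed as an endomorphism 1-bordism of $\varnothing$) is equivalent in $\Bord_2^{\fr, \nc}$ to $\coev^\R \circ \coev$, where $\coev$ and $\coev^\R$ are the half-circle 1-bordisms depicted in \cref{fig:TFTcoevev}. Thus the source of the product, $S^1_1 \otimes S^1_1$, is equivalent as a 1-morphism to $(\coev^\R \circ \coev)\otimes (\coev^\R \circ \coev)$, and the target $S^1_1$ is equivalent to $\coev^\R \circ \coev$.

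First I would exhibit a generic height function on the pair-of-pants bordism whose unique critical point in the interior is an index-$1$ saddle sitting between the two incoming circles. Slicing just below the saddle presents the bordism as $(\coev^\R \circ \coev)^{\otimes 2}$; slicing just above it presents it as $\coev^\R \circ \coev$; and the 2-bordism near the saddle locally merges the two ``inner'' half-circles via a piece looking like $\coev \circ \coev^\R \Rightarrow \id_{\pt \otimes \pt}$, while being trivial on the two ``outer'' half-circles. Up to the framings, this saddle piece is by definition the elementary bordism producing the counit $\epsilon_p$ of the adjunction $\coev \dashv \coev^\R$ in \cref{fig:TFTcoevadjunction}. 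Applying $Z$ and using its symmetric monoidality gives the composite $\id \circ \epsilon \circ \id$ asserted in the first half of the proposition. For the second formula, I would perform the analogous decomposition on the pair-of-pants with inputs $S^1_0$ and $S^1_1$, using that $S^1_0 \cong \ev \circ \coev$ (with the implicit braiding suppressed per the convention stated after \cref{def:dimension}); the saddle now merges the ``$\coev^\R$'' of the $S^1_1$ factor with the ``$\coev$'' of the $S^1_0$ factor, again producing $\epsilon_p$.

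The main obstacle is the careful bookkeeping of the 2-framings: one must check that the 2-framing induced on each elementary sub-bordism by the Morse decomposition matches the standard 2-framing on the elementary generators $\coev$, $\coev^\R$, and $\epsilon_p$ specified in \cite{CalaqueScheimbauer}, without any spurious twist that would insert an inverse Serre map. In particular, the framed pair-of-pants $S^1_n \otimes S^1_m \to S^1_{n+m-1}$ produces the output $S^1_1$ precisely when $n = m = 1$ (resp.\ $S^1_0$ when $(n,m) = (1,0)$), so no correction by $T$ is needed; this is consistent with the Morse-theoretic decomposition giving $\epsilon_p$ on the nose rather than $\epsilon_p$ twisted by the Serre morphism. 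Once the framings are verified to agree, the generators-and-relations presentation of $\Bord_2^{\fr, \nc}$ referenced at the start of \cref{sect:TFTexplicit} (following \cite{Pstragowski, Araujo}) ensures that the value of $Z$ on the pair-of-pants is determined by its value on these elementary pieces, yielding the stated composites.
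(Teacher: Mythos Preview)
Your proposal is correct and follows essentially the same approach as the paper: the paper's proof consists of a single sentence pointing to a figure that exhibits exactly the Morse-theoretic decomposition you describe, with the saddle realizing the counit $\epsilon_p$ between the two inner half-circles and the outer half-circles carried along as identity cylinders. Your more explicit discussion of the framing bookkeeping is a reasonable elaboration of what the paper leaves implicit in the picture.
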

\begin{proof}
	The cobordism representing the product $S^1_1\sqcup S^1_1\rightarrow S^1_1$ can be decomposed as shown in \cref{fig:TFTmultiplication} which implies the claim.
	
	\begin{figure}
		\begin{tikzpicture}[thick]
			\draw (-0.9, 2.3) [partial ellipse = 90:180:0.4 and 0.3];
			\draw (-0.6, 2.3) [partial ellipse = 180:270:0.7 and 0.3];
			\draw (-1.3, 2.3) -- (-1.3, 0.3);
			\draw (-0.6, 0.3) [partial ellipse = 180:270:0.7 and 0.3];
			\draw (-0.9, 0.3) [dashed, partial ellipse = 90:180:0.4 and 0.3];
			\draw (-0.6, 2) -- (-0.6, 0);
			\draw (-0.9, 2.6) -- (-0.9, 2);
			\draw[dashed] (-0.9, 2) -- (-0.9, 0.6);
			\draw (-1, -0.4) node {$\coev$};
			
			\draw (0, 0) -- (0, 2) -- (2, 2) -- (2, 0);
			\draw (0, 0.3) [partial ellipse = -90:0:0.4 and 0.3];
			\draw (-0.3, 0.3) [dashed, partial ellipse = 64:0:0.7 and 0.3];
			\draw (-0.3, 0.3) [partial ellipse = 90:64:0.7 and 0.3];
			\draw (2, 0.3) [partial ellipse = 180:270:0.7 and 0.3];
			\draw (1.7, 0.3) [dashed, partial ellipse = 180:90:0.4 and 0.3];
			\draw (0.85, 0.3) [partial ellipse = 180:0:0.45 and 0.6];
			\draw[dashed] (1.7, 0.6) -- (1.7, 2);
			\draw (1.7, 2) -- (1.7, 2.6) -- (-0.3, 2.6) -- (-0.3, 0.6);
			\draw (0, -0.3) node {$\coev^\R$};
			\draw (1.6, -0.4) node {$\coev$};
			
			\draw (2.4, 2.3) [partial ellipse = 90:0:0.7 and 0.3];
			\draw (2.7, 2.3) [partial ellipse = 0:-90:0.4 and 0.3];
			\draw (2.7, 0.3) [partial ellipse = 0:-90:0.4 and 0.3];
			\draw (2.4, 0.3) [dashed, partial ellipse = 0:64:0.7 and 0.3];
			\draw (2.4, 0.3) [partial ellipse = 90:64:0.7 and 0.3];
			\draw (3.1, 2.3) -- (3.1, 0.3);
			\draw (2.7, 2) -- (2.7, 0);
			\draw (2.4, 2.6) -- (2.4, 0.6);
			\draw (2.8, -0.3) node {$\coev^\R$};
			
			\draw (3.6, 1.3) node {$=$};
			
			\draw (4.5, 2.3) [partial ellipse = 90:180:0.4 and 0.3];
			\draw (4.8, 2.3) [partial ellipse = 180:270:0.7 and 0.3];
			\draw (4.8, 2) -- (6.8, 2);
			\draw (6.8, 2.3) [partial ellipse = -90:0:0.4 and 0.3];
			\draw (6.5, 2.3) [partial ellipse = 0:90:0.7 and 0.3];
			\draw (6.5, 2.6) -- (4.5, 2.6);
			\draw (4.5, 0.3) [dashed, partial ellipse = 90:180:0.4 and 0.3];
			\draw (4.8, 0.3) [partial ellipse = 180:270:0.7 and 0.3];
			\draw (4.8, 0.3) [partial ellipse = -90:0:0.4 and 0.3];
			\draw (4.5, 0.3) [dashed, partial ellipse = 0:90:0.7 and 0.3];
			\draw (6.8, 0.3) [partial ellipse = -90:0:0.4 and 0.3];
			\draw (6.8, 0.3) [partial ellipse = 270:180:0.7 and 0.3];
			\draw (6.5, 0.3) [dashed, partial ellipse = 0:90:0.7 and 0.3];
			\draw (6.5, 0.3) [dashed, partial ellipse = 180:90:0.4 and 0.3];
			\draw (5.65, 0.3) [partial ellipse = 180:0:0.45 and 0.6];
			\draw (7.2, 2.3) -- (7.2, 0.3);
			\draw (4.1, 2.3) -- (4.1, 0.3);
			\draw (4.6, -0.3) node {$S^1_1$};
			\draw (6.7, -0.3) node {$S^1_1$};
			\draw (5.8, 2.9) node {$S^1_1$};
		\end{tikzpicture}
		\caption{Decomposition of the product.}
		\label{fig:TFTmultiplication}
	\end{figure}
\end{proof}

\begin{prop}
	The coproduct $\bbDelta\colon Z(S^1_1)\rightarrow Z(S^1_0)\otimes Z(S^1_0)$ is equivalent to
	\begin{align*}
		Z(S^1_1)&\cong \ev\circ \ev^\L \\
		&\cong (\ev\otimes \ev)\circ(\id\otimes \coev\circ\coev^\R\otimes \id)\circ(\coev\otimes \coev) \\
		&\xrightarrow{\epsilon} (\ev\circ\coev)\otimes (\ev\circ\coev) \\
		&\cong Z(S^1_0)\otimes Z(S^1_0).
	\end{align*}
	\label{prop:TFTcomultiplication}
\end{prop}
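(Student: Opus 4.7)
The plan is to mirror the strategy used in the proof of \cref{prop:TFTmultiplication}: decompose the 2-bordism representing the coproduct $\bbDelta\colon S^1_1\Rightarrow S^1_0\sqcup S^1_0$ into elementary 2-bordisms, each of whose image under $Z$ is a piece of the duality data on $x=Z(\pt)$. The first task is to identify $Z(S^1_1)\cong \ev\circ\ev^\L$. By \cref{prop:S1nvalue} we have $Z(S^1_1)\cong \ev\circ\sigma\circ(\id\otimes T)\circ\coev$, and by the formula $\ev^\L=(T\otimes\id)\circ\sigma\circ\coev$ and the symmetry of $\ev$ under the braiding we can move the Serre twist from one factor to the other, so the two composites agree. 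The second isomorphism in the statement, expanding $\ev\circ\ev^\L$ as
\[
(\ev\otimes\ev)\circ(\id\otimes\coev\circ\coev^\R\otimes\id)\circ(\coev\otimes\coev),
\]
then follows from the triangle identities for the adjunction $\coev\dashv\coev^\R$: the composite $\coev\circ\coev^\R$ inserted in the middle is absorbed by the unit $\eta$, recovering $\ev^\L$ (once flanked by an additional $\coev$ on each side and evaluated against the outer $\ev$'s via the zigzag identity for $\ev\dashv \ev$-dual).

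For the central $\epsilon$-step, I would draw the coproduct pair-of-pants as a composition, analogous to \cref{fig:TFTmultiplication}: the bottom consists of two nested coevaluation caps (contributing $\coev\otimes\coev$), followed by a middle strip in which a $\coev^\R$ absorbs the inner pair into the empty object and a subsequent $\coev$ reopens it, followed on top by two nested evaluation caps (contributing $\ev\otimes\ev$). Composing with the counit $\epsilon\colon \coev\circ\coev^\R\Rightarrow\id$ on the middle strip is represented topologically by the 2-bordism move that collapses the extra handle, thus producing the pair-of-pants coproduct bordism. The resulting 1-morphism is $(\ev\circ\coev)\otimes(\ev\circ\coev)\cong Z(S^1_0)\otimes Z(S^1_0)$ by \cref{prop:S1nvalue} with $n=0$.

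The main obstacle is keeping track of the 2-framings across the decomposition: the incoming circle carries one extra twist ($S^1_1$) while the outgoing ones carry the standard framings ($S^1_0$), and it is precisely the appearance of the Serre twist $T$ hidden inside $\ev^\L$ that accounts for this framing asymmetry. As in the proof of \cref{prop:TFTmultiplication}, the framings on each elementary 2-bordism match those prescribed by the cobordism hypothesis (\cref{thm:noncompactcobordismhyp}) on $\pt\in\Bord_2^{\fr,\nc}$ via the adjunction data displayed in \cref{fig:TFTcoevev} and \cref{fig:TFTcoevadjunction}, so no extra Serre corrections beyond the one already absorbed into $\ev\circ\ev^\L=Z(S^1_1)$ appear.
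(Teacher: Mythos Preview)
Your approach is essentially the same as the paper's: decompose the pair-of-pants bordism into elementary pieces corresponding to the duality data, exactly as was done for the product in \cref{prop:TFTmultiplication}. The paper's proof literally consists of a single sentence pointing to a figure (\cref{fig:TFTcomultiplication}) showing this decomposition, so your verbal account is a fleshed-out version of the same argument.

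One small imprecision worth flagging: your explanation of the second isomorphism $\ev\circ\ev^\L\cong(\ev\otimes\ev)\circ(\id\otimes\coev\circ\coev^\R\otimes\id)\circ(\coev\otimes\coev)$ invokes the triangle identity for $\coev\dashv\coev^\R$ to ``absorb'' the middle composite via the unit $\eta$. That is not quite how this step works. The two sides are simply two different ways of slicing the \emph{same} framed 1-bordism $S^1_1$; no 2-morphism (unit or counit) is being applied here. The only genuine 2-morphism in the whole composite is the counit $\epsilon$ in step three, which corresponds to the unique saddle point of the pair-of-pants. Once you phrase step two as an equality of 1-bordisms rather than an application of adjunction data, the argument is clean and matches the paper.
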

\begin{proof}
	The cobordism representing the coproduct $S^1_1\rightarrow S^1_0\sqcup S^1_0$ can be decomposed as shown in \cref{fig:TFTcomultiplication}, which implies the claim.
	\begin{figure}
		\begin{tikzpicture}[thick]
			\draw (0, 2) -- (2, 2);
			\draw (-0.3, 2.6) -- (1.7, 2.6);
			\draw (-0.2, 1.7) [partial ellipse = 90:180:0.4 and 0.3];
			\draw (0.1, 1.7) [partial ellipse = 180:270:0.7 and 0.3];
			\draw (0.1, 1.4) -- (2.5, 1.4);
			\draw (2.5, 1.7) [partial ellipse = -90:0:0.4 and 0.3];
			\draw (2.2, 1.7) [partial ellipse = 0:90:0.7 and 0.3];
			\draw (-0.6, 1.7) -- (-0.6, -0.3);
			\draw (2.9, 1.7) -- (2.9, -0.3);
			\draw (0.1, -0.3) [partial ellipse = 180:270:0.7 and 0.3];
			\draw (-0.2, -0.3) [dashed, partial ellipse = 90:180:0.4 and 0.3];
			\draw (0, 0.3) [dashed, partial ellipse = -90:0:0.4 and 0.3];
			\draw (-0.3, 0.3) [dashed, partial ellipse = 90:0:0.7 and 0.3];
			\draw (0.1, -0.6) -- (2.5, -0.6);
			\draw (2.5, -0.3) [partial ellipse = -90:0:0.4 and 0.3];
			\draw (2.2, -0.3) [dashed, partial ellipse = 0:90:0.7 and 0.3];
			\draw (2, 0.3) [dashed, partial ellipse = 180:270:0.7 and 0.3];
			\draw (1.7, 0.3) [dashed, partial ellipse = 180:90:0.4 and 0.3];
			\draw (0.85, 0.3) [dashed, partial ellipse = 180:0:0.45 and 0.6];
			\draw (-0.2, 2) -- (-0.2, 1.4);
			\draw[dashed] (-0.2, 1.4) -- (-0.2, 0);
			\draw (0, 2) -- (0, 1.4);
			\draw[dashed] (0, 1.4) -- (0, 0);
			\draw (2.2, 2) -- (2.2, 1.4);
			\draw[dashed] (2.2, 1.4) -- (2.2, 0);
			\draw (2, 2) -- (2, 1.4);
			\draw[dashed] (2, 1.4) -- (2, 0);
			\draw (-0.3, 2.6) -- (-0.3, 2);
			\draw[dashed] (-0.3, 2) -- (-0.3, 0.6);
			\draw (1.7, 2.6) -- (1.7, 2);
			\draw[dashed] (1.7, 2) -- (1.7, 0.6);
			\draw (-0.5, 2.9) [partial ellipse = 270:180:0.7 and 0.3];
			\draw (-0.8, 2.9) [partial ellipse = 180:90:0.4 and 0.3];
			\draw (-0.8, 3.2) -- (1.6, 3.2);
			\draw (1.6, 2.9) [partial ellipse = 90:0:0.7 and 0.3];
			\draw (1.9, 2.9) [partial ellipse = 0:-90:0.4 and 0.3];
			\draw (-1.2, 2.9) -- (-1.2, 0.9);
			\draw (-0.5, 2.6) -- (-0.5, 1.9);
			\draw[dashed] (-0.5, 1.9) -- (-0.5, 0.6);
			\draw (-0.5, 0.9) [partial ellipse = -100:-180:0.7 and 0.3];
			\draw (-0.8, 0.9) [dashed, partial ellipse = 180:90:0.4 and 0.3];
			\draw[dashed] (-0.8, 1.2) -- (1.6, 1.2);
			\draw (2.3, 2.9) -- (2.3, 2);
			\draw[dashed] (2.3, 2) -- (2.3, 0.9);
			\draw (1.9, 0.9) [dashed, partial ellipse = 0:-90:0.4 and 0.3];
			\draw (1.6, 0.9) [dashed, partial ellipse = 90:0:0.7 and 0.3];
			\draw (1.9, 2.6) -- (1.9, 2);
			\draw[dashed] (1.9, 2) -- (1.9, 0.6);
			
			\draw (3.4, 1) node {$=$};
			
			\draw (4.8, 2) -- (6.8, 2);
			\draw (4.5, 2.6) -- (6.5, 2.6);
			\draw (4.8, 1.7) [partial ellipse = 90:180:0.4 and 0.3];
			\draw (5.1, 1.7) [partial ellipse = 180:270:0.7 and 0.3];
			\draw (5.1, 1.4) -- (7.1, 1.4);
			\draw (7.1, 1.7) [partial ellipse = -90:0:0.4 and 0.3];
			\draw (6.8, 1.7) [partial ellipse = 0:90:0.7 and 0.3];
			\draw (4.4, 1.7) -- (4.4, -0.3);
			\draw (7.5, 1.7) -- (7.5, -0.3);
			\draw (5.1, -0.3) [partial ellipse = 180:270:0.7 and 0.3];
			\draw (4.8, -0.3) [dashed, partial ellipse = 90:180:0.4 and 0.3];
			\draw (4.8, 0.3) [dashed, partial ellipse = -90:0:0.4 and 0.3];
			\draw (4.5, 0.3) [dashed, partial ellipse = 90:0:0.7 and 0.3];
			\draw (5.1, -0.6) -- (7.1, -0.6);
			\draw (7.1, -0.3) [partial ellipse = -90:0:0.4 and 0.3];
			\draw (6.8, -0.3) [dashed, partial ellipse = 0:90:0.7 and 0.3];
			\draw (6.8, 0.3) [dashed, partial ellipse = 180:270:0.7 and 0.3];
			\draw (6.5, 0.3) [dashed, partial ellipse = 180:90:0.4 and 0.3];
			\draw (5.65, 0.3) [dashed, partial ellipse = 180:0:0.45 and 0.6];
			\draw (4.5, 2.9) [partial ellipse = 270:180:0.7 and 0.3];
			\draw (4.2, 2.9) [partial ellipse = 180:90:0.4 and 0.3];
			\draw (4.2, 3.2) -- (6.2, 3.2);
			\draw (6.2, 2.9) [partial ellipse = 90:0:0.7 and 0.3];
			\draw (6.5, 2.9) [partial ellipse = 0:-90:0.4 and 0.3];
			\draw (3.8, 2.9) -- (3.8, 0.9);
			\draw (4.5, 0.9) [partial ellipse = -100:-180:0.7 and 0.3];
			\draw (4.2, 0.9) [dashed, partial ellipse = 180:90:0.4 and 0.3];
			\draw[dashed] (4.2, 1.2) -- (6.2, 1.2);
			\draw (6.9, 2.9) -- (6.9, 2);
			\draw[dashed] (6.9, 2) -- (6.9, 0.9);
			\draw (6.5, 0.9) [dashed, partial ellipse = 0:-90:0.4 and 0.3];
			\draw (6.2, 0.9) [dashed, partial ellipse = 90:0:0.7 and 0.3];
			\draw (8, -0.3) node {$S^1_1$};
			\draw (8, 1.7) node {$S^1_0$};
			\draw (7.4, 2.9) node {$S^1_0$};
		\end{tikzpicture}
		\caption{Decomposition of the coproduct.}
		\label{fig:TFTcomultiplication}
	\end{figure}
\end{proof}

\begin{cor}
	The Shklyarov copairing $[\coev]\in Z(S^1_0)\otimes Z(S^1_0)$ is equivalent to $\bbDelta(1)$.
	\label{cor:Mukaicopairingelbow}
\end{cor}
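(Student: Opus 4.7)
The plan is to verify the equality by expanding $\bbDelta(1) = \bbDelta \circ 1$ using the explicit formulas from \cref{prop:TFTunit} and \cref{prop:TFTcomultiplication}, and recognizing the result as the Shklyarov copairing $[\coev] = \dim(\coev_x)$ from \cref{def:transfermap}.

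By \cref{prop:TFTunit}, the unit $1 \colon \bu \to Z(S^1_1) \cong \coev^\R \circ \coev$ is the unit $\eta$ of the adjunction $\coev \dashv \coev^\R$. By \cref{prop:TFTcomultiplication}, $\bbDelta$ is a composite whose last nontrivial step applies the counit $\epsilon$ of the same adjunction (to collapse a $\coev \circ \coev^\R$ factor in the middle). So $\bbDelta \circ \eta$ inserts $\eta$ at the start and $\epsilon$ near the end; the triangle identity for $\coev \dashv \coev^\R$ lets us collapse this $\eta$--$\epsilon$ pair. The remaining composite is precisely the transfer map $\dim(\coev) \colon \bu \to \dim(x)^{\otimes 2}$ of \cref{def:transfermap}, which is by definition the Shklyarov copairing $[\coev]$.

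A more conceptual alternative uses the cobordism hypothesis (\cref{thm:noncompactcobordismhyp}) together with the functoriality of $\dim$ (\cref{thm:ExistenceOfDim}). Since $Z$ is symmetric monoidal with $Z(\pt) = x$, monoidality of $\dim$ gives $[\coev] = \dim(\coev_x) = \dim(Z(\coev_\pt)) \cong Z(\dim(\coev_\pt))$, where $\coev_\pt \colon \emptyset \to \pt^\vee \sqcup \pt$ is the coevaluation 1-bordism for $\pt$ in $\Bord_2^{\fr, \nc}$. Both $\dim(\coev_\pt)$ and $\bbDelta \circ \eta$ are 2-morphisms $\emptyset \to S^1_0 \sqcup S^1_0$ represented topologically by a cylinder $S^1 \times [0,1]$: the former by the trace of the interval $\coev_\pt$, the latter by capping the incoming $S^1_1$-boundary of the pair-of-pants with a disk (the resulting genus-zero surface with two outgoing boundary components is an annulus). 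The 2-framings agree because both descend from the standard 2-framing on the disk.

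The main obstacle in either approach is bookkeeping the braidings and the inverse Serre twist $T$ implicit in the identification $Z(S^1_1) \cong \coev^\R \circ \coev \cong \ev \circ \ev^\L$ from \cref{prop:S1nvalue,prop:TFTcomultiplication}; without this, the triangle-identity cancellation (in the first approach) or the comparison of framings on the cylinder (in the second) may be off by an automorphism of $Z(S^1_0) \otimes Z(S^1_0)$.
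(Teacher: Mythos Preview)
Your first approach has the right ingredients, but the triangle-identity step is mistaken. The transfer map $[\coev] = \dim(\coev)$ from \cref{def:transfermap} is itself a pasting diagram built from \emph{both} the unit $\eta$ and the counit $\epsilon$ of the adjunction $\coev \dashv \coev^\R$; nothing gets cancelled. The paper's argument is simply to write out that pasting diagram and observe that it factors as $\eta$ followed by a 2-morphism which, by \cref{prop:TFTcomultiplication}, is exactly $\bbDelta$. Since $\eta$ is the unit $1$ by \cref{prop:TFTunit}, this yields $[\coev] = \bbDelta \circ 1 = \bbDelta(1)$ directly. In your version, the $\epsilon$ appearing inside $\bbDelta$ acts on a $\coev \circ \coev^\R$ factor sitting in the \emph{middle} of a longer composite, not adjacent to the prepended $\eta$, so no triangle identity applies; and even if it did, collapsing would remove both 2-cells, whereas the transfer map retains both. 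Drop the cancellation step and just match the two composites, and you have the paper's proof.

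Your second approach via the cobordism hypothesis is a genuine alternative the paper does not take: one argues on the bordism side that $\dim(\coev_{\pt})$ is the annulus $\varnothing \to S^1_0 \sqcup S^1_0$, which is the pair-of-pants with its incoming $S^1_1$ capped by a disk. This is cleaner in spirit but, as you note, trades the formula-matching for a framing verification. The paper's direct computation in the target $(\infty,2)$-category sidesteps that issue entirely.
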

\begin{proof}
	By \cref{def:transfermap} $[\coev]$ is computed by traversing the diagram
	\[
	\xymatrix@C=0.5cm@R=1.5cm{
		& \bu \ar_{\coev_x}[dr] \ar@{=}[rr] & \ar@{}[d]^(.2){}="a"^(.63){}="b" \ar^{\eta}@{=>} "a";"b" & \bu \ar@{=}[dr] & \\
		\bu \ar@{=}[ur] \ar_{(\id\otimes \coev_{x^\vee}\otimes \id)\circ\coev_x}[dr] && x\otimes x^\vee \ar_{\coev_x^\R}[ur] \ar^{\id_{x\otimes x^\vee}\otimes \ev_{x\otimes x^\vee}^\L}[dr] \ar@{}[d]^(.3){}="c"^(.73){}="d" \ar^{\epsilon}@{=>} "c";"d" && \bu \\
		& x\otimes x^\vee\otimes x^{\vee\vee}\otimes x^\vee \ar^-{\id_{x\otimes x^\vee}\otimes \ev_{x^\vee}}[ur] \ar@{=}[rr] && x\otimes x^\vee\otimes x^{\vee\vee}\otimes x^\vee \ar_{\ev_x\circ(\id\otimes \ev_{x^\vee}\otimes \id)}[ur]
	}
	\]
	
	By \cref{prop:TFTcomultiplication} the diagram without $\eta$ is equivalent to the coproduct $\bbDelta\colon Z(S^1_1)\rightarrow Z(S^1_0)\otimes Z(S^1_0)$. By \cref{prop:TFTunit} the unit of $Z(S^1_1)$ is $\eta$.
\end{proof}

Finally, we describe the homotopy $((-)\otimes 1)\wedge \bbDelta(1)\sim \bbDelta(-)$ given by the left part of \cref{fig:TFThomotopy}. It will be convenient to present the 2-bordisms in terms of sequences of horizontal slices, where the shown 2-morphism is understood to be acting on the green dashed area.

\begin{prop}
	The homotopy $((-)\otimes 1)\wedge \bbDelta(1)\sim \bbDelta(-)$ is equivalent to the composite of the following sequence of homotopies.
	\begin{enumerate}
		\item Using \cref{prop:TFTunit,prop:TFTmultiplication,prop:TFTcomultiplication} the 2-morphism $((-)\otimes 1)\wedge \bbDelta(1)$ is given by the composite
		\[
		\begin{tikzpicture}[thick]
			\draw (0, 0) circle (0.3cm);
			\draw[thin, green, dashed] (0.6, 0.2) rectangle (1, -0.2);
			
			\draw (1.4, 0) node {$\xrightarrow{\eta}$};
			
			\draw (2.1, 0) circle (0.3cm);
			\draw (2.9, 0) circle (0.3cm);
			\draw[thin, green, dashed] (2.9, 0.4) rectangle (3.3, -0.4);
			
			\draw (3.6, 0) node {$\cong$};
			
			\draw (4.3, 0) circle (0.3cm);
			\draw (5.1, -0.3) arc (270:90:0.3cm);
			\draw (5.1, 0.3) arc (-90:90:0.3cm);
			\draw (5.1, 0.9) arc (270:90:0.3cm);
			\draw (5.1, 1.5) -- (6.1, 1.5);
			\draw (6.1, 1.5) arc (90:-90:0.3cm);
			\draw (6.1, 0.9) arc (90:270:0.3cm);
			\draw (6.1, 0.3) arc (90:-90:0.3cm);
			\draw (6.1, -0.3) -- (5.1, -0.3);
			\draw[thin, green, dashed] (5.1, 1) rectangle (6.1, 0.2);
			
			\draw (6.8, 0) node {$\xrightarrow{\epsilon}$};
			
			\draw (7.5, 0) circle (0.3cm);
			\draw (8.3, -0.3) arc (270:90:0.3cm);
			\draw (8.3, 0.3) -- (9.3, 0.3);
			\draw (8.3, 0.9) arc (270:90:0.3cm);
			\draw (8.3, 1.5) -- (9.3, 1.5);
			\draw (9.3, 1.5) arc (90:-90:0.3cm);
			\draw (9.3, 0.3) arc (90:-90:0.3cm);
			\draw (9.3, -0.3) -- (8.3, -0.3);
			\draw (8.3, 0.9) -- (9.3, 0.9);
			\draw[thin, green, dashed] (7.5, 0.4) rectangle (8.3, -0.4);
			
			\draw (10, 0) node {$\xrightarrow{\epsilon}$};
			
			\draw (10.7, 0.3) arc (90:270:0.3cm);
			\draw (10.7, 0.3) -- (12.5, 0.3);
			\draw (11.5, 0.9) arc (270:90:0.3cm);
			\draw (11.5, 1.5) -- (12.5, 1.5);
			\draw (12.5, 1.5) arc (90:-90:0.3cm);
			\draw (12.5, 0.3) arc (90:-90:0.3cm);
			\draw (12.5, -0.3) -- (10.7, -0.3);
			\draw (11.5, 0.9) -- (12.5, 0.9);
		\end{tikzpicture}
		\]
		\item
		Exchanging the last $\epsilon$ with the composite of the isomorphism and the first $\epsilon$ we obtain
		\[
		\begin{tikzpicture}[thick]
			\draw (0, 0) circle (0.3cm);
			\draw[thin, green, dashed] (0.6, 0.2) rectangle (1, -0.2);
			
			\draw (1.4, 0) node {$\xrightarrow{\eta}$};
			
			\draw (2.1, 0) circle (0.3cm);
			\draw (2.9, 0) circle (0.3cm);
			\draw[thin, green, dashed] (2.1, 0.4) rectangle (2.9, -0.4);
			
			\draw (3.6, 0) node {$\xrightarrow{\epsilon}$};
			
			\draw (4.3, 0.3) arc (90:270:0.3cm);
			\draw (4.3, 0.3) -- (5.1, 0.3);
			\draw (5.1, 0.3) arc (90:-90:0.3cm);
			\draw (5.1, -0.3) -- (4.3, -0.3);
			\draw[thin, green, dashed] (5.1, 0.4) rectangle (5.5, -0.4);
			
			\draw (5.8, 0) node {$\cong$};
			
			\draw (6.5, 0.3) arc (90:270:0.3cm);
			\draw (6.5, 0.3) -- (7.3, 0.3);
			\draw (6.5, -0.3) -- (7.3, -0.3);
			\draw (7.3, 0.3) arc (-90:90:0.3cm);
			\draw (7.3, 0.9) arc (270:90:0.3cm);
			\draw (7.3, 1.5) -- (8.3, 1.5);
			\draw (8.3, 1.5) arc (90:-90:0.3cm);
			\draw (8.3, 0.9) arc (90:270:0.3cm);
			\draw (8.3, 0.3) arc (90:-90:0.3cm);
			\draw (8.3, -0.3) -- (7.3, -0.3);
			\draw[thin, green, dashed] (7.3, 1) rectangle (8.3, 0.2);
			
			\draw (9, 0) node {$\xrightarrow{\epsilon}$};
			
			\draw (9.7, 0.3) arc (90:270:0.3cm);
			\draw (9.7, 0.3) -- (11.5, 0.3);
			\draw (10.5, 0.9) arc (270:90:0.3cm);
			\draw (10.5, 1.5) -- (11.5, 1.5);
			\draw (11.5, 1.5) arc (90:-90:0.3cm);
			\draw (11.5, 0.3) arc (90:-90:0.3cm);
			\draw (11.5, -0.3) -- (9.7, -0.3);
			\draw (10.5, 0.9) -- (11.5, 0.9);
		\end{tikzpicture}
		\]
		\item Applying the cusp 3-isomorphism witnessing the adjunction axioms for $\coev\dashv \coev^\R$, we cancel the composite of the first two 2-morphisms and obtain $\bbDelta(-)$:
		\[
		\begin{tikzpicture}[thick]
			\draw (0, 0) circle (0.3cm);
			\draw[thin, green, dashed] (0, 0.4) rectangle (0.4, -0.4);
			
			\draw (0.7, 0) node {$\cong$};
			
			\draw (1.4, -0.3) arc (270:90:0.3cm);
			\draw (1.4, 0.3) arc (-90:90:0.3cm);
			\draw (1.4, 0.9) arc (270:90:0.3cm);
			\draw (1.4, 1.5) -- (2.4, 1.5);
			\draw (2.4, 1.5) arc (90:-90:0.3cm);
			\draw (2.4, 0.9) arc (90:270:0.3cm);
			\draw (2.4, 0.3) arc (90:-90:0.3cm);
			\draw (2.4, -0.3) -- (1.4, -0.3);
			\draw[thin, green, dashed] (1.4, 1) rectangle (2.4, 0.2);
			
			\draw (3.1, 0) node {$\xrightarrow{\epsilon}$};
			
			\draw (3.8, -0.3) arc (270:90:0.3cm);
			\draw (3.8, 0.3) -- (4.8, 0.3);
			\draw (3.8, 0.9) arc (270:90:0.3cm);
			\draw (3.8, 1.5) -- (4.8, 1.5);
			\draw (4.8, 1.5) arc (90:-90:0.3cm);
			\draw (3.8, 0.9) -- (4.8, 0.9);
			\draw (4.8, 0.3) arc (90:-90:0.3cm);
			\draw (4.8, -0.3) -- (3.8, -0.3);
		\end{tikzpicture}
		\]
	\end{enumerate}
\end{prop}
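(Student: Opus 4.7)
The plan is to realize the Frobenius homotopy from the left half of \cref{fig:TFThomotopy} as a sequence of Morse moves on the underlying 2-bordism and then, at each stage, read off the corresponding algebraic identity in $\cA$ using the elementary decompositions from \cref{prop:TFTunit,prop:TFTmultiplication,prop:TFTcomultiplication}. Concretely, the cobordism realizing $((-)\otimes 1)\wedge \bbDelta(1)$ is a pair of pants with an extra $\bbDelta(1)$-handle attached on one leg; the Frobenius homotopy slides this handle across the coproduct until it cancels against the unit, producing the bare $\bbDelta$.

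First, I would establish (1). The bordism in question, equipped with its natural height function, has four critical points which, read from bottom to top, are: a saddle of index $0$ creating an extra $S^1_1$ (giving $\eta$, by \cref{prop:TFTunit}), followed by the pair-of-pants product $S^1_1\sqcup S^1_1\rightarrow S^1_1$ (giving $\epsilon$ by \cref{prop:TFTmultiplication}), followed by the pair-of-pants coproduct $S^1_1\rightarrow S^1_0\sqcup S^1_0$ (which, by the explicit decomposition used in the proof of \cref{prop:TFTcomultiplication}, contributes another $\epsilon$ after first inserting a $\coev\circ\coev^\R$ pair between the two $\ev$'s). Stacking these four slices and identifying each with its algebraic avatar via the cited propositions gives precisely the composite in (1).

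Next, for (2), the two $\epsilon$ 2-morphisms lie in disjoint regions of the surface once we rearrange the height function so that the ``multiplication $\epsilon$'' happens after the ``comultiplication $\epsilon$''. In algebraic terms this is the interchange law for 2-morphisms in $\cA$, applied to the two $\epsilon$'s that sit in separate tensor factors; equivalently, this is a standard rearrangement of independent critical points, and the resulting homotopy is the one depicted in (2). For (3), after the reshuffling the composite $\eta$ followed by the first $\epsilon$ lies entirely within a single ``elbow'' of the bordism, namely the cusp of the adjunction $\coev \dashv \coev^\R$. The cusp 3-isomorphism $\alpha$ (equivalently one of the triangle identities witnessed by a Morse cancellation of an index-0 with an adjacent index-1 critical point) straightens this elbow, leaving exactly the decomposition of $\bbDelta$ supplied by \cref{prop:TFTcomultiplication}.

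The main obstacle is bookkeeping of framings and orientations: each of the elementary pieces carries a 2-framing inherited from the decomposition, and matching the framings on the left-hand bordism to those produced by the composite requires that the Serre twist $T$ introduced when one uses $\coev^\R$ instead of $\ev$ lines up correctly on both sides. This is addressed by noting that in all three stages the framings on the boundary circles are fixed to be $S^1_1$ on the input and $S^1_0$ on the outputs, and the framings on the internal $\coev/\coev^\R$ pairs are the canonical ones used in \cref{fig:TFTcoevev,fig:TFTcoevadjunction}; consequently no unexpected powers of $T$ enter and the three steps assemble into a genuine homotopy between 2-morphisms in $\Bord_2^{\fr,\nc}$. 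Applying $Z$ then yields the asserted decomposition of the Frobenius homotopy.
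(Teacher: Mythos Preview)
Your proposal is correct and follows essentially the same approach as the paper: the Frobenius homotopy is realized by (i) an interchange of the two saddle critical points (the paper phrases this as ``the black critical point exchanges height with the leftmost orange critical point'', corresponding to your step (2)), and (ii) a birth--death cancellation of the unit with the adjacent saddle, i.e.\ the cusp 3-isomorphism for $\coev\dashv\coev^\R$ (your step (3)). One small slip: in your description of the initial decomposition (1) you list the order as $\eta$, product $\epsilon$, coproduct $\epsilon$, whereas the height function on the left bordism of \cref{fig:TFThomotopy} actually gives $\eta$, coproduct $\epsilon$, product $\epsilon$; correspondingly the interchange in (2) moves the product $\epsilon$ \emph{down} past the coproduct, not up. This does not affect the argument, since you correctly identify which pair ($\eta$ and the product $\epsilon$) cancels in step (3).
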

\begin{proof}
	The homotopy shown in \cref{fig:TFThomotopy} consists of the following two operations:
	\begin{enumerate}
		\item The black critical point exchanges height with the leftmost orange critical point.
		\item The two orange critical points collide and disappear, passing through a birth-death singularity.
	\end{enumerate}
	The first operation corresponds to an interchange 3-isomorphism. The second operation corresponds to the cusp 3-isomorphism witnessing the adjunction $\coev\dashv \coev^\R$ (see e.g. the proof of \cite[Lemma 8.5]{Pstragowski}).
\end{proof}

Combining the homotopy $\vee'\colon ((-)\otimes 1)\wedge \bbDelta(1)\sim \bbDelta(-)\sim \bbDelta(1)\wedge (1\otimes (-))$ given in \eqref{eq:TFThomotopy} with a nullhomotopy of $\bbDelta(1)$ we obtain a secondary operation $\fZ(x)\rightarrow \dim(x)\otimes \dim(x)[-1]$. In examples we will be interested in the cases when $\bbDelta(1)$ is not trivialized in $\dim(x)$, but only in some quotient. For this, assume that $\End_\cA(\bu)$ is a stable $\infty$-category. Consider a morphism $\widetilde{\dim}(x)\rightarrow \dim(x)$ and assume that $\bbDelta(1)$ lifts along $\widetilde{\dim}(x)\otimes \widetilde{\dim}(x)$. In this case the endpoints of the homotopy $\vee'$ lie in $\dim(x)\otimes \widetilde{\dim}(x)$ and $\widetilde{\dim}(x)\otimes \dim(x)$, respectively. Therefore, $\vee'$ gives rise to a secondary operation in the quotient.

\begin{defn}
Let $x\in\cA$ be a smooth object together with a morphism $\widetilde{\dim}(x)\rightarrow \dim(x)$ whose cofiber in the stable $\infty$-category $\End_\cA(\bu)$ is $\dim(x)\rightarrow \overline{\dim}(x)$. Suppose that the Shklyarov copairing $[\coev]$ admits a lift along the morphism $\widetilde{\dim}(x)\otimes \widetilde{\dim}(x)\rightarrow \dim(x)\otimes \dim(x)$. The \defterm{secondary coproduct}
\[\vee\colon \fZ(x)\longrightarrow \overline{\dim}(x)\otimes \overline{\dim}(x)[-1]\]
is given by composing the homotopy $\vee'$ with the projection $\dim(x)\otimes \dim(x)\rightarrow \overline{\dim}(x)\otimes \overline{\dim}(x)$.
\label{def:secondarycoproduct}
\end{defn}

\section{String topology TQFT}

Let $M$ be a closed oriented $d$-manifold. In this section we denote by the same letter its underlying homotopy type. We use the same notation as in \cref{sect:stringtopology} and replace the smooth category $\Sp^M$ by $\LocSys(M) = \Fun(\Sing(M), \Mod_\Z)$ for concreteness. So, we obtain the TQFT operations described in \cref{sect:TFToperations}. The goal of this section is to show that they are equivalent to the string topology operations from \cref{sect:stringtopology}.

\subsection{Marked spaces}

In this section we introduce a convenient pictorial notation for morphisms of local systems that will be useful in the future sections.

Fix a space $X\in\cS$. Consider the symmetric monoidal $(\infty, 2)$-category $\coCorr(\cS)$ of cocorrespondences in spaces (see \cite[Chapter 7]{GaitsgoryRozenblyum1} for the construction) which has the following informal description:
\begin{itemize}
	\item Its objects are spaces.
	\item Its 1-morphisms from $S$ to $T$ are cocorrespondences $S\rightarrow C\leftarrow T$ of spaces.
	\item Its 2-morphisms from $S\rightarrow C_1\leftarrow T$ to $S\rightarrow C_2\leftarrow T$ are given by commutative diagrams
	\[
	\xymatrix{
		& C_1 \ar[d]& \\
		& C_2 & \\
		S \ar[uur] \ar[ur] && T \ar[ul] \ar[uul]
	}
	\]
\end{itemize}
The symmetric monoidal structure is given by the disjoint union of spaces. We have a natural symmetric monoidal functor
\[\Map(-, X)\colon \coCorr(\cS)^{2\op}\longrightarrow \Corr(\cS)\]
to the $(\infty, 2)$-category of correspondences, where we recall that the symmetric monoidal structure is given by the Cartesian product. Recall that in \cref{thm:Spbivariant} we have also introduced the symmetric monoidal functor
\[\LocSys_\sharp^*\colon \Corr(\cS)\longrightarrow \PrSt_k.\]
So, we obtain the composite
\begin{equation}
	\coCorr(\cS)^{2\op}\xrightarrow{\Map(-, X)} \Corr(\cS)\xrightarrow{\LocSys_\sharp^*} \PrSt_\Z.
	\label{eq:coCorrcomposite}
\end{equation}
By construction
\[\Hom_{\coCorr(\cS)}(\varnothing, \varnothing) \cong \cS,\qquad \Hom_{\PrSt_\Z}(\Mod_\Z, \Mod_\Z)\cong \Mod_\Z\]
and the induced composite on units is
\begin{equation}
	\cS^{\op}\xrightarrow{\Map(-, X)}\cS\xrightarrow{\C_\bullet(-)} \Mod_\Z.
	\label{eq:coCorrSpcomposite}
\end{equation}

It is easy to see that in $\coCorr(\cS)^{2\op}$ we have an adjunction
\begin{equation}
	(\pt\rightarrow \pt\leftarrow \varnothing)\dashv(\varnothing\rightarrow \pt\leftarrow \pt)
	\label{eq:coCorrleftadjoint}
\end{equation}
which after the application of the composite \eqref{eq:coCorrcomposite} corresponds to the adjunction
\[p_\sharp\dashv p^*.\]
We will later see that working with pictures in $\cS^{\op}$ is much easier than working the corresponding spectra obtained via \eqref{eq:coCorrSpcomposite}.

Let us now assume that $X$ is finitely dominated. By \cref{prop:finitetypeproper} the pullback functor $p^*\colon \Mod_\Z\rightarrow \LocSys(X)$ admits a colimit-preserving right adjoint $p_*\colon \LocSys(X)\rightarrow \Mod_\Z$ given by $p_*=p_\sharp(\zeta_X\otimes(-))$ for some $\zeta_X\in\LocSys(X)$. It can be interpreted in $\coCorr(\cS)^{2\op}$ by formally adjoining a \emph{right} adjoint to $(\varnothing\rightarrow \pt\leftarrow \pt)$ which we denote by $(\pt\rightarrow \begin{tikzpicture}\draw[red, fill=red] (0, 0) circle (2pt); \end{tikzpicture}\leftarrow \varnothing)$. In other words, we allow marking points on the space. For instance, the picture on \cref{fig:markedspaceexample} represents $\zeta_X\otimes \Delta^*\Delta_\sharp\Z_X\cong \Delta^*\Delta_\sharp\zeta_X$.

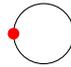
\begin{figure}[ht]
	\begin{tikzpicture}
		\draw(0, 0) circle (0.4cm);
		\draw[red, fill=red] (-0.4, 0) circle (2pt);
	\end{tikzpicture}
	\caption{Example of a marked space.}
	\label{fig:markedspaceexample}
\end{figure}

The adjunction data for $\Delta_\sharp\dashv \Delta^*$ and $p^*\dashv p_\sharp(\zeta_\sharp\otimes(-))$ has the following pictorial representation (see \cref{fig:markedadjunctions}, where dashed lines denote arbitrary spaces):
\begin{itemize}
	\item The unit $\eta_\Delta\colon \cF\rightarrow \Delta^*\Delta_\sharp\cF$ of the $\Delta$ adjunction attaches a loop at a given point.
	
	\item The counit $\epsilon_\Delta\colon \Delta_\sharp\Delta^*\cE\rightarrow \cE$ of the $\Delta$ adjunction breaks an edge.
	
	\item The unit $\eta_p\colon \Z\rightarrow p_\sharp \zeta_X$ of the $p$ adjunction attaches a disjoint basepoint marked by $\zeta_X$.
	
	\item The counit $\epsilon_p\colon \Z_X\otimes p_\sharp(\zeta_X\otimes \cF)\rightarrow \cF$ of the $p$ adjunction connects a marked point to an unmarked point.
\end{itemize}

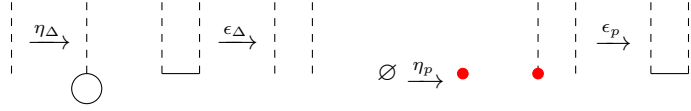
\begin{figure}[ht]
	\begin{tikzpicture}
		\draw[dashed] (0, 0) -- (0, 1);
		\draw (0.5, 0.5) node {$\xrightarrow{\eta_\Delta}$};
		\draw[dashed] (1, 0) -- (1, 1);
		\draw (1, -0.2) circle (0.2cm);
		
		\draw[dashed] (2, 0) -- (2, 1);
		\draw (2, 0) -- (2.5, 0);
		\draw[dashed] (2.5, 0) -- (2.5, 1);
		\draw (3, 0.5) node {$\xrightarrow{\epsilon_\Delta}$};
		\draw[dashed] (3.5, 0) -- (3.5, 1);
		\draw[dashed] (4, 0) -- (4, 1);
		
		\draw (5, 0) node {$\varnothing$};
		\draw (5.5, 0) node {$\xrightarrow{\eta_p}$};
		\draw[red, fill=red] (6, 0) circle (2pt);
		
		\draw[dashed] (7, 0) -- (7, 1);
		\draw[red, fill=red] (7, 0) circle (2pt);
		\draw[dashed] (7.5, 0) -- (7.5, 1);
		\draw (8, 0.5) node {$\xrightarrow{\epsilon_p}$};
		\draw[dashed] (8.5, 0) -- (8.5, 1);
		\draw (8.5, 0) -- (9, 0);
		\draw[dashed] (9, 0) -- (9, 1);
	\end{tikzpicture}
	\caption{Units and counits of $\Delta_\sharp\dashv \Delta^*$ and $p^*\dashv p_\sharp(\zeta_X\otimes(-))$.}
	\label{fig:markedadjunctions}
\end{figure}

\subsection{TQFT from local systems}

Let us explain how string topology operations form a part of a TQFT (this perspective goes back to \cite{CohenGodin}). Throughout this section we return to the case of a closed oriented manifold $M$. As before, we denote by $\Delta\colon M\rightarrow M\times M$ the diagonal map and $p\colon M\rightarrow \pt$ the projection. Denote by $\Delta_{12}\colon M\times M\rightarrow M\times M\times M$ the map $(x, y)\mapsto (x, x, y)$ and similarly for $\Delta_{23}$. Denote by $p_i\colon M\times M\rightarrow M$ the projections on each factor.

Recall that by \cref{cor:finitelydominatedsmooth} the $\infty$-category $\LocSys(M)$ is smooth with the duality data given by
\[\coev(\Z) = \Delta_\sharp \Z_M\in\LocSys(M\times M).\]
Let $\eta_p\colon \Z\rightarrow p_\sharp\Z_M$ and $\epsilon_p\colon \Z_{M\times M}[-d]\rightarrow \Delta_\sharp\Z_M$ be the coevaluation and evaluation of the relative duality $(\Z_M,\Z_M[-d])$. The following immediately follows from \cref{prop:zetapushforward}.

\begin{prop}
The right adjoint to coevaluation $\coev^\R\colon \LocSys(M\times M)\rightarrow \Mod_\Z$ is given by
\[\coev^\R(\cE) = p_\sharp\Delta^*\cE[-d].\]
The unit of the adjunction is
\[\Z\xrightarrow{\eta_p} p_\sharp\Z_M[-d]\xrightarrow{\eta_\Delta} p_\sharp\Delta^*\Delta_\sharp\Z_M[-d].\]
The counit of the adjunction is
\[\Delta_\sharp\Z_M\otimes p_\sharp\Delta^*\cE[-d]\cong \Delta_\sharp (p_1)_\sharp(\Z_M\boxtimes \Delta^*\cE)[-d]\xrightarrow{\epsilon_p} \Delta_\sharp (p_1)_\sharp(\Delta_\sharp\Z_M\otimes (\Z_M\boxtimes \Delta^*\cE))\cong \Delta_\sharp\Delta^*\cE\xrightarrow{\epsilon_\Delta} \cE.\]
\label{prop:LocSyscoevadjoint}
\end{prop}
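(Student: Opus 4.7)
The plan is to decompose $\coev$ into a composite and take the right adjoint of each factor, then translate back using the orientation to obtain the clean formulas stated.

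First, I would note that by \cref{prop:Spselfdual} applied to $\LocSys(M \times M)$, the coevaluation factors as
\[
\coev = \Delta_\sharp \circ p^* \colon \Mod_\Z \xrightarrow{p^*} \LocSys(M) \xrightarrow{\Delta_\sharp} \LocSys(M \times M).
\]
Taking right adjoints, $\coev^\R = p_* \circ \Delta^*$. Since $M$ is finitely dominated (being a closed manifold), \cref{prop:zetapushforward} gives a natural isomorphism $p_*(-) \cong p_\sharp(\zeta_M \otimes -)$, and \cref{prop:manifoldCWduality} together with the chosen orientation yields $\zeta_M \cong \Z_M[-d]$. Combining these,
\[
\coev^\R(\cE) \cong p_\sharp(\zeta_M \otimes \Delta^* \cE) \cong p_\sharp \Delta^* \cE [-d],
\]
which gives the first claim.

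For the unit and counit, I would use that the unit (resp.\ counit) of a composite adjunction $G_2 G_1 \dashv F_1 F_2$ is the composite of the units (resp.\ counits) of $G_i \dashv F_i$. Applying this to the composite above:
\begin{itemize}
    \item The unit of $\coev \dashv \coev^\R$ is
    \[
    \id \xrightarrow{\eta_p} p_* p^* \xrightarrow{\eta_\Delta} p_* \Delta^* \Delta_\sharp p^*.
    \]
    Evaluating at $\Z$ and translating $p_*$ to $p_\sharp((-) [-d])$ under the orientation gives the stated formula $\Z \xrightarrow{\eta_p} p_\sharp \Z_M[-d] \xrightarrow{\eta_\Delta} p_\sharp \Delta^* \Delta_\sharp \Z_M[-d]$.
    \item The counit of $\coev \dashv \coev^\R$ is
    \[
    \Delta_\sharp p^* p_* \Delta^* \xrightarrow{\epsilon_p} \Delta_\sharp \Delta^* \xrightarrow{\epsilon_\Delta} \id.
    \]
    The main task is to identify the counit $p^* p_* \to \id$ of the adjunction $p^* \dashv p_*$ with the counit $\epsilon_p$ of the relative duality under the identification $p_*(-) \cong p_\sharp(\Z_M[-d] \otimes -)$. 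By the projection formula $\Delta_\sharp(\Z_M \otimes p_\sharp \Delta^* \cE[-d]) \cong \Delta_\sharp(p_1)_\sharp(\Z_M \boxtimes \Delta^* \cE)[-d]$, the composite becomes exactly the expression given in the proposition.
\end{itemize}

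The only real point of care is the translation of the $p^* \dashv p_*$ counit into the relative-duality counit $\epsilon_p$; this is essentially the content of \cref{prop:CWduality} combined with \cref{prop:zetapushforward}, since $\epsilon_p$ is by construction the morphism $\bS_M \boxtimes \zeta_M \to \Delta_\sharp \bS_M$ exhibiting the relative dualizability. Everything else is bookkeeping with adjunctions and the orientation isomorphism.
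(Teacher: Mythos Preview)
Your proposal is correct and follows exactly the approach the paper intends: the paper simply states that the proposition ``immediately follows from \cref{prop:zetapushforward}'', and you have spelled out precisely this argument (factor $\coev = \Delta_\sharp \circ p^*$, take right adjoints to get $p_* \circ \Delta^*$, then apply \cref{prop:zetapushforward} and the orientation to rewrite $p_*$ as $p_\sharp((-)[-d])$). Your additional remarks on how the unit and counit of a composite adjunction decompose, and on translating the $p^* \dashv p_*$ counit into the relative-duality counit $\epsilon_p$ via \cref{prop:CWduality}, are exactly the bookkeeping the paper leaves implicit.
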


Recall the inverse Serre morphism from \cref{def:inverseSerre}.

\begin{cor}
The inverse Serre functor $T\colon \LocSys(M)\rightarrow \LocSys(M)$ is given by $\cL\mapsto \cL[-d]$. In particular, it is invertible.
\label{cor:Serreinvertible}
\end{cor}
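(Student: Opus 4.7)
The plan is to directly unpack \cref{def:inverseSerre} and compute each step of the composite using the explicit formulas for $\coev$ from \cref{prop:Spselfdual} (applied to $\LocSys(M)$) and $\coev^\R$ from \cref{prop:LocSyscoevadjoint}. Throughout, we use the identification $\LocSys(M)^{\otimes 3} \cong \LocSys(M^3)$ from \cref{prop:externaltensor}.

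Starting with $\cL \in \LocSys(M)$, the functor $\id \otimes \coev$ produces $\cL \boxtimes \Delta_\sharp \Z_M$, which may be written as $q_\sharp(\cL \boxtimes \Z_M)$ for $q \colon M^2 \to M^3$, $(x, y) \mapsto (x, y, y)$. The braiding $\sigma \otimes \id$ swaps the first two factors of $M^3$, yielding $q'_\sharp(\cL \boxtimes \Z_M)$ where $q'(x, y) = (y, x, y)$. Finally, by \cref{prop:LocSyscoevadjoint} the functor $\coev^\R \otimes \id$ equals $(p \otimes \id)_\sharp (\Delta \otimes \id)^*[-d]$, where $\Delta \otimes \id \colon M^2 \to M^3$ sends $(a, b) \mapsto (a, a, b)$ and $p \otimes \id \colon M^2 \to M$ is the projection onto the second factor.

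The core computation is the evaluation of $(\Delta \otimes \id)^* q'_\sharp(\cL \boxtimes \Z_M)$ via base change (\cref{prop:basechange}). The Cartesian square formed by $\Delta \otimes \id$ and $q'$ has fiber product $\{(a, b, x, y) \in M^4 : a = x = y = b\} \cong M$, with both projections to $M^2$ equal to the diagonal $\Delta$. Base change thus identifies the pullback with $\Delta_\sharp \Delta^*(\cL \boxtimes \Z_M) \cong \Delta_\sharp \cL$, and then $(p \otimes \id)_\sharp \Delta_\sharp \cL \cong \cL$ since $(p \otimes \id) \circ \Delta = \id_M$. Combined with the shift from $\coev^\R$, we obtain $T(\cL) \cong \cL[-d]$, and invertibility is then immediate since $\cL \mapsto \cL[d]$ provides a two-sided inverse. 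The main subtlety is the identification of the symmetric monoidal braiding on $\LocSys(M)^{\otimes 3}$ with the swap of the factors of $M^3$ under the external tensor product equivalence; the hypothesis of \cref{prop:basechange} is automatic since the map $\Delta \otimes \id$ of spaces is smooth.
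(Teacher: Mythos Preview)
Your proof is correct and follows essentially the same route as the paper's. Both arguments unpack \cref{def:inverseSerre} using the explicit coevaluation from \cref{prop:Spselfdual} and its right adjoint from \cref{prop:LocSyscoevadjoint}, and then reduce to a single base-change computation. The paper phrases this as a composition of correspondences in $\Corr(\cS)$ (whose fiber product is exactly the pullback you write down), and it silently drops the braiding $\sigma\otimes\id$ because $\sigma_{12}\circ\Delta_{12}=\Delta_{12}$, whereas you track it explicitly via the map $q'$; the resulting Cartesian squares and conclusions are identical.
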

\begin{proof}
The inverse Serre functor is
\[T(\cL) = (p_2)_\sharp\Delta_{12}^*(\cL\boxtimes \Delta_\sharp\Z_M)[-d].\]
Consider the composition of correspondences
	\[
	\xymatrix{
		& M\times M \ar_{p_1}[dl] \ar^{\Delta_{23}}[dr] && M\times M \ar^{\id}[dr] \ar_{\Delta_{12}}[dl] \\
		M && M\times M\times M && M\times M
	}
	\]
	Then $\cL\mapsto \Delta_{12}^*(\cL\boxtimes \Delta_\sharp \Z_M)$ is given by the composition of $\sharp$-pushforward and $*$-pullback functors along these correspondences. But the composite of the correspondences is $(M\xleftarrow{\id} M\xrightarrow{\Delta} M\times M)$, so we obtain a natural isomorphism
	\[\Delta_{12}^*(\cL\boxtimes \Delta_\sharp \Z_M)\cong \Delta_\sharp \cL.\]
\end{proof}

Since the Serre functor is invertible, by the cobordism hypothesis (see \cref{thm:noncompactcobordismhyp}) we obtain a positive-boundary framed 2-dimensional TQFT
\[Z\colon \Bord_2^{\fr, \nc}\longrightarrow \PrSt_\Z\]
such that $Z(\pt) = \LocSys(M)$.

\begin{remark}
In fact, $\LocSys(M)$ has a left Calabi--Yau structure of dimension $d$ by \cite[Theorem 5.4]{BravDyckerhoff1} and \cite[Section 3.4, Theorem 5]{CohenGanatra}. As sketched in \cite[Remark 4.2.17]{LurieCobordism}, in this case $Z$ descends to a symmetric monoidal functor defined on a twisted version of the $\infty$-category of \emph{oriented} positive-boundary extended bordisms.
\end{remark}

\begin{prop}
For every $n\in\Z$ there is an equivalence
\[Z(S^1_n)\cong \C_\bullet(LM)[-nd].\]
\label{prop:stringS1nvalue}
\end{prop}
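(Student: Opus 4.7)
The plan is to assemble three ingredients already available in the paper: the cobordism-hypothesis description of $Z(S^1_n)$ from \cref{prop:S1nvalue}, the explicit identification of the inverse Serre functor for $\LocSys(M)$ from \cref{cor:Serreinvertible}, and the computation of dimensions of parametrized spectra (which in the $\Z$-linear version reads $\dim(\LocSys(M))\cong \C_\bullet(LM)$) from \cref{prop:LocSysTHH}.

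First, I would apply \cref{prop:S1nvalue} to rewrite
\[
Z(S^1_n)\cong \ev\circ\sigma\circ (\id\otimes T^n)\circ\coev = \tr(T^n),
\]
where $T\colon \LocSys(M)\to \LocSys(M)$ is the inverse Serre endomorphism. By \cref{cor:Serreinvertible} the functor $T$ is the shift $\cL\mapsto \cL[-d]$; since $\LocSys(M)\in\PrSt_\Z$, this shift is precisely the $\Mod_\Z$-linear action of $\Z[-d]\in \Mod_\Z=\End_{\PrSt_\Z}(\bu)$ on $\LocSys(M)$. Iterating, $T^n$ is the action of $\Z[-nd]$.

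Second, I would invoke the standard compatibility between the trace and the action of the unit endomorphism ring. Concretely, for any dualizable object $x$ in a symmetric monoidal $\infty$-category $\cA$ and any $k\in\End_\cA(\bu)$, the endomorphism $k\cdot\id_x\colon x\to x$ has trace $k\cdot \dim(x)$; this is immediate from the definition of $\tr$ in \cref{def:dimension} by sliding $k$ across the coevaluation. Applying this with $\cA=\PrSt_\Z$, $x=\LocSys(M)$, and $k=\Z[-nd]$, one obtains
\[
\tr(T^n)\cong \Z[-nd]\otimes_\Z \dim(\LocSys(M))\cong \dim(\LocSys(M))[-nd].
\]

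Finally, combining with the $\Z$-linear analog of \cref{prop:LocSysTHH}, which gives $\dim(\LocSys(M))\cong \C_\bullet(LM)$, yields $Z(S^1_n)\cong \C_\bullet(LM)[-nd]$ as desired. The only mildly subtle point is the unwinding of \cref{prop:S1nvalue} together with the cobordism-hypothesis identification of the Serre twist with the framing twist, but this has already been done in the reference to \cref{prop:Serretwist}, so no further obstacle remains.
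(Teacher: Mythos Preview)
Your proof is correct and follows essentially the same path as the paper: both start from \cref{prop:S1nvalue}, plug in the identification $T=(-)[-d]$ from \cref{cor:Serreinvertible}, and finish by identifying the dimension with $\C_\bullet(LM)$. The only cosmetic difference is that the paper unwinds the duality data explicitly to $p_\sharp\Delta^*\Delta_\sharp\Z_M[-nd]$ and then applies base change along the pullback square defining $LM$, whereas you package this step as the general identity $\tr(k\cdot\id_x)\cong k\cdot\dim(x)$ together with the already-established $\dim(\LocSys(M))\cong \C_\bullet(LM)$ from \cref{prop:LocSysTHH}; your formulation is slightly slicker but amounts to the same computation.
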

\begin{proof}
We have
\begin{align*}
Z(S^1_n)&\cong \ev\circ\sigma\circ (\id\otimes T^n)\circ \coev\\
&\cong p_\sharp \Delta^*\Delta_\sharp \Z_M[-dn]
\end{align*}
where the first equivalence is given by \cref{prop:S1nvalue} and the second equivalence follows from the self-duality of $\LocSys(M)$ constructed in \cref{prop:Spselfdual} and the description of the inverse Serre functor from \cref{cor:Serreinvertible}. The result follows from the base change property associated to the Cartesian diagram
\[
\xymatrix{
LM \ar[r] \ar[d] & M \ar[d] \\
M \ar[r] & M\times M
}
\]
\end{proof}

\begin{remark}
The proof shows that $Z(S^1_n) = \C_\bullet(LM^{-{\T M^{\oplus n}}})$ and the orientation of $M$ is used so get a Thom isomorphism. In particular, without assuming that $M$ is oriented, the various pair-of-pants products $Z(S^1_n) \otimes Z(S^1_m) \to Z(S^1_{n+m-1})$ give operations
    \[
    LM^{-\T M^{\oplus n}} \otimes LM^{-\T M^{\oplus m}} \to LM^{-\T M^{\oplus n+m-1}},
    \]
    that differ by twisting with $\T M$. The natural TQFT secondary coproduct $\vee \colon Z(S^1_1)[1] \to Z(S^1_0) \otimes Z(S^1_0)$
    is then a map
    \[
    \Sigma LM^{-TM} \to \Sigma^\infty LM/M \otimes \Sigma^\infty LM/M.
    \]
\end{remark}

\begin{prop}
The element $\bbDelta(1)\in Z(S^1_0)\otimes Z(S^1_0)$ is equivalent to $(L\Delta) [\Z_M]\in\C_\bullet(LM\times LM)$, where $[\Z_M]\in\C_\bullet(LM)$ is the $\HH$ Euler characteristic and $L\Delta\colon \C_\bullet(LM)\rightarrow \C_\bullet(LM\times LM)$ is the diagonal map on the loop space.
\label{prop:stringelbowvalue}
\end{prop}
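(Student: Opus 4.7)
The proof will combine Corollary \ref{cor:Mukaicopairingelbow}, which identifies $\bbDelta(1)$ with the Shklyarov copairing $[\coev]$, with the functoriality of the Chern character under pushforward of compact objects. I will proceed in the following steps.

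First, by Corollary \ref{cor:Mukaicopairingelbow}, $\bbDelta(1) \in Z(S^1_0) \otimes Z(S^1_0)$ is equivalent to $[\coev]$, the Shklyarov copairing of $\cC = \LocSys(M)$. By definition $[\coev]$ is the composite $\dim(\coev)(1) \colon \Z \to \dim(\cC \otimes \cC^\vee) \cong \dim(\cC) \otimes \dim(\cC)$, which is precisely the Chern character \eqref{eq:cherncharacters} of the compact object $\coev(\Z) = \Delta_\sharp \Z_M \in \LocSys(M \times M)$ (using the identification $\cC \otimes \cC^\vee \cong \LocSys(M \times M)$ coming from \cref{prop:Spselfdual}).

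Second, I will invoke the functoriality of the Chern character. The functor $\Delta_\sharp \colon \LocSys(M) \to \LocSys(M \times M)$ preserves compact objects (since $\Delta^*$ is colimit-preserving, both being functors between compactly generated categories with $\Delta_\sharp$ a left adjoint); by \cref{lm:reldualiscompact} it is therefore a morphism in $(\PrSt_\Z)^\dual$. Hence by \cref{thm:ExistenceOfDim} combined with the discussion following \eqref{eq:cherncharacters}, the diagram
\[
\xymatrix{
(\LocSys(M)^\omega)^\sim \ar^-{\ch}[r] \ar^{\Delta_\sharp}[d] & \Omega^\infty\dim(\LocSys(M)) \ar^{\dim(\Delta_\sharp)}[d] \\
(\LocSys(M \times M)^\omega)^\sim \ar^-{\ch}[r] & \Omega^\infty\dim(\LocSys(M \times M))
}
\]
commutes. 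Applying this to $\Z_M \in \LocSys(M)^\omega$, whose Chern character is $\ch(\Z_M) = [\Z_M] = \chi_{\HH}(M) \in \C_\bullet(LM)$ by definition, yields $[\coev] = \ch(\Delta_\sharp \Z_M) = \dim(\Delta_\sharp)[\Z_M]$.

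Third, by \cref{prop:LocSysTHH} the transfer $\dim(\Delta_\sharp)$ is identified under $\dim(\LocSys(-)) \cong \C_\bullet(L(-))$ with the map $L\Delta \colon \C_\bullet(LM) \to \C_\bullet(L(M \times M)) \cong \C_\bullet(LM \times LM)$. Assembling the three steps, $\bbDelta(1) = [\coev] = (L\Delta)[\Z_M]$, which is the desired identification. No step should present a significant obstacle; the only slightly delicate point is checking that the two self-duality data (the one used to define $[\coev]$ via \cref{cor:Mukaicopairingelbow} and the one from \cref{prop:Spselfdual}) agree, which follows from the uniqueness of duality data (\cref{rmk:ddat}) applied to $\coev(\Z) = \Delta_\sharp \Z_M$.
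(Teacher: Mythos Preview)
Your proof is correct and follows essentially the same route as the paper's: both invoke \cref{cor:Mukaicopairingelbow} to identify $\bbDelta(1)$ with $[\coev]$, then use that $\coev(\Z)=\Delta_\sharp\Z_M$ together with \cref{prop:LocSysTHH} to rewrite $[\coev]$ as $(L\Delta)[\Z_M]$. The paper compresses your second and third steps into a single sentence, but the underlying content (functoriality of $\dim$ applied to the factorization $\coev=\Delta_\sharp\circ p^*$, which is exactly what your Chern-character naturality square expresses) is the same.
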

\begin{proof}
By \cref{cor:Mukaicopairingelbow} $\bbDelta(1)\in Z(S^1_0)\otimes Z(S^1_0)$ is equivalent to $[\coev]\in\dim(\LocSys(M))\otimes \dim(\LocSys(M))$. Since $\coev(\Z)=\Delta_\sharp \Z_M$, we have $[\coev] = (L\Delta) [\Z_M]$ by \cref{prop:LocSysTHH}.
\end{proof}

\begin{thm}
Under the identification $Z(S^1_n)\cong \C_\bullet(LM)[-nd]$ given by \cref{prop:stringS1nvalue} the loop product $\H_\bullet(LM)\otimes \H_\bullet(LM)\longrightarrow \H_{\bullet-d}(LM)$ is equal on homology to the TQFT product $\wedge\colon Z(S^1_0)\otimes Z(S^1_0)\rightarrow Z(S^1_{-1})$.
\label{thm:TFTstringproduct}
\end{thm}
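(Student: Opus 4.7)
The plan is to unfold the TQFT product using the explicit presentation from \cref{prop:TFTmultiplication} (which applies mutatis mutandis to the product $Z(S^1_0)\otimes Z(S^1_0)\to Z(S^1_{-1})$) and match it with the description of the loop product in \cref{prop:stringproductintersection}. First, I would note that by \cref{cor:Serreinvertible} the Serre twist $T$ is simply a shift by $[-d]$, so up to these shifts the product $Z(S^1_0)\otimes Z(S^1_0)\to Z(S^1_{-1})$ is given by the same formula as in \cref{prop:TFTmultiplication}: it factors as
\[
\ev\circ\coev\otimes\ev\circ\coev\xrightarrow{\id\circ\epsilon\circ\id}\ev\circ\coev\circ T^{-1},
\]
where $\epsilon$ is the counit of the adjunction $\coev\dashv \coev^\R$.

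Second, I would invoke \cref{prop:LocSyscoevadjoint} to rewrite this counit as the composite
\[
\Delta_\sharp\Z_M\otimes \coev^\R(\cE)\xrightarrow{\epsilon_p}\Delta_\sharp\Delta^*\cE\xrightarrow{\epsilon_\Delta}\cE,
\]
where $\epsilon_p\colon \Z_{M\times M}[-d]\to \Delta_\sharp \Z_M$ is, by \cref{prop:manifoldCWduality}, the Pontryagin--Thom collapse along the diagonal. Applied to $\cE$ obtained from the evaluation of $\coev$ on the two $Z(S^1_0)$ factors, which by the tensor product formula for local systems (\cref{prop:externaltensor}) together with base change along the Cartesian square realizing $LM\times LM=(LM\times LM)\times_{M\times M}(M\times M)$, identifies the $\epsilon_p$ step with the intersection product of \cref{def:intersection} applied to the Hurewicz fibration $\ev_0\times \ev_0\colon LM\times LM\to M\times M$. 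By \cref{cor:HWint}, this is exactly the composite $R_{CS}$ followed by cap with the Thom class that appears in \cref{prop:stringproductintersection}.

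Third, I would identify the remaining $\epsilon_\Delta$ step with the concatenation map $m\colon LM\times_M LM\to LM$. Using the pullback square
\[
\xymatrix{
LM\times_M LM \ar[r] \ar[d] & LM\times LM \ar^{\ev_0\times \ev_{1/2}}[d] \\
M\ar^{\Delta}[r] & M\times M,
}
\]
(where the top row records a figure-eight together with the midpoint identification dictated by the pair-of-pants), applying the base change formula of \cref{prop:basechange} to the $\epsilon_\Delta$ step shows that it implements exactly the gluing of loops at their common base point, i.e.\ concatenation. Composing the two steps gives the formula from \cref{prop:stringproductintersection}, which is the loop product.

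The main obstacle is bookkeeping: one must trace through the composition of natural transformations and base-change isomorphisms to identify the abstract TQFT composite $\ev\circ(\epsilon_p\circ\epsilon_\Delta)\otimes\id\circ\coev$ with a concrete map of chain complexes of spaces, and in particular confirm that the reparametrization implicit in the pair-of-pants corresponds to the half-translation built into the choice of fibration $(\ev_0,\ev_{1/2})\colon LM\to M\times M$ (equivalently, that concatenation rather than any other degenerate composite appears). Once this matching is in place, graded commutativity and associativity of both sides are automatic.
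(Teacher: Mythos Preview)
Your approach is essentially the paper's: unfold the TQFT product via \cref{prop:TFTmultiplication} and \cref{prop:LocSyscoevadjoint} into an $\epsilon_p$ step followed by an $\epsilon_\Delta$ step, identify $\epsilon_p$ with the intersection product for $\ev\times\ev\colon LM\times LM\to M\times M$, and identify $\epsilon_\Delta$ with concatenation.

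One point to tighten: in your third step the right vertical map of the square should be $\ev_0\times\ev_0$, not $\ev_0\times\ev_{1/2}$ (the latter belongs to the coproduct setup on a single $LM$), and base change alone does not identify $\epsilon_\Delta$ with $m$. The counit $\epsilon_\Delta\colon \Delta_\sharp\Delta^*\Rightarrow\id$ is not a base-change isomorphism; what you need is to determine which map $S^1\to S^1\vee S^1$ it induces under the marked-spaces / cocorrespondence formalism of \eqref{eq:coCorrcomposite}. The paper handles this by observing that in the picture for $\epsilon_\Delta$ there is a unique relevant homotopy class of based maps, and the induced map $\FE\to LM$ on mapping spaces is visibly concatenation. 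With that adjustment your argument matches the paper's proof.
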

\begin{proof}
	According to \cref{prop:TFTmultiplication,prop:LocSyscoevadjoint} the TQFT product is given by the composite
	\begin{align*}
		p_\sharp\Delta^*\Delta_\sharp\Z_M\otimes p_\sharp\Delta^*\Delta_\sharp\Z_M &\cong p_\sharp\Delta^*\Delta_\sharp(p_1)_\sharp(\Z_M\boxtimes\Delta^*\Delta_\sharp\Z_M) \\
		&\xrightarrow{\epsilon_p} p_\sharp\Delta^*\Delta_\sharp(p_1)_\sharp(\Delta_\sharp\Z_M\otimes (\Z_M\boxtimes \Delta^*\Delta_\sharp\Z_M))[d] \\
		&\cong p_\sharp\Delta^*\Delta_\sharp(p_1)_\sharp\Delta_\sharp\Delta^*\Delta_\sharp\Z_M[d]
		\\
		&\cong p_\sharp\Delta^*\Delta_\sharp\Delta^*\Delta_\sharp\Z_M[d] \\
		&\xrightarrow{\epsilon_\Delta} p_\sharp\Delta^*\Delta_\sharp \Z_M[d].
	\end{align*}
	
	In terms of marked spaces, it can be represented by maps
	\begin{center}
		\begin{tikzpicture}
			\draw (0, 0) circle (0.4cm);
			\draw (1, 0) circle (0.4cm);
			\draw[red, fill=red] (0.6, 0) circle (2pt);
			\draw (1.8, 0) node {$\xrightarrow{\epsilon_p}$};
			\draw (2.6, 0) circle (0.4cm);
			\draw (3.4, 0) circle (0.4cm);
			\draw (4.2, 0) node {$\xrightarrow{\epsilon_\Delta}$};
			\draw (5, 0) circle (0.4cm);
		\end{tikzpicture}
	\end{center}
	
	The loop product is given by the composite
	\begin{align*}
		p_\sharp\ev_\sharp\Z_{LM}\otimes p_\sharp\ev_\sharp\Z_{LM} &\xrightarrow{\epsilon_p} (p_\sharp\boxtimes p_\sharp)(\Delta_\sharp\Z_M\otimes(\ev_\sharp\Z_{LM}\boxtimes \ev_\sharp\Z_{LM}))[d] \\
		&\cong (p_\sharp\boxtimes p_\sharp)\Delta_\sharp(\ev_\sharp\Z_{LM}\otimes \ev_\sharp\Z_{LM})[d] \\
		&\cong p_\sharp(\ev_\sharp\Z_{LM}\otimes \ev_\sharp\Z_{LM})[d] \\
		&\xrightarrow{m} p_\sharp\ev_\sharp\Z_{LM}[d].
	\end{align*}

	In terms of marked spaces, it can be represented similarly:
	\begin{center}
		\begin{tikzpicture}
			\draw (0, 0) circle (0.4cm);
			\draw (1, 0) circle (0.4cm);
			\draw[red, fill=red] (0.6, 0) circle (2pt);
			\draw (1.8, 0) node {$\xrightarrow{\epsilon_p}$};
			\draw (2.6, 0) circle (0.4cm);
			\draw (3.4, 0) circle (0.4cm);
			\draw (4.2, 0) node {$\xrightarrow{m}$};
			\draw (5, 0) circle (0.4cm);
		\end{tikzpicture}
	\end{center}

	So, we are left with identifying $m$ with $\epsilon_\Delta$ in the second map. Reading the 1-morphisms horizontally, we can represent the counit $\epsilon_\Delta\colon \Delta_\sharp\Delta^*\Rightarrow \id$ and $\Delta^*\Delta_\sharp\Delta^*\Delta_\sharp\Z_M\xrightarrow{\epsilon_\Delta} \Delta^*\Delta_\sharp\Z_M$ as
	\begin{center}
		\begin{tikzpicture}
			\draw (0, 0) arc (-90:90:0.4cm);
			\draw (0.8, 0) arc (270:90:0.4cm);
			\draw (1.4, 0.4) node {$\xrightarrow{\epsilon_\Delta}$};
			\draw (2, 0.8) -- (2.8, 0.8);
			\draw (2, 0) -- (2.8, 0);
			
			\draw (5.2, 0.4) circle (0.4cm);
			\draw (6, 0.4) circle (0.4cm);
			\draw (7, 0.4) node {$\xrightarrow{\epsilon_\Delta}$};
			\draw (8, 0) arc (270:90:0.4cm) -- (8.8, 0.8) arc (90:-90:0.4cm) -- (8, 0);
		\end{tikzpicture}
	\end{center}

	There is a unique homotopy class of based maps on the left, and the induced map on the right is easily seen to be homotopic to $m$.
\end{proof}

Next, let us describe the TQFT origin of the loop coproduct.

\begin{prop}
The coproduct $\bbDelta\colon \C_\bullet(LM)[-d]\rightarrow \C_\bullet(LM)\otimes \C_\bullet(LM)$ is homotopic to the composite
\[\C_\bullet(LM)[-d]\longrightarrow \C_\bullet(\FE)\longrightarrow \C_\bullet(LM)\otimes \C_\bullet(LM),\]
where the first map is given by the intersection product associated to $(\ev_0, \ev_{1/2})\colon LM\rightarrow M\times M$ and the second map is induced by the projection $\FE\rightarrow LM\times LM$.
\label{prop:TFTstringcoproduct}
\end{prop}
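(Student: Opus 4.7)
The plan is to unwind the formula for $\bbDelta$ from \cref{prop:TFTcomultiplication} using the explicit duality data for $\LocSys(M)$ recorded in \cref{prop:LocSyscoevadjoint}, and then match the resulting composite with the geometric description through the figure-eight space $\FE$.

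First, I would translate the TQFT formula into our setting. By \cref{prop:TFTcomultiplication}, $\bbDelta$ is the composite obtained by applying the counit $\epsilon\colon \coev\circ\coev^\R\Rightarrow \id$ to the middle slot of the expression $(\ev\otimes\ev)\circ(\id\otimes\coev\circ\coev^\R\otimes\id)\circ(\coev\otimes\coev)$ evaluated on $\Z$. Using \cref{prop:LocSyscoevadjoint}, this counit factors as $\epsilon_\Delta\circ \epsilon_p$, where $\epsilon_p$ is the Pontryagin--Thom collapse along the diagonal (degree $-d$) and $\epsilon_\Delta$ is the counit of $\Delta_\sharp\dashv\Delta^*$. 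By \cref{cor:Serreinvertible} the inverse Serre twist is a shift by $[-d]$, which accounts for the degree shift in the source $Z(S^1_1)\cong \C_\bullet(LM)[-d]$.

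Next, I would interpret each of these functors as a pushforward or pullback along a correspondence, using the bivariant formalism of \cref{thm:Spbivariant} and the marked-space pictures developed for the analogous verification in the proof of \cref{thm:TFTstringproduct}. The key observation is the following base-change identification: by interpreting the circle $S^1$ as the pushout $\pt\sqcup_{\pt\sqcup\pt}\pt$, one has $LM\cong M\times_{M\times M}M$, and similarly by identifying $\FE = LM\times_M LM$ with the pullback of two copies of this square, one gets a natural isomorphism
\[LM\times_{M\times M} M\cong \FE,\]
where the first fiber product uses the map $(\ev_0,\ev_{1/2})\colon LM\to M\times M$ from \eqref{eq:F8intersection}. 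Under this identification, inserting $\epsilon_p$ at the second (time-$1/2$) marked point of the circle corresponds exactly to the Thom collapse defining the intersection product of \cref{def:intersection} for the fibration $(\ev_0,\ev_{1/2})$; the subsequent $\epsilon_\Delta$ then produces the intermediate chains $\C_\bullet(\FE)$.

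Finally, the ``outgoing'' part of the bordism, given by the two $\ev$ caps together with the reparametrization of the resulting shape, must be identified with the map $\FE\to LM\times LM$ from \eqref{eq:F8forgetbasepoint}. This is a direct picture-level check in the marked-space formalism, entirely parallel to the identification of the multiplication map with $\epsilon_\Delta$ carried out at the end of the proof of \cref{thm:TFTstringproduct}: in both cases one uses that the composite of elementary correspondences for pushing forward the figure-eight shape to two disjoint circles is the projection $\FE\to LM\times LM$.

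The main obstacle will be verifying in step two that the Pontryagin--Thom collapse introduced by $\epsilon_p$ in the middle of the decomposition lands on the correct diagonal, namely the one corresponding to the time-$1/2$ evaluation relative to the time-$0$ evaluation already encoded by $S^1_1 = \coev^\R\coev$. This is a purely combinatorial matter in the marked-space calculus, but it requires careful bookkeeping of which pair of marked points the middle counit identifies; after this is settled, the comparison with \cref{def:stringcoproduct} is immediate since the loop coproduct is defined precisely as the composite of the intersection product for $(\ev_0,\ev_{1/2})$ with the pushforward along $\FE\to LM\times LM$.
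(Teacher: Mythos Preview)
Your proposal is correct and follows essentially the same route as the paper: the paper's proof is a one-line appeal to \cref{prop:TFTcomultiplication} and \cref{prop:LocSyscoevadjoint}, followed by the marked-space picture of $\bbDelta$ as $\epsilon_p$ then $\epsilon_\Delta$, which is exactly the unwinding you describe. Your more detailed bookkeeping of the base-change identifications and the time-$1/2$ marked point is not needed at this level of the argument but is consistent with how the paper later uses this description in the proof of \cref{thm:TFTHWequivalence}.
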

\begin{proof}
The claim follows from the description of the coproduct $\bbDelta$ from \cref{prop:TFTcomultiplication} as well as the description of the counit $\coev\circ \coev^\R\rightarrow \id$ given in \cref{prop:LocSyscoevadjoint}.
\end{proof}

In terms of marked spaces, the coproduct $\bbDelta$ can be described as the composite
\begin{center}
	\begin{tikzpicture}
            \draw (-0.8, 0) node {$\bbDelta\colon$};
		\draw (0, 0) circle (0.4cm);
		\draw[red, fill=red] (0.4, 0) circle (2pt);
		\draw (0.8, 0) node {$\xrightarrow{\epsilon_p}$};
		\draw (1.6, 0) circle (0.4cm);
		\draw (2.4, 0) circle (0.4cm);
		\draw (3.2, 0) node {$\xrightarrow{\epsilon_\Delta}$};
		\draw (4, 0) circle (0.4cm);
	\end{tikzpicture}
\end{center}

By \cref{prop:S1nvalue} and \cref{prop:stringS1nvalue} we have
\[\fZ(\LocSys(M))\cong Z(S^1_1)\cong \C_\bullet(LM)[-d],\qquad \dim(\LocSys(M))\cong \C_\bullet(LM).\]
Let $\widetilde{\dim}(\LocSys(M))\cong \C_\bullet(M)$ with $\widetilde{\dim}(\LocSys(M))\rightarrow \dim(\LocSys(M))$ given by the inclusion of constant loops $i\colon \C_\bullet(M)\rightarrow \C_\bullet(LM)$. Its cofiber is given by the complex of relative chains $\overline{\dim}(\LocSys(M)) \cong \C_\bullet(LM, M)$.

By \cref{prop:stringelbowvalue} the class $[\coev]\in \C_\bullet(LM\times LM)$ is equivalent to $(L\Delta)[\Z_M]$. Further, using the Pontryagin--Thom lift of the $\HH$ Euler characteristic, the class $[\Z_M]$ is equivalent to $i(e(M))$. Using the commutative diagram
\[
\xymatrix{
M \ar^{\Delta}[r] \ar^{i}[d] & M\times M \ar^{i\times i}[d] \\
LM \ar^-{L\Delta}[r] & LM\times LM
}
\]
we see that $[\coev]$ admits a lift along $(i\times i)\colon \C_\bullet(M)\otimes \C_\bullet(M)\rightarrow \C_\bullet(LM)\otimes \C_\bullet(LM)$. Thus, we can apply the construction of the secondary coproduct from \cref{def:secondarycoproduct} to obtain an operation
\begin{equation}
\vee\colon \H_{\bullet+d-1}(LM)\longrightarrow \H_\bullet(\C_\bullet(LM, M)\otimes \C_\bullet(LM, M))\cong \H_\bullet(LM\times LM, M\times LM\cup LM\times M).
\label{eq:stringsecondarycoproduct}
\end{equation}

\begin{thm}
The secondary coproduct \eqref{eq:stringsecondarycoproduct} coincides with the loop coproduct from \cref{def:stringcoproduct}.
\label{thm:TFTHWequivalence}
\end{thm}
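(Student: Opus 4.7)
The plan is to unpack both the TQFT secondary coproduct \eqref{eq:stringsecondarycoproduct} and the loop coproduct of \cref{def:stringcoproduct} into explicit composites of morphisms in $\LocSys(M\times M)$, and then verify they agree by matching the corresponding pieces of data. Both constructions ultimately rely on the Pontryagin--Thom lift $\lambda_{\PT}(M)$, so the heart of the argument is to see that this single piece of data is assembled in the same way on both sides.

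First I would rewrite the TQFT coproduct $\bbDelta\colon Z(S^1_1)\rightarrow Z(S^1_0)\otimes Z(S^1_0)$ via \cref{prop:TFTstringcoproduct} as the composite of the intersection product associated to the Hurewicz fibration $(\ev_0,\ev_{1/2})\colon LM\rightarrow M\times M$ with the pushforward $\C_\bullet(\FE)\rightarrow \C_\bullet(LM\times LM)$. Unpacking the three elementary steps of the homotopy $\vee'$ described in \cref{sect:TFTexplicit} in terms of marked spaces, each step corresponds to a canonical chain-level homotopy built from the units/counits of the adjunctions $\Delta_\sharp\dashv\Delta^*$ and $\coev\dashv\coev^\R$. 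When translated back to parametrized spectra over $M\times M$, the three-step homotopy $\vee'$ exactly traces out the diagonal part of diagram \eqref{eq:F8intersection} that underlies the loop coproduct.

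Next, by \cref{prop:stringelbowvalue} the class $\bbDelta(1)=(L\Delta)[\Z_M]\in \C_\bullet(LM\times LM)$. The Pontryagin--Thom lift of the $\HH$ Euler characteristic presents $[\Z_M]$ as the image of the Euler class $e(M)\in \C_\bullet(M)$ under the inclusion of constant loops. Together with naturality of $L\Delta$, this furnishes the required lifts of $\bbDelta(1)$ along both $\C_\bullet(M)\otimes \C_\bullet(LM)\rightarrow \C_\bullet(LM)\otimes \C_\bullet(LM)$ and $\C_\bullet(LM)\otimes \C_\bullet(M)\rightarrow \C_\bullet(LM)\otimes \C_\bullet(LM)$, which are the partial trivializations needed in \cref{def:secondarycoproduct}. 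Modding out by $\C_\bullet(M)$ in each factor produces the secondary coproduct with target $\C_\bullet(LM,M)\otimes \C_\bullet(LM,M)[-1]$, where the $[-1]$ shift arises from replacing the lift data by its cofiber; this matches the shift in the loop coproduct coming from the boundary map $\H_{\bullet+d-1}(LM)\rightarrow \H_{\bullet+d}(LM,LM\sqcup LM)$ in \eqref{eq:LMshift}.

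Finally, I would compare the two assembled composites. The relative intersection product for the square \eqref{eq:F8intersection} is by construction \eqref{eqn:relintersection} built from the same lift diagram \eqref{eqn:echarliftdiag} that encodes $\lambda_{\PT}(M)$ (and which is presented geometrically by \eqref{eq:FM2diagram}); by \cref{prop:HWrelint} this is the Goresky--Hingston chain-level recipe. On the other hand, the TQFT recipe uses the lift of $\bbDelta(1)$, which is the trace of the same diagram \eqref{eqn:echarliftdiag} under $\Delta_\sharp$. The two descriptions agree once one identifies the lift of $\Delta_\sharp\Z_M[-d]\rightarrow \Delta_\sharp\Z_M$ appearing on the left-hand edge of \eqref{eqn:relintersection} with the Pontryagin--Thom nullhomotopy of the diagonal class used to trivialize $\bbDelta(1)$ on the TQFT side, and identifies the reparametrization maps $J_0,J_1$ of \eqref{eq:F8intersection} with the canonical morphisms in $\coCorr(\cS)$ that attach a constant loop to a given point. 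The main obstacle is the careful bookkeeping of these identifications, in particular pinning down the correspondence between the two ``halves'' of the figure-eight geometry encoded by $J_0$ and $J_1$ and the left/right factorizations $((-)\otimes 1)\wedge \bbDelta(1)$ and $\bbDelta(1)\wedge (1\otimes (-))$ appearing in the TQFT homotopy $\vee'$. Once this matching is verified, the theorem follows because both constructions are determined by the same lift $\lambda_{\PT}(M)$.
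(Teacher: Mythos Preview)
Your outline correctly identifies the ingredients --- both sides are built from the same lift diagram \eqref{eqn:echarliftdiag} --- but it stops short of an actual proof, and there is a genuine gap in the mechanism you propose for comparing the two constructions.

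The point is that the secondary coproduct is not a map but a \emph{homotopy}: it is the composite homotopy \eqref{eq:TFTcompletehomotopy} projected to $\overline{\dim}\otimes\overline{\dim}$. Likewise, the loop coproduct is extracted from the relative intersection square \eqref{eqn:relintersection}, which encodes a homotopy between $\Delta_\sharp(\id\otimes e(M))$ and the intersection map. So what you must compare are two $2$-morphisms, not two $1$-morphisms, and saying that both are ``determined by $\lambda_{\PT}(M)$'' is not enough: you need to exhibit a $3$-cell (a homotopy of homotopies) relating them. Your proposal asserts that ``the three-step homotopy $\vee'$ exactly traces out the diagonal part of diagram \eqref{eq:F8intersection}'', but \eqref{eq:F8intersection} is a commutative square of spaces and has no such diagonal to be traced out; this sentence hides precisely the work that must be done.

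The paper's approach supplies the missing device. One introduces four homotopic morphisms $\phi_1,\phi_2,\phi_3,\phi_4\colon \Z_M[-d]\to \ev_\sharp\Z_{LM}\otimes\C_\bullet(LM)$ interpolating between the Chern character and the lifted Euler class, together with a \emph{universal} homotopy $\phi(a)\sim (a\otimes 1)\wedge\phi(1)$ valid for any such $\phi$ (built from the cusp isomorphism for $p^*\dashv p_\sharp(\zeta_M\otimes -)$ and an interchange cell). This produces a commuting square of homotopies
\[
\begin{tikzcd}
(a\otimes 1)\wedge\phi_1(1) \ar[r,"\sim"]\ar[d,"\sim"] & (a\otimes 1)\wedge\phi_4(1) \ar[d,"\sim"] \\
\phi_1(a) \ar[r,"\sim"] & \phi_4(a)
\end{tikzcd}
\]
whose top row, prepended with $\bbDelta(a)\sim\phi_1(a)\sim(a\otimes 1)\wedge\phi_1(1)$, is then identified (Step~3) with the TQFT half of \eqref{eq:TFTcompletehomotopy}, and whose bottom row (Step~4) is identified with the relative intersection square via a prism argument that trades the lift triangle tensored with $\eta_\Delta$ for the square in \eqref{eqn:relintersectionpreliminary}. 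Without this interpolation and the universal homotopy, your ``careful bookkeeping'' has no scaffolding on which to hang the comparison.
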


\subsection{Proof of \texorpdfstring{\cref{thm:TFTHWequivalence}}{theorem \ref{thm:TFTHWequivalence}}}

We begin by slightly rephrasing the definition of the loop coproduct. Split diagram \eqref{eq:F8intersection} into two commutative diagrams
\begin{equation}
	\xymatrix{
		LM \ar^{J_i}[r] \ar^{\ev}[d] & LM \ar^{(\ev_0, \ev_{1/2})}[d] \\
		M \ar^-{\Delta}[r] & M\times M
	}
	\label{eq:F8halfintersection}
\end{equation}
for $i=0, 1$ for each $LM$ term in the upper left corner.

Consider the diagram
\begin{equation}
\xymatrix{
	\C_\bullet(LM)[-d] \ar^-{e(M)}[r] \ar^{J_0}[d] & \C_\bullet(LM) \ar^{\iota_0}[d] \ar^-{\id\times \ev}[r] & \C_\bullet(LM\times M) \ar^{\id\times i}[d] \\
	\C_\bullet(LM)[-d] \ar[r] & \C_\bullet(\FE) \ar[r] & \C_\bullet(LM\times LM) \\
	\C_\bullet(LM)[-d] \ar^-{e(M)}[r] \ar_{J_1}[u] & \C_\bullet(LM) \ar_{\iota_1}[u] \ar^-{\ev\times\id}[r] & \C_\bullet(M\times LM) \ar_{i\times \id}[u]
}
\label{eq:stringcoproducttwohalves}
\end{equation}
where the two squares on the left are given by the relative intersection squares \eqref{eq:relintersectionhomology} for \eqref{eq:F8halfintersection} for $i=0,1$. The middle row describes the coproduct $\bbDelta$ by \cref{prop:TFTstringcoproduct} and the map
\[\H_{\bullet+d-1}(LM)\longrightarrow \H_\bullet(LM\times LM, LM\times M\cup M\times LM)\]
in the definition of the loop coproduct is obtained by summing over the top and bottom rows and taking the cofiber of the resulting map into the middle row and postcomposing with the map induced by $(LM \times LM, LM \times M \sqcup M \times LM) \to (LM \times LM, LM \times M \cup M \times LM)$.

We will next make explicit the secondary coproduct. It is given by a homotopy
\begin{equation}
	(a\otimes 1)\wedge ((i\times i)\circ\Delta) e(M)\sim \bbDelta(a)\sim ((i\times i)\circ\Delta) e(M)\wedge (1\otimes a)
	\label{eq:TFTcompletehomotopy}
\end{equation}
constructed from the following pieces:
\begin{enumerate}
	\item The homotopy \eqref{eq:TFThomotopy} given by \cref{fig:TFThomotopy} is
	\[(a\otimes 1)\wedge \bbDelta(1)\sim \bbDelta(a)\sim \bbDelta(1)\wedge (1\otimes a).\]
	\item Using \cref{prop:stringelbowvalue} we obtain homotopies
	\[(a\otimes 1)\wedge (L\Delta)[\Z_M]\sim (a\otimes 1)\wedge \bbDelta(1)\]
	and
	\[(L\Delta)[\Z_M]\wedge (1\otimes a)\sim \bbDelta(1)\wedge (1\otimes a).\]
	\item Applying the lift of the $\HH$ Euler characteristic we obtain homotopies
	\[(a\otimes 1)\wedge (L\Delta) i(e(M))\sim (a\otimes 1)\wedge (L\Delta)[\Z_M]\]
	and
	\[(L\Delta)i(e(M))\wedge (1\otimes a)\sim (L\Delta)[\Z_M]\wedge (1\otimes a).\]
	\item Using the commutative diagram
	\[
	\xymatrix{
		M \ar^{i}[d] \ar^-{\Delta}[r] & M\times M \ar^{i\times i}[d] \\
		LM \ar^-{L\Delta}[r] & LM\times LM
	}
	\]
	we obtain homotopies
	\[(a\otimes 1)\wedge ((i\times i)\circ \Delta)(e(M))\sim (a\otimes 1)\wedge (L\Delta) i(e(M)),\]
        where the left-hand side is given by a morphism $\C_\bullet(LM)[-d]\rightarrow \C_\bullet(LM)\otimes \C_\bullet(M)$, and
	\[((i\times i)\circ \Delta)(e(M))\wedge (1\otimes a)\sim (L\Delta)i(e(M))\wedge (1\otimes a),\]
        where the left-hand side is given by a morphism $\C_\bullet(LM)[-d]\rightarrow \C_\bullet(M)\otimes \C_\bullet(LM)$.
\end{enumerate}

Thus, we see that both the loop coproduct and the secondary coproduct are given by a homotopy commutative diagram of the form
\[
\xymatrix{
 & \C_\bullet(LM)\otimes \C_\bullet(M) \ar^{\id\otimes i}[dr] & \\
\C_\bullet(LM)[-d] \ar[ur] \ar[dr] \ar^{\bbDelta}[rr] && \C_\bullet(LM)\otimes \C_\bullet(LM) \\
& \C_\bullet(M)\otimes \C_\bullet(LM) \ar_{i\otimes \id}[ur] &
}
\]
where the top and the bottom 2-morphisms are related by permuting the factors. So, to prove \cref{thm:TFTHWequivalence} it is enough to identify the relevant two-morphisms appearing at the top of the loop coproduct and the secondary coproduct.

It will be convenient to use the following notation. Consider a morphism $\phi\colon \Z_M[-d]\rightarrow \ev_\sharp \Z_{LM} \otimes \C_\bullet(LM)$. By adjunction it gives rise to a morphism $\phi(1)\colon \Z\rightarrow \C_\bullet(LM)\otimes \C_\bullet(LM)$. It also gives rise to an operation
\[\phi(a)\colon \C_\bullet(LM)[-d]\xrightarrow{\id\otimes \phi}\C_\bullet(\FE)\otimes \C_\bullet(LM)\xrightarrow{m\otimes\id} \C_\bullet(LM)\otimes \C_\bullet(LM)\]
as well as the morphism
\begin{align*}
		(a\otimes 1)\wedge\phi(1)\colon &\C_\bullet(LM)[-d] \xrightarrow{\id\otimes\phi(1)}\\
		&\C_\bullet(LM)[-d] \otimes \C_\bullet(LM)\otimes \C_\bullet(LM) \xrightarrow{\epsilon_p}\\
		&\C_\bullet(\FE)\otimes \C_\bullet(LM) \xrightarrow{m\otimes\id} \\
		& \C_\bullet(LM)\otimes \C_\bullet(LM).
\end{align*}

There is a homotopy $\phi(a)\sim (a\otimes 1)\wedge\phi(1)$ given pictorially by
{
	\importpiclib
	\[
	\begin{tikzcd}
		(a \otimes 1)\wedge\phi(1) \colon & \phhc \ar[r, "\eta_p"]\ar[dr, equal]& \phhc \ \pzeta \ar[r, "\phi"]\ar[d, "\epsilon_p"] & \phhc \ \pohoh \ar[d, "\epsilon_p"] & \\
		\phi(a)\colon \ar[u, "\sim"] && \phhc \ar[r, "\phi"] & \pcircircircsep \ar[r, "\epsilon_\Delta"] & \povalcirc
	\end{tikzcd}
	\]
}
Traversing this picture along the top part we obtain $(a\otimes 1)\wedge\phi(1)$ and traversing this picture along the bottom part we obtain $\phi(a)$. The left two-cell is given by the cusp isomorphism witnessing the adjunction $p^*(-)\dashv p_\sharp(\zeta_M\otimes(-))$ and the middle two-cell is given by the interchange two-cell.

The strategy of the proof is as follows:
\begin{itemize}
	\item \textbf{Step 1}. We will construct four homotopic morphisms
	\[\phi_1,\phi_2,\phi_3,\phi_4\colon \Z_M[-d]\rightarrow \ev_\sharp \Z_{LM} \otimes \C_\bullet(LM),\]
        where $\phi_4$ factors through $\Z_M[-d]\rightarrow \Z_M\otimes \C_\bullet(LM)\xrightarrow{i\otimes \id} \ev_\sharp \Z_{LM} \otimes \C_\bullet(LM)$. The homotopy $\phi(a)\sim (a\otimes 1)\wedge\phi(1)$ gives rise to a homotopy commuting square
	\begin{equation}\label{diag:squarehomotopy}
		\begin{tikzcd}
			(a\otimes 1)\wedge\phi_1(1) \ar[r, "\sim"]\ar[d, "\sim"] & (a\otimes 1)\wedge\phi_2(1) \ar[r, "\sim"]\ar[d, "\sim"] & (a\otimes 1)\wedge\phi_3(1) \ar[r, "\sim"]\ar[d, "\sim"] & (a\otimes 1)\wedge\phi_4(1) \ar[d, "\sim"] \\
			\phi_1(a) \ar[r, "\sim"] &\phi_2(a) \ar[r, "\sim"] &\phi_3(a) \ar[r, "\sim"] &\phi_4(a).
		\end{tikzcd}
	\end{equation}
	in $\Hom_{\Mod_\Z}(\C_\bullet(LM)[-d], \C_\bullet(LM)\otimes \C_\bullet(LM))$.

	\item \textbf{Step 2}. We will construct a homotopy $\bbDelta(a)\sim \phi_1(a)$.
	
	\item \textbf{Step 3}. We will identify the homotopy
	\[\bbDelta(a)\sim\phi_1(a)\sim (a\otimes 1)\wedge\phi_1(1)\sim (a\otimes 1)\wedge\phi_2(1)\sim (a\otimes 1)\wedge\phi_3(1)\sim (a\otimes 1)\wedge\phi_4(1)\]
	with a half of the homotopy \eqref{eq:TFTcompletehomotopy}.
	
	\item \textbf{Step 4}. We will identify the homotopy
	\[\bbDelta(a)\sim \phi_1(a)\sim \phi_2(a)\sim \phi_3(a)\sim \phi_4(a)\]
	with the bottom half of diagram in \eqref{eq:stringcoproducttwohalves} describing the loop coproduct.
\end{itemize}

\subsubsection{Step 1}

We now define the homotopic elements $\phi_1, \phi_2, \phi_3, \phi_4 \in \Hom_{\LocSys(M)}(\Z_M[-d], \ev_\sharp \Z_{LM} \otimes \C_\bullet(LM))$ by
\[
\phi_1 \colon \Z_M[-d] \xrightarrow{\eta_\Delta} \ev_\sharp \Z_{LM}[-d] \xrightarrow{\epsilon_p} (\ev_\sharp \Z_{LM})^{\otimes 2} \xrightarrow{\epsilon_\Delta} \ev_\sharp \Z_{LM} \otimes \C_\bullet(LM)
\]
\[
\phi_2 \colon \Z_M[-d] \xrightarrow{\epsilon_p} \ev_\sharp \Z_{LM} \xrightarrow{\eta_\Delta} (\ev_\sharp \Z_{LM})^{\otimes 2} \xrightarrow{\epsilon_\Delta} \ev_\sharp \Z_{LM} \otimes \C_\bullet(LM)
\]
\[
\phi_3 \colon \Z_M[-d] \xrightarrow{e(M)} \Z_M \xrightarrow{\eta_\Delta\sqcup \eta_\Delta} (\ev_\sharp \Z_{LM})^{\otimes 2} \xrightarrow{\epsilon_\Delta} \ev_\sharp \Z_{LM} \otimes \C_\bullet(LM)
\]
\[
\phi_4 \colon \Z_M[-d] \xrightarrow{e(M)} \Z_M \xrightarrow{\epsilon_\Delta} \Z_M \otimes \C_\bullet(M) \xrightarrow{\eta_\Delta\sqcup \eta_\Delta} \ev_\sharp \Z_{LM} \otimes \C_\bullet(LM)
\]
Note that with the previous notation we have
\[
\phi_1(1) = \bbDelta(1) = [\coev] = [\Delta_\sharp \circ p^*]
\]
\[
\phi_2(1) = [\Delta_\sharp] \circ [p^*],
\]
\[
\phi_3(1) = [\Delta_\sharp] \circ i \circ e(M),
\]
\[
\phi_4(1) = ((i\times i)\circ\Delta)e(M)
\]
where $[\Delta_\sharp] = L\Delta$ by \cref{prop:LocSysTHH}. Pictorially, these morphisms are given as follows:
{
	\importpiclib
	\begin{equation}
		\begin{tikzcd}\label{diag:phi1 to phi4}
			\phi_4 \colon & & & \ppointmun \ar[ddd, "\eta_\Delta\sqcup\eta_\Delta"] & \\
			\phi_3 \ar[u, "\sim"] \colon & & \ppointm \ar[ur, "\epsilon_\Delta"] \ar[d, "\eta_\Delta"] && \\
			\phi_2 \ar[u, "\sim"] \colon & \pzetam \ar[d, "\eta_\Delta"] \ar[ur, "e(M)"] \ar[phantom, ur, ""{name=T}] \ar[r, "\epsilon_p"] & \phhm \ar[phantom, to=T, "!"] \ar[d, "\eta_\Delta"] && \\
			\phi_1 \ar[u, "\sim"] \colon & \phhcm \ar[r, "\epsilon_p"]& \pfigeightbm \ar[r, "\epsilon_\Delta"] & \ptwocircm
		\end{tikzcd}
	\end{equation}
}

The triangle labeled by ! is provided by the lift of the $\HH$ Euler characteristic. The other two unlabeled two-cells are the interchange two-cells.

\subsubsection{Step 2}

The homotopy $\bbDelta(a)\sim \phi_1(a)$ is given pictorially by
{
	\importpiclib
	\begin{equation}
		\begin{tikzcd}\label{diag:ship hull}
			\phi_1(a)\colon & \phhc \ar[r, "\eta_\Delta"] \ar[dr, equal] & \pcirc\phhc \ar[r, "\epsilon_p"]\ar[d, "\epsilon_\Delta"] & \pcircircirc \ar[r, "\epsilon_\Delta"]\ar[d, "\epsilon_\Delta"] & \pcircircircsep \ar[r, "\epsilon_\Delta"] & \povalcirc \\
			\bbDelta(a) \colon \ar[u, "\sim"] && \phhc \ar[r, "\epsilon_p"] & \povalcircnsep \ar[urr, "\epsilon_\Delta"] &&
		\end{tikzcd}
	\end{equation}
}

\subsubsection{Step 3}

The composite
\[\bbDelta(a)\sim \phi_1(a)\sim (a\otimes 1)\wedge \phi_1(1)\sim (a\otimes 1)\wedge \phi_2(1)\sim (a\otimes 1)\wedge \phi_3(1)\sim (a\otimes 1)\wedge \phi_4(1)\]
in \cref{diag:squarehomotopy} is given by
{
	\importpiclib
	\[
	\begin{tikzcd}
		(a \otimes 1)\wedge\phi_4(1) \colon &[-2em]&&&& \phhc \ \pdpoint  \ar[ddd, "\eta_\Delta\sqcup\eta_\Delta"]&&\\ 
		(a \otimes 1)\wedge\phi_3(1) \colon \ar[u, "\sim"] &     &                       &  &              \phhc \ \ppoint \ar[d, "\eta_\Delta"] \ar[ur, "\epsilon_\Delta"] &  &                  & \\
		(a \otimes 1)\wedge\phi_2(1)\colon \ar[u, "\sim"] &       &                       & \phhc \ \pzeta \ar[ur, "e(M)"{name=P}] \ar[r, "\epsilon_p"]\ar[d, "\eta_\Delta"]\ar[dl, equal]& \phhc \ \pcirc \ar[d, "\eta_\Delta"]  \ar[phantom, to=P, "!"] &  &                        &\\
		(a \otimes 1)\wedge\phi_1(1) \colon \ar[u, "\sim"] & \phhc \ar[r, "\eta_p"]\ar[dr, equal]& \phhc \ \pzeta \ar[r, "\eta_\Delta"]\ar[d, "\epsilon_p"] & \phhc \ \phhc \ar[r, "\epsilon_p"]\ar[d, "\epsilon_p"]         & \phhc \ \pfigeight \ar[r, "\epsilon_\Delta"]\ar[d, "\epsilon_p"] & \phhc \ \pohoh \ar[r, "\epsilon_p"] \ar[d, "\epsilon_p"] & \pcircircircsep \ar[r, "\epsilon_\Delta"] & \povalcirc \\
		\phi_1(a)\colon \ar[u, "\sim"]  & & \phhc \ar[r, "\eta_\Delta"] \ar[dr, equal] & \pcirc\phhc \ar[r, "\epsilon_p"]\ar[d, "\epsilon_\Delta"] & \pcircircirc \ar[r, "\epsilon_\Delta"]\ar[d, "\epsilon_\Delta"] & \pcircircircsep \ar[d, "\epsilon_\Delta"] \ar[ur, equal] && \\
		\bbDelta(a) \colon \ar[u, "\sim"] &&& \phhc \ar[r, "\epsilon_p"] & \povalcircnsep \ar[r, "\epsilon_\Delta"] & \povalcirc \ar[uurr, equal]&&
	\end{tikzcd}
	\]
}

The bottom three rows identify with the homotopy $\bbDelta(a)\sim (a\otimes 1)\wedge\bbDelta(1)$, and the top 4 rows identify with the homotopies
\[(a\otimes 1)\wedge\bbDelta(1)\sim(a\otimes 1)\wedge (L\Delta)[\Z_M]\sim (a\otimes 1)\wedge (L\Delta)i(e(M))\sim (a\otimes 1)\wedge ((i\times i)\circ\Delta)e(M)\]
in \eqref{eq:TFTcompletehomotopy}.

\subsubsection{Step 4}
Let us first spell out the homotopy
\[\bbDelta(a)\sim \phi_1(a)\sim \phi_2(a)\sim \phi_3(a)\sim \phi_4(a).\]
It is obtained by gluing in an extra circle at the dashed line in \eqref{diag:phi1 to phi4} and attaching the resulting diagram to \eqref{diag:ship hull}. We thus obtain
{
	\importpiclib
	\begin{equation}
		\begin{tikzcd}\label{diag:tftcopdiagsimplified}
			\phi_4(a) \colon & & & & \pcirc \ \ppoint \ar[ddd, "\eta_\Delta\sqcup\eta_\Delta"] & \\
			\phi_3(a) \ar[u, "\sim"] \colon & & & \pcirc \ar[ur, "\epsilon_\Delta"] \ar[d, "\eta_\Delta"] && \\
			\phi_2(a) \ar[u, "\sim"] \colon & & \phhc \ar[d, "\eta_\Delta"] \ar[ur, "e(M)"] \ar[phantom, ur, ""{name=T}] \ar[r, "\epsilon_p"] & \pcircirc \ar[phantom, to=T, "!"] \ar[d, "\eta_\Delta"] && \\
			\phi_1(a) \ar[u, "\sim"] \colon & \phhc \ar[r, "\eta_\Delta"] \ar[dr, equal] \ar[ur, equal] & \pcirc\phhc \ar[r, "\epsilon_p"]\ar[d, "\epsilon_\Delta"] & \pcircircirc \ar[r, "\epsilon_\Delta"]\ar[d, "\epsilon_\Delta"] & \pcircircircsep \ar[r, "\epsilon_\Delta"] & \povalcirc \\
			\bbDelta(a) \colon \ar[u, "\sim"] && \phhc \ar[r, "\epsilon_p"] & \povalcircnsep \ar[urr, "\epsilon_\Delta"] &&
		\end{tikzcd}
	\end{equation}
}

To compare this to the definition of the loop coproduct as in \eqref{eq:stringcoproducttwohalves} (more precisely, the lower two rows) let us first spell out the relative intersection square. Specializing the definition of the relative intersection square \eqref{eqn:relintersectionpreliminary} we obtain
{\importpiclib
	\[
	\begin{tikzcd}[column sep=small]
		\ptopeightm \ar[rr, "\eta_\Delta"] \ar[rrrrrr, equal, bend left] \ar[d, "e(M)"] && \pxfigeightm \ar[rr, equal] \ar[d, "e(M)"] &&  \pxfigeightm \ar[rr, "\epsilon_\Delta"] \ar[d, "e(M)"'] \ar[dr, "\epsilon_p"] \ar[dr, phantom, ""{name=T}]  && \phhc \ar[d, "\epsilon_p"] \\
		\ptopeight \ar[rr, "\eta_\Delta"] && \pxfigeight \ar[rr, equal]  && \pxfigeight \ar[r, "\eta_\Delta"] \ar[phantom, to=T, "!"] \ar[rr, equal, bend right = 45] & \pxfigeightex \ar[r, "\epsilon_\Delta"] & \pxfigeight
	\end{tikzcd}
	\]
}
Here we used that the diagram \eqref{eqn:echarliftdiag} determining a lift of the $\HH$ Euler characteristic is equivalent to the triangle $!$ using the $\Delta_\sharp \dashv \Delta^*$ adjunction. Namely, \eqref{eqn:echarliftdiag} is given by
\[
\begin{tikzcd}
	\Delta_\sharp \Z_M[-d] \ar[d, "e(M)"'] \ar[rr, "\eta_\Delta"] \ar[dr, "\epsilon_p"] \ar[dr, phantom, ""{name=T}]& & \Z_{M\times M}[-d] \ar[d, "\epsilon_p"]\\
	\Delta_\sharp \Z_M \ar[r, "\epsilon_\Delta"'] \ar[phantom, to=T, "!"] \ar[rr, bend right = 45, equal] & \Delta_\sharp\Delta^*\Delta_\sharp\Z_M \ar[r, "\eta_\Delta"'] & \Delta_\sharp \Z_M
\end{tikzcd}
\]
Adding the lower right square of \eqref{eq:stringcoproducttwohalves} we obtain
{\importpiclib
	\[
	\begin{tikzcd}[column sep=small]
		\ptopeightm \ar[rr, "\eta_\Delta"] \ar[rrrrrr, equal, bend left] \ar[d, "e(M)"] && \pxfigeightm \ar[rr, equal] \ar[d, "e(M)"] &&  \pxfigeightm \ar[rr, "\epsilon_\Delta"] \ar[d, "e(M)"'] \ar[dr, "\epsilon_p"] \ar[dr, phantom, ""{name=T}]  && \phhc \ar[d, "\epsilon_p"] \\
		\ptopeight \ar[rr, "\eta_\Delta"] \ar[d, "\epsilon_\Delta"] && \pxfigeight \ar[rr, equal]  \ar[d, "\epsilon_\Delta"]  && \pxfigeight \ar[r, "\eta_\Delta"] \ar[phantom, to=T, "!"]  \ar[rr, equal, bend right = 45] & \pxfigeightex \ar[r, "\epsilon_\Delta"] & \pxfigeight \\
		\ptopeightpoint \ar[rr, "\eta_\Delta"] && \pohoh &&&&
	\end{tikzcd}
	\]
}
Simplifying and rearranging the diagram slightly we obtain
{\importpiclib
	\begin{equation}
		\begin{tikzcd}[column sep=small, row sep = small]
			\ptopeightm \ar[rr, "\eta_\Delta"] \ar[rrrr, equal, bend left] \ar[dd, "e(M)"'] && \pxfigeightm \ar[rr, "\epsilon_\Delta"] \ar[dr, "\epsilon_p"] \ar[dd, "e(M)"'{name=C}] && \phhc \ar[d, "\epsilon_p"] \\
			&&& \pxfigeightex  \ar[phantom, to=C, "!"] \ar[r, "\epsilon_\Delta"] & \pxfigeight \\
			\ptopeight \ar[rr, "\eta_\Delta"] \ar[d, "\epsilon_\Delta"] && \pxfigeight \ar[ur, "\eta_\Delta"'] \ar[urr, equal, bend right] \ar[d, "\epsilon_\Delta"] &&\\
			\ptopeightpoint \ar[rr, "\eta_\Delta"] && \pohoh &&&&&&
		\end{tikzcd}
	\end{equation}
}

Tensoring the $!$-triangle with the morphism $\eta_\Delta\colon \ev_\sharp \Z_M\rightarrow \ev_\sharp\Z_M\otimes \ev_\sharp \Z_M$ we obtain a commutative prism. Two faces of the prism are given by the square as well as the $!$-triangle appearing above. We can hence replace this composite with the other three faces of the prism to obtain
{\importpiclib
	\begin{equation}
		\begin{tikzcd}[column sep=small, row sep = small]
			\ptopeightm \ar[rr, "\eta_\Delta"] \ar[rrrr, equal, bend left] \ar[dd, "e(M)"'{name=T}] \ar[dr, "\epsilon_p"] && \pxfigeightm \ar[rr, "\epsilon_\Delta"] \ar[dr, "\epsilon_p"] && \phhc \ar[d, "\epsilon_p"] \\
			& \ptopeightex \ar[to=T, phantom, "!"] \ar[rr, "\eta_\Delta"] && \pxfigeightex \ar[r, "\epsilon_\Delta"] & \pxfigeight \\
			\ptopeight \ar[rr, "\eta_\Delta"] \ar[ur, "\eta_\Delta"] \ar[d, "\epsilon_\Delta"] && \pxfigeight \ar[ur, "\eta_\Delta"] \ar[urr, equal, bend right] \ar[d, "\epsilon_\Delta"] &&\\
			\ptopeightpoint \ar[rr, "\eta_\Delta"] && \pohoh &&&&&&
		\end{tikzcd}
	\end{equation}
}
which finally coincides with diagram \eqref{diag:tftcopdiagsimplified}.

\printbibliography

\end{document}